\documentclass[hidelinks, 12pt, reqno]{amsart}

\usepackage[
  margin=1in,           
  textwidth=6.2in,      
  textheight=8.5in      
]{geometry} 
\usepackage[utf8]{inputenc}
\usepackage{amsmath, bm}
\usepackage[psamsfonts]{amssymb}
\usepackage{amsfonts}
\usepackage{algorithm2e}
\usepackage{graphicx}
\usepackage{verbatim}
\usepackage{tikz-cd}
\usepackage{mathrsfs}
\usepackage{color}
\usepackage[english]{babel}
\usepackage{indentfirst}
\usepackage{tikz}
\usetikzlibrary{matrix,arrows,decorations.pathmorphing}
\usepackage{algorithm2e}
\usepackage[all]{xy}
\usepackage[T1]{fontenc}
\usepackage{hyperref}
\usepackage{mathtools}
\usepackage{multicol}
\usepackage{float}
\usepackage{physics}
\usepackage{dsfont}
\usepackage{multirow}
\usetikzlibrary{tikzmark}
\usepackage{caption}
\usepackage{subcaption}
\usepackage{float}
\usepackage{enumitem}

\newtheorem{theorem}{Theorem}[section]
\newtheorem{prop}[theorem]{Proposition}

\newtheorem{lemma}[theorem]{Lemma}
\newtheorem{cor}[theorem]{Corollary}

\newtheorem{ex}[theorem]{Example}
\newtheorem{dfn}[theorem]{Definition}
\newtheorem{remark}[theorem]{Remark}

\newtheorem{thm}{Theorem}

\newcommand{\bC}{\mathbb{C}}

\newcommand{\bH}{\mathbb{H}}

\newcommand{\bK}{\mathbb{K}}

\newcommand{\bN}{\mathbb{N}}

\newcommand{\bQ}{\mathbb{Q}}
\newcommand{\bR}{\mathbb{R}}

\newcommand{\bV}{\mathbb{V}}
\newcommand{\bW}{\mathbb{W}}

\newcommand{\bZ}{\mathbb{Z}}

\newcommand{\cA}{\mathcal{A}}
\newcommand{\cB}{\mathcal{B}}
\newcommand{\cC}{\mathcal{C}}

\newcommand{\cH}{\mathcal{H}}

\newcommand{\cP}{\mathcal{P}}

\newcommand{\cV}{\mathcal{V}}

\newcommand{\sD}{\mathscr{D}}

\newcommand{\sL}{\mathscr{L}}

\newcommand{\ra}{\rightarrow}

\newcommand{\xL}{{\hookrightarrow}}
\newcommand{\lH}{\hat{\lambda}|_{\partial W_H}}

\DeclareMathOperator{\im}{Im}
\DeclareMathOperator{\ham}{Ham}

\DeclareMathOperator{\id}{Id}

\DeclareMathOperator{\CF}{CF}

\DeclareMathOperator{\HF}{HF}
\DeclareMathOperator{\SH}{SH}

\DeclareMathOperator{\Vol}{Vol}
\DeclareMathOperator{\Cal}{Cal}

\DeclareMathOperator{\PCont}{PCont}
\DeclareMathOperator{\Sys}{Sys}

\newcommand{\jznote}[1]{#1}

\begin{document}

\author{Qi Feng}
\email{feqi@mail.ustc.edu.cn}
\address{School of Mathematical Sciences, University of Science and Technology of China, 96
Jinzhai Road, Hefei Anhui, 230026, China}

\author{Jun Zhang}
\email{jzhang4518@ustc.edu.cn}
\address{The Institute of Geometry and Physics, University of Science and Technology of China, 96 Jinzhai Road, Hefei Anhui, 230026, China}

\title{Symplectic dynamics via a tube construction}

\maketitle

\begin{abstract}

This is the first in a series of papers studying symplectic dynamics on Liouville domains $(W, \lambda)$ from a geometric approach. We employ the tube construction, combining \cite{Ush22} and \cite{ABE25}, to build a higher-dimensional Liouville domain from $(W,\lambda; H)$, where $H$ is a Hamiltonian function on $(W, \lambda)$, possibly non-autonomous. We establish dynamical stabilities with respect to a new Thompson-type pseudo-metric (inspired by \cite{Tho63}) on Hamiltonians, bounded below by Banach--Mazur type distances. We also explore relations between the topological entropies of the Hamiltonian dynamics and its induced Reeb dynamics, examine symplectic capacities, especially the Gromov width, under the tube construction, and give a categorification of the Thompson-type metric via a generalized tube construction (called the tunnel construction). Finally, we \jznote{investigate the relation between the filtered} symplectic homology of the resulting tube and the Hamiltonian Floer homologies of the input Hamiltonians, as well as their iterates, on the given base $(W, \lambda)$.

\end{abstract}

\tableofcontents

\section{Introduction}\label{sec-intro}

Since the introduction of a non-linear analogue of the Banach--Mazur distance between Liouville domains, proposed by Ostrover and Polterovich in 2015 to measure distances between domains from a symplectic viewpoint, metric geometry has been successfully integrated into quantitative symplectic geometry, particularly in the search for large-scale geometric phenomena and geometrically ``exotic'' domains. To facilitate the discussion, let us recall some basic concepts and notations. A Liouville domain $(W, \lambda)$ is a compact exact symplectic manifold $(W,\lambda)$ with boundary, whose Liouville vector field $Y_\lambda$, defined by $\iota_{Y_\lambda} d\lambda=\lambda$, is outward transverse to $\partial W$.  This condition implies that $Y_\lambda$ conformally rescales $d\lambda$; that is, $\mathcal L_{Y_\lambda} d\lambda = d\lambda$. Denote the flow of $Y_{\lambda}$ by $\sL_{\lambda}^t$. The rescaling of $(W, \lambda)$ is then defined by $aW\coloneqq\sL_{\lambda}^{\ln a}(W) \text{ for } 0<a\leq 1.$
A typical example of $(W, \lambda)$ is a star-shaped domain $(U \subset \bC^n, \lambda_{\rm std})$, where the Liouville vector field is the standard radial vector field. In this case, with the notation above, $aU = \{\sqrt{a}x\mid x\in U\}$ for $0< a \leq 1$. 

Denote $\sD_{2n}\coloneqq\{\text{Liouville domains of dimension }2n\}.$
For any $(W,\lambda_{W}),(V,\lambda_V)\in \sD_{2n}$, an embedding $\varphi\colon W\ra V$ satisfying $\varphi^*\lambda_V-\lambda_W=df$ for some smooth function $f\colon W\ra \bR$ is called a {\em Liouville embedding} (and a Liouville diffeomorphism if $\varphi$ is additionally a diffeomorphism). In particular, every Liouville embedding is a symplectic embedding, and the two notions coincide when $H^1(W;\bR) = 0$. Throughout the paper, we use the symbol ${\hookrightarrow}$ to denote the existence of a Liouville embedding. Moreover, a Liouville embedding $\varphi\colon W\xL V$ is called a {\em strict Liouville embedding} if $\varphi^*\lambda_V=\lambda_W$ (and a strict Liouville diffeomorphism if $\varphi$ is furthermore a diffeomorphism). The strict Liouville diffeomorphisms on $(W,\lambda)$ form a group, denoted by ${\rm Symp}(W,\lambda)$.

The following two definitions are the most commonly used notions for measuring the symplectic geometric distance between two given Liouville domains (see \cite{GU19,SZ21,Ush22}): for any $(W,\lambda_W), (V,\lambda_V)\in\sD_{2n}$,

 \[
 d_{\rm BM}((W,\lambda_W),(V,\lambda_V))\coloneqq\inf\left\{\ln C \geq 0\,\left|\,  \frac{1}{C}W\xL V, \,\, \frac{1}{C}V\xL W\right\}\right.
 \]
\[
d_{\rm SBM}\left((W,\lambda_W),(V,\lambda_V)\right)\coloneqq\inf\left\{\ln C \geq 0\,\left|\,
\begin{array}{l}
	\exists i_{W,V}\colon\frac{1}{C}W{\hookrightarrow}V\,{\rm{s.t.}}\,\frac{1}{C^2}V\subset i_{W,V}\left(\frac{1}{C}W\right)\subset V\\
	\exists i_{V,W}\colon \frac{1}{C}V{\hookrightarrow}W\, {\rm{s.t.}}\,\frac{1}{C^2}W\subset i_{V,W}\left(\frac{1}{C}V\right)\subset W
\end{array}\right\}\right.. \vspace{2mm}
\]
Clearly, $d_{\mathrm{BM}} \leq d_{\mathrm{SBM}}$, and both $d_{\mathrm{BM}}$ and $d_{\mathrm{SBM}}$ satisfy the triangle inequality. Note that the $d_{\mathrm{BM}}$ and $d_{\mathrm{SBM}}$ defined above satisfy $d_{\mathrm{BM}} = \frac{1}{2} \ln d_c$ and 
$d_{\mathrm{SBM}} = \frac{1}{2} \ln d_f$, where $d_c$ denotes the coarse symplectic Banach--Mazur distance and $d_f$ the fine symplectic Banach--Mazur distance, both defined in Definition~1.2 of \cite{Ush22}. Moreover, it is straightforward to verify that if there exists a Liouville diffeomorphism $\varphi\colon (W,\lambda_W) \simeq (V,\lambda_V)$, then 
\begin{equation} \label{BM-0}
d_{\rm BM}((W,\lambda_W),(V,\lambda_V))=d_{\rm SBM}((W,\lambda_W),(V,\lambda_V))=0.
\end{equation}
Therefore, in general, both $d_{\mathrm{BM}}$ and $d_{\mathrm{SBM}}$ are only pseudo-metrics (which may vanish on distinct inputs). 

\medskip

There have been various approaches to constructing examples of Liouville domains. In this paper, we focus on the following ``dynamics-to-geometry'' approach, which is partially motivated by the global surface of section (GSS, for brevity). Explicitly, given a time-dependent Hamiltonian $H\colon S^1= \bR/\bZ\times  W\ra\bR$ on a Liouville domain $(W,\lambda)$, consider the domain defined as follows:
\begin{equation} \label{Usher-con}
W_H\coloneqq\{(w,z)\in W\times\bC\mid z=re^{2\pi i t } \,\,\mbox{and}\,\, \pi|z|^2\leq H(t,w)\}. 
\end{equation}
Suppose  that $H$ satisfies the following conditions:
\begin{equation}\label{H-con}
\begin{aligned}
&\text{$H$ is autonomous in a neighborhood of $\partial W$,}\\
&H|_{\partial W}\equiv 0,\text{ and }\tau_{\lambda,H}(t,w)\coloneqq \lambda(X_{H_t}(w))+H(t,w)>0
\end{aligned}
\end{equation}
where $H_t(w):=H(t,w)$ and $X_{H_t}$ is defined by $\iota_{X_{H_t}}d\lambda=d_wH_t.$ Then it is readily verified that $\hat{\lambda}\coloneqq\lambda+\pi r^2dt \in\Omega^1(W\times\bC)$ endows $W_H$ with the structure of a new Liouville domain $(W_H,\hat{\lambda})$, where 
\[ \partial W_H = \left(\{(w, z) \in {\rm Int}(W) \times \bC \,| \, \pi|z|^2 = H(t,w)\}\right) \cup (\partial W \times \{0\}). \]
In particular, when the Hamiltonian is autonomous, i.e.\ $H(t, \cdot) = H(\cdot)$, the first component of the boundary $\partial W_H$ above is simply the product ${\rm Int}(W) \times \bR/ \bZ$. 
For more details, see Section~4 of \cite{Ush22}; following the terminology of \cite{Ush22}, we refer to (\ref{Usher-con}), for a Hamiltonian $H$ satisfying (\ref{H-con}), as a {\it tube construction}. In fact, our construction combines and generalizes the constructions in \cite{Ush22, ABE25}. Note that the construction in \cite{Ush22} is restricted to autonomous Hamiltonians, while \cite{ABE25} focuses on time-dependent Hamiltonians defined on the unit ball\footnote{The construction in \cite{ABE25} uses a different normalization, so that when the Hamiltonian is taken to be zero on the unit ball, the construction yields a higher-dimensional unit ball (which is useful in \cite{ABE25} for treating domains close to the unit ball). In our construction (\ref{Usher-con}), when the Hamiltonian vanishes identically, the construction degenerates in the $\bC$-component.}. Here, our construction allows more general Hamiltonians on more general Liouville domains.
 Moreover, in analogy with GSS (in low-dimensional situations), the Reeb dynamics on the contact boundary $\partial (W_H, \hat{\lambda})$ corresponds in a rich way to the Hamiltonian dynamics of $H$ on $(W, d\lambda)$. This correspondence will be discussed in detail in Section~\ref{sec-Reeb-Ham}. 
 
 Equivalently, one may view \(W_H\) fiberwise over \(W\): for each \(w\in W\), the fiber is the star-shaped planar domain $(W_H)_w=\left\{re^{2\pi i t}\in \bC \mid \pi r^2\le H(t,w)\right\}\subset \bC$.
Thus \(W_H\) is obtained by replacing each point \(w\in W\) with the corresponding fiber \((W_H)_w\), as illustrated in Figure~\ref{fig:tube-fiber}. 
\begin{figure}[h]
\centering
\includegraphics[width=0.6\textwidth]{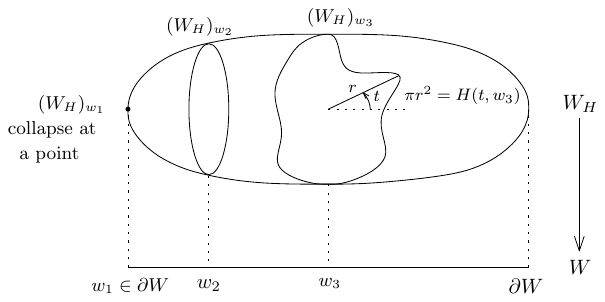}
\caption{The tube construction}
\label{fig:tube-fiber}
\end{figure}
Note that, by definition, for $w$ near $\partial W$, the fiber $(W_H)_w$ is a standard round disk in $\bC$, while for more inside $w \in W$, the fiber $(W_H)_w$ is only star-shaped, not necessarily being a round disk (in particular, as shown in Figure \ref{fig:tube-fiber}, the angle $2 \pi t$ and $H(t, w)$ are related in terms of the variable $t$). 
  
\begin{remark} \label{rmk-obd} Note that the tube construction in (\ref{Usher-con}) provides a special example of an open book decomposition; that is, $\partial W_H$ admits the following fibration structure: 
\begin{equation} \label{OB-bundle}
\pi_{\rm OB}: \partial W_H \backslash (\partial W \times \{0\}) \to S^1 = \bR/\bZ \,\,\,\,\mbox{by}\,\,\,\, \left(w, \sqrt{\frac{H(t,w)}{\pi}} e^{2 \pi i t}\right) \mapsto t. 
\end{equation}
Here, $\partial W \times \{0\}$ serves as the binding. Moreover, for each $t \in S^1$, the fiber is 
\[ \pi^{-1}_{\rm OB}(t) = \left\{\left(w, \sqrt{\frac{H(t,w)}{\pi}} e^{2 \pi i t}\right)  \,\bigg| \, w \in {\rm int}(W)\right\} \]
which is diffeomorphic to ${\rm int}(W)$ via the map $w \mapsto \left(w, \sqrt{H(t,w)/\pi} \, e^{2 \pi i t}\right)$. In particular, this map extends to the boundary $\partial W$ and sends $\partial W$ to the binding $\partial W \times \{0\}$. Therefore, the page of this open book decomposition is the closure \jznote{$\overline {\pi_{\rm OB}^{-1}(t)} =: P(t)$}, which is diffeomorphic to $W$. Furthermore, recall that the monodromy of an open book decomposition is the return diffeomorphism obtained by parallel transport once around the base $S^1 = \bR/\bZ$. Here, it is readily verified that the monodromy $\mu_{\rm OB}: P(0) \to P(0)$ is precisely the action of the Hamiltonian diffeomorphism $\varphi_H^1$. \jznote{Here, for the bundle over $S^1$ in (\ref{OB-bundle}), one chooses the horizontal distribution as the line field spanned by the Reeb vector field (i.e., the unique horizontal vector field in the tangent bundle of the total space that projects to $\partial_t$).} Explicitly, under the diffeomorphism between ${\rm int}(W)$ and $P(0) \backslash (\partial W \times \{0\})$ above, the monodromy can be expressed as $\mu_{\rm OB}(w) = \varphi_H^1(w)$. In particular, restricted to the boundary $\partial W$, $\mu_{\rm OB}$ acts as \jznote{a reparametrization of} the Reeb flow on $(\partial W, \lambda|_{\partial W})$. 

One can modify $\mu_{\rm OB}$ by composing with a Hamiltonian diffeomorphism on $W$ that restricts to \jznote{a reversed Reeb flow on $\partial W$ so that} the resulting Hamiltonian diffeomorphism on the page $P(0) \simeq W$ is \jznote{precisely the} identity on $\partial W$. In other words, up to isotopy of Hamiltonian diffeomorphisms, the monodromy $\mu_{\rm OB}$ lies in the group ${\rm Ham}(W, \partial W)$ (fixing the boundary $\partial W$). This also shows that our open book decomposition of $\partial W_H$ is special in the sense that the monodromy $\mu_{\rm OB}$ is further isotopic to ${\rm id}_W$. In general, the monodromy $\mu_{\rm OB}$ lies in the symplectomorphism group ${\rm Symp}(W, \partial W)$, which may contain elements not realized by Hamiltonian diffeomorphisms. \end{remark}

Here is an example of the tube construction (\ref{Usher-con}), which illustrates that symmetry in the Hamiltonian dynamics can be reflected, from a geometric perspective, in the resulting Liouville domain from the tube.

\begin{ex} \label{ex-domain-symmetry} Let $(W, \lambda)$ be a fixed Liouville domain and $H: S^1 \times W \to \bR$ be a given Hamiltonian function satisfying the condition (\ref{H-con}). For an integer $p\ge 2$, consider $H^{(p)}(t,w)\coloneqq pH(pt,w).$ Recall that if $H$ generates the Hamiltonian diffeomorphism $\varphi$, then $H^{(p)}$ generates the Hamiltonian diffeomorphism $\varphi^p$. Moreover, a direct computation gives $\tau_{\lambda,H^{(p)}}(t,w)=p\tau_{\lambda,H}(pt,w)>0$. Therefore, $H^{(p)}$ is also a Hamiltonian function satisfying the condition (\ref{H-con}). Then, applying the construction (\ref{Usher-con}), we obtain a Liouville domain $W_{H^{(p)}}$. 

Importantly, by definition we have a $\bZ/p\bZ$-symmetry in this Hamiltonian dynamics in the sense that  
\[ H^{(p)}\left(t+\frac{1}{p},w\right)=H^{(p)}(t,w).\]
This implies that the resulting tube $W_{H^{(p)}}$ carries a (non-free) $\bZ/p\bZ$-action $\rho_{H, p}: W_{H^{(p)}} \to W_{H^{(p)}}$ by rotation in the angular variable, that is, 
\begin{equation}\label{eq-cyclic-action-tube}
\rho_{H, p}\left(w,re^{2\pi i t}\right)\coloneqq \left(w,re^{2\pi i \left(t+\frac{1}{p}\right)}\right).
\end{equation}
Moreover, since on $W_{H^{(p)}}$ the Liouville 1-form is $\hat{\lambda}\coloneqq\lambda+\pi r^2dt$, this action preserves the Liouville structure in a strict way, i.e., $\rho_{H, p}^*\hat\lambda=\hat\lambda$. Therefore, the tube $(W_{H^{(p)}},\hat\lambda)$ admits a $\bZ/p\bZ$-symmetry via strict Liouville diffeomorphism $\rho_{H, p}$. The same argument works for any input Hamiltonian function $G: S^1 \times W \to \bR$ that admits a ``$p$-th root'' $H: S^1 \times W \to \bR$ in the sense that $G = H^{(p)}$ and $H$ also satisfies the condition (\ref{H-con}). In particular, observe that any autonomous Hamiltonian function $G: S^1 \times W \to \bR$ satisfying the condition (\ref{H-con}) is such a function. It admits a $p$-th root for any integer $p$, so one can upgrade the rotation in (\ref{eq-cyclic-action-tube}) to be an $S^1$-rotation on the $\bC$-factor, which shows that the resulting tube $W_G$ admits an $S^1$-symmetry (again via strict Liouville diffeomorphisms). 

Finally, if $(W, \lambda)$ admits a symmetry itself (for instance, $(W, \lambda)$ is a toric domain in $(\bC^n, \lambda_{\rm std})$), then it is possible to choose input Hamiltonian functions $G$ that admit further symmetries, which comes from both the $W$-factor and the $\bC$-factor. As an example, see the paragraph right below Theorem \ref{thm-large-scale-dT} in Section \ref{ssec-LSG}. 
\end{ex}

\jznote{At the level of dynamics, the Hamiltonian dynamics generated by \(H^{(p)}\) was investigated in the pioneering work of Polterovich--Shelukhin \cite{PS16}, where they introduced a homological criterion and applied it to quantitatively distinguish autonomous Hamiltonian systems from non-autonomous ones. Example \ref{ex-domain-symmetry} above, based on the tube construction (\ref{Usher-con}), shifts the discussion to the domain level. Explicitly, an analogous problem is to distinguish a given Liouville domain from a family of Liouville domains with prescribed symmetries, and to do so in a quantitative fashion. Achieving this requires developing several new pieces of machinery. In the present paper, we initiate this program by first studying the symplectic dynamics of the tube construction, with additional Floer-homological machinery to be established in the future work \cite{FZ-Floer}. Finally, we note that previous efforts toward this domain-level distinction have been made in \cite{Hut24, BG25}. Their works, however, mainly focus on toric domains, whereas the domains in our discussion admit much less symmetry. Moreover, from our perspective, rather than merely confirming whether a given domain is {\it not} symplectomorphic to a toric domain, we aim to derive a lower bound that quantifies this difference.}

\subsection{Thompson-type metric} Define
\begin{equation} \label{dfn-H-lambda}
\cH(W,\lambda)\coloneqq\{H\colon  S^1\times W\ra\bR\mid \mbox{$H$ satisfies condition (\ref{H-con})}\}.
\end{equation}
It is easy to see that $\cH(W,\lambda)$ is convex and invariant under $\bR_{>0}$-rescaling. The discussion above then yields the following map, given an initial Liouville domain $(W, \lambda)$: 
\begin{equation} \label{U-map}
{\rm U}: \cH(W,\lambda) \to \sD_{2n+2} \,\,\,\,\mbox{by} \,\,\,\, {\rm U}(H)  = (W_H, \hat{\lambda})
\end{equation}

From a quantitative perspective, one can upgrade $\sD_{2n+2}$ to a pseudo-metric space, either $(\sD_{2n+2}, d_{\rm BM})$ or $(\sD_{2n+2}, d_{\rm SBM})$. We now introduce the following definition, which similarly upgrades $\cH(W,\lambda)$ to a pseudo-metric space. 

\begin{dfn} \label{dfn-T-type}
A {\em Thompson-type distance} on $\cH(W,\lambda)$ is defined by 
\[
d_{\rm T}(H,G)\coloneqq\inf\left\{\ln C \geq 0\,\left|\,\exists\,\varphi\in {\rm Symp}(W,\lambda)\,\, {\rm{s.t.}}\,\, \frac{1}{C}H\leq G\circ\varphi \leq C H\right\}\right.
\]
for any $H, G \in \cH(W,\lambda)$. 
\end{dfn}

Here, the ``{\rm T}'' in the notation $d_{\rm T}(\cdot, \cdot)$ refers to the Thompson metric introduced in \cite{Tho63}. Recall that the original definition of the Thompson metric applies to a real linear space $X$ equipped with a partial order $\preceq$ (induced from a positive cone): for $x, y \in X$, one defines $d_{\rm T}(x,y)\coloneqq \inf\{s> 0\mid e^{-s}x\preceq y\preceq e^sx\}$ (in our setting, $X = \cH(W,\lambda)$ is equipped with the partial order $\preceq$ defined by $H \preceq G$ if and only if $H \leq G$, and $s = \ln C$). The only difference is that we incorporate the group action of ${\rm Symp}(W,\lambda)$ on $\cH(W,\lambda)$ into Definition~\ref{dfn-T-type}. This accounts for the fact that Hamiltonians differing only by the action of ${\rm Symp}(W,\lambda)$ correspond to the same domain in the sense of $d_{\rm BM}$ or $d_{\rm SBM}$, by Lemma~\ref{lemma-reparametrization}. Several metric properties of $d_{\rm T}(\cdot, \cdot)$ are verified in Proposition~\ref{prop-ln-metric} in Section~\ref{sec-thm-A}. 

\begin{remark} \label{rmk-T-H-comp}
We emphasize that the nature of $d_{\rm T}(\cdot, \cdot)$ in Definition~\ref{dfn-T-type} differs from that of another well-known metric in Hamiltonian dynamics---Hofer's metric---where $d_{\rm Hofer}(H, G) = \int_0^1 \max (H_t - G_t) - \min (H_t - G_t) \, dt$. One possible explanation is that, by Corollary~\ref{cor-Reeb-Ham}, the closed orbits of the Reeb dynamics on $\partial(W_H, \hat{\lambda})$ correspond to {\rm all} periodic orbits of the Hamiltonian dynamics on $(W, \lambda)$. Therefore, comparison between $\partial(W_H, \hat{\lambda})$ and $\partial(W_G, \hat{\lambda})$ should be based on the periodic orbits of $\varphi_H^k$ and $\varphi_G^k$ for all $k \in \bN$. By contrast, $d_{\rm Hofer}(H, G)$ above typically captures only the difference between the time-1 maps $\varphi_H^1$ and $\varphi_G^1$. \end{remark}

With the Thompson-type distance $d_{\rm T}(\cdot, \cdot)$ on $\cH(W,\lambda)$, the map in (\ref{U-map}) upgrades to the following: 
\begin{equation} \label{U-map-2}
{\rm U}: (\cH(W,\lambda), d_{\rm T}) \to (\sD_{2n+2}, \mbox{$d_{\rm SBM}$\,\,\mbox{or} \,\,$d_{\rm BM}$}) \,\,\,\,\mbox{by} \,\,\,\, {\rm U}(H)  = (W_H, \hat{\lambda}).
\end{equation}
Our first main result establishes a stability property under the map ${\rm U}$. Restricting to the subset of $\cH(W,\lambda)$ defined by $\cH^+(W,\lambda)\coloneqq\{H\in\cH(W,\lambda)\mid \lambda(X_{H_t})\ge 0 \text{ on } S^1\times W\}$, we have the following inequality. 

\begin{thm}\label{thm-estimation}
For any $H,G\in\cH^+(W,\lambda)$, we have $d_{\rm SBM}({\rm U}(H), {\rm U}(G))\leq d_{\rm T}(H,G)$.
\end{thm}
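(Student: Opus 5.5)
The plan is to reduce the estimate to set-theoretic inclusions between tubes, after two preparatory steps.

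\textbf{Step 1 (removing $\varphi$).} Fix $C>e^{d_{\rm T}(H,G)}$ and $\varphi\in{\rm Symp}(W,\lambda)$ with $\frac1C H\le G\circ\varphi\le CH$. The map $(w,z)\mapsto(\varphi(w),z)$ pulls $\hat\lambda$ back to itself. It carries $W_{G\circ\varphi}$ onto $W_G$, which is the content of Lemma~\ref{lemma-reparametrization}. It also preserves $\cH^+$, since $\lambda(X_{G\circ\varphi})=\varphi^*(\lambda(X_G))$. Hence we may assume $\varphi=\id$, so that
\[
\tfrac1C H\le G\le CH .
\]
Directly from \eqref{Usher-con}, inequalities of Hamiltonians give inclusions of tubes: $W_{H/C}\subset W_G\subset W_{CH}$, and symmetrically $W_{G/C}\subset W_H\subset W_{CG}$.

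\textbf{Step 2 (rescaled tubes versus tubes of rescaled Hamiltonians).} The Liouville field of $\hat\lambda$ is $\hat Y=Y_\lambda+\frac r2\partial_r$. Its time-$\ln a$ flow is $(w,z)\mapsto(\sL^{\ln a}_\lambda w,\sqrt a\,z)$. Since $\hat\lambda(\hat Y)=0$ on $W\times\{0\}$, only the radial part matters near the binding. We also have
\[
dH_t(Y_\lambda)=d\lambda(X_{H_t},Y_\lambda)=-\lambda(X_{H_t}).
\]
For $H\in\cH^+$ this says $H$ is non-increasing along $Y_\lambda$. Consequently, for every $s>0$ the Liouville field $\hat Y$ is outward transverse to $\partial W_{sH}$, and $s\mapsto W_{sH}$ is a monotone family of domains in the completion $\widehat{W_H}$ that are star-shaped with respect to $\hat Y$. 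Each such domain is determined by its graph function $f_s$ over $\partial W_H$ in the cylindrical coordinates of $\widehat{W_H}$, and $f_s$ is strictly increasing in $s$.

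Using the flow of $\hat Y$ together with a Moser-type isotopy along this star-shaped family, the aim is to build a Liouville diffeomorphism $\Psi_a\colon aW_H\to W_{aH}$ for $0<a\le1$. It should be compatible with inclusions: whenever $G$ is sandwiched as above, $\Psi_a$ respects the nesting $W_{aH}\subset W_G$. The hypothesis $\lambda(X_H)\ge0$ enters exactly here. Without it, $\partial W_{sH}$ could fail to be $\hat Y$-transverse, and the family would not be radially ordered.

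\textbf{Step 3 (assembling the $d_{\rm SBM}$ data).} Define $i_{H,G}\colon\frac1C W_H\to W_G$ as $\Psi_{1/C}$ followed by the inclusion $W_{H/C}\subset W_G$. Since $G\le CH$ gives $G/C^2\le H/C$, its image satisfies
\[
W_{G/C^2}\subset i_{H,G}\left(\tfrac1C W_H\right)\subset W_G ,
\]
and Step 2, applied to $G$, identifies $W_{G/C^2}$ with $\frac1{C^2}W_G$ inside $W_G$. The embedding $i_{G,H}$ is built symmetrically. Letting $C\downarrow e^{d_{\rm T}(H,G)}$ then gives $d_{\rm SBM}({\rm U}(H),{\rm U}(G))\le d_{\rm T}(H,G)$.

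\textbf{Main obstacle.} The hard part is Step 2. The Liouville rescaling $aW_H$ moves the $W$-coordinate, so it is not literally $W_{aH}$, and the required identification must be natural enough to turn the inclusion $W_{G/C^2}\subset W_{H/C}$ into the SBM condition. My first attempt would be to work entirely in the completion, where all the domains involved are $\hat Y$-star-shaped. I would then replace "$\frac1{C^2}V\subset i(\frac1C W)$" by the corresponding inequality of graph functions, which is exactly what the pointwise inequalities from Step 1 provide. The remaining subtlety is that $\Psi_a$ for $H$ and the analogous map for $G$ must be coherent; if they are not, I would fall back on a parametric Moser argument along the path $sH+(1-s)G$, which stays in $\cH^+$ by convexity.
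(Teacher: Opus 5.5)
Your Step~2 aims at a map that cannot exist. For $0<a<1$ there is no Liouville diffeomorphism $\Psi_a\colon aW_H\to W_{aH}$, because $\Vol(aW_H)=a^{n+1}\Vol(W_H)$, while Proposition~\ref{prop-Louville} gives $\Vol(W_{aH})=\frac{1}{n!}\Cal(aH)=a\Vol(W_H)$. Already for the ellipsoid $W_H=E(1,b)$ one has $aW_H=E(a,ab)$ but $W_{aH}=E(1,ab)$: the rescaled tube sits over the shrunken base $\sL_\lambda^{\ln a}(W)$.

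So no Moser-type argument, and no parametric argument along $sH+(1-s)G$, can produce $\Psi_a$. Step~3 relies on $i_{H,G}(\frac{1}{C}W_H)=W_{H/C}$, so it has no foundation. Two further points:
\begin{itemize}
\item The $d_{\rm SBM}$ condition is a literal set inclusion $\frac{1}{C^2}W_G\subset i_{H,G}(\frac{1}{C}W_H)$. It is not an identification of $W_{G/C^2}$ with $\frac{1}{C^2}W_G$ up to diffeomorphism.
\item The condition $\cH^+$ is not what makes $\partial W_{sH}$ transverse to $\hat Y$. That holds for every $H\in\cH(W,\lambda)$, since $\tau_{\lambda,sH}=s\,\tau_{\lambda,H}$.
\end{itemize}

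The missing idea is that the monotonicity you already derived gives an \emph{inclusion}, not a diffeomorphism. Take $(w,z)\in W_H$ with $z=re^{2\pi it}$, and set $w'\coloneqq\sL_\lambda^{\ln a}(w)$. Then $w$ lies on the forward Liouville orbit of $w'$, so $H(t,w)\le H(t,w')$. Hence $\pi|\sqrt{a}\,z|^2\le aH(t,w)\le aH(t,w')$, that is, $aW_H\subset W_{aH}$. This is exactly where $\lambda(X_{H_t})\ge 0$ is used.

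Consequently:
\begin{itemize}
\item $H\le CG$ gives $\frac{1}{C}W_H\subset W_{H/C}\subset W_G$, so the plain inclusion is the required Liouville embedding.
\item $G\le CH$ gives $\frac{1}{C}W_G\subset W_H$. Applying the diffeomorphism $\sL_{\hat\lambda}^{-\ln C}$ turns this into $\frac{1}{C^2}W_G\subset\frac{1}{C}W_H$.
\end{itemize}
This is the paper's Proposition~\ref{prop-up-bound}. Together with your Step~1 (the paper's Lemma~\ref{lemma-reparametrization}), it proves the theorem, with no coherence issue left to resolve.
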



Obviously, the same conclusion also holds for $d_{\rm BM}$. Moreover, there are variants of Theorem~\ref{thm-estimation}. For instance, one can define an equivalence relation on $\cH(W,\lambda)$ by declaring $H\sim G$ if and only if there exist a smooth family $\{H^s\}_{s\in[0,1]}$ in $\cH(W,\lambda)$ and a neighborhood $U$ of $\partial W$ such that $H^{s}=H^{0} \text{ on } S^{1}\times U$ for all $s\in[0,1]$, with $H^0=H$, $H^1=G$, and such that the time-1 maps $\varphi_{H^s}^1$ are independent of $s$. By the homotopy invariance property in Proposition~\ref{prop-invariant}, ${\rm U}(H^0)$ is Liouville diffeomorphic to ${\rm U}(H^1)$. Therefore, one can also define an equivalence relation on $\sD_{2n+2}$ by declaring $(W,\lambda_W) \sim (V,\lambda_V)$ if and only if they are Liouville diffeomorphic. Again, denote the resulting set of equivalence classes by $\overline{\sD_{2n+2}}$, equipped with the induced $d_{\rm BM}$ and $d_{\rm SBM}$. Note that in this case, by (\ref{BM-0}), the values of $d_{\rm BM}$ and $d_{\rm SBM}$ on equivalence classes are independent of the choice of representatives.

The discussion above yields a further update of (\ref{U-map-2}): 
\begin{equation} \label{U-map-3}
{\rm U}: \left(\overline{\cH(W,\lambda)}, d_{\rm T}\right) \to \left(\overline{\sD_{2n+2}}, \mbox{$d_{\rm SBM}$\,\,\mbox{or} \,\,$d_{\rm BM}$}\right) \,\,\,\,\mbox{by} \,\,\,\, {\rm U}([H])  = [(W_H, \hat{\lambda})]
\end{equation}
which is, in particular, well-defined. The Thompson-type metric $d_{\rm T}(\cdot, \cdot)$ on $\cH(W,\lambda)$ from Definition~\ref{dfn-T-type} can then be restricted to $\cH^+(W,\lambda)$ and descends to a pseudo-metric (still denoted by) $d_{\rm T}(\cdot, \cdot)$ on $\overline{\cH^+(W,\lambda)}$: 
\[
d_{\rm T}([H],[G])\coloneqq\inf\{d_{\rm T}(H',G')\mid [H']=[H],\; [G']=[G]\}
\]
for $[H],[G] \in \overline{\cH^+(W,\lambda)}$.
We then obtain the following corollary of Theorem~\ref{thm-estimation}. 
 
\begin{cor} \label{cor-estimation-class} For any $H,G\in\cH^+(W,\lambda)$ and their corresponding classes $[H], [G] \in \overline{\cH^+(W,\lambda)}$, the following stability inequality holds: 
\begin{equation} \label{cor-geo-stab}
d_{\rm SBM}({\rm U}([H]), {\rm U}([G]))\leq d_{\rm T}([H],[G]).
\end{equation}
In particular, the same holds for $d_{\rm BM}({\rm U}([H]),{\rm U}([G]))$. 
\end{cor}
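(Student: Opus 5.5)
The corollary should follow from Theorem~\ref{thm-estimation} combined with the homotopy invariance of Proposition~\ref{prop-invariant}. The first thing to settle is that ${\rm U}$ is well defined on classes. Suppose $H'\sim H$ via a family $\{H^s\}$ satisfying the conditions in the definition of $\sim$. Proposition~\ref{prop-invariant} gives a Liouville diffeomorphism ${\rm U}(H)\simeq {\rm U}(H')$. By (\ref{BM-0}) and the triangle inequality for $d_{\rm SBM}$, we then have
\[
d_{\rm SBM}({\rm U}(H),{\rm U}(G)) = d_{\rm SBM}({\rm U}(H'),{\rm U}(G'))
\]
for any representatives $H'\in[H]$ and $G'\in[G]$. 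In particular, the left-hand side of (\ref{cor-geo-stab}) does not depend on the choice of representatives.

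For the inequality itself, fix $\epsilon>0$. By the definition of $d_{\rm T}([H],[G])$ as an infimum, choose representatives $H'\in[H]$ and $G'\in[G]$ with $d_{\rm T}(H',G')\le d_{\rm T}([H],[G])+\epsilon$. Here we take the infimum over representatives lying in $\cH^+(W,\lambda)$, as the descended pseudo-metric on $\overline{\cH^+(W,\lambda)}$ prescribes. This restriction matters, because Theorem~\ref{thm-estimation} is only stated on $\cH^+(W,\lambda)$. Applying Theorem~\ref{thm-estimation} to $H'$ and $G'$ gives
\[
d_{\rm SBM}({\rm U}(H'),{\rm U}(G')) \le d_{\rm T}(H',G') \le d_{\rm T}([H],[G])+\epsilon.
\]
By the first step, the left-hand side equals $d_{\rm SBM}({\rm U}([H]),{\rm U}([G]))$. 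Letting $\epsilon\to 0$ gives (\ref{cor-geo-stab}). The statement for $d_{\rm BM}$ then follows from $d_{\rm BM}\le d_{\rm SBM}$.

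The main point needing care is the invariance step. One must check that the Liouville diffeomorphism supplied by Proposition~\ref{prop-invariant} really makes $d_{\rm SBM}$ vanish, and not merely $d_{\rm BM}$. The fine distance requires nested embeddings $\frac{1}{C^2}V\subset i(\frac1C W)\subset V$. A Liouville diffeomorphism $\phi$ satisfies $\phi^*\lambda_V-\lambda_W=df$, and it intertwines the rescalings $aW$ only up to isotopy. I would first try to handle this by citing (\ref{BM-0}), which the paper asserts already covers $d_{\rm SBM}$.
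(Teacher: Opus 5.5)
Your argument is correct and follows the paper's route. You make ${\rm U}$ well defined on classes via Proposition~\ref{prop-invariant} and (\ref{BM-0}), apply Theorem~\ref{thm-estimation} to near-optimal representatives in $\cH^+(W,\lambda)$, and let $\epsilon\to 0$. The worry in your last paragraph is unnecessary: $C=1$ is admissible in the definition of $d_{\rm SBM}$, the nesting condition then reduces to $i_{W,V}(W)=V$, and the Liouville diffeomorphism $\phi$ (with $\phi^{-1}$ in the other direction) satisfies this directly, so no compatibility with rescalings is needed.
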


Exploring another direction as a variant of Theorem~\ref{thm-estimation}, one can consider the contact 1-form $\lH$ on the contact boundary of the Liouville domain ${\rm U}(H) (= (W_H, \hat{\lambda}))$. To simplify notation, we denote $\lH$ by $\alpha_H$. Although, for $H, G\in\cH(W,\lambda)$, the contact 1-forms $\alpha_H$ and $\alpha_G$ are defined on different spaces (namely, $\alpha_H$ on $\partial W_H$ and $\alpha_G$ on $\partial W_G$), Proposition~\ref{prop-contact-bdy} below shows that there exists a canonical diffeomorphism $\varphi_{H,G}: \partial W_H \to \partial W_G$ such that the pullback 1-form $\varphi_{H, G}^*\alpha_G$ defines the same contact structure as $\alpha_H$ on the contact boundary $\partial W_H$. 

Meanwhile, for a contact manifold $(M,\xi=\ker\alpha_0)$, denote by $O_{M,\xi}(\alpha_0)$ the family of contact $1$-forms of $\xi$, namely $\{e^f\alpha_0\mid f\in C^{\infty}(M,\bR)\}$. For any $\alpha_1,\alpha_2\in O_{M,\xi}(\alpha_0)$, recall that the contact Banach--Mazur distance, as in Definition~4 of \cite{RZ21}, is given by 
\begin{equation} \label{dfn-CBM}
d_{\rm CBM}(\alpha_1,\alpha_2)\coloneqq\inf\left\{\ln C\geq 0\,\left|\,\exists\varphi\in{\rm Cont}_0(M,\xi)\text{ s.t. }\frac{1}{C}\alpha_1 \preceq\varphi^*\alpha_2 \preceq C\alpha_1\right\}\right.
\end{equation}
where ${\rm Cont}_0(M,\xi)$ is the identity component of the contactomorphism group of $(M,\xi)$, and $\preceq$ is the partial order defined as follows: $\alpha_1\preceq \alpha_2$ if and only if the corresponding functions $f_1$ and $f_2$ (with $\alpha_1 = e^{f_1}\alpha_0$ and $\alpha_2=e^{f_2}\alpha_0$) satisfy $f_1(p) \leq f_2(p)$ for all $p\in M$. Combining this with the discussion of the previous paragraph, the following definition arises naturally:

\begin{dfn}\label{dfn-CBM-2}
	For any $H, G\in\cH(W,\lambda)$, the {\em contact Banach-Mazur distance} $d_{\rm CBM}$ between $\alpha_H$ and $\alpha_G$ is defined by 
		\begin{align*}
		d_{\rm CBM}\left(\alpha_H,\alpha_G\right)
		&\coloneqq d_{\rm CBM}(\alpha_H,\varphi^*_{H,G}(\alpha_G)) \\
		&=\inf\left\{\ln C\geq 0\left| 
	\begin{array}{l}
		\exists\varphi\in{\rm Cont}_0(\partial W_{H},\ker\alpha_H), \rm{ s.t.}\\
		\varphi^*\varphi_{H,G}^*(\alpha_G)=e^f\alpha_H,|f|\leq \ln C
	\end{array}
	\right\}\right..
		\end{align*}
\end{dfn} 
Proposition~\ref{prop-CBM} investigates several metric properties of $d_{\rm CBM}(\alpha_H, \alpha_G)$, in particular the symmetry of $d_{\rm CBM}(\cdot, \cdot)$. 

\begin{remark} It is not explicitly stated in \cite{RZ21}, but one readily verifies that the $d_{\rm CBM}$ defined in \cite{RZ21}---that is, (\ref{dfn-CBM}) above---is independent of the choice of the base contact 1-form $\alpha_0$ (used to formulate the orbit space $O_{M,\xi}(\alpha_0)$). Therefore, the second equality in Definition~\ref{dfn-CBM-2} is consistent with (\ref{dfn-CBM}) for $\alpha_0 = \alpha_H$.\end{remark}

The following result shows that the stability in Theorem~\ref{thm-estimation} admits an intermediate step, passing through a contact-geometric analogue of $d_{\rm SBM}$. 

\begin{thm}\label{thm-SBM-CBM-ln}
	For any $H,G\in\cH^+(W,\lambda)$, we have 
	\[
	d_{\rm SBM}({\rm U}(H), {\rm U}(G)) \leq d_{\rm CBM}(\alpha_H,\alpha_G)\leq d_{\rm T}(H,G).
	\]
\end{thm}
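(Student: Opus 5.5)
The plan is to prove the two inequalities separately. Both go through the completion $\widehat{W_H}$ of ${\rm U}(H)$ and the identification of the boundary $\partial W_H$ with the graph of a function over the boundary of another tube along the Liouville flow of $\hat\lambda$, whose Liouville vector field is $\hat Y = Y_\lambda + \tfrac{r}{2}\partial_r$.

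\emph{Right inequality $d_{\rm CBM}(\alpha_H,\alpha_G)\le d_{\rm T}(H,G)$.} Fix $C$ and $\varphi\in{\rm Symp}(W,\lambda)$ with $\frac1C H\le G\circ\varphi\le CH$.
\begin{enumerate}
\item By Lemma~\ref{lemma-reparametrization}, the map $(w,z)\mapsto(\varphi(w),z)$ is a strict Liouville diffeomorphism $W_{G\circ\varphi}\to W_G$. I would use it to replace $G$ by $G\circ\varphi$. The catch is that $d_{\rm CBM}$ uses ${\rm Cont}_0$, so I must check that this map is compatible with the canonical identifications $\varphi_{H,G}$ from Proposition~\ref{prop-contact-bdy}, possibly after composing with an element of ${\rm Cont}_0$. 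If $\varphi$ is not in the identity component, I would absorb it into the definition of $\varphi_{H,G}$ through Proposition~\ref{prop-CBM}.
\item On the open part, parametrize $\partial W_H$ by $(w,t)\in{\rm int}(W)\times S^1$ via $w\mapsto (w,\sqrt{H(t,w)/\pi}\,e^{2\pi i t})$. In these coordinates $\alpha_H=\lambda+H\,dt$.
\item Use the hypothesis $H\in\cH^+(W,\lambda)$, i.e.\ $\lambda(X_{H_t})\ge0$. I expect this to give that $\partial W_H$ is transverse to $\hat Y$ in a monotone way, so that $\sL_{\hat\lambda}^{s}(W_H)$ is sandwiched between tubes of rescaled Hamiltonians. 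Concretely, I would show $\frac1C W_H\subset W_{H/C}\subset W_G\subset W_{CH}\subset C W_H$ inside the completion.
\item It follows that $\partial W_G$ is the image of $\partial W_H$ under $p\mapsto \sL^{f(p)}_{\hat\lambda}(p)$ for a function $f$ with $|f|\le\ln C$. The pullback of $\hat\lambda$ under this map is $e^f\alpha_H$.
\item Compare this radial map with $\varphi_{H,G}$. Both are obtained along the linear path $G_s=(1-s)H+sG$, which stays in $\cH^+$, so they differ by an element of ${\rm Cont}_0(\partial W_H,\ker\alpha_H)$ via Gray stability.
\end{enumerate}

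\emph{Left inequality $d_{\rm SBM}({\rm U}(H),{\rm U}(G))\le d_{\rm CBM}(\alpha_H,\alpha_G)$.} Suppose $\psi\in{\rm Cont}_0$ satisfies $\psi^*\varphi_{H,G}^*\alpha_G=e^f\alpha_H$ with $|f|\le\ln C$.
\begin{enumerate}
\item The strict contactomorphism $\varphi_{H,G}\circ\psi$ from $(\partial W_H,e^f\alpha_H)$ to $(\partial W_G,\alpha_G)$ extends to the symplectizations. This identifies $W_G$, up to Liouville isomorphism, with the region of $\widehat{W_H}$ bounded by the graph of $f$ over $\partial W_H$.
\item Since $-\ln C\le f\le\ln C$, that region contains $\frac1C W_H$ and sits inside $C W_H$.
\item Rescale by $1/C$ to get $i_{H,G}\colon \frac1C W_G\hookrightarrow W_H$ with $\frac1{C^2}W_H\subset i_{H,G}(\frac1C W_G)\subset W_H$, and symmetrically with the roles of $H$ and $G$ exchanged. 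This is the argument in \cite{RZ21} relating $d_{\rm CBM}$ to $d_{\rm SBM}$.
\item Because $\psi\in{\rm Cont}_0$, it lifts to a compactly supported Hamiltonian isotopy of the completion. This is what makes the resulting embeddings Liouville rather than merely symplectic, and lets the interior $W_H$ be carried along.
\end{enumerate}

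The main obstacle is step 3 of the right inequality. There one must verify, using $\lambda(X_{H_t})\ge0$ and the explicit formula for $\hat Y$, that the Liouville flow of $\hat\lambda$ maps tubes into tubes monotonically. One must also handle the binding $\partial W\times\{0\}$, where the $(w,t)$-coordinates degenerate and $f$ must extend smoothly. I would first check the computation $\frac{d}{ds}\bigl(\pi r^2-H\bigr)$ along $\hat Y$, which should be $\pi r^2-dH(Y_\lambda)=\pi r^2+\lambda(X_H)$; at points of $\partial W_H$, where $\pi r^2=H$, this equals $\tau_{\lambda,H}>0$.
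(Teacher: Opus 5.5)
Your plan is the paper's proof. The paper handles the left inequality by citing Lemma~8 of \cite{RZ21} on $(\partial W_H,\alpha_H)$ with filling $W_H$. It observes that $W_G$ is exactly the region of the completion cut out by the graph of $f_{H,G}$, and your left-inequality steps reconstruct that lemma. For the right inequality, the paper uses the inclusions $\frac1C W_G\subset W_H$ and $\frac1C W_H\subset W_G$ from Proposition~\ref{prop-up-bound}. These force $|f_{H,G}|\le\ln C$, so $\psi=\mathrm{id}$ is admissible in Definition~\ref{dfn-CBM-2}. Your chain $\frac1C W_H\subset W_{H/C}\subset W_G\subset W_{CH}\subset CW_H$ gives the same bound, and in fact uses only the monotonicity of $H$.

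Two corrections along the way:
\begin{itemize}
\item The computation you single out, $\widehat Y(\pi r^2-H)=\tau_{\lambda,H}$ on $\partial W_H$, only gives transversality, which holds on all of $\cH(W,\lambda)$. The sandwich rests instead on $\frac{d}{ds}H(t,\sL^s_\lambda(w))=-\lambda(X_{H_t})\le 0$ at every point of $W$; this is where $\cH^+$ enters.
\item Step~5 of the right inequality is superfluous. By uniqueness of the hitting time, the map $p\mapsto \sL^{f(p)}_{\hat\lambda}(p)$ of step~4 is literally $\varphi_{H,G}$, so no Gray-stability comparison is needed.
\end{itemize}

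The step that does not go through is step~1 of the right inequality, and your proposed remedy fails. Proposition~\ref{prop-CBM}(2) only allows precomposition with elements of ${\rm Cont}_0$, and $\varphi_{H,G}$ is a canonical map, so nothing can be absorbed into it. With $\Phi(w,z)=(\varphi(w),z)$, what you actually need is $\varphi_{G,H}\circ\Phi\circ\varphi_{H,G\circ\varphi}\in{\rm Cont}_0(\partial W_H,\ker\alpha_H)$.

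If $\varphi$ lies in the identity component of ${\rm Symp}(W,\lambda)$, this holds. An isotopy $\varphi_s$ gives the contact isotopy $\varphi_{G,H}\circ\Phi_s\circ\varphi_{H,G\circ\varphi_s}$ from the identity to this map.

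If $\varphi_*\neq\mathrm{id}$ on $H_1(W)$, it fails. An example is the cotangent lift of the rotation by $\pi/2$ of the square flat torus, acting on $D^*T^2$. The map is then not even homotopic to the identity, because:
\begin{itemize}
\item sliding along Liouville flow lines shows that, as a map into $\widehat W\times\bC$, it is homotopic to $\Phi$, which acts by $\varphi_*$ on $H_1$;
\item $H_1(\partial W_H)\to H_1(W_H)\cong H_1(W)$ is onto.
\end{itemize}

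The paper's proof passes over this same point. It appeals to Lemma~\ref{lemma-reparametrization}, which yields a strict contactomorphism but not one in ${\rm Cont}_0$. So both arguments, as written, prove the right-hand inequality only with the infimum in $d_{\rm T}$ taken over the identity component of ${\rm Symp}(W,\lambda)$. The bound $d_{\rm CBM}(\alpha_H,\alpha_G)\le\inf\{\ln C\mid \frac1C H\le G\le CH\}$ is fully established.
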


For both Theorem~\ref{thm-estimation} and Theorem~\ref{thm-SBM-CBM-ln}, once one passes to $d_{\rm SBM}$ between Liouville domains, standard homological machinery in symplectic geometry (such as a version of symplectic homology) comes into play. Through the now-standard process of packaging filtered Floer theory as a persistence module (for more background, see Section~\ref{sec-bottleneck}), one can bound $d_{\rm SBM}$ from below by a combinatorial measurement---the bottleneck distance, denoted by $d_{\rm bot}$. As a consequence, we obtain a further stability result. Denote by $\mathcal B({\rm U}(H))$ the barcode of the filtered $S^1$-equivariant symplectic homology of the Liouville domain ${\rm U}(H)$, for any $H \in \cH(W,\lambda)$. 

\begin{cor} \label{cor-stab-bottle} 
For any $H,G\in\cH^+(W,\lambda)$ and their corresponding classes $[H], [G] \in \overline{\cH^+(W,\lambda)}$, we have 
\[ d_{\rm bot}(\mathcal B({\rm U}(H)), \mathcal B({\rm U}(G))) \leq d_{\rm T}([H], [G]).\]
Here, $H$ and $G$ can be taken to be any representatives of the classes $[H]$ and $[G]$, respectively. 
\end{cor}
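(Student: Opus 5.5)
The plan is to combine Theorem~\ref{thm-estimation} with the standard persistence-module stability for $S^1$-equivariant symplectic homology, and then pass to equivalence classes using Corollary~\ref{cor-estimation-class}. In the first step we fix $H,G\in\cH^+(W,\lambda)$ and show
\[
d_{\rm bot}(\mathcal B({\rm U}(H)),\mathcal B({\rm U}(G)))\leq d_{\rm SBM}({\rm U}(H),{\rm U}(G)).
\]

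To prove this, take $C>1$ admissible in the definition of $d_{\rm SBM}$. We get Liouville embeddings $i_{W_H,W_G}\colon \frac1C W_H\xL W_G$ and $i_{W_G,W_H}\colon \frac1C W_G\xL W_H$ with the prescribed nesting $\frac{1}{C^2}W_G\subset i(\frac1C W_H)\subset W_G$, and symmetrically. Viscosity (Viterbo) transfer maps for the filtered $S^1$-equivariant symplectic homology $SH^{S^1,<a}$ are functorial under Liouville embeddings. Rescaling by $\frac1C$ shifts the action filtration multiplicatively by $\frac1C$, so on the logarithmic scale $\ln a$ it acts as a shift by $\ln C$. The transfer maps therefore give persistence morphisms $\Phi\colon V_{W_G}\to V_{W_H}[\ln C]$ and $\Psi\colon V_{W_H}\to V_{W_G}[\ln C]$.

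The nesting condition is what makes this an interleaving. The composite $\Psi[\ln C]\circ\Phi$ is the transfer map for $\frac1{C^2}W_G\subset W_G$, factored through $i(\frac1C W_H)$. By naturality of transfer under composition of inclusions, together with the identification of the transfer for a rescaling $\frac{1}{C^2}W\subset W$ with the structure map $V_W\to V_W[2\ln C]$, this composite is the shift map. The same holds for the other composite. Hence the two persistence modules are $\ln C$-interleaved. The isometry theorem then bounds $d_{\rm bot}$ by $\ln C$, and taking the infimum over admissible $C$ gives the claim.

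The main obstacle I expect is exactly this identification of the composite transfer with the persistence structure map. It requires the fine (SBM) nesting condition, not just the existence of embeddings; with only $d_{\rm BM}$ one would not get a commuting interleaving, which is why the statement is phrased via $d_{\rm SBM}$. I would cite the established argument (as in \cite{SZ21,GU19,Ush22}, where $d_f$-stability of barcodes of filtered symplectic homology is proven), adapting notation to $\frac12\ln d_f = d_{\rm SBM}$.

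With that in hand, the corollary follows by combining the inequality with Corollary~\ref{cor-estimation-class}. Write ${\rm U}([H])$ for the class of $(W_H,\hat\lambda)$. By Proposition~\ref{prop-invariant}, any representative $H'\in[H]$ gives ${\rm U}(H')$ Liouville diffeomorphic to ${\rm U}(H)$. Filtered $S^1$-equivariant symplectic homology is invariant under Liouville diffeomorphism, since the exact primitive can be absorbed compactly as a deformation of the Liouville form that does not change actions of Reeb orbits; equivalently, $d_{\rm SBM}=0$ combined with the first step gives $d_{\rm bot}=0$. So $\mathcal B({\rm U}(H))$ depends only on $[H]$, which justifies the final sentence of the statement. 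Then, for any representatives $H'\in[H]$ and $G'\in[G]$,
\[
d_{\rm bot}\le d_{\rm SBM}({\rm U}(H'),{\rm U}(G'))\le d_{\rm T}(H',G')
\]
by Theorem~\ref{thm-estimation}. Taking the infimum over representatives yields $d_{\rm T}([H],[G])$.
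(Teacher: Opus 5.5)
This is essentially the paper's argument: the corollary is the cited stability $d_{\rm bot}\le d_{\rm SBM}$ (Proposition~\ref{prop-interleaving}) chained with Theorem~\ref{thm-estimation}/Corollary~\ref{cor-estimation-class}, with independence of representatives coming from Proposition~\ref{prop-invariant} and (\ref{BM-0}), so your proof goes through.

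One caveat concerns your sketch of the cited step. The composite of the transfers for $i_{W_H,W_G}$ and $\frac{1}{C}i_{W_G,W_H}$ is the transfer of the composite embedding $\frac{1}{C^2}W_G\hookrightarrow W_G$, which need not be the inclusion. The nesting is properly used by building both interleaving maps from the single chain $\frac{1}{C^2}W_G\subset i_{W_H,W_G}(\frac{1}{C}W_H)\subset W_G$; both composites are then transfers of genuine inclusions, hence shift maps. For the embeddings actually produced in the proof of Theorem~\ref{thm-estimation} (inclusions in $W\times\bC$ twisted by a single $\varphi\in{\rm Symp}(W,\lambda)$), your composite is already literally the inclusion.
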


Owing to the strong computability of $d_{\rm bot}$, the stability in Corollary~\ref{cor-stab-bottle} provides an efficient approach to estimating (a lower bound for) the pseudo-metric $d_{\rm T}$ on $\overline{\cH^+(W,\lambda)}$, and paves the way to the study of the large-scale geometry of the pseudo-metric space $(\overline{\cH^+(W,\lambda)}, d_{\rm T})$. In fact, after equipping $d_{\rm T}$ with the ${\rm Symp}(W,\lambda)$-action from Definition~\ref{dfn-T-type} and passing to the equivalence classes from $\cH^+(W,\lambda)$ to $\overline{\cH^+(W,\lambda)}$, it is not even obvious that the induced pseudo-metric is non-trivial. Fortunately, by examining the construction of $(W_H, \hat{\lambda})$, the following global quantitative relation is readily derived (see Proposition~\ref{prop-Louville}): for $H\in\cH(W^{2n},\lambda)$, 
\begin{equation} \label{Vol-Cal}
{\rm Vol}(W_H)=\frac{1}{n!}\,\Cal(H) = \frac{1}{n!} \left(\int_{S^1\times W}H dt\wedge (d\lambda)^{n}\right)
\end{equation}
where $\Cal(H)$ denotes the Calabi invariant of $H$, and the third expression in (\ref{Vol-Cal}) follows directly from its definition. Then, by Corollary~\ref{cor-calabi}, we obtain the following result. 

\begin{cor} \label{cor-Cal} For any $H,G\in\cH^+(W^{2n},\lambda)$ and their corresponding classes $[H], [G] \in \overline{\cH^+(W,\lambda)}$, we have 
\[ \frac{1}{n+1}|\ln\Cal(H)-\ln\Cal(G)|\leq d_{\rm BM}({\rm U}([H]), {\rm U}([G]))  \leq d_{\rm T}([H], [G]). \]
Here, $H$ and $G$ can be taken to be any representatives of the classes $[H]$ and $[G]$, respectively. \end{cor}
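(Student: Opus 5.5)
The argument splits into the two inequalities: the right-hand one is already Theorem~\ref{thm-estimation}, so the real work is the volume lower bound for $d_{\rm BM}$. By Corollary~\ref{cor-estimation-class}, $d_{\rm BM}({\rm U}([H]),{\rm U}([G]))\le d_{\rm SBM}({\rm U}([H]),{\rm U}([G]))\le d_{\rm T}([H],[G])$. Moreover, the left-hand side does not depend on representatives, since Liouville diffeomorphic domains are at $d_{\rm BM}$-distance zero by (\ref{BM-0}).

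For the left inequality, I would first check that $\Cal$ is constant on equivalence classes. If $H\sim G$ via $\{H^s\}$, then Proposition~\ref{prop-invariant} gives a Liouville diffeomorphism ${\rm U}(H)\simeq {\rm U}(G)$. Liouville diffeomorphisms preserve $d\hat\lambda$, hence they preserve volume, so (\ref{Vol-Cal}) gives $\Cal(H)=\Cal(G)$. It therefore suffices to work with fixed representatives $H,G$ and prove
\[
\frac{1}{n+1}\left|\ln \Cal(H)-\ln\Cal(G)\right|\le d_{\rm BM}({\rm U}(H),{\rm U}(G)).
\]

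This is the volume argument behind Corollary~\ref{cor-calabi}. Take any $C$ admissible in the definition of $d_{\rm BM}$, so that $\frac1C W_H\hookrightarrow W_G$ and $\frac1C W_G\hookrightarrow W_H$. Liouville embeddings are symplectic embeddings of equal-dimensional manifolds, hence volume non-increasing. The rescaling $aW_H=\sL^{\ln a}_{\hat\lambda}(W_H)$ satisfies $(\sL^{t})^*d\hat\lambda=e^{t}d\hat\lambda$ because $\mathcal L_{Y}d\hat\lambda=d\hat\lambda$. Since $W_H$ has dimension $2n+2$, this gives
\[
\Vol\left(\tfrac1C W_H\right)=C^{-(n+1)}\Vol(W_H).
\]
Hence $C^{-(n+1)}\Vol(W_H)\le \Vol(W_G)$, and symmetrically with $H$ and $G$ exchanged. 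Taking logarithms yields $|\ln\Vol(W_H)-\ln\Vol(W_G)|\le (n+1)\ln C$. By (\ref{Vol-Cal}), $\Vol(W_H)=\Cal(H)/n!$; the factor $n!$ cancels in the difference of logarithms, and $\Cal(H)>0$ since it equals $n!\Vol(W_H)$, which is positive. Taking the infimum over admissible $C$ then finishes the proof.

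The only points needing care are the scaling exponent, which must be $n+1$ (half of $\dim W_H=2n+2$), and the fact that the time-dependence of $H$ causes no trouble, since (\ref{Vol-Cal}) is stated for general $H\in\cH(W,\lambda)$. I do not expect a genuine obstacle here: the substantive input is Theorem~\ref{thm-estimation}, which may be assumed.
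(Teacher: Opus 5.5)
Your proposal is correct and takes the same route as the paper. The left inequality is the volume-scaling argument of Corollary~\ref{cor-calabi}: exponent $n+1$ for the $(2n+2)$-dimensional tube, combined with $\Vol(W_H)=\frac{1}{n!}\Cal(H)$. The right inequality is Corollary~\ref{cor-estimation-class} (equivalently Theorem~\ref{thm-estimation}) together with $d_{\rm BM}\le d_{\rm SBM}$. You also check that $\Cal$ is constant on equivalence classes, using Proposition~\ref{prop-invariant} and the fact that Liouville diffeomorphisms preserve volume; the paper leaves this step implicit.
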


Note that Corollary~\ref{cor-Cal} implies, in particular, that $d_{\rm T}$ is non-trivial on $\cH^+(W,\lambda)$. However, the Calabi invariant $\Cal(\cdot)$ is too coarse to detect the large-scale geometric phenomena of $d_{\rm T}$, whereas $d_{\rm bot}$ can provide a more refined lower bound for $d_{\rm T}$. The following diagram summarizes the stabilities discussed above. 
\begin{figure}[ht]
	\centering
\includegraphics[scale=1.1]{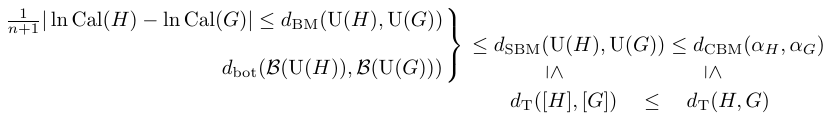}
\end{figure}
\vspace{-6mm}
\jznote{\begin{remark} When the given Liouville domains \((W, \lambda_W)\) and \((V, \lambda_V)\) admit certain \(\mathbb Z/p\mathbb Z\)-symmetries (hence, admitting \(\mathbb Z/p\mathbb Z\)-actions generated by \(\rho_W\) and \(\rho_V\), respectively), e.g., Example \ref{ex-domain-symmetry}, one can introduce an equivariant refinement of the symplectic Banach--Mazur distance. The key point is to consider Liouville embeddings \(\iota: W \hookrightarrow V\) satisfying \(\iota\circ \rho_W=\rho_V \circ\iota\); such an embedding is called {\rm \(\mathbb Z/p\mathbb Z\)-equivariant}. Note also that rescaled domains naturally inherit these \(\mathbb Z/p\mathbb Z\)-actions, since strict Liouville diffeomorphisms commute with the Liouville flow. Analogously, one obtains an equivariant symplectic Banach--Mazur distance, denoted by \(d_{\mathrm{SBM}}^{\mathbb Z/p\mathbb Z}\), which is obviously no smaller than \(d_{\mathrm{SBM}}\). Stability results hold in a similar manner. In particular, for the domains \((W_{H^{(p)}}, \hat{\lambda})\) considered in Example \ref{ex-domain-symmetry}, we have the following stability estimate:
\[
d_{\mathrm{SBM}}^{\mathbb Z/p\mathbb Z}\left((W_{H^{(p)}}, \hat{\lambda}), (W_{G^{(p)}}, \hat{\lambda})\right) \leq d_{\mathrm{T}}(H, G).
\]
For the lower bound of
\(d_{\mathrm{SBM}}^{\mathbb Z/p\mathbb Z}((W_{H^{(p)}}, \hat{\lambda}), (W_{G^{(p)}}, \hat{\lambda}))\),
the (strict) Liouville diffeomorphisms \(\rho_{H, p}\) and \(\rho_{G, p}\) in (\ref{eq-cyclic-action-tube}) induce a \(\mathbb Z/p\mathbb Z\)-action on the corresponding filtered \(S^1\)-equivariant symplectic homology groups. Then, by the general representation theory of persistence modules (see Section 4.4 in \cite{PRSZ20}), one can extract refined invariants from, for instance, the barcodes of subrepresentations, and use them to provide a lower bound. Further developments in this direction will be presented in \cite{FZ-Floer}. 
\end{remark}}

\subsection{Bounded relative growth rate} 

Before turning to a different topic, let us relate the Thompson-type metric $d_{\rm T}$ in Definition~\ref{dfn-T-type} to another classical quantity, the relative growth rate\footnote{This subject arose in a discussion with L.~Polterovich, who conjectured that a result of the form of Theorem~\ref{thm-Thompson-rate} below should hold.}. We briefly recall its definition within the framework of contact geometry (see \cite{EP00}). For a contact manifold $(M,\xi)$ with a co-oriented contact structure, denote by $\PCont_0(M,\xi)$ the set of contact isotopies, i.e., the paths (parametrized by $[0,1]$) in the contactomorphism group of $(M,\xi)$ starting from $\id_M$. Fix a contact $1$-form $\alpha\in\Omega^1(M)$; then each contact isotopy $\psi=\{\psi^t\}_{t\in[0,1]}\in\PCont_0(M,\xi)$ is generated by a unique contact Hamiltonian function $h_{\alpha}(\psi)\colon [0,1]\times M\ra\bR$. Explicitly,
\[
h_\alpha(\psi)=\alpha(X_{\psi})\,\,\, \text{ where } \,\,\,\dv{\psi}{t}=X_{\psi}\circ\psi
\]
We call $\psi$ positive if the corresponding $h_{\alpha}$ is pointwise positive.
Denote by $\PCont_+(M,\xi)$ the set of all positive contact isotopies in $\PCont_0(M,\xi)$. For brevity, set ${\rm G}\coloneqq\widetilde{\rm Cont}_0(M,\xi)$ and consider
\begin{equation} \label{dfn-G+}
{\rm G}_+\coloneqq\{[\psi]\in {\rm G}\mid \psi\in\PCont_+(M,\xi)\}.
\end{equation}
Fix any contact $1$-form $\alpha$ on $M$. For any two elements $[\varphi], [\psi]\in {\rm G}$, define $[\varphi]\succeq[\psi]$ if and only if $h_\alpha(\varphi)\geq h_\alpha(\psi)$ pointwise for some representatives (still denoted by $\varphi$ and $\psi$, without loss of generality) of $[\varphi]$ and $[\psi]$, respectively (Proposition~1.4.B in \cite{EP00}). Then ${\rm G}_+$ in (\ref{dfn-G+}) consists of the dominants of ${\rm G}$, where we call $a\in {\rm G}\setminus\{1\}$ a dominant if for every $b\in {\rm G}$ there exists some $k\in\bN$ such that $a^k\succeq b$. For $a\in {\rm G}_+$ and $b\in {\rm G}$, define 
\begin{align*}
\rho^+_{\succeq }(a,b)&\coloneqq\lim_{l\ra\infty}\frac{\inf\{k\in\bZ\mid a^k\succeq b^l\}}{l}\\
	&=\inf\left.\left\{\frac{k}{l}\in\bQ\,\right|\,(k\in\bZ, l\in\bN)(a^k\succeq b^l)\right\}.
\end{align*}
For the second equality above, see the proof of Theorem~2.4 in \cite{Sim10}. If $b\in {\rm G}_+$, then one similarly defines $\rho_{\succeq}^-(a,b)\coloneqq \rho_{\succeq}^+(b,a)$. The {\em relative growth rate between $a$ and $b$} is defined by
\[
\gamma_{\succeq}(a,b)\coloneqq\max\{|\rho_{\succeq}^+(a,b)|, |\rho_{\succeq}^-(a,b)|\}.
\]
Moreover, if $a,b\in {\rm G}_+$, then both $\rho_{\succeq}^+(a,b)$ and $\rho_{\succeq}^-(a,b)$ are nonzero, since inequality~(1.1.A) in \cite{EP00} implies that $\rho_{\succeq}^+(a,b)\rho_{\succeq}^-(a,b)\geq 1$. Define $d_{\gamma}\colon {\rm G}_+\times {\rm G}_+\ra[0,\infty)$ as follows (see Lemma~1.7.A in \cite{EP00} and (12) in \cite{RZ21}):
\begin{equation} \label{order-metric}
d_{\gamma}(a,b)\coloneqq\ln\gamma_{\succeq}(a,b)\,\,\,\,\mbox{for any $a, b \in {\rm G}_+$}
\end{equation}
and one easily verifies that $({\rm G}_+,d_{\gamma})$ is a pseudo-metric space. 

\medskip

We now focus on a special family of contact manifolds---prequantization spaces. Let $(M,\xi)$ be a prequantization space of a closed symplectic manifold $(X,\omega)$. By definition, there exists a principal $S^1$-bundle structure $p\colon M\ra X$ and an $S^1$-invariant $1$-form $\alpha\in\Omega^1(M)$ such that $d\alpha=p^*\omega$. This setting is particularly useful for linking symplectic Hamiltonian dynamics with contact Hamiltonian dynamics. Explicitly, given a Hamiltonian $H\colon [0,1]\times X\ra\bR$, there exists a contact isotopy $\psi_H=\{\psi_H^t\}_{t\in[0,1]}\in {\rm PCont}_0(M,\xi)$, generated by $p^*H\colon [0,1]\times M\ra\bR$. If $H$ is positive, then so is $p^*H$, which yields a positive contact isotopy $\psi_H \in {\rm PCont}_+(M,\xi)$ and a class $[\psi_H] \in {\rm G}_+$ (see (\ref{dfn-G+})). 

Let $\cH_+(X)\coloneqq\{\text{ smooth }H\colon  X\ra\bR_{+}\}$. Then, in the same spirit as Definition~\ref{dfn-T-type}, we can define another Thompson-type metric on $\cH_+(X)$ as follows:
\[
d_{\rm T, p}(H,G)\coloneqq\inf\left\{\ln C>0\,\left|\, \exists\,\varphi\in \ham(X,\omega)\,\, {\rm{s.t.}}\,\, \frac{1}{C}H\leq G\circ\varphi\leq CH\right\}\right.
\]
for any $H,G\in \cH_+(X)$, where the ``${\rm p}$'' in $d_{\rm T, p}$ indicates that we are working in the prequantization setting. The verification that $d_{\rm T, p}$ is a pseudo-metric follows in the same way as part~(2) of Proposition~\ref{prop-ln-metric}. With respect to the pseudo-metric $d_{\gamma}$ defined in (\ref{order-metric}), we then have the following comparison. 

\begin{thm}\label{thm-Thompson-rate}
	Given a prequantization $(M, \xi) \to (X, \omega)$, we have 
	\[ d_{\gamma}([\psi_H],[\psi_G])\leq d_{\rm T, p}(H,G)\]
	 for any $H,G\in\cH_+(X)$. 
\end{thm}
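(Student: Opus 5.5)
Suppose $\frac{1}{C}H\le G\circ\varphi\le CH$ for some $\varphi\in\ham(X,\omega)$ and $C>1$. We want $\gamma_\succeq([\psi_H],[\psi_G])\le C$, that is, $|\rho^\pm_\succeq|\le C$. Taking the infimum over admissible $C$ then gives the claim.

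The first step replaces $G$ by $G\circ\varphi$ at no cost. Lift $\varphi$ to a contactomorphism $\tilde\varphi\in{\rm Cont}_0(M,\xi)$ with $\tilde\varphi^*\alpha=\alpha$. This is possible because Hamiltonian diffeomorphisms of the base lift to strict contactomorphisms of the prequantization: lift a Hamiltonian isotopy $\{\varphi_F^t\}$ by the contact flow of $p^*F$, which preserves $\alpha$ because $p^*F$ is $S^1$-invariant, i.e. constant along Reeb orbits. Then $\psi_{G\circ\varphi}^t=\tilde\varphi^{-1}\psi_G^t\tilde\varphi$, whose contact Hamiltonian is $p^*G\circ\tilde\varphi=p^*(G\circ\varphi)$. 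Conjugation by a fixed element of $\rm G$ preserves the order $\succeq$: it is bi-invariant, since conjugating by a strict contactomorphism just precomposes Hamiltonians. Conjugation is also a group automorphism, so $\rho^\pm_\succeq$ are unchanged. Note that $a\mapsto\tilde\varphi^{-1}a\tilde\varphi$ is conjugation by the class of the path to $\tilde\varphi$. Hence we may assume $\varphi=\id$ and $\frac1C H\le G\le CH$.

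The second step is a comparison of powers. Since $H,G$ are autonomous and $S^1$-invariant, $[\psi_H]^k$ is generated by $k\,p^*H$: by reparametrization, $k$ concatenated copies are homotopic to the flow of $kp^*H$ for time one. More generally, $[\psi_H]^k[\psi_G]^{-l}$ and similar words can be controlled. The key inequality is that for $k,l\in\bN$ with $k/l\ge C$ we have $kH\ge lG$ pointwise, since $lG\le lCH\le kH$. Using the representatives $\{\psi^{kt}_{H}\}$ and $\{\psi^{lt}_{G}\}$, whose contact Hamiltonians are $kp^*H$ and $lp^*G$ (constant in time along the flows, as $p^*H$ is preserved by its own flow), this gives $[\psi_H]^k\succeq[\psi_G]^l$. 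Therefore $\rho^+_\succeq([\psi_H],[\psi_G])=\inf\{k/l : a^k\succeq b^l\}\le C$. Rational $k/l\ge C$ can be taken arbitrarily close to $C$, and the values are positive, so $|\rho^+|\le C$. Symmetrically, $H\le CG$ gives $\rho^-_\succeq=\rho^+_\succeq([\psi_G],[\psi_H])\le C$. Both quantities are positive: $\rho^+\rho^-\ge1$ and each is $\le C$ by these bounds, so each lies in $[1/C,C]$. Hence $\gamma_\succeq\le C$ and $d_\gamma\le\ln C$.

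The main obstacle is rigor in the identification $[\psi_H]^k=[\{\psi_H^{kt}\}]$ in the universal cover and the claim that the order defined via Hamiltonians of \emph{some} representatives applies to these representatives. I would handle the first by the standard homotopy between a concatenation and a reparametrized flow of the time-independent Hamiltonian. For the second, Proposition~1.4.B of \cite{EP00} says that $\succeq$ is well defined and equivalent to the existence of a nonnegative path. The lifting of Hamiltonian diffeomorphisms to ${\rm Cont}_0$ in the first step must also be checked. This is standard for prequantizations, as long as $\varphi$ is taken in $\ham$, which is the case here.
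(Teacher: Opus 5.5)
\textbf{Gap in the reduction to $\varphi=\id$.} You keep $a=[\psi_H]$ fixed and replace only $b=[\psi_G]$ by $b_\varphi=c^{-1}bc=[\psi_{G\circ\varphi}]$, where $c$ is the class of a path to $\tilde\varphi$. Your justification does not cover this. Conjugation $x\mapsto c^{-1}xc$ is indeed an order-preserving automorphism of ${\rm G}$, but that only gives $\rho^\pm_\succeq(a,b)=\rho^\pm_\succeq(c^{-1}ac,\,c^{-1}bc)$, i.e.\ invariance when \emph{both} arguments are conjugated. Conjugating both turns $H$ into $H\circ\varphi$, and then the hypothesis $\frac1C H\le G\circ\varphi\le CH$ no longer compares the right pair. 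So $\rho^\pm_\succeq(a,b)=\rho^\pm_\succeq(a,b_\varphi)$ does not follow from ``conjugation is a group automorphism.'' This one-sided conjugation is exactly the nontrivial step of the theorem. The points you flagged as the main obstacle (the identity $[\psi_H]^k=[\psi_{kH}]$, the meaning of $\succeq$, the strict lift of $\varphi$) are routine by comparison.

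\textbf{The missing idea is dominance.} Because $a\in{\rm G}_+$ is dominant, you can choose $N$ with $a^N\succeq c$ and $a^N\succeq c^{-1}$. Take $k=\lceil Cl\rceil$, so that $G\circ\varphi\le CH$ gives $b_\varphi^l\preceq a^k$. Bi-invariance then yields
\[
b^l=c\,b_\varphi^l\,c^{-1}\preceq c\,a^k\,c^{-1}\preceq a^{k+2N}.
\]
The extra $2N$ vanishes after dividing by $l$ and letting $l\to\infty$, so $\rho^+_\succeq(a,b)\le C$.

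For $\rho^-_\succeq$ you need the same trick, now using dominance of $b$. Choose $M$ with $b^M\succeq c^{\pm1}$. From $H\le C\,G\circ\varphi$ you get
\[
a^l\preceq b_\varphi^k=c^{-1}b^kc\preceq b^{k+2M}.
\]
Alternatively, as the paper does, conjugate $a$ to $cac^{-1}=[\psi_{H\circ\varphi^{-1}}]$ and absorb $c^{\pm1}$ into powers of $b$.

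Once this absorption step is inserted, the rest of your argument coincides with the paper's proof: the strict lift, the identity $[\psi_K]^m=[\psi_{mK}]$, the pointwise comparison $k\,p^*H\ge l\,p^*G$ of autonomous Hamiltonians, and positivity of $\rho^\pm_\succeq$.
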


Note that Theorem \ref{thm-Thompson-rate} partially addresses Problem 1.2.B in \cite{EP00}—specifically in the prequantization setting—concerning the estimation of the relative growth rate in geometric or dynamical terms.

\begin{remark} Note that the value $d_{\gamma}(\cdot, \cdot)$ is not directly computable, as its inputs are homotopy classes. Therefore, it is nontrivial to obtain large-scale geometric properties of $({\rm G}_+, d_{\gamma})$ (and hence of $({\rm Ham}(X, \omega), d_{\rm T, p})$). For instance, a meaningful question is the following: for $(X, \omega) = (S^2, \omega_{\rm std})$ and $(M, \xi) = (S^3, \xi_{\rm std})$, how can $d_{\rm T, p}$ and $d_{\gamma}$ be estimated? We plan to address this question systematically somewhere else. \end{remark}

\subsection{Lifted topological entropy} We next turn to a different topic: investigating the relationship between the topological entropies of various dynamics under the tube construction (\ref{Usher-con}) with condition (\ref{H-con}). This will illustrate, from a quantitative perspective, how the dynamical systems involved change. 

\medskip

Recall that, on a fixed compact metric space $(X, d)$ and for any continuous flow $\varphi: \mathbb{R} \times X \to X$, its \emph{topological entropy} is defined by
\begin{equation} \label{dfn-top-entropy}
h_{\text{top}}(\varphi) = \lim_{\varepsilon \to 0} \limsup_{T \to \infty} \frac{1}{T} \ln N_{\text{sep}}(\varphi, T, \varepsilon)
\end{equation}
where $N_{\text{sep}}(\varphi, T, \varepsilon)$ denotes the maximal cardinality of subsets in which any two distinct points $x,y$ satisfy $\max_{0 \le t \le T} d(\varphi(t, x), \varphi(t, y)) \geq \varepsilon$. In other words, no two distinct points in such a subset have orbits under the flow $\varphi(t, \cdot)$ that remain $\varepsilon$-close up to time $T$. Note that, by Proposition~3.1.2 of \cite{KH95}, the topological entropies computed with respect to different metrics on $X$ coincide; hence we have suppressed the metric in the notation $h_{\mathrm{top}}(\varphi)$. 

There are two variants of the notation $h_{\rm top}(\varphi)$. (i) When $\varphi$ is a continuous map on $X$ (without time-parameter), one defines $h_{\rm top}(\varphi)$ in a parallel way to (\ref{dfn-top-entropy}), by simply replacing $\varphi(t, x)$ with the iterates $\varphi^k(x)$ and $\frac{1}{T}$ with $\frac{1}{k}$. The resulting value is still denoted by $h_{\rm top}(\varphi)$. (ii) For any flow $\varphi: \bR \times X \to X$, by rescaling the time-parameter one shows that $h_{\rm top}(\varphi) = h_{\rm top}(\varphi^1)$, where $\varphi^1\coloneqq \varphi(1, \cdot)$ is the time-1 map of the flow $\varphi$. For a comprehensive treatment of the basic properties of topological entropy, we refer the reader to the book \cite{KH95}. Although topological entropy has been studied extensively, in this paper we focus on the following settings. 

Given a Liouville domain $(W, \lambda)$, recall the set of Hamiltonian functions $\cH(W,\lambda)$ defined in (\ref{dfn-H-lambda}). On the one hand, one can consider the topological entropy of the time-1 map of the Hamiltonian flow $\varphi_H^t$ on $(W, \lambda)$, namely $h_{\rm top}(\varphi_H^1)$; on the other hand, under the tube construction (\ref{Usher-con}), one can consider 
\[  h_{\rm top}\left(\varphi^1_{\hat{\lambda}|_{\partial W_H}}\right) \,\,\,\,\mbox{i.e., topological entropy of the Reeb flow on $\partial (W_H, \hat{\lambda})$}. \]
Corollary~\ref{cor-formula} gives an explicit relation between the Hamiltonian flow on $(W,d\lambda)$ and the Reeb flow on $\partial(W_H,\hat{\lambda})$. Combining this with Proposition~\ref{prop-reparameterization}, we obtain the following comparison theorem.

\begin{thm}\label{thm-entropy}
Given a Liouville domain $(W, \lambda)$, for any $H\in\cH(W,\lambda)$, we have
\[
\left(\min_{S^1\times W}\tau_{\lambda,H}\right) h_{\rm top}\left(\varphi^1_{\hat{\lambda}|_{\partial W_H}}\right)\leq h_{\rm top}(\varphi_H^1)\leq \left(\max_{S^1\times W}\tau_{\lambda,H}\right) h_{\rm top}\left(\varphi^1_{\hat{\lambda}|_{\partial W_H}}\right)
\]
where $\tau_{\lambda, H}$ is the positive function defined in (\ref{H-con}). 
\end{thm}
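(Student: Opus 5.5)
The plan is to realize the Reeb flow on the open part of $\partial W_H$ as a time reparametrization of the suspension flow of $\varphi_H$, and then to compare entropies under reparametrization. We write $\alpha_H=\hat\lambda|_{\partial W_H}$. On the non-binding part of $\partial W_H$ we use the coordinates $(t,w)\mapsto (w,\sqrt{H(t,w)/\pi}\,e^{2\pi i t})$, in which $\alpha_H=\lambda+H\,dt$. A direct computation, which is the content of Corollary~\ref{cor-formula}, shows that the Reeb vector field is
\[ R_{\alpha_H}=\frac{1}{\tau_{\lambda,H}}\left(\partial_t+X_{H_t}\right). \]
Indeed, $\iota_{\partial_t+X_{H_t}}(d\lambda+dH\wedge dt)=0$ and $\alpha_H(\partial_t+X_{H_t})=H+\lambda(X_{H_t})=\tau_{\lambda,H}$. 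Hence on this region the Reeb flow is a reparametrization, with speed factor $1/\tau_{\lambda,H}$, of the suspension flow $\Phi^s(t,w)=(t+s,\varphi_H^{t+s}\circ(\varphi_H^t)^{-1}(w))$ on $S^1\times\mathrm{int}(W)$. The time-$1$ map of $\Phi$ restricted to the page $\{t=0\}$ is exactly $\varphi_H^1$. On the binding $\partial W\times\{0\}$ the Reeb flow is the Reeb flow of $(\partial W,\lambda|_{\partial W})$. Near $\partial W$ the Hamiltonian $H$ is autonomous and $X_H$ is a multiple of that Reeb field, so the two descriptions fit together continuously.

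Next we need the entropy of the suspension. We compactify $S^1\times W$ to the mapping torus of $\varphi_H^1$ on all of $W$. Then $h_{\rm top}(\Phi^1)=h_{\rm top}(\varphi_H^1)$, since the suspension of a map with constant return time $1$ has flow entropy equal to the entropy of the map (standard; see \cite{KH95}). The main obstacle is that $\partial W_H$ is not this mapping torus. Instead, $\partial W_H$ is obtained from it by collapsing each circle $S^1\times\{x\}$, for $x\in\partial W$, to a point of the binding. Collapsing along a closed invariant set where the dynamics is (up to reparametrization) a flow on $S^1\times\partial W$ should not change the entropy beyond the contribution of the invariant part. We would use the variational principle together with the fact that the collapse map is a semiconjugacy that is injective off $S^1\times\partial W$. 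Ergodic measures either live on the complement, where entropy is unchanged, or on $\partial W$, where both sides see the same dynamics up to the factor $S^1$, and that factor carries zero entropy. Equivalently, we would invoke the Bowen inequality $h(\text{extension})\le h(\text{factor})+\sup_y h(\text{fiber})$ with circle fibers, each of zero entropy.

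Finally we apply Proposition~\ref{prop-reparameterization}. For a continuous flow $\Phi$ and a reparametrized flow with generator $X/\tau$ where $0<a\le\tau\le b$, one has $a\,h_{\rm top}(\text{Reeb})\le h_{\rm top}(\Phi)\le b\,h_{\rm top}(\text{Reeb})$. This follows, for instance, from Abramov's formula $h_\mu(\text{reparam})=h_\mu(\Phi)/\int\tau\,d\mu$ combined with the variational principle, or directly from the separated-set definition by comparing time $T$ with time $\le T/a$. Taking $a=\min\tau_{\lambda,H}$ and $b=\max\tau_{\lambda,H}$ gives the stated chain of inequalities, with $h_{\rm top}(\varphi_H^1)=h_{\rm top}(\Phi^1)$ in the middle. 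The step I expect to need the most care is the identification of entropies across the binding collapse, together with checking that Proposition~\ref{prop-reparameterization} applies to the continuous but not smoothly matched flow on the compactification.
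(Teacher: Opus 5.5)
Your proposal is correct and follows the paper's route. The paper likewise lifts the Reeb flow through $(t,w)\mapsto (w,\sqrt{H(t,w)/\pi}\,e^{2\pi i t})$ to a flow $R^s$ on $S^1\times W$. That flow is smooth and fixed-point free, so your last worry is moot. The paper then treats $R^s$ as a time change of the suspension flow $\Psi^s(t,w)=(t+s,\varphi_H^s(t,w))$, applies Proposition~\ref{prop-reparameterization}, and identifies $h_{\rm top}(\Psi^1)=h_{\rm top}(\varphi_H^1)$; it does this via slice-invariance of $\Psi^1$ and Theorem~\ref{thm-rel-entropy} rather than by quoting the suspension formula.

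The one difference is the binding collapse. The paper handles it by density of $S^1\times{\rm Int}(W)$ (Proposition~\ref{prop-open-dense}) together with the conjugacy on that set. Your Bowen-inequality argument is the more careful version: that conjugacy is not uniformly bicontinuous near $\partial W$, so by itself it only yields $h_{\rm top}(\varphi^1_{\hat\lambda|_{\partial W_H}})\le h_{\rm top}(R^1)$. You should state explicitly why each fiber $S^1\times\{x\}$, $x\in\partial W$, has zero entropy: $H$ is autonomous near $\partial W$, so the time change $h(s,t,x)$ is independent of $t$, and $R^s$ maps such a fiber isometrically onto another.
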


An immediate consequence is the following. 

\begin{cor} \label{cor-pos-entropy} 
Let $(W,\lambda)$ be a Liouville domain	with $h_{\rm top}(\varphi^1_{\lambda|_{\partial W}})>0$, then for any  $H\in\cH(W,\lambda)$, we have 
\[ h_{\rm top}\left(\varphi^1_{\hat{\lambda}|_{\partial W_H}}\right)>0 \,\,\,\,\mbox{and then $h_{\rm top}(\varphi_H^1)>0$}.\] 
\end{cor}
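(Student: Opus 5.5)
The plan is to combine Theorem~\ref{thm-entropy} with a monotonicity argument for topological entropy under restriction to a closed invariant subset. The closed invariant subset will be the binding $\partial W\times\{0\}\subset\partial W_H$ of the open book from Remark~\ref{rmk-obd}. It suffices to prove $h_{\rm top}(\varphi^1_{\hat\lambda|_{\partial W_H}})>0$. Indeed, $\tau_{\lambda,H}$ is a continuous positive function on the compact set $S^1\times W$, so $\min\tau_{\lambda,H}>0$. The left inequality of Theorem~\ref{thm-entropy} then gives
\[ h_{\rm top}(\varphi_H^1)\geq \left(\min_{S^1\times W}\tau_{\lambda,H}\right) h_{\rm top}\left(\varphi^1_{\hat\lambda|_{\partial W_H}}\right)>0. \]

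The first step is to show that the Reeb flow of $\alpha_H=\hat\lambda|_{\partial W_H}$ preserves the binding $B:=\partial W\times\{0\}$, and to identify the restricted flow.
\begin{itemize}
\item Along $B$ we have $r=0$, so $\alpha_H|_{B}=\lambda|_{\partial W}$. Hence $B$ is a contact submanifold.
\item Near $\partial W$, $H$ is autonomous and vanishes on $\partial W$, so $\partial W$ is a regular level set of $H$ near which $H>0$ inside. Then $X_H|_{\partial W}$ is tangent to $\partial W$ and lies in the kernel of $d\lambda|_{\partial W}$. So $X_H|_{\partial W}=c\,R_{\lambda|_{\partial W}}$ for a positive function $c$, namely $c=\lambda(X_H)|_{\partial W}=\tau_{\lambda,H}|_{\partial W}$, using $H|_{\partial W}=0$.
\item Using the explicit description of the Reeb flow in Corollary~\ref{cor-formula}, which projects to a reparametrized Hamiltonian flow, I expect the Reeb field $R_{\alpha_H}$ to be tangent to $B$ along $B$. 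Alternatively, one can compute directly in the local model where $W_H$ is $W\times$ (round disk of area $H$) near $\partial W$.
\item Tangency forces $R_{\alpha_H}|_B$ to satisfy $\lambda|_{\partial W}(R)=1$ and $\iota_R d\lambda|_{\partial W}=0$. Hence $R_{\alpha_H}|_B$ is exactly the Reeb field of $(\partial W,\lambda|_{\partial W})$.
\end{itemize}

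The second step is monotonicity. For a flow on a compact metric space, the restriction to a closed invariant subset has entropy no larger than the whole flow; this is immediate from the definition (\ref{dfn-top-entropy}), since separated sets in $B$ are separated sets in $\partial W_H$ for the restricted metric. Therefore
\[ h_{\rm top}\left(\varphi^1_{\hat\lambda|_{\partial W_H}}\right)\geq h_{\rm top}\left(\varphi^1_{\lambda|_{\partial W}}\right)>0. \]

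The main obstacle is the first step: checking the tangency of $R_{\alpha_H}$ to the binding, and that no rescaling appears, given that $\partial W_H$ is only described implicitly near $B$. If the restricted flow turns out to be only a reparametrization of the Reeb flow of $\lambda|_{\partial W}$ by a positive continuous function, I would fall back on Proposition~\ref{prop-reparameterization}. It bounds the entropies of the two flows against each other by the positive min and max of the time-change, so positivity of entropy is preserved and the conclusion is unchanged.
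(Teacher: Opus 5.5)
Your proposal is correct and follows essentially the same route as the paper. Both arguments use the invariance of the binding $\partial W\times\{0\}$, with restricted Reeb flow equal to that of $\lambda|_{\partial W}$, then monotonicity of entropy under restriction to a closed invariant subset, and finally the left inequality of Theorem~\ref{thm-entropy} with $\min\tau_{\lambda,H}>0$. The step you flag as the main obstacle is already settled in Section~\ref{sec-Reeb-Ham}: in Proposition~\ref{prop-construction}, the terms $\frac{\partial_tH}{2\pi r}\partial_r$ and $\partial_t$ vanish at $r=0$ because $H$ is autonomous near $\partial W$, so $R_{\alpha_H}|_{\partial W\times\{0\}}=X_H/\tau_{\lambda,H}=R_{\lambda|_{\partial W}}$ exactly (Remark~\ref{rmk-bdy}), and no fallback to Proposition~\ref{prop-reparameterization} is needed.
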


\begin{proof} By the discussion in Section \ref{sec-Reeb-Ham}, we have 
\[ \varphi^1_{\hat{\lambda}|_{\partial W_H}}: \partial W \times \{0\} \to \partial W \times \{0\}, \]
where $\partial W \times \{0\}$ sits inside a distinguished component $W \times \{0\}$ of $\partial(W_H, \hat{\lambda})$. In other words, $\partial W \times \{0\}$ is an invariant subset of $\varphi^1_{\hat{\lambda}|_{\partial W_H}}$. Moreover, since $\hat{\lambda}|_{\partial W \times \{0\}} = \lambda|_{\partial W}$, we know that 
\[ \varphi^1_{\hat{\lambda}|_{\partial W_H}}\big|_{\partial W \times \{0\}}  = \varphi^1_{{\lambda}|_{\partial W}}. \]
Hence, by our hypothesis, 
\[  h_{\rm top}\left(\varphi^1_{\hat{\lambda}|_{\partial W_H}} \right) \geq h_{\rm top}\left(\varphi^1_{\hat{\lambda}|_{\partial W_H}}\big|_{\partial W \times \{0\}} \right) =  h_{\rm top}(\varphi^1_{\lambda|_{\partial W}}) >0. \]
Then $h_{\rm top}(\varphi_H^1)>0$ by Theorem~\ref{thm-entropy}, which completes the proof.
\end{proof}

\begin{remark} When $(W, \lambda)$ is of dimension 4 (so that $\partial(W, \lambda)$ is of dimension 3), the positive-entropy hypothesis can be ensured in a generic sense (see Theorem~1.5 in \cite{CDHR24}). 
\end{remark}
\begin{remark}
When $\dim W=2$, there is a classical way to obtain stability of the Hamiltonian entropy. Indeed, if $H_j\to H$ in $C^\infty(S^1\times W)$ with $\varphi^1_{H_j}\to \varphi^1_H$ in the $C^\infty$ topology, the map $f\mapsto h_{\mathrm{top}}(f)$ is continuous on the space of $C^\infty$ diffeomorphisms. More precisely, the upper semicontinuity in the $C^\infty$ case is obtained by Yomdin's theory via uniform control of local entropy, see Section~4.3.1 in \cite{Buz06}. On the other hand, Katok's horseshoe theorem for $C^{1+\alpha}$ surface diffeomorphisms gives lower semicontinuity: positive entropy can be approximated by uniformly hyperbolic horseshoes, and such horseshoes persist under $C^1$ perturbations \cite{Kat80}. Hence, for two-dimensional $W$, $H\mapsto h_{\mathrm{top}}(\varphi^1_H)$ is continuous with respect to the $C^\infty$ topology on Hamiltonians.
\end{remark}
By \cite{ADMP23}, for any closed contact $3$-manifold $(Y,\xi)$, there exists a $C^\infty$-generic open subset $\cV_0(Y,\xi)$ of contact form such that the topological entropy  is lower semi-continuous at every point of this subset  with respect to the $C^0$-distance. We can use this result to obtain the stability properties of topological entropy with respect to the Thompson metric. 
\begin{cor}
Let $(W,\lambda)$ be a two-dimensional Liouville domain and let $H_0\in\mathcal H^+(W,\lambda)$ satisfy $\tau_{\lambda,H_0}\equiv c_0>0$. If the associated contact form $\alpha_{H_0}=\hat\lambda|_{\partial W_{H_0}}\in\cV_0(\partial W_{H_0},\ker\alpha_{H_0})$ and a sequence of Hamiltonians $\{H_j\in\mathcal H^+(W,\lambda)\}_{j\in\bN}$ satisfies $d_{\rm T}(H_j,H_0)\to 0$ and $\|\tau_{\lambda,H_j}-c_0\|_{C^0}\ra 0$, then $h_{\rm top}(\varphi_{H_0}^1)\le\liminf_{j\to\infty}h_{\rm top}(\varphi_{H_j}^1)$.
\end{cor}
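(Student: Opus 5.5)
The plan is to transport the question to the three-dimensional contact boundary, where the lower semicontinuity of \cite{ADMP23} applies, and then convert back to Hamiltonian entropy using Theorem~\ref{thm-entropy}.

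\textbf{Step 1: contact forms converge in $C^0$.} By Proposition~\ref{prop-contact-bdy}, each $\alpha_{H_j}$ pulls back, via the canonical diffeomorphism $\varphi_{H_0,H_j}$, to a form on $Y:=\partial W_{H_0}$ defining the same contact structure $\xi_0=\ker\alpha_{H_0}$. Since $d_{\rm T}(H_j,H_0)\to 0$, Theorem~\ref{thm-SBM-CBM-ln} gives $d_{\rm CBM}(\alpha_{H_0},\alpha_{H_j})\to 0$. By Definition~\ref{dfn-CBM-2}, there are $\phi_j\in{\rm Cont}_0(Y,\xi_0)$ with
\[
\phi_j^*\varphi_{H_0,H_j}^*\alpha_{H_j}=e^{f_j}\alpha_{H_0}, \qquad \|f_j\|_{C^0}\to 0.
\]
Set $\beta_j:=e^{f_j}\alpha_{H_0}$. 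Then $\beta_j\to\alpha_{H_0}$ in the $C^0$-distance used in \cite{ADMP23}, which is the sup-norm of the conformal factor.

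\textbf{Step 2: apply \cite{ADMP23}.} The Reeb flow of $\beta_j$ is smoothly conjugate to the Reeb flow of $\alpha_{H_j}$ on $\partial W_{H_j}$, via $\varphi_{H_0,H_j}\circ\phi_j$. Topological entropy is a conjugacy invariant, so
\[
h_{\rm top}\bigl(\varphi^1_{\beta_j}\bigr)=h_{\rm top}\bigl(\varphi^1_{\alpha_{H_j}}\bigr).
\]
Since $\alpha_{H_0}\in\cV_0(Y,\xi_0)$, lower semicontinuity at $\alpha_{H_0}$ yields
\[
h_{\rm top}\bigl(\varphi^1_{\alpha_{H_0}}\bigr)\le\liminf_{j\to\infty}h_{\rm top}\bigl(\varphi^1_{\alpha_{H_j}}\bigr).
\]

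\textbf{Step 3: return to Hamiltonian entropy.} Theorem~\ref{thm-entropy} with $\tau_{\lambda,H_0}\equiv c_0$ gives equality:
\[
h_{\rm top}(\varphi^1_{H_0})=c_0\,h_{\rm top}\bigl(\varphi^1_{\alpha_{H_0}}\bigr).
\]
For $H_j$, the left inequality of Theorem~\ref{thm-entropy} gives
\[
\Bigl(\min\tau_{\lambda,H_j}\Bigr)\,h_{\rm top}\bigl(\varphi^1_{\alpha_{H_j}}\bigr)\le h_{\rm top}(\varphi^1_{H_j}).
\]
Since $\|\tau_{\lambda,H_j}-c_0\|_{C^0}\to 0$, we have $\min\tau_{\lambda,H_j}\to c_0>0$. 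Combining these,
\[
h_{\rm top}(\varphi^1_{H_0})=c_0\,h_{\rm top}\bigl(\varphi^1_{\alpha_{H_0}}\bigr)\le \liminf_j \Bigl(\min\tau_{\lambda,H_j}\Bigr)h_{\rm top}\bigl(\varphi^1_{\alpha_{H_j}}\bigr)\le\liminf_j h_{\rm top}(\varphi^1_{H_j}).
\]
Multiplying a liminf by a sequence converging to $c_0>0$ is harmless here: entropies are nonnegative, and if the liminf is $+\infty$ the inequality is trivial. This proves the claim.

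\textbf{Main obstacle.} The delicate point is Step~1–2: matching the notion of $C^0$-closeness in \cite{ADMP23} with the $d_{\rm CBM}$ conclusion. \cite{ADMP23} requires forms on a fixed $(Y,\xi)$ close to $\alpha_{H_0}$ in $C^0$, which $\beta_j$ provides only after conjugating by $\phi_j$ and $\varphi_{H_0,H_j}$. One must check that these conjugations are genuine diffeomorphisms intertwining the Reeb flows, which is immediate since $\beta_j$ is literally the pullback of $\alpha_{H_j}$. One must also check that the infimum in $d_{\rm CBM}$ can be realized up to $\epsilon$; choosing $\phi_j$ with $|f_j|\le d_{\rm CBM}+1/j$ suffices.
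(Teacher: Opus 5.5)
Your proposal is correct and follows the paper's proof essentially step for step. Theorem~\ref{thm-SBM-CBM-ln} converts $d_{\rm T}(H_j,H_0)\to 0$ into $d_{\rm CBM}\to 0$, which after pulling back by $\varphi_{H_0,H_j}$ and a contactomorphism yields forms $e^{f_j}\alpha_{H_0}$ with $\|f_j\|_{C^0}\to 0$ whose Reeb flows are conjugate to those of $\alpha_{H_j}$, so \cite{ADMP23} applies, and the return via Theorem~\ref{thm-entropy} (equality for $H_0$, lower bound with $\min\tau_{\lambda,H_j}\to c_0$) is exactly the paper's final chain of inequalities.
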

\begin{proof}
Set $Y_0\coloneqq\partial W_{H_0}, \alpha_0\coloneqq\alpha_{H_0}, \xi_0\coloneqq\ker\alpha_0$. Since $\dim W=2$, the manifold $Y_0$ is a closed three-dimensional contact manifold. By Theorem~1 of \cite{ADMP23}, the Reeb entropy functional $\alpha\mapsto h_{\rm top}(\varphi_\alpha^1)$ is lower semicontinuous at $\alpha_0$ with respect to their $d_{C^0}$-distance on $\mathcal R(Y_0,\xi_0)$, the space of contact forms on $(Y_0,\xi_0)$.

We now explain why this applies to the forms $\alpha_{H_j}$. Since we have $d_{\rm CBM}(\alpha_{H_j},\alpha_{H_0})\le d_{\rm T}(H_j,H_0)$, $d_{\rm CBM}(\alpha_{H_j},\alpha_{H_0})\to 0$.
Equivalently, after identifying $\partial W_{H_j}$ with $Y_0$ by the canonical contact identifications coming from the tube construction, and after composing with contactomorphisms isotopic to the identity, we may write $\widetilde\alpha_j=e^{f_j}\alpha_0$ on $(Y_0,\xi_0)$ with $\|f_j\|_{C^0}\to 0$. Thus $d_{C^0}(\widetilde\alpha_j,\alpha_0)\to 0$ in the sense of~\cite{ADMP23}. Since $\widetilde\alpha_j$ is obtained from $\alpha_{H_j}$ by pullback under a diffeomorphism/contactomorphism, their Reeb flows are conjugate. Therefore $h_{\rm top}(\varphi_{\widetilde\alpha_j}^1)=h_{\rm top}(\varphi_{\alpha_{H_j}}^1).$
Applying the lower semicontinuity theorem of~\cite{ADMP23} gives $h_{\rm top}(\varphi_{\alpha_{H_0}}^1)\le\liminf_{j\to\infty}h_{\rm top}(\varphi_{\alpha_{H_j}}^1)$.

It remains to transfer this inequality from the associated Reeb flows to the Hamiltonian time-one maps.
Since $\tau_{\lambda,H_0}\equiv c_0$, Theorem~\ref{thm-entropy} gives the equality $h_{\rm top}(\varphi_{H_0}^1)=c_0\,h_{\rm top}(\varphi_{\alpha_{H_0}}^1)$.
For the sequence $H_j$, the lower bound gives $h_{\rm top}(\varphi_{H_j}^1)\ge\left(\min_{S^1\times W}\tau_{\lambda,H_j}\right)h_{\rm top}(\varphi_{\alpha_{H_j}}^1)$.
Because $\|\tau_{\lambda,H_j}-c_0\|_{C^0}\to 0$, we have $\min_{S^1\times W}\tau_{\lambda,H_j}\to c_0$. Combining the previous inequalities, we obtain
\[
\begin{aligned}
h_{\rm top}(\varphi_{H_0}^1)
&=c_0\,h_{\rm top}(\varphi_{\alpha_{H_0}}^1) \le c_0\liminf_{j\to\infty}h_{\rm top}(\varphi_{\alpha_{H_j}}^1) \\
&=\liminf_{j\to\infty}\left(\min_{S^1\times W}\tau_{\lambda,H_j}\right)h_{\rm top}(\varphi_{\alpha_{H_j}}^1) \le\liminf_{j\to\infty}h_{\rm top}(\varphi_{H_j}^1).
\end{aligned}
\]
This proves the desired lower semicontinuity.
\end{proof}
\begin{remark}
If $W$ is a $2$-dimensional Liouville domain and the Reeb orbits of $\alpha_H$ on the binding $B=\partial W\times\{0\}$ are non-degenerate, then pages of $\partial W_H$ give a global surface of section for $R_{\alpha_H}$, whose binding consists of finitely many non-degenerate Reeb orbits.  Hence $\alpha_H\in\cV_0(\partial W_H,\ker\alpha_H)$.
\end{remark}

\begin{ex} \label{ex-ite-tube} Given a Liouville domain $(W, \lambda)$ and an autonomous $H \in \mathcal H(W, \lambda)$, set 
\[ \left((W_H)_0, \lambda_0\right) = (W, \lambda) \,\,\,\,\mbox{and}\,\,\,\, \left((W_H)_1, \lambda_1\right) = (W_H, \hat{\lambda}). \]
Then for $n\ge 2$, given Liouville domain $\left((W_H)_{n-1}, \lambda_{n-1}\right)$, define
\[
(W_H)_n\coloneqq \left\{((w, z_1, ..., z_{n-1}),z_n)\in (W_H)_{n-1}\times\mathbb{C}\, \bigg| \, \pi|z_n|^2\le H(w)-\pi\sum_{i=1}^{n-1}|z_i|^2\right\}
\]
and $\lambda_n \coloneqq \lambda_{n-1}+\frac{1}{2}r_n^2d\theta_n$, where $z_n=r_ne^{\sqrt{-1}\theta_n}$. Unfolding the recursion, we obtain 
\[
(W_H)_n = \Bigl\{(w,z_1,\dots,z_n)\in W\times\mathbb{C}^n\;\Big|\;
\pi\sum_{i=1}^n|z_i|^2\le H(w)\Bigr\}
\]
and $\lambda_n = \lambda + \frac{1}{2}\sum_{i=1}^n r_i^2\,d\theta_i$. For simplicity, denote by $(\alpha_H)_n\coloneqq\lambda_n|_{\partial (W_H)_n}$ the induced contact 1-form on the contact boundary $\partial((W_H)_n, \lambda_n)$. 
\end{ex}

By examining the Reeb dynamics on $\partial((W_H)_n, \lambda_n)$, we obtain the following conclusion, which is partially due to the fact that each ``lifting'' from $(W_H)_i$ to $(W_H)_{i+1}$ only produces, dynamically, a rotation in the newly added coordinate, and such a rotation has zero topological entropy.

\begin{cor} \label{cor-pos-entropy-2} Given a Liouville domain $(W, \lambda)$ and an autonomous $H \in \mathcal H(W, \lambda)$, for any $n\in\bN$, we have
	\[
\left(\min_{W}\tau_{\lambda,H}\right) h_{\rm top}(\varphi^1_{(\alpha_H)_n})\leq h_{\rm top}(\varphi_H^1)\leq \left(\max_{W}\tau_{\lambda,H}\right) h_{\rm top}(\varphi^1_{(\alpha_H)_n}).
	\]
	Here, $\varphi^1_{(\alpha_H)_n}$ is the time-1 map of the Reeb flow on $\partial((W_H)_n, \lambda_n)$. Moreover, if $h_{\rm top}(\varphi^1_{\lambda|_{\partial W}})>0$, then $h_{\rm top}(\varphi^1_{(\alpha_H)_n})>0$ for every $n\in\bN$.
 \end{cor}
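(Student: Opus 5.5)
The plan is to run the proof of Theorem~\ref{thm-entropy} again, with the tube $W_H$ replaced by the iterated tube $(W_H)_n$. The new ingredient is a description of the Reeb flow of $(\alpha_H)_n$ on $\partial (W_H)_n$.

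First, compute the Reeb vector field. Away from the binding-type strata, the boundary is $\{\pi\sum|z_i|^2 = H(w)\}$, so it is a sphere bundle over ${\rm Int}(W)$. Using the defining function $F = \pi\sum_i |z_i|^2 - H(w)$ and $d\lambda_n = d\lambda + \sum r_i\,dr_i\wedge d\theta_i$, the Reeb field is proportional to the Hamiltonian field of $F$. That field is $-X_H$ on the $W$-factor plus the rotation $2\pi\partial_{\theta_i}$ in each $z_i$, up to sign conventions. Normalizing by $\lambda_n(X_F) = \lambda(X_H) + \pi\sum r_i^2 = \lambda(X_H)+H = \tau_{\lambda,H}(w)$ gives
\[
R_n = \frac{1}{\tau_{\lambda,H}(w)}\Bigl(X_H + 2\pi\sum_{i=1}^n \partial_{\theta_i}\Bigr).
\]
Here $H$ is autonomous, so $\tau$ depends only on $w$ and is invariant under the flow of $X_H$. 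On the part of the boundary lying over $\partial W$, we have $H=0$ and all $z_i=0$, so the flow there is the Reeb flow of $\lambda|_{\partial W}$, exactly as in the case $n=1$ of Section~\ref{sec-Reeb-Ham} and Corollary~\ref{cor-formula}.

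Second, compare entropies. Reparametrize time by $\tau_{\lambda,H}$. The resulting flow $\Phi^t$ is $(\varphi_H^t, \text{rotation by } 2\pi t \text{ in each } \theta_i)$ on the sphere-bundle part, together with the corresponding flow on the binding part. This is precisely the kind of flow handled in Theorem~\ref{thm-entropy} via Proposition~\ref{prop-reparameterization}, which bounds the entropy of a time change by the min and max of the speed factor, here $\tau_{\lambda,H}$. So it suffices to show $h_{\rm top}(\Phi^1) = h_{\rm top}(\varphi_H^1)$, and the argument for this is structural.

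The map $\Phi^1$ is a skew product over $\varphi_H^1$ on $W$: the projection $\partial(W_H)_n \to W$ is a continuous surjective semiconjugacy from $\Phi^1$ to $\varphi^1_H$, extended on $\partial W$ by the Reeb flow. The fiber maps are rotations, which are isometries of the $\mathbb{C}^n$-sphere fibers. The Bowen/Ledrappier--Walters inequality $h(\Phi)\le h(\varphi_H^1)+\sup_w h(\Phi,\pi^{-1}(w))$ applies, and the fiber entropy vanishes because the fiber maps are isometries. This gives $h_{\rm top}(\Phi^1)\le h_{\rm top}(\varphi_H^1)$. The reverse inequality $h_{\rm top}(\Phi^1)\ge h_{\rm top}(\varphi_H^1)$ holds because $\varphi_H^1$ is a factor of $\Phi^1$.

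One could also argue by induction on $n$, viewing $(W_H)_{n}$ as a tube over $(W_H)_{n-1}$. I would avoid this, because $H-\pi\sum|z_i|^2$ does not vanish near $\partial (W_H)_{n-1}$ in the required way. The direct fibration argument is cleaner.

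The main obstacle is the degenerate locus where some $z_i$ vanish or $w\in\partial W$, where the sphere fibers collapse. There I will check that the Bowen inequality still applies: it only needs a continuous factor map, and the fibers are compact sets on which $\Phi^1$ acts by isometries, uniformly in the fiber metric induced from $W\times\mathbb{C}^n$.

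For the last assertion, the binding $\partial W\times\{0\}$ is an invariant set on which $(\alpha_H)_n$ restricts to $\lambda|_{\partial W}$. Monotonicity of entropy under restriction to invariant subsets then gives $h_{\rm top}(\varphi^1_{(\alpha_H)_n})\ge h_{\rm top}(\varphi^1_{\lambda|_{\partial W}})>0$, as in the proof of Corollary~\ref{cor-pos-entropy}.
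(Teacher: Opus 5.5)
Your proposal is correct and follows the paper's proof. The Reeb flow of $(\alpha_H)_n$ is a time change of $\psi^t(w,z)=(\varphi_H^t(w),e^{2\pi i t}z)$ with speed bounds $\min_W\tau_{\lambda,H}$ and $\max_W\tau_{\lambda,H}$, so Proposition~\ref{prop-reparameterization} applies; you should record, as the paper does, that $\psi$ has no fixed points, since that proposition requires it. Then $h_{\rm top}(\psi^1)=h_{\rm top}(\varphi_H^1)$ follows from the factor map ${\rm pr}_W$ and Theorem~\ref{thm-rel-entropy}, with fiber entropy zero because $(\psi^1)^k(w,z)=((\varphi_H^1)^k(w),z)$. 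For the final assertion, restricting directly to the invariant binding $\partial W\times\{0\}$ is a valid shortcut of the paper's route through Corollary~\ref{cor-pos-entropy} and the lower bound.

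Two asides are inaccurate but unused. First, $\tau_{\lambda,H}$ is in general not invariant under $\varphi_H^t$ (only $H$ is), and Proposition~\ref{prop-reparameterization} does not need this. Second, the inductive route you dismiss actually works: $H-\pi\sum_{i<n}|z_i|^2$ vanishes on $\partial(W_H)_{n-1}$, and its $\tau$-function with respect to $\lambda_{n-1}$ equals $\tau_{\lambda,H}\circ{\rm pr}_W>0$, so it lies in $\mathcal H((W_H)_{n-1},\lambda_{n-1})$.
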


Finally, for any two-dimensional $(W, \lambda)$, by \cite{Kat80}, positive entropy in two-dimensional dynamics implies the existence of transverse homoclinic points, which cannot arise in the integrable setting (which is essentially equivalent to being autonomous). Hence, for any autonomous Hamiltonian dynamics in this setting, the topological entropy is zero. As a direct implication of Corollary~\ref{cor-pos-entropy-2}, we obtain the following result, which is opposite to Corollary~\ref{cor-pos-entropy}, as well as the second conclusion of Corollary~\ref{cor-pos-entropy-2}. 
\begin{cor} \label{cor-2-dim-ite-entropy}
	Let $H\in\cH(W,\lambda)$ be an autonomous Hamiltonian on any two-dimensional Liouville domain $(W, \lambda)$. Then $h_{\rm top}(\varphi^1_{(\alpha_H)_n})=0$ for every $n\in\bN$. \end{cor}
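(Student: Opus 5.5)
The plan is to combine the two-sided estimate of Corollary~\ref{cor-pos-entropy-2} with the vanishing of the Hamiltonian entropy for autonomous systems on surfaces. Since $H$ is autonomous, the lower bound of Corollary~\ref{cor-pos-entropy-2} gives, for every $n\in\bN$,
\[
\left(\min_{W}\tau_{\lambda,H}\right) h_{\rm top}\left(\varphi^1_{(\alpha_H)_n}\right)\leq h_{\rm top}(\varphi_H^1).
\]
By condition (\ref{H-con}) and compactness of $W$, $\min_W\tau_{\lambda,H}>0$. It therefore suffices to show that $h_{\rm top}(\varphi_H^1)=0$; nonnegativity of entropy then forces $h_{\rm top}(\varphi^1_{(\alpha_H)_n})=0$.

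To show $h_{\rm top}(\varphi_H^1)=0$ for an autonomous $H$ on a surface, I would argue by contradiction using Katok's theorem \cite{Kat80}. The map $\varphi_H^1$ is a $C^\infty$ area-preserving diffeomorphism of the compact surface $W$. If its topological entropy were positive, the variational principle would give an ergodic invariant measure of positive metric entropy. Katok's theorem would then produce a hyperbolic horseshoe, and hence a hyperbolic periodic point of some $\varphi_H^k$ with a transverse homoclinic point.

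This contradicts integrability. Since $H$ is autonomous, $H\circ\varphi_H^t=H$, so every orbit stays inside a level set of $H$. The stable and unstable manifolds of a hyperbolic periodic point $p$ are then contained in the level set $H^{-1}(H(p))$. Transversality of these two curves at a homoclinic point $q$ would force $dH(q)$ to vanish on two independent directions, so $dH(q)=0$. The homoclinic points of a horseshoe form an infinite set accumulating on $p$, so all of them would be critical points of $H$. One then shows this cannot happen.

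An alternative route, which I would use if the level-set argument becomes delicate, is more elementary. Away from the critical set of $H$, the flow is a rotation in action-angle coordinates on each invariant circle, which has zero entropy. The dynamics on the critical set and on separatrices have zero entropy as well; for instance, Bowen's estimate $h_{\rm top}(f)\le h_{\rm top}(\pi)+\sup_y h(f,\pi^{-1}(y))$, applied to the factor map $H$, gives this with $h_{\rm top}=0$ on the base (the identity) and fibers that are single orbits of a flow on curves.

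The main obstacle is making the step from ``autonomous'' to ``zero entropy'' rigorous when $H$ has degenerate critical points. With degenerate critical points, the level sets of $H$ need not be unions of embedded circles, so the action-angle description breaks down. I would handle this through the fiber-entropy estimate: every fiber $H^{-1}(c)$ is invariant under $\varphi^t_H$, and the restricted flow on each compact fiber has zero entropy, since the flow is one-dimensional along the fiber and separation grows at most polynomially. Bowen's inequality then gives $h_{\rm top}(\varphi_H^1)=0$, which completes the proof.
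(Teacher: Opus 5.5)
Your proof is correct in outline and follows the paper's own route. The lower bound of Corollary~\ref{cor-pos-entropy-2}, together with $\min_W\tau_{\lambda,H}>0$, reduces the claim to $h_{\rm top}(\varphi_H^1)=0$. The paper also gets that from Katok's horseshoe theorem \cite{Kat80}, using that transverse homoclinic points are incompatible with the first integral $H$. Your version of this step is more detailed than the paper's one-line sketch.

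The step you leave as ``one then shows this cannot happen'' is immediate and needs no accumulation argument. If $dH(q)=0$, then $q$ is a stationary point of the flow, so $\varphi_H^{kn}(q)=q\not\to p$, and $q$ cannot lie on $W^s(p)\setminus\{p\}$. So the Katok route does not care about degenerate critical points; the ``main obstacle'' you describe only affects your fallback. One hypothesis to watch, which the paper also passes over: Katok's theorem is stated for closed surfaces. Since $dH\neq 0$ along $\partial W$, you can extend $H$ autonomously to a closed surface in which $W=\{H\ge 0\}$ is invariant.

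Your fallback is not justified as written. On a critical level, $H^{-1}(c)$ is not a single orbit, and pairwise separation need not grow polynomially: two points on the unstable separatrix of a nondegenerate saddle separate exponentially while they are near the saddle. What is true is that the number of $(T,\varepsilon)$-separated points grows subexponentially, but checking this directly on degenerate level sets is awkward.

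A clean elementary replacement uses the variational principle. Let $\mu$ be an ergodic $\varphi_H^1$-invariant measure.
\begin{itemize}
\item By Poincar\'e recurrence, $\mu$-a.e.\ point is recurrent.
\item Near a regular point $x$, the level set $H^{-1}(H(x))$ is a single arc of the flow line through $x$. Hence a recurrent regular point lies on a closed orbit.
\item Regular closed orbits in a fixed level set are relatively open in it, so there are countably many. By ergodicity, $\mu$ is either a Dirac mass at a critical point or is carried by one closed orbit.
\item On a closed orbit, $\varphi_H^1$ is conjugate to a rotation.
\end{itemize}
Either way $h_\mu(\varphi_H^1)=0$, so $h_{\rm top}(\varphi_H^1)=0$ without Katok's theorem or Bowen's fiber inequality.
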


To this end, recall that a flow $\varphi(t, \cdot)$ on a metric space is called \emph{equicontinuous} if, for every $\varepsilon>0$, there exists $\delta>0$ such that $d(x,y)<\delta$ implies $d(\varphi(t,x),\varphi(t,y))<\varepsilon$ for all $t\in\mathbb{R}$. It is straightforward to verify from the definition that any equicontinuous flow has zero topological entropy (for completeness, a proof will be provided in Proposition~\ref{prop-equiv-entropy} in Section~\ref{ssec-K-equi}). Although $h_{\rm top}(\varphi^1_{(\alpha_H)_n})=0$ for any autonomous $H\in\cH(W,\lambda)$ on a two-dimensional Liouville domain and any $n\in\bN$, there exists an example in which the Reeb flow of $(\alpha_H)_n$ is not equicontinuous.

\begin{ex}\label{ex-non-equicontinuous}
Let $(W,\lambda)=(\overline{B}^{\,2}(\pi),\lambda_{\rm std}=\frac{1}{2}r^2d\varphi)$ and let $H(z)=e-e^{|z|^2}$, or equivalently $H(r)=e-e^{r^2}$. Then $H\in \cH(W,\lambda_{\rm std})$, and the Reeb flow of $(\alpha_H)_n$ is not equicontinuous for any $n\geq 1$. 
\end{ex} 

The verification of Example \ref{ex-non-equicontinuous} will be postponed to Section~\ref{ssec-K-equi}.

\subsection{$C^0$-Liouville domains}

Let $(W,\lambda)$ be a Liouville domain. Define the following $C^0$-strong class of Hamiltonian functions on $(W, \lambda)$:
\begin{align*}
	\cH^0( W,\lambda)\coloneqq\left\{ H\in C^0(S^1\times W,[0,\infty))\;\middle|\;H|_{S^1\times \partial W}=0,\;H>0 \text{ on } S^1\times {\rm Int}(W),\right.\\
\left.\text{and for every }\varepsilon>0,\ c_H(\varepsilon)\coloneqq\min_{(t,w)\in S^1\times W}(H(t,\sL_\lambda^{-\varepsilon}(w))-e^{-\varepsilon}H(t,w))>0\right\}.
\end{align*}
This is the natural time-dependent $C^0$-analogue of the positivity condition $\tau_{\lambda,H}>0$. In particular, every smooth $H\in \cH(W,\lambda)$ belongs to $\cH^0( W,\lambda)$ (see Proposition \ref{prop-smooth-into-strong}). For $H\in \cH^0(W,\lambda)$, similarly to the tube construction in (\ref{Usher-con}), set
\[
W_H\coloneqq\left\{(w,z)\in W\times \bC\;\middle|\;z=re^{2\pi i t},\ \pi |z|^2\le H(t,w)\right\}
\]
and let $U_H\coloneqq {\rm Int}(W_H)$. As before, $U_H$ carries the exact symplectic form $d\hat\lambda$, where $\hat\lambda\coloneqq \lambda+\pi r^2\,dt$. It is then natural to introduce a symplectic Banach--Mazur distance for (open) interior of Liouville domains.

\begin{dfn}\label{def-open-sbm}
Let $H,G\in \cH^0( W,\lambda)$. Define
\[
d_{\rm SBM}\left(W_H,W_G\right)\coloneqq\inf\left\{\ln C>0\,\left|\,
\begin{array}{l}
	\exists i_{H,G}\colon\frac{1}{C}U_H{\hookrightarrow}U_G,{\rm{s.t.}}\,\frac{1}{C^2}U_G\subset i_{H,G}\left(\frac{1}{C}U_H\right)\subset U_G\\
	\exists i_{G,H}\colon \frac{1}{C}U_G{\hookrightarrow}U_H, {\rm{s.t.}}\,\frac{1}{C^2}U_H\subset i_{G,H}\left(\frac{1}{C}U_G\right)\subset U_H
\end{array}\right\}\right.. \vspace{2mm}
\]
Here, for $C\ge 1$, we write $\frac{1}{C}U_H\coloneqq \sL_{\hat\lambda}^{-\ln C}(U_H).$
\end{dfn}
For a Liouville domain $(W,\lambda_W)$, one can associate a barcode $\cB(W,\lambda_W)$ via $S^1$-equivariant symplectic homology. Usher proved that, for any two Liouville domains $(W,\lambda_W)$ and $(V,\lambda_V)$, $d_{\rm bot}(\cB(W,\lambda_W),\cB(V,\lambda_V))\le d_{\rm SBM}((W,\lambda_W),(V,\lambda_V)).$
We next extend this estimate to the $C^0$-setting.

\begin{thm}\label{thm-barcode-extension-c0}
Let $(W,\lambda)$ be a Liouville domain. Then the assignment 
\[
\cH(W,\lambda)\to \{\text{barcodes}\}\quad \mbox{by} \quad H\mapsto \cB(H)\coloneqq \cB(W_H,\hat\lambda)
\]
extends uniquely to a map $\cB:\cH^0(W,\lambda)\to \{\text{barcodes}\}$ such that $d_{\rm bot}(\cB(H),\cB(G))\le d_{\rm SBM}(W_H,W_G)$ for all $H,G\in \cH^0(W,\lambda).$ In particular, the extended barcode map is continuous with respect to $d_{\rm SBM}$, and hence $C^0$-continuous.
\end{thm}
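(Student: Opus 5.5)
The extension will be by approximation and completeness. The space of barcodes (with finitely many bars below each level, or more generally the q-tame barcodes) is complete under $d_{\rm bot}$. So it suffices to show two things: smooth Hamiltonians in $\cH(W,\lambda)$ approximate any $H\in\cH^0(W,\lambda)$ in the distance $d_{\rm SBM}(W_\cdot,W_\cdot)$ of Definition~\ref{def-open-sbm}, and on smooth inputs this open-interior distance still controls $d_{\rm bot}$. Concretely, for $H\in\cH^0(W,\lambda)$ I will construct smooth $H_k\in\cH(W,\lambda)$ with $d_{\rm SBM}(W_{H_k},W_H)\to 0$. The triangle inequality for the open version of $d_{\rm SBM}$ (composing embeddings, exactly as in the closed case) then makes $\{H_k\}$ Cauchy. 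Usher's inequality gives $d_{\rm bot}(\cB(H_k),\cB(H_l))\le d_{\rm SBM}(W_{H_k},W_{H_l})$. For smooth inputs the open and closed distances agree: one direction is by restricting embeddings, and the other follows by shrinking $C$ slightly and using the fact that $\frac1{C}\overline{U}\subset U$ for $C>1$. Hence $\cB(H_k)$ is Cauchy, and we define $\cB(H)$ as its limit. This limit is independent of the approximating sequence because interleaving two approximating sequences still gives a Cauchy sequence.

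Next I check the stability inequality. For $H,G\in\cH^0$ with approximants $H_k\to H$ and $G_k\to G$, the triangle inequality gives
\[
d_{\rm bot}(\cB(H),\cB(G))=\lim_k d_{\rm bot}(\cB(H_k),\cB(G_k))\le \limsup_k\big(d_{\rm SBM}(W_{H_k},W_H)+d_{\rm SBM}(W_H,W_G)+d_{\rm SBM}(W_G,W_{G_k})\big),
\]
and the right-hand side equals $d_{\rm SBM}(W_H,W_G)$. Uniqueness follows because any extension satisfying the inequality must satisfy $d_{\rm bot}(\cB'(H),\cB(H_k))\le d_{\rm SBM}(W_H,W_{H_k})\to0$. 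For $C^0$-continuity, I will show that $\|H-G\|_{C^0}$ small, with $H$ fixed, forces $d_{\rm SBM}$ small. The tool is the defining quantity $c_H(\varepsilon)>0$: if $\|G-H\|_{C^0}<c_H(\varepsilon)$, then $G(t,\sL^{-\varepsilon}w)\ge H(t,\sL^{-\varepsilon}w)-c_H(\varepsilon)\ge e^{-\varepsilon}H(t,w)$, and symmetrically. This gives sandwiched inclusions of rescaled tubes, described in the next paragraph.

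The key geometric step, which I expect to be the main obstacle, is to identify the Liouville rescaling $\frac1C U_H=\sL_{\hat\lambda}^{-\ln C}(U_H)$ in fiber terms. The vector field $Y_{\hat\lambda}=Y_\lambda+\tfrac12 r\partial_r$ has flow $(w,re^{2\pi it})\mapsto(\sL_\lambda^{s}w,e^{s/2}re^{2\pi it})$. Therefore
\[
\sL^{-\varepsilon}_{\hat\lambda}(W_H)=\{(w,z):\pi|z|^2\le e^{-\varepsilon}H(t,\sL_\lambda^{\varepsilon}w),\ w\in\sL^{-\varepsilon}_\lambda W\}.
\]
The condition $c_H(\varepsilon)>0$ says exactly that $\sL_{\hat\lambda}^{-\varepsilon}(U_H)\Subset U_H$, with a uniform gap. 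With this description, the inequality $e^{-\varepsilon}H\circ\sL^{\varepsilon}\le G$ (and its symmetric version) yields the inclusions $\frac1{C}U_H\subset U_G$ and $\frac1{C^2}U_G\subset\frac1C U_H$ via the identity embeddings, with $C=e^{\varepsilon}$ and possibly up to a doubled $\varepsilon$. The technical issue is handling $w$ near $\partial W$, where the flow leaves $W$; there $H=0$ and $G$ is small, and I expect $c_H(\varepsilon)>0$ (with $H$ extended by $0$ outside $W$) to cover this region. Finally, to obtain the smooth approximants $H_k$, I will mollify $H$ in $(t,w)$ and then correct near $\partial W$ so that $H_k$ is autonomous there, vanishes on $\partial W$, and satisfies $\tau_{\lambda,H_k}>0$. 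A convenient choice is $H_k=(1-\delta_k)\,\mathrm{mollify}(H)+\delta_k K$, where $K\in\cH(W,\lambda)$ is fixed and autonomous. The strict contraction margin $c_H(\varepsilon)$ survives small $C^0$ perturbation, which gives the infinitesimal condition $\lambda(X_{H_k})+H_k>0$ up to the boundary correction. The $C^0$ estimate above then gives $d_{\rm SBM}(W_{H_k},W_H)\to0$.
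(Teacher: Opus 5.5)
Your architecture is essentially the paper's. You approximate $H\in\cH^0(W,\lambda)$ by smooth $H_k\in\cH(W,\lambda)$, and use $c_H(\varepsilon)>0$ to get the identity inclusions $\sL_{\hat\lambda}^{-\varepsilon}(U_H)\subset U_{H_k}$ and $\sL_{\hat\lambda}^{-\varepsilon}(U_{H_k})\subset U_H$ once $\|H_k-H\|_{C^0}<c_H(\varepsilon)$. You then conclude that $\cB(H_k)$ is Cauchy, define the limit, and pass the inequality to the limit; your uniqueness argument via the inequality itself is fine.

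\textbf{Gap 1: the smooth approximants.} The genuine gap is the existence of such $H_k$, i.e.\ the $C^0$-density of $\cH(W,\lambda)$ in $\cH^0(W,\lambda)$. Your justification, that the margin $c_H(\varepsilon)$ ``survives small $C^0$ perturbation, which gives the infinitesimal condition $\lambda(X_{H_k})+H_k>0$'', is a non sequitur. The quantity $c_F(\varepsilon)>0$ is a finite-difference statement at a fixed scale. By contrast, $\tau_{\lambda,F}=F-d_wF(Y_\lambda)=\frac{d}{ds}\big|_{s=0}\,e^sF(t,\sL_\lambda^{-s}(w))$ is a derivative, and no $C^0$ information controls it. For example, a $C^0$-small high-frequency wiggle along $Y_\lambda$ keeps $c_F(\varepsilon)>0$ at any fixed scale but makes $\tau_{\lambda,F}<0$ somewhere.

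Mollification in charts does not commute with the Liouville flow. So before your $\delta_kK$ term can absorb the defect, you would have to prove that mollification creates only an $o(1)$ negative part of $\tau$; note that $\tau_{\lambda,H}\ge 0$ holds only distributionally. Nothing like this is in the proposal.

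The idea you are missing is to smooth \emph{along} the Liouville flow. Take any smooth $F$ with $\|F-H\|_{C^0}<\delta$ and set $A(t,w)=\frac1\rho\int_0^\rho e^sF(t,\sL_\lambda^{-s}(w))\,ds$. Then exactly $\tau_{\lambda,A}=\rho^{-1}\bigl(e^\rho F(t,\sL_\lambda^{-\rho}(w))-F(t,w)\bigr)\ge\rho^{-1}\bigl(e^\rho c_H(\rho)-(1+e^\rho)\delta\bigr)>0$. This converts the finite-scale margin into pointwise positivity.

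\textbf{Gap 2: the boundary correction.} Your formula $H_k=(1-\delta_k)\,\mathrm{mollify}(H)+\delta_kK$ does not lie in $\cH(W,\lambda)$. The first summand is positive and time-dependent along $\partial W$, and adding an autonomous $K$ does not change this. A naive cutoff would instead destroy $\tau>0$. So the ``correction near $\partial W$'' needs an actual construction.

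The paper works in a collar $[-\rho,0]\times\partial W$ with $Y_\lambda=\partial_r$, where $\tau_{\lambda,A}>0$ means $\partial_r(e^{-r}A)<0$. It replaces $u=e^{-r}A$ by $\widetilde u=\int_r^0p\,ds$, where $p>0$, $p=-\partial_ru$ near $r=-\rho$, $p$ is independent of $t$ near $r=0$, and $\int_{-\rho}^0p\,ds=u|_{r=-\rho}$. The resulting $C^0$ error is at most $e^\rho\|A|_{[-\rho,0]\times\partial W}\|_{C^0}$, which is small because $H|_{\partial W}=0$.

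\textbf{A smaller point.} Identifying the open and closed $d_{\rm SBM}$ for smooth inputs is not just ``shrinking $C$''. If you restrict an embedding $i$ of $\frac1CU_H$ to a smaller closed domain $\frac1{C'}W_H$, you lose the inclusion of $\frac1{C'^2}W_G$ in the image. To restore it you must conjugate by the Liouville flow, $\sL_{\hat\lambda}^{-s}\circ i\circ\sL_{\hat\lambda}^{s}$, and add a compactness argument. In the Cauchy step the paper avoids both this and the triangle inequality for the open $d_{\rm SBM}$ through the non-smooth $U_H$. It does so by packaging your identity inclusions into $\delta_\lambda$, whose triangle inequality is immediate.
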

\begin{remark}
Theorem~\ref{thm-barcode-extension-c0} may be viewed as an application of Usher's theory of barcodes for open Liouville domains \cite{Ush22}. The Liouville vector field of $\hat\lambda=\lambda+\frac12r^2d\theta$ is $\widehat Y=Y_\lambda+\frac12r\partial_r$, and therefore $\sL_{\hat\lambda}^{-s}(w,z)=(\sL_\lambda^{-s}(w),e^{-s/2}z)$. The condition $c_H(s)>0$ implies $\sL_{\hat\lambda}^{-s}(U_H)\subset U_H$ and, for $s>0$, gives a uniform collar separating $\sL_{\hat\lambda}^{-s}(U_H)$ from $\partial U_H$. Thus $(U_H,\hat\lambda)$ is an open Liouville domain in Usher's sense. The barcode $\cB(H)$ is therefore the barcode of $(U_H,\hat\lambda)$, and for smooth $H$ it agrees with the barcode of the compact Liouville domain $(W_H,\hat\lambda)$. The inequality $d_{\rm bot}(\cB(H),\cB(G))\le d_{\rm SBM}(W_H,W_G)$ is exactly Usher's stability estimate for the fine symplectic Banach--Mazur distance.
\end{remark}

\subsection{Capacity under stabilization} \label{ssec-capacity-stab} The next topic concerns capacity. Recall that a symplectic capacity is a functional $c$ from the space of symplectic manifolds to the non-negative real numbers, satisfying certain axioms. In particular, the (ball-)normalization axiom states that
\[ c(B^{2n}(a)) = c(Z^{2n}(a)) = a \]
where, throughout the paper, 
\[
B^{2n}(a)\coloneqq\left\{z\in\bC^n\;\middle|\;\pi\sum_{j=1}^n|z_j|^2<a\right\},\qquad\overline{B}^{\,2n}(a)\coloneqq\left\{z\in\bC^n\;\middle|\;\pi\sum_{j=1}^n|z_j|^2\le a\right\},
\]
and $Z^{2n}(a)\coloneqq B^2(a)\times \bC^{n-1}$.  This axiom excludes volume from being a symplectic capacity. Moreover, the existence of such a (ball-normalized) symplectic capacity is equivalent to Gromov's celebrated non-squeezing theorem \cite{Gro85}. This leads to the following (first) symplectic capacity:
\begin{equation} \label{dfn-wG}
 w_G(M, \omega) \coloneqq  \sup\{a\,| \, \mbox{$\exists$ a symplectic embedding $(B^{2n}(a), \omega_{\rm std}) \hookrightarrow (M, \omega)$}\}. 
 \end{equation}
commonly known as the Gromov width. 

In what follows, we investigate how $w_G$ behaves under the tube construction (\ref{Usher-con}), via input Hamiltonian functions in the set (\ref{dfn-H-lambda}). 

\begin{lemma}\label{lem:Gwidth-continuity-c0-time-dependent}
For any Liouville domain $(W,\lambda)$, the map
\[
\cH^0(W,\lambda)\to [0,\infty),\qquad
H\mapsto w_G(W_H,d\hat\lambda)
\]
is continuous with respect to the $C^0$-topology.
\end{lemma}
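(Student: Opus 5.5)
Given $H\in\cH^0(W,\lambda)$ and $\varepsilon>0$, I will show that every $G\in\cH^0(W,\lambda)$ that is $C^0$-close to $H$ satisfies
\[
\sL_{\hat\lambda}^{-s}(U_H)\subset U_G \quad\text{and}\quad \sL_{\hat\lambda}^{-s}(U_G)\subset U_H,
\]
with $s>0$ small in terms of $\varepsilon$. Since $\sL_{\hat\lambda}^{-s}$ is conformally symplectic with factor $e^{-s}$ on $d\hat\lambda$, it maps any symplectic ball of capacity $a$ in $U_H$ to a ball of capacity $e^{-s}a$ in $U_G$ (after rescaling the standard ball). This gives $w_G(U_G)\ge e^{-s}w_G(U_H)$ and symmetrically $w_G(U_H)\ge e^{-s}w_G(U_G)$, hence $|\ln w_G(W_G)-\ln w_G(W_H)|\le s$. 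I will also check that $w_G(W_H,d\hat\lambda)=w_G(U_H,d\hat\lambda)$. Balls are open, so an embedding into $W_H$ can be pushed into the interior by a slight Liouville shrinking at the cost of an arbitrarily small loss of capacity. Continuity then follows, since $w_G(U_H)\in(0,\infty)$: it is positive because $U_H$ is a nonempty open set, and finite by volume.

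The key step is the inclusion, and it uses the quantity $c_H(s)>0$ from the definition of $\cH^0$. By the formula $\sL_{\hat\lambda}^{-s}(w,z)=(\sL_\lambda^{-s}(w),e^{-s/2}z)$ recorded in the remark after Theorem~\ref{thm-barcode-extension-c0}, a point $(w,z)$ with $z=re^{2\pi i t}$ and $\pi r^2<H(t,w)$ is sent to a point $(w',z')$ with $w=\sL^{s}_\lambda(w')$ and $\pi|z'|^2=e^{-s}\pi r^2<e^{-s}H(t,\sL_\lambda^{s}(w'))$. I want this to be less than $G(t,w')$. The definition of $c_H$, applied at the point $\sL^s_\lambda(w')$, gives
\[
H(t,w')-e^{-s}H(t,\sL_\lambda^{s}w')\ge c_H(s).
\]
Here I need $\sL^s_\lambda w'\in W$; this holds because $w\in W$. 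Hence $e^{-s}H(t,\sL^s_\lambda w')\le H(t,w')-c_H(s)$, which is less than $G(t,w')$ whenever $\|G-H\|_{C^0}<c_H(s)$. So $\sL^{-s}_{\hat\lambda}(U_H)\subset U_G$.

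For the reverse inclusion I will use $c_G(s)\ge c_H(s)-(1+e^{-s})\|G-H\|_{C^0}$, which follows directly from the definition. This is positive once $\|G-H\|_{C^0}<c_H(s)/2$, and then the same argument with $H$ and $G$ swapped gives $\sL^{-s}_{\hat\lambda}(U_G)\subset U_H$.

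I expect the main obstacle to be the bookkeeping of the Liouville flow on $W$, specifically the domain of $\sL^{s}_\lambda$. The flow $\sL^{-s}_\lambda$ maps $W$ into itself, but its inverse is only defined on the image. The argument above avoids needing the forward flow by working with preimages under the backward flow. I will double-check that $c_H(s)$ is applied only at points of $W$, and I will handle the boundary fiber, where $H=0$, separately by noting that it does not meet the interior $U_H$.
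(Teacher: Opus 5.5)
Your proposal follows the paper's proof. Both prove the inclusions $\sL_{\hat\lambda}^{-s}(U_H)\subset U_G$ and $\sL_{\hat\lambda}^{-s}(U_G)\subset U_H$ for $\|G-H\|_{C^0}$ small relative to $c_H(s)$, then apply monotonicity and conformality of $w_G$. The paper takes $w_G(W_H):=w_G(U_H)$ by convention, so your extra check that the two agree is harmless.

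There is one slip in the reverse inclusion. Positivity of $c_G(s)$ is automatic, since $G\in\cH^0(W,\lambda)$. What the swapped argument actually needs is $\|G-H\|_{C^0}<c_G(s)$. Your bound $c_G(s)\ge c_H(s)-(1+e^{-s})\|G-H\|_{C^0}$ gives this only when $(2+e^{-s})\|G-H\|_{C^0}<c_H(s)$, for example when $\|G-H\|_{C^0}<c_H(s)/3$. It does not follow from $\|G-H\|_{C^0}<c_H(s)/2$.

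A cleaner fix is to argue directly, as the paper does. For $(w,z)\in U_G$ you have
\[
e^{-s}\pi|z|^2<e^{-s}H(t,w)+\|G-H\|_{C^0}\le H(t,\sL_\lambda^{-s}(w))-c_H(s)+\|G-H\|_{C^0}.
\]
So the single threshold $\|G-H\|_{C^0}<c_H(s)$ handles both inclusions.
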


\begin{remark} Although the target $(M, \omega)$ in \eqref{dfn-wG} is meant to be a symplectic manifold, the definition can be modified so that $(B^{2n}(a), \omega_{\rm std}) \hookrightarrow ({\rm Int}(M), \omega)$ when $M$ has boundary, or even when its boundary is not smooth. For convenience, we still denote $w_G({\rm Int}(M), \omega)$ by $w_G(M, \omega)$. \end{remark}
In particular, this implies that, for any fixed $(W, \lambda)$ and $H \in \mathcal H(W, \lambda)$, $w_G(W_{aH}, d\hat{\lambda})$ tends to zero as $a \to 0$. As another main result of this paper, the following theorem establishes certain asymptotic behaviors of $w_G$. 

\begin{thm} \label{thm-capacity} 
Let $(W,\lambda)$ be a Liouville domain, and let $H\in \cH(W,\lambda)$ be possibly time-dependent. Set
$M\coloneqq \max_{(t,w)\in S^1\times W} H(t,w),\overline H(w)\coloneqq \int_0^1 H(t,w)\,dt,\overline M\coloneqq \max_{w\in W}\overline H(w)$.
Then we have the following asymptotic behaviors:
\begin{itemize}
\item[(i)]  $\overline M\le \liminf_{a\to0^+}\frac{w_G(W_{aH},d\hat\lambda)}{a}\le \limsup_{a\to0^+}\frac{w_G(W_{aH},d\hat\lambda)}{a}\le M$. 
\item[(ii)] $\lim_{a\to\infty} w_G(W_{aH}, d\hat{\lambda}) = w_G(W\times \mathbb C, \omega_{\rm prod})$. 
\end{itemize}
In particular, if $H$ is autonomous, then $\overline H=H$, hence
$\lim_{a\to0^+}\frac{w_G(W_{aH},d\hat\lambda)}{a}=\max_{w\in W}H(w)$.
\end{thm}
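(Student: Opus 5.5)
The plan is to treat the upper and lower bounds in (i) by different mechanisms: a non-squeezing argument for the upper bound and a local blow-up for the lower bound. Part (ii) will follow from monotonicity together with an exhaustion argument.

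\emph{Upper bound in (i).} Since $0\le aH\le aM$, we have $W_{aH}\subset W\times \overline{B}^{\,2}(aM)$. I will invoke the non-squeezing theorem for products of a Liouville domain with a disk: $w_G(W\times B^2(c))\le c$ whenever $(W,\lambda)$ is exact. This is proved by compactifying the disk factor to $S^2$ of area $c+\epsilon$ and counting $J$-holomorphic spheres in the class $[pt\times S^2]$ inside $\widehat W\times S^2$. Exactness rules out bubbling, and convexity at infinity (or a maximum principle along the cylindrical end of the completion) keeps the curves in a compact region. The monotonicity formula then gives $w_G\le c+\epsilon$. Hence $w_G(W_{aH})\le aM$ for every $a$, which yields the $\limsup$ bound. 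This step relies on known machinery, and I expect it to be the main technical input. If the paper prefers, the same bound also follows from a capacity defined via symplectic homology, which is monotone under Liouville embeddings and equals $c$ on the product.

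\emph{Lower bound in (i).} First pick $w_0\in{\rm Int}(W)$ with $\overline H(w_0)=\overline M$, and a Darboux chart $\phi:B^{2n}(\epsilon)\to W$ centred at $w_0$. Next rescale by $w=\phi(\sqrt a\,x)$ and $z=\sqrt a\,\zeta$. Under this rescaling $d\hat\lambda$ becomes $a\,(\omega_{\rm std}^x+\omega_{\rm std}^\zeta)$, and the portion of $W_{aH}$ over the chart becomes
\[
\Omega_a=\{(x,\zeta)\,:\,|x|<\epsilon/\sqrt a,\ \zeta=\rho e^{2\pi i t},\ \pi\rho^2\le H(t,\phi(\sqrt a x))\}.
\]
Consequently $w_G(W_{aH})\ge a\, w_G(\Omega_a)$. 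As $a\to0$, the domains $\Omega_a$ converge locally uniformly to $\bR^{2n}\times D_{w_0}$, where $D_{w_0}=\{\rho e^{2\pi it}:\pi\rho^2< H(t,w_0)\}$ is a star-shaped planar domain of area $\overline H(w_0)$.

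The interior of $D_{w_0}$ is symplectomorphic to an open disk of that area, so for every $\delta>0$ there is an embedding of $B^{2n+2}(\overline M-\delta)$ into $\bR^{2n}\times{\rm Int}(D_{w_0})$. Its image is a compact set $K$, lying in $\{|x|\le R\}$ times a compact subset of ${\rm Int}(D_{w_0})$. By uniform continuity of $H$, for $a$ small we have $H(t,\phi(\sqrt a x))\ge H(t,w_0)-\eta$ on $|x|\le R$, so $K\subset\Omega_a$. Therefore $\liminf_{a\to0^+} w_G(W_{aH})/a\ge\overline M-\delta$ for every $\delta>0$. In the autonomous case $\overline M=M$, so the two bounds coincide and give the stated limit.

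\emph{Part (ii).} Since $H>0$ on $S^1\times{\rm Int}(W)$, the family $W_{aH}$ is increasing in $a$, and each $W_{aH}$ sits inside $W\times\bC$. Hence $w_G(W_{aH})$ is nondecreasing and bounded above by $w_G(W\times\bC)$. Conversely, take any ball embedding into ${\rm Int}(W)\times\bC$ and shrink it slightly so that its image $K$ is compact. The projection of $K$ to $W$ is a compact subset of ${\rm Int}(W)$, on which $H\ge h_0>0$, and $|z|$ is bounded on $K$. So $K\subset W_{aH}$ once $a h_0>\pi\max_K|z|^2$. Taking the supremum over such balls gives equality of the limit with $w_G(W\times\bC,\omega_{\rm prod})$.
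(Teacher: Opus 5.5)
Your proposal is correct and follows essentially the same route as the paper. The upper bound comes from $W_{aH}\subset W\times\overline{B}^{\,2}(aM)$ plus non-squeezing; the paper simply cites Lalonde--McDuff's general theorem instead of sketching the holomorphic-sphere argument. Part (ii) is the same monotone-exhaustion argument, and your rescaled limit domain $\bR^{2n}\times D_{w_0}$ is a repackaging of the paper's direct embedding $\phi(B^{2n}(\rho))\times\Omega_{a,\mu}\subset W_{aH}$, in which, for small $a$, the fiber disk of area roughly $a\overline H(w_0)$ is the bottleneck.
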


Symplectic manifolds of the form $W \times \bC$ have been studied before. When $W$ is a standard (four-dimensional) toric domain, $W \times \bC$ (or more generally $W \times \bC^{N}$) provides the so-called {\it stabilized structure}, and the stabilized embedding problem---the existence of symplectic embeddings between stabilized domains---is a rich subject (see \cite{CGH23}). Here, part~(ii) of Theorem~\ref{thm-capacity} addresses a different problem: the source is a higher-dimensional symplectic ball rather than a stabilized ball. When $(M, \omega)$ is a closed symplectic manifold---in particular, when $M = \mathbb T^2$---earlier results on estimating the Gromov width of $M \times \mathbb C$ appear in \cite{Gut08}. Interestingly, within the class of symplectic manifolds (with or without boundary), the value of $w_G$ may be either finite or infinite, depending on the input. We illustrate this via the following two propositions. 

Recall that  $c_Z(W, d \lambda)$ denotes the cylindrical capacity of  the Liouville domain $(W, d\lambda)$, measuring the smallest symplectic cylinder $Z^{2n}(a)$ into which $(W, d\lambda)$ can be symplectically embedded. 

\begin{prop} \label{prop-wG-finite} The following list collects several cases of Liouville domains $(W, \lambda)$ whose stabilizations have finite Gromov width.
\begin{itemize}
\item[(i)] If $(W, \lambda)$ satisfies $w_G(W, d\lambda) = c_Z(W, d\lambda)$, then $w_G(W \times \mathbb C, \omega_{\rm prod}) = w_G(W, d \lambda)$. 
\item[(ii)] If $(W, \lambda) = (S^2\setminus \bigcup_{j=1}^n{\rm Int}(D_j^2),\lambda)$, where $D_1^2,\cdots, D_n^2\subset S^2$ are pairwise disjoint smoothly embedded closed disks and $n\ge1$, then $w_G(W \times \bC, \omega_{\rm prod}) = \int_W \omega$. 
\item[(iii)] If $(W, \lambda) = (D_g^*N, \lambda_{\rm can})$ where $N$ is a connected closed surface which is not an oriented surface of genus at least $2$, then $w_G(W \times \bC, \omega_{\rm prod})< \infty$. 
\end{itemize}
\end{prop}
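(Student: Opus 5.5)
Parts (i) and (ii) should follow from the non-squeezing theorem together with trivial product embeddings. Part (iii) should follow from a Gromov-type bound $w_G(M\times\bC)<\infty$ for suitable closed $M$ into which $W$ embeds.

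\emph{Part (i).} For the lower bound, take any symplectic embedding $B^{2n}(a)\hookrightarrow (W,d\lambda)$. Taking its product with the identity on $B^2(a)$ and precomposing with the inclusion
\[
B^{2n+2}(a)\subset B^{2n}(a)\times B^2(a)
\]
gives $B^{2n+2}(a)\hookrightarrow W\times\bC$. Hence $w_G(W\times\bC,\omega_{\rm prod})\ge w_G(W,d\lambda)$. For the upper bound, fix $\varepsilon>0$ and take a symplectic embedding $W\hookrightarrow Z^{2n}(c_Z(W,d\lambda)+\varepsilon)$. Its product with ${\rm id}_{\bC}$ is an embedding
\[
W\times\bC\hookrightarrow Z^{2n}(c_Z+\varepsilon)\times\bC=Z^{2n+2}(c_Z+\varepsilon).
\]
Gromov's non-squeezing theorem then gives $w_G(W\times\bC)\le c_Z(W,d\lambda)+\varepsilon$. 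Letting $\varepsilon\to0$ and using the hypothesis $w_G=c_Z$ yields equality.

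\emph{Part (ii).} I will reduce this case to (i) by showing $w_G(W)=c_Z(W)=A$, where $A\coloneqq\int_W\omega$. A compact connected planar surface with boundary embeds smoothly in $\bR^2$. By a Moser/Dacorogna--Moser argument applied to the area forms, this upgrades to an area-preserving embedding into a disk of area $A+\varepsilon$, so $c_Z(W)\le A$. Conversely, cut $W$ along $n-1$ disjoint thin arcs joining the boundary components. The complement of a thin neighborhood of these arcs is a closed disk of area $A-\varepsilon$. Any area-$(A-\varepsilon)$ disk is symplectomorphic to $\overline B^2(A-\varepsilon)$, so $w_G(W)\ge A$. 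Since $w_G\le c_Z$ always, we get $w_G=c_Z=A$, and (i) applies.

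\emph{Part (iii).} The possible $N$ are $S^2$, $T^2$, $\bR P^2$ and the Klein bottle $K$. In each case I will find a closed symplectic $4$-manifold $(X,\omega_X)$ containing $N$ as a Lagrangian, with a class $\beta\in H_2(X)$ of area $\omega_X(\beta)$ and a nonvanishing genus-zero Gromov--Witten invariant with a point constraint. The choices are:
\begin{itemize}
\item $X=S^2\times S^2$, $\beta=[S^2\times{\rm pt}]$, for $N=S^2$ (antidiagonal), $N=T^2$ (Clifford torus) and $N=K$ (a known Lagrangian Klein bottle in $S^2\times S^2$);
\item $X=\bC P^2$, $\beta=[\text{line}]$, for $N=\bR P^2$.
\end{itemize}

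By Weinstein's neighborhood theorem, $D^*_\delta N$ embeds in $(X,\omega_X)$ for some small $\delta>0$. Because the fiberwise rescaling $p\mapsto p/\delta$ scales $\lambda_{\rm can}$ by $1/\delta$, the domain $D^*_gN=D^*_1N$ then embeds in $(X,\delta^{-1}\omega_X)$. Taking the product with $\bC$ gives
\[
W\times\bC\hookrightarrow \bigl(X\times\bC,\ \delta^{-1}\omega_X\oplus\omega_{\rm std}\bigr).
\]
I then run Gromov's argument. Take a ball $B^6(a)$ embedded in $X\times\bC$, an almost complex structure that is standard on the image of the ball, and split (product-type) near infinity in the $\bC$-direction. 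The nonvanishing invariant yields a $J$-holomorphic sphere in class $\beta\times{\rm pt}$ through the center of the ball. The monotonicity lemma then gives $a\le\delta^{-1}\omega_X(\beta)$. Hence $w_G(W\times\bC)\le\delta^{-1}\omega_X(\beta)<\infty$.

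I expect the main obstacle to be this last step, for two reasons. First, the target $X\times\bC$ is noncompact, so compactness of the moduli space has to be secured, for instance by the maximum principle in the $\bC$-factor for a split $J$ near infinity. Second, the Klein bottle needs a concrete Lagrangian embedding into a suitable closed $X$. If no convenient reference is available for $K\subset S^2\times S^2$, I would instead try to realize $D^*K$ directly inside a product of two finite-area surfaces, via a twisted annulus-bundle description of $T^*K$.
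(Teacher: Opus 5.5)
Parts (i) and (ii) are sound. Part (i) is the paper's argument. In (ii) you take a different and more elementary route than the paper. Since $Z^2(a)=B^2(a)$, an area-preserving embedding of the planar surface $W$ into a disk of area $A+\varepsilon$ gives $c_Z(W)\le A\le w_G(W)$, so (ii) reduces to (i) and needs only non-squeezing in $\bC^2$. The paper instead proves $w_G(S^2(a)\times\bC)=a$ via the toric width bound for $S^2(a)\times S^2(b)$ and embeds $W$ into $S^2(A+\varepsilon)$. That is heavier, but it also covers the closed sphere, where $c_Z=\infty$ and your reduction is unavailable.

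\textbf{The genuine gap is in (iii): your list of admissible $N$ is wrong.}
\begin{itemize}
\item The hypothesis excludes only \emph{orientable} surfaces of genus at least $2$. So every non-orientable closed surface $N_k=\#^k\mathbb{RP}^2$, $k\ge 1$, must be handled.
\item You list only the surfaces with $\chi(N)\ge 0$. All $N_k$ with $k\ge 3$ (e.g.\ $\#^3\mathbb{RP}^2$) are dropped.
\item For these your argument has no input. For each $k$ you need a closed symplectic $4$-manifold that carries a nonzero genus-zero point Gromov--Witten invariant and contains a Lagrangian copy of $N_k$.
\item Such embeddings do not come for free from $\bC P^2$ or $S^2\times S^2$. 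For instance, going through $\bC^2\subset\bC P^2$ only works for $k\equiv 2\pmod 4$, $k\ge 6$.
\end{itemize}
The paper supplies exactly this ingredient from \cite{DHL19}: every $k\mathbb{RP}^2$ embeds as a Lagrangian in some rational symplectic $4$-manifold, and rational surfaces are symplectically uniruled. With that input, your Weinstein-neighbourhood rescaling and the stabilized width bound finish the argument. The Klein bottle is covered by the same citation, so you no longer need a separate Lagrangian $K\subset S^2\times S^2$.

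On the analytic step you flagged, the noncompactness of $X\times\bC$ can be sidestepped as in the paper. For $c'<c$ the image of $\overline{B}^{\,6}(c')$ is compact, so it lies in $X\times B^2(b)\subset X\times S^2(b')$. This is a closed manifold, and the class $(\beta,0)$ still has a nonzero point invariant there. A closed-manifold width bound then gives $c'\le$ the area of $\beta$, independently of $b'$; the paper uses \cite{HLS21}, after applying the divisor axiom to get at least three marked points. Your split-$J$ maximum-principle approach would also work, but this is cheaper.
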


\begin{ex} By \cite{CGH23}, standard examples satisfying part~(i) of Proposition~\ref{prop-wG-finite} are the monotone toric domains in $(\bR^{2n}, \omega_{\rm std})$. \end{ex}

\begin{remark} 	(1) The inequalities $w_G(W,d\lambda)\leq w_G(W\times\bC,\omega_{\rm prod})$ and $c_Z(W,d\lambda)\geq c_Z(W\times\bC,\omega_{\rm prod})$ hold by definition. Hence $w_G(W,d\lambda)=c_Z(W,d\lambda)$ implies $w_G(W\times\bC,\omega_{\rm prod})=c_Z(W\times\bC,\omega_{\rm prod})$. (2) Conclusion~(ii) of Proposition~\ref{prop-wG-finite} also holds for $S^2$ (without boundary). (3) A symplectically uniruled symplectic $4$-manifold (see Section~\ref{ssec-capacity}) admits {\rm no} closed Lagrangian submanifold of genus $g\ge 2$; see Theorem~2.2 of \cite{Wel07} and Theorem~7.3 of \cite{Wel18}. Thus part~(iii) of Proposition~\ref{prop-wG-finite} is consistent with the expectation that oriented surfaces of genus at least $2$ should exhibit different behavior.
 \end{remark}

\begin{prop}\label{prop-lift-capacity}
Let $(X,\omega_X)$ be a symplectic manifold such that
$
w_G(X,\omega_X)=\infty.
$
Let $(\Sigma,\omega_\Sigma)$ be a symplectic surface of positive genus (with or without boundary). Then
$
w_G(\Sigma\times X,\omega_\Sigma\oplus\omega_X)=\infty.
$
\end{prop}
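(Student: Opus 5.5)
The plan is to reduce to a local model and build big balls by a lattice-quotient (``wrapping'') construction.

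\emph{Reduction to a model.} Since $\Sigma$ has positive genus, it contains an embedded one-holed torus. More precisely, there is a symplectic embedding of $(T^2\setminus \overline D,\omega_{T^2})$, where $T^2=\bR^2/\bZ^2$ carries a constant area form $c\,dx\wedge dy$ with $c>0$ small and $D$ is a small disc around a point $p$, into ${\rm Int}(\Sigma)$. This uses a neighbourhood of a pair of curves meeting once, together with Moser's argument for area forms; the parameter $c$ only rescales areas and is harmless.

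Since $w_G(X,\omega_X)=\infty$, for every $R>0$ there is a symplectic embedding $B^{2m}(R)\hookrightarrow X$, where $\dim X=2m$. By monotonicity of $w_G$ it therefore suffices to prove:
\[
\text{for every } a>0 \text{ there is } R>0 \text{ with } B^{2m+2}(a)\hookrightarrow (T^2\setminus \overline D)\times B^{2m}(R).
\]

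\emph{Lattice construction.} Work in $\bR^{2m+2}=\bR^2_{x_1,y_1}\times\bR^{2m}$ with $\omega_{\rm std}$. I will choose a real $2$-plane $P$ that is \emph{nearly isotropic}, meaning $\omega|_P$ is a tiny multiple of the Euclidean area form of $P$. Inside $P$ I take a lattice $\Lambda$ of $\omega$-covolume $c$. Because $P$ is nearly isotropic, $\Lambda$ has huge Euclidean covolume, and by choosing the lattice well-rounded (for instance a rescaled square lattice) every nonzero $\lambda\in\Lambda$ has $|\lambda|>2\sqrt{a/\pi}$, which is the diameter of $B^{2m+2}(a)$. Consequently $B\cap(B+\lambda)=\emptyset$ for all $0\neq\lambda\in\Lambda$, where $B=B^{2m+2}(a)$.

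By linear symplectic algebra there is $L\in {\rm Sp}(2m+2)$ with $L(\Lambda)=\bZ^2\subset\bR^2_{x_1,y_1}$, after normalising the symplectic form on the first factor to $c\,dx_1\wedge dy_1$. Concretely, pick a symplectic basis adapted to $P$ and to its $\omega$-orthogonal complement, and send a basis of $\Lambda$ to $e_{x_1},e_{y_1}$. Composing $L$ with the quotient $\bR^2\times\bR^{2m}\to T^2\times\bR^{2m}$ is then injective on $B$, because $L(\Lambda)$ is exactly the deck group and $B$ is disjoint from its $\Lambda$-translates. The image is compact, so it lies in $T^2\times B^{2m}(R)$ for $R$ large.

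This shows $w_G(T^2\times\bC^m)=\infty$, recovering the classical fact. It is also why an annulus alone would not work: an annulus times $\bC^m$ embeds in a cylinder, so its Gromov width is finite. The genus is genuinely used here.

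\emph{Main obstacle: avoiding the puncture.} The image of the ball projects onto $T^2$ and will in general meet $\{p\}\times B^{2m}(R)$, whereas we need to land in $(T^2\setminus\overline D)\times B^{2m}(R)$. My first attempt is to exploit the freedom in the construction. The projection to $T^2$ of the image, restricted to a $\bZ^2$-fundamental domain, has controlled shape. Translating by $t\in T^2$ and averaging over $t$ shows that the set of $(t,\text{point})$ pairs hitting $D\times\bR^{2m}$ has small measure when ${\rm area}(D)\to 0$. This alone does not give disjointness, so I would then use the following argument. Remove from $B(a)$ the codimension-two symplectic slice $Q=\Phi^{-1}(\{p\}\times\bR^{2m})$, which is a finite union of affine symplectic $2m$-planes meeting $B$. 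Then apply the known result (McDuff--Opshtein type ball-packing in complements of symplectic hypersurfaces, or an explicit ``wrapping around the divisor'' symplectomorphism supported near $Q$) that $B(a)\setminus Q$ contains balls of capacity arbitrarily close to $a$. A small tubular neighbourhood of $Q$ then corresponds to $D\times B^{2m}(R)$ for $D$ small.

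If this step resists, the fallback is to replace $T^2\setminus\overline D$ by the full torus. For closed $\Sigma$ this amounts to finding an embedding of $T^2\setminus\{\text{small disc}\}$ whose complement absorbs the image, via a Hamiltonian isotopy of $\Sigma\times X$ that pushes the compact image of the ball off $D\times X$ by flowing in the unbounded $X$-directions.
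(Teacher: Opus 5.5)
Your reduction to $(T^2\setminus\overline D)\times B^{2m}(R)$ and the lattice construction are sound. They prove $w_G(T^2\times\bC^m)=\infty$, and choosing $c$ smaller than the area of a one-holed torus in $\Sigma$ does make the size of $D$ irrelevant. So you have handled the case where $\Sigma$ is a closed torus.

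For every other $\Sigma$, however, the step you call the main obstacle is the whole content of the proposition, and it is not carried out. The claim that $B(a)\setminus Q$ contains balls of capacity close to $a$ is not a result you can quote:
\begin{itemize}
\item The McDuff--Opshtein results concern ball packings of closed symplectic four-manifolds relative to divisors. They say nothing about your situation.
\item Here $Q\cap B$ is a family of parallel affine symplectic planes $v+P^\omega$ in $\bR^{2m+2}$. Their number grows with $a$: averaged over the position of $p$ it is at least $a/c$, by linear non-squeezing.
\item These planes are far from complex, because $P$ is nearly isotropic. So no unitary change of coordinates turns them into coordinate hyperplanes, and the usual Traynor-type or fibrewise tricks that work for $\{z_1\in S\}$ do not apply.
\item More to the point, composing with $\Phi$, your claim immediately gives large balls in $(T^2\setminus\{p\})\times\bR^{2m}$. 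It is therefore at least as strong as what you are trying to prove, so the argument reduces the problem to itself.
\end{itemize}
The variant with a ``wrapping'' symplectomorphism supported near $Q$ cannot work at all. Suppose it is the identity outside a thin tubular neighbourhood of $Q$. Then it fixes the boundary of a small $2$-disc orthogonal to one of the planes, centred at a point of that plane deep inside the smaller ball. The image of the disc has the same boundary, which links the plane, so the image still meets $Q$.

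Your fallback does not help either. Hamiltonians depending only on the $X$-coordinates leave the $T^2$-coordinate unchanged, so they cannot push anything off $D\times X$. An isotopy that does so must move the $T^2$-coordinate in an $X$-dependent way, and producing one that clears $D\times X$ is again the original problem. You also cannot pass to the full torus: unless $\Sigma$ is itself a closed torus, it contains no embedded closed torus.

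The missing ingredient is exactly what the paper imports. By the main lemma of \cite{Gut08} together with Lemma~5.1 of \cite{PV15}, for a once-punctured torus $U\subset\Sigma$ there are constants $C_U,A_0>0$ such that $B^{2m+2}(A)\hookrightarrow U\times B^{2m}(C_UA^2)$ for all $A\ge A_0$. This is the puncture-avoiding version of your lattice embedding, and it is genuinely harder than the closed-torus case. With it, the proposition follows by composing with an embedding $B^{2m}(C_UA^2)\hookrightarrow X$, which exists since $w_G(X)=\infty$. Without it, or an actual construction replacing it, your proof only covers the case where $\Sigma$ is a closed torus.
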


As an immediate consequence, we obtain the following.
\begin{cor}\label{cor-torus}
Let $W=\Sigma_1\times\cdots \times \Sigma_n$, where $\Sigma_i$ is a two dimensional surface of positive genus (with or without boundary), then $w_G(W\times\bC, \omega_{\rm prod})=\infty$.	
\end{cor}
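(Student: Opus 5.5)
Corollary~\ref{cor-torus} should follow from Proposition~\ref{prop-lift-capacity} by induction on the number of surface factors. The first step is to regroup the product. Up to the obvious symplectomorphism permuting factors,
\[
(W\times\bC,\omega_{\rm prod}) \cong \left(\Sigma_1\times(\Sigma_2\times\cdots\times\Sigma_n\times\bC),\ \omega_{\Sigma_1}\oplus\omega_{\Sigma_2}\oplus\cdots\oplus\omega_{\Sigma_n}\oplus\omega_{\rm std}\right).
\]
The Gromov width is a symplectic invariant, so it suffices to show that the right-hand side has infinite Gromov width.

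For each $k=0,1,\dots,n$, set $X_k\coloneqq\Sigma_{n-k+1}\times\cdots\times\Sigma_n\times\bC$, so that $X_0=\bC$. The base case is $w_G(X_0)=w_G(\bC,\omega_{\rm std})=\infty$. This holds because $B^2(a)$ embeds in $\bC$ by inclusion for every $a>0$.

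For the inductive step, suppose $w_G(X_k)=\infty$. Since $X_{k+1}=\Sigma_{n-k}\times X_k$ and $\Sigma_{n-k}$ is a symplectic surface of positive genus, Proposition~\ref{prop-lift-capacity} gives $w_G(X_{k+1})=\infty$. After $n$ steps this yields $w_G(X_n)=w_G(W\times\bC,\omega_{\rm prod})=\infty$.

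The only point needing care is whether the hypotheses of Proposition~\ref{prop-lift-capacity} hold at each stage. The manifold $X$ in that proposition is allowed to be noncompact; here each $X_k$ is a product of possibly-bounded surfaces with $\bC$. When boundary is present, we use the convention from the preceding remark that $w_G$ is computed on the interior. The interior of a product is the product of the interiors, so the induction can be run entirely with interiors, and these are genuine symplectic manifolds. Each $\Sigma_i$ keeps positive genus when passing to its interior, so the hypothesis of Proposition~\ref{prop-lift-capacity} is unaffected.
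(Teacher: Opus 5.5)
Your proof is correct and is the argument the paper intends: the corollary is stated as an immediate consequence of Proposition~\ref{prop-lift-capacity}, applied $n$ times starting from $X=\bC$ (where $w_G=\infty$) and adding one positive-genus surface factor at each step. Passing to interiors to avoid corners is a harmless extra precaution, consistent with the paper's convention for $w_G$ on manifolds with boundary.
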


\subsection{Large-scale geometry} \label{ssec-LSG} 

In this subsection, we show that the Thompson-type metric on Hamiltonians already exhibits nontrivial large-scale geometry, even on the two-dimensional disk. The key input comes from \cite{CGH23a}, where the authors constructed a family of smooth concave toric domains in $\bC^2$ whose Banach--Mazur distances grow linearly with the $\ell^\infty$-distance of the parameter.

Our task is to transfer that construction to the Hamiltonian setting considered in this paper. The key observation is that the toric domains from \cite{CGH23a} can be expressed as graph-type domains over a fixed interval. This makes it possible, after a simple rescaling, to realize them as tube domains associated to explicit radial Hamiltonians on the 2-disk $\overline{B}^{\,2}(a)$. Once this is done, Theorem~\ref{thm-estimation} converts the Banach--Mazur lower bound for those domains into a lower bound for the Thompson-type metric $d_{\rm T}$ from Definition~\ref{dfn-T-type}.

We now recall the precise geometric input that will be used.

\begin{prop}[Section 2.4 in \cite{CGH23a}] \label{prop-CGH}
Fix $N \ge 1$. Then there exist constants  $c_N>0$ and a family of smooth concave toric domains $\{X_v\}_{v\in\bR^N}\subset \bC^2$ such that 
\[ d_{\rm BM}(X_v,X_w)\ge c_N\|v-w\|_\infty\]
for any $v, w \in \bR^N$. In particular, for each $v \in \bR^N$, there exists a smooth concave function $f_v:[0,1]\to[0,\infty)$ such that $f_v(1)=0$ and
$X_v=\{(u,z)\in\bC^2 \mid \pi|u|^2\le 1,\ \pi|z|^2\le f_v(\pi|u|^2)\}$.
\end{prop}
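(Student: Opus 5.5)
The statement is cited from Section 2.4 of \cite{CGH23a}, so the plan is to recall that construction and check its two features: the linear lower bound and the graph form. Everything below is a reconstruction of their argument, adapted to our normalization.

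\textbf{Step 1: the family of domains.} Following \cite{CGH23a}, I fix $N$ and a reference smooth concave function $f_0:[0,1]\to[0,\infty)$ with $f_0(1)=0$. For $v\in\bR^N$, I obtain $f_v$ by a controlled modification of $f_0$ on $N$ disjoint subintervals. Concretely, I adjust the slopes, or insert small concave ``bumps'', with sizes governed by $e^{v_j}$, keeping the result concave. Composing with a bounded reparametrization such as $\arctan$ keeps all functions in a compact family away from degeneracy. I then set
\[
X_v=\{(u,z)\in\bC^2 \mid \pi|u|^2\le 1,\ \pi|z|^2\le f_v(\pi|u|^2)\}.
\]
Its moment image is the region under the graph of $f_v$ over $[0,1]$. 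Concavity of $f_v$ means the complement of this region in the first quadrant is convex, so $X_v$ is a concave toric domain. Smoothing the corners makes $X_v$ smooth, and the graph description holds by construction.

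\textbf{Step 2: the lower bound.} I would use a family of capacities that behave well under scaling. Natural candidates are the ECH capacities or the Gutt--Hutchings capacities $c_k$, or better the refined invariants of \cite{CGH23a}. For concave toric domains these are computed combinatorially from the weight sequence, that is, the ball packing given by $f_v$.

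Since $\frac1C X\hookrightarrow Y$ forces $c_k(X)\le C\,c_k(Y)$, it follows that
\[
d_{\rm BM}(X_v,X_w)\ge \sup_k \bigl|\ln c_k(X_v)-\ln c_k(X_w)\bigr|.
\]
It remains to choose the modifications so that, for each coordinate $j$, some ratio of capacities isolates $v_j$. The idea is to use ratios that are invariant under rescaling, combined with the unscaled normalization. The goal is a quantity $\Phi_j$ with $|\Phi_j(X_v)-\Phi_j(X_w)|\ge c_N|v_j-w_j|$.

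\textbf{Step 3: the main obstacle.} The hard step is getting growth that is \emph{linear and unbounded} in $\|v-w\|_\infty$. This is a problem because the capacities of domains inside the unit polydisk are bounded, so naive log-ratios saturate.

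The remedy I would try first is the one \cite{CGH23a} use. Let the weights themselves encode $e^{v_j}$, allowing very thin or very long features, while keeping the domains inside a fixed shape up to scaling. Then use the \emph{symmetric} Banach--Mazur distance, where both embedding directions are available, to bound differences of log-capacity ratios in both directions. Verifying that the combinatorial capacity formulas really separate the coordinates, with a uniform constant $c_N$, is where I expect the real work. At that point I would defer to the explicit computations in \cite{CGH23a}.
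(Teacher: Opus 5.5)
As a citation, your proposal matches the paper. The paper gives no argument of its own; it only remarks that the construction and estimate in \cite{CGH23a} rest on ECH capacities and their Weyl law. The reconstruction you build around the citation, however, fails wherever it is specific.

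\textbf{Step 1.} The normalization is incompatible with the conclusion. If composing with $\arctan$ keeps all $f_v$ in a compact family away from degeneracy, then all $X_v$ contain a common ball $B^4(r)$ and lie in a common ball $B^4(R)$. Then $\frac{r}{R}X_v\subset B^4(r)\subset X_w$ by inclusion, so $d_{\rm BM}(X_v,X_w)\le\ln(R/r)$ for all $v,w$. This rules out any bound of the form $c_N\|v-w\|_\infty$. The growth has to come from features whose relative sizes degenerate without bound, which is what you say in Step 3, so Steps 1 and 3 contradict each other.

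\textbf{The graph description.} Your check is backwards. For concave $f_v$ the region under the graph is convex, so $X_v$ would be a convex toric domain. The complement in the quadrant is convex exactly when the profile is convex. Only in that case do you get the concave toric domains whose ECH capacities are given by the weight-sequence formula you rely on in Step 2. So $f_v$ must be convex and nonincreasing for $X_v$ to be a concave toric domain in that sense.

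\textbf{Steps 2--3: the main gap.} These steps contain no mechanism producing $N$ independent unbounded directions. The bound $d_{\rm BM}(X,Y)\ge\sup_k|\ln c_k(X)-\ln c_k(Y)|$ is fine. For concave toric domains, though, $c_k(X_\Omega)=c_k\bigl(\bigsqcup_iB(w_i)\bigr)$ gives $c_1\le c_k\le k\,c_1$. Hence any ratio $c_k/c_l$ of fixed-index capacities lies in $[1/l,k]$. Such a ratio cannot satisfy $|\Phi_j(X_v)-\Phi_j(X_w)|\ge c_N|v_j-w_j|$ once $|v_j-w_j|$ is large. Either the indices have to grow with $v$, or the volume has to enter. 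Relating $c_k$ for large $k$ to the volume is exactly what the ECH Weyl law $c_k(X)^2/k\to 4\Vol(X)$ does. That is the input the paper singles out, and it never appears in your plan.

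\textbf{The proposed remedy.} It adds nothing. $d_{\rm BM}$ already requires embeddings in both directions, and Step 2 used both. If by ``symmetric'' you mean $d_{\rm SBM}$, a lower bound for it yields no lower bound for $d_{\rm BM}\le d_{\rm SBM}$, and $d_{\rm BM}$ is the distance in the statement.

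So beyond the citation itself, the outline does not supply the argument for the linear lower bound, even schematically.
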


Based on the family of domains constructed in Proposition~\ref{prop-CGH}, we are able to construct explicit Hamiltonians on the disk whose pairwise $d_{\rm T}$-distances are bounded below by the $\ell^\infty$-distance of the parameters.
\begin{thm}\label{thm-large-scale-dT} 
For any fixed $a>0$ and $N\ge 1$, there exist a map $\Phi_{N,a}:\bR^N \to \mathcal H^+(\overline{B}^{\,2}(a),\lambda_{\mathrm{std}})$, $v \mapsto \Phi_{N,a}(v)$ and a constant $c_N>0$, depending only on $N$, such that
\[ d_{\rm T}(\Phi_{N,a}(v),\Phi_{N,a}(w))\ge c_N\|v-w\|_\infty\]
for all $v,w\in\bR^N$. 
\end{thm}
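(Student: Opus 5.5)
The plan is to realize the toric domains $X_v$ of Proposition~\ref{prop-CGH} as tubes over the disk and then invoke Theorem~\ref{thm-estimation}.

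\textbf{Step 1: the Hamiltonians.} Given $v\in\bR^N$, take $f_v$ from Proposition~\ref{prop-CGH} and set
\[
H_v(w)\coloneqq f_v\big(\pi|w|^2/a\big)\quad\text{on } \overline{B}^{\,2}(a).
\]
This is an autonomous radial function with $H_v|_{\partial}=f_v(1)=0$. Writing $s=\pi|w|^2$, we have $\lambda_{\rm std}(X_H)=-s\,H'(s)$ up to the sign convention. Since $f_v$ is concave, nonnegative and vanishes at $1$, it is non-increasing near $1$, and one expects $f_v'\le 0$ everywhere (for concave toric domains the profile is decreasing). Hence $\lambda(X_{H_v})=-(s/a)f_v'(s/a)\ge 0$. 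Also $\tau=H_v-sH_v'>0$ on the interior by positivity of $f_v$; at the boundary, $\tau=-f_v'(1)>0$ provided the profile hits zero transversally, which holds for the smooth domains constructed in \cite{CGH23a}.

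Two details need care:
\begin{itemize}
\item autonomy near $\partial W$ is automatic;
\item smoothness of $H_v$ at the origin requires $f_v$ smooth at $0$, which again holds.
\end{itemize}
If any of these conditions fails, I would modify $f_v$ by a $C^0$-small perturbation that changes $d_{\rm BM}$ by an arbitrarily small amount, or else enlarge the domain. So $\Phi_{N,a}(v)\coloneqq H_v\in\cH^+$.

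\textbf{Step 2: identify the tube.} Since $H_v$ is autonomous,
\[
W_{H_v}=\{(w,z)\mid \pi|z|^2\le f_v(\pi|w|^2/a)\},
\]
with $\hat\lambda$ equal to the standard Liouville form on $\bC^2$. The linear symplectic map $(w,z)\mapsto (w/\sqrt a,\,z)$ is not symplectic. Instead, I would apply the rescaling $(w,z)\mapsto(w/\sqrt a,\,z/\sqrt a)$, which is conformal with factor $1/a$; its image is
\[
\{\pi|u|^2\le1,\ \pi|z|^2\le f_v(\pi|u|^2)/a\}.
\]
This is not exactly $X_v$. To fix this I would choose $H_v(w)\coloneqq a f_v(\pi|w|^2/a)$, so that the tube equals $\sqrt a\cdot X_v$, the image of $X_v$ under $(u,z)\mapsto \sqrt a(u,z)$. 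Conformal scaling by a common factor does not change $d_{\rm BM}$, because $d_{\rm BM}$ only compares scalings. Hence
\[
d_{\rm BM}(W_{H_v},W_{H_w})=d_{\rm BM}(X_v,X_w).
\]
Note that $aH$ is still in $\cH^+$, since the conditions are invariant under positive scaling.

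\textbf{Step 3: conclude.} Theorem~\ref{thm-estimation}, with $d_{\rm BM}\le d_{\rm SBM}$, gives
\[
c_N\|v-w\|_\infty\le d_{\rm BM}(X_v,X_w)\le d_{\rm T}(H_v,H_w).
\]

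The main obstacle I expect is Step 1: checking that the profiles $f_v$ from \cite{CGH23a} give honest smooth Hamiltonians satisfying (\ref{H-con}), namely $f_v'(1)<0$, $f_v'\le0$, and smoothness at $0$. If they do not, I would approximate $X_v$ by domains whose profiles satisfy these conditions, using that $d_{\rm BM}$ is continuous under Hausdorff-small nested perturbations, and absorb the error by shrinking $c_N$ slightly.
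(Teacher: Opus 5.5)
Your proposal is correct and is essentially the paper's proof. The paper also sets $H_v(u)=a f_v(\pi|u|^2/a)$, checks $\lambda_{\rm std}(X_{H_v})=-\mu\widetilde f_v'(\mu)\ge 0$ from the non-increasing profile, identifies ${\rm U}(H_v)$ with the Liouville rescaling of $X_v$ (your $\sqrt a\cdot X_v$), and combines invariance of the Banach--Mazur distances under a common rescaling with Theorem~\ref{thm-estimation}.

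Your Step~1 fallback is unnecessary, and an additive approximation error could not be absorbed into $c_N$ when $\|v-w\|_\infty$ is small. Concavity with $f_v(1)=0<f_v(0)$ already forces $f_v'(1)\le -f_v(0)<0$, hence $\tau_{\lambda_{\rm std},H_v}>0$ on $\partial\overline{B}^{\,2}(a)$; this is a point the paper's proof only verifies on the interior.
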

The goal in this subsection is to import the large-scale geometry of the toric domains from Proposition~\ref{prop-CGH} into the Hamiltonian space on a fixed disk. For each parameter $v\in\bR^N$, the concave toric domain $X_v$ is determined by a profile function $f_v$. After rescaling this profile to $[0,a]$, we define a radial Hamiltonian $H_v$ on $\overline{B}^{\,2}(a)$ by $H_v(u)\coloneqq \widetilde f_v(\pi|u|^2)$. Then the associated tube domain is exactly the Liouville rescaling of the toric domain, namely ${\rm U}(H_v)=aX_v$. Thus the Banach--Mazur lower bound for the family $\{X_v\}$ becomes a lower bound for the symplectic Banach--Mazur distance between the tube domains. Finally, Theorem~\ref{thm-estimation} converts this domain-level lower bound into the desired lower bound for the Thompson-type metric $d_{\rm T}$ on Hamiltonians.

Note that, although the function space $\mathcal H^+(\overline{B}^{\,2}(a),\lambda_{\mathrm{std}})$ tends to be large itself, due to the group action of ${\rm Symp}(\overline{B}^{\,2}(a), \lambda_{\rm std})$ in Definition~\ref{dfn-T-type}, it is not immediately clear whether $d_{\rm T}$ retains enough freedom. Theorem~\ref{thm-large-scale-dT} confirms that $(\mathcal H^+(\overline{B}^{\,2}(a),\lambda_{\mathrm{std}}), d_{\rm T})$ is at least as large as any high-dimensional Euclidean space $(\bR^N, \|\cdot\|_{\infty})$. The construction (and the estimate of $d_{\rm BM}$) in Proposition~\ref{prop-CGH} is based on ECH capacities and their Weyl law, and is therefore restricted to four-dimensional domains. This explains why, in Theorem~\ref{thm-large-scale-dT}, we can establish the large-scale geometry only on $\overline{B}^{\,2}(a)$. It would be interesting to explore whether the conclusion of Theorem~\ref{thm-large-scale-dT} extends to higher-dimensional base Liouville domains $(W, \lambda)$. 

\subsection{Systolic ratio estimation}
Given a Liouville domain $(W, \lambda)$, denote by $\Sys(\partial W)$ the systole of the contact boundary $(\partial W,\lambda|_{\partial W})$, that is, the minimal period of a closed Reeb orbit of $\lambda|_{\partial W}$. Similarly, for $H \in \mathcal H(W, \lambda)$, denote by $\Sys(\partial W_H)$ the systole of $(\partial W_H,\alpha_H)$. Recall the defining property $\tau_{\lambda,H}(= \iota_{X_H}\lambda + H) >0$ from (\ref{H-con}), and set $\tau_{\min}(H)\coloneqq \min_{S^1\times W}\tau_{\lambda,H}$. Recall also that the normalized contact volume of $(\partial W_H,\alpha_H)$ is defined by
\[
\Vol(\partial W_H,\alpha_H)\coloneqq\frac{1}{n!}\int_{\partial W_H}\alpha_H\wedge(d\alpha_H)^n.
\]
The systolic ratio of $W_H$ is defined as
\[
\rho(W_H)\coloneqq\frac{\Sys(\partial W_H)^{n+1}}{\Vol(\partial W_H,\alpha_H)}.
\]
 We then have the following estimate, whose proof is given here, as it is fairly short and can be regarded as a direct consequence of Corollary~\ref{cor-Reeb-Ham}.

\begin{prop} \label{prop-sys-ratio} 
Let $(W^{2n}, \lambda)$ be a $2n$-dimensional Liouville domain. Then, for any $H \in \mathcal H(W, \lambda)$, 
	\[
\frac{n!\,\min\{\Sys(\partial W),\tau_{\min}(H)\}^{n+1}}{(n+1)\Cal(H)}\le\rho(W_H)\le\frac{n!\,\Sys(\partial W)^{n+1}}{(n+1)\Cal(H)} .
\]
\end{prop}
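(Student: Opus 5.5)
The plan is to treat numerator and denominator of $\rho(W_H)$ separately. The volume is handled by Stokes' theorem and the Calabi formula (\ref{Vol-Cal}). The systole is handled by the Reeb--Hamiltonian correspondence of Corollary~\ref{cor-Reeb-Ham}.

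For the volume, since $\hat\lambda$ is a Liouville form on $W_H$ (of dimension $2n+2$), Stokes gives
\[
\int_{\partial W_H}\alpha_H\wedge(d\alpha_H)^n=\int_{W_H}(d\hat\lambda)^{n+1}=(n+1)!\,{\rm Vol}(W_H),
\]
with ${\rm Vol}$ normalized by $\frac{1}{(n+1)!}(d\hat\lambda)^{n+1}$. Dividing by $n!$ yields $\Vol(\partial W_H,\alpha_H)=(n+1){\rm Vol}(W_H)$. By (\ref{Vol-Cal}) (Proposition~\ref{prop-Louville}) this equals $\frac{n+1}{n!}\Cal(H)$. I will double-check the normalization conventions against Proposition~\ref{prop-Louville}, since the factor $\frac{n!}{n+1}$ in the statement must come out exactly from this step. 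Consequently
\[
\rho(W_H)=\frac{n!\,\Sys(\partial W_H)^{n+1}}{(n+1)\Cal(H)},
\]
and the proposition reduces to showing
\[
\min\{\Sys(\partial W),\tau_{\min}(H)\}\le \Sys(\partial W_H)\le \Sys(\partial W).
\]

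For the upper bound, I use the fact (already exploited in the proof of Corollary~\ref{cor-pos-entropy}) that the binding $\partial W\times\{0\}$ is invariant under the Reeb flow of $\alpha_H$, and that $\alpha_H$ restricts there to $\lambda|_{\partial W}$. Any closed Reeb orbit of $\lambda|_{\partial W}$ of period $\Sys(\partial W)$ is therefore a closed Reeb orbit of $\alpha_H$ of the same period.

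For the lower bound, let $\gamma$ be any closed Reeb orbit of $\alpha_H$. If $\gamma$ meets the binding, it lies in it by invariance, so its period is at least $\Sys(\partial W)$. Otherwise $\gamma$ lies in the complement, which fibers over $S^1$ via $\pi_{\rm OB}$ (Remark~\ref{rmk-obd}). By Corollary~\ref{cor-Reeb-Ham} (equivalently Corollary~\ref{cor-formula}), $\gamma$ corresponds to a $k$-periodic orbit $x$ of $\varphi_H$ with $k\ge1$. Its period is the $\alpha_H$-action, i.e. the integral of $\tau_{\lambda,H}$ along $x$ over $t\in[0,k]$. I expect the correspondence to give the Reeb vector field as $(X_{H_t}+\partial_t)/\tau_{\lambda,H}$ after reparametrization; the check is that $\hat\lambda(X_{H_t}+\partial_t)=\lambda(X_{H_t})+\pi r^2=\lambda(X_{H_t})+H=\tau_{\lambda,H}$. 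Hence the period is $\int_0^k\tau_{\lambda,H}(t,x(t))\,dt\ge k\,\tau_{\min}(H)\ge\tau_{\min}(H)$. Taking the infimum over all orbits gives the lower bound.

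The main obstacle is this last step. It requires making precise, via Corollary~\ref{cor-Reeb-Ham}, that orbits off the binding project with winding number $k\ge1$ in the $t$-direction. It also requires confirming that the period formula uses $\tau_{\lambda,H}$ exactly, rather than some rescaled version coming from the parametrization of the page. Near $\partial W$, where $H$ is autonomous and $H$ vanishes on $\partial W$, orbits off the binding still wind positively in $t$, so no degenerate case arises.
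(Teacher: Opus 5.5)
Your proposal is correct and follows essentially the same route as the paper. Stokes plus the Calabi formula give $\Vol(\partial W_H,\alpha_H)=\frac{n+1}{n!}\Cal(H)$, and Corollary~\ref{cor-Reeb-Ham} splits the closed Reeb orbits into binding orbits and Hamiltonian orbits. Binding orbits have period at least $\Sys(\partial W)$, and invariance of the binding gives the upper bound; Hamiltonian orbits have period $T_{t_0,w_0,k}=\int_0^k\tau_{\lambda,H}\,d\sigma\ge\tau_{\min}(H)$. One cosmetic point: the actual Reeb field also contains the term $\frac{\partial_tH}{2\pi r\,\tau_{\lambda,H}}\partial_r$ (Proposition~\ref{prop-construction}). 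This does not affect your normalization check, since $\hat\lambda(\partial_r)=0$, and the period formula comes directly from Corollary~\ref{cor-Reeb-Ham} anyway.
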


\begin{proof}
By Corollary~\ref{cor-Reeb-Ham}, the closed Reeb orbits on $\partial W_H$ are of two kinds. First, the invariant subset $\partial W\times\{0\}\subset \partial W_H$ carries exactly the original Reeb flow, so the closed Reeb orbits there are precisely those on $\partial W$; in particular, their periods are bounded below by $\Sys(\partial W)$. Second, the Reeb orbits arising from integer-periodic Hamiltonian orbits of $H$ have periods $T_{t_0,w_0,k}=\int_0^k\tau_{\lambda,H}(t_0+\sigma,\varphi_H^\sigma(t_0,w_0))\,d\sigma.$ Hence these periods are bounded below by $\tau_{\min}(H)$. Therefore $\Sys(\partial W_H)\ge \min\{\Sys(\partial W),\tau_{\min}(H)\}$. On the other hand, since $\partial W\times\{0\}$ is an invariant subset of $\partial W_H$, we also have  $\Sys(\partial W_H)\le \Sys(\partial W)$.

It remains to compute the normalized contact volume. By Stokes' theorem, $\int_{\partial W_H}\alpha_H\wedge(d\alpha_H)^n=\int_{W_H}(d\hat\lambda)^{n+1}$. Therefore $\Vol(\partial W_H,\alpha_H)=\frac{1}{n!}\int_{W_H}(d\hat\lambda)^{n+1}=(n+1)\Vol(W_H)$. Using the identity $\Vol(W_H)=\frac{1}{n!}\Cal(H)$, we obtain $\Vol(\partial W_H,\alpha_H)=\frac{n+1}{n!}\Cal(H)$.
Substituting this identity into the two estimates for $\Sys(\partial W_H)$ gives
\[
\frac{n!\,\min\{\Sys(\partial W),\tau_{\min}(H)\}^{n+1}}{(n+1)\Cal(H)}\le\rho(W_H)\le\frac{n!\,\Sys(\partial W)^{n+1}}{(n+1)\Cal(H)},
\]
as requested.
\end{proof}

Set $\tau_{\max}(H) \coloneqq \max_{S^1\times W}\tau_{\lambda,H}$, then a similar argument as in Proposition \ref{prop-sys-ratio} yields the following result.  

\begin{cor}\label{cor-sys-ratio-euler}
Let $(W^{2n},\lambda)$ be a Liouville domain with $\chi(W)\neq 0$. Then for any $H\in\mathcal H(W,\lambda)$ we have $\Sys(\partial W_H)\leq \min\{\Sys(\partial W),\tau_{\max}(H)\}$. Consequently,
\[
\frac{n!\,\min\{\Sys(\partial W),\tau_{\min}(H)\}^{n+1}}{(n+1)\Cal(H)}\leq\rho(W_H)\leq
\frac{n!\,\min\{\Sys(\partial W),\tau_{\max}(H)\}^{n+1}}
{(n+1)\Cal(H)}.
\]
\end{cor}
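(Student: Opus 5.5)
The lower bound is exactly the one in Proposition~\ref{prop-sys-ratio}, and the volume identity $\Vol(\partial W_H,\alpha_H)=\frac{n+1}{n!}\Cal(H)$ proved there carries over unchanged. It therefore suffices to prove $\Sys(\partial W_H)\le \min\{\Sys(\partial W),\tau_{\max}(H)\}$ and substitute it into $\rho(W_H)=\Sys(\partial W_H)^{n+1}/\Vol(\partial W_H,\alpha_H)$.

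For the first part of the minimum, we reuse the argument of Proposition~\ref{prop-sys-ratio}. The binding $\partial W\times\{0\}$ is invariant under the Reeb flow of $\alpha_H$, and $\alpha_H$ restricts there to $\lambda|_{\partial W}$. Hence every closed Reeb orbit of $\partial W$ is a closed Reeb orbit of $\partial W_H$, which gives $\Sys(\partial W_H)\le \Sys(\partial W)$.

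For the second part, we need a one-periodic orbit of $\varphi_H$ lying in ${\rm Int}(W)$. By Corollary~\ref{cor-Reeb-Ham}, such an orbit through $(t_0,w_0)$ gives a closed Reeb orbit of period
\[
T=\int_0^1\tau_{\lambda,H}(t_0+\sigma,\varphi_H^\sigma(t_0,w_0))\,d\sigma\le \tau_{\max}(H).
\]
To produce the fixed point we use $\chi(W)\neq0$, via the Lefschetz fixed point theorem. The map $\varphi_H^1$ is isotopic to $\id_W$ through $\{\varphi_H^s\}$, so its Lefschetz number equals $L(\id_W)=\chi(W)\neq 0$. Hence $\varphi_H^1$ has a fixed point.

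The difficulty is that this fixed point might lie on $\partial W$, where the dynamics is Reeb-type rather than a Hamiltonian orbit in the interior. I will handle this as follows. Near $\partial W$, $H$ is autonomous and vanishes on $\partial W$, and $\tau_{\lambda,H}>0$ there. This forces $H=h(\rho)$ in collar coordinates with $X_H$ a positive multiple of the Reeb field, nonvanishing near $\partial W$. Since $W$ is a compact manifold with boundary, we first push the isotopy slightly off the boundary. Concretely, we apply Lefschetz to the restriction of $\varphi_H^1$ to a slightly shrunken domain $W'\subset{\rm Int}(W)$ whose boundary is a level set of the autonomous collar part; this restriction is an invariant self-map, so the Lefschetz number is still $\chi(W')=\chi(W)$.

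If all fixed points found this way lie in the collar, they come from Reeb orbits of $\partial W$ of period one in the reparametrized time. This case is the main obstacle. I would try to exclude it by a degree/index count: fixed points in the collar region are organized in Morse--Bott families (the circle-bundle-like Reeb orbit sets), and these families contribute zero total index. Then the nonzero Lefschetz number forces an interior fixed point away from the collar. Alternatively, such a collar fixed point is itself a closed Reeb orbit of $\partial W_H$ corresponding to a one-periodic Hamiltonian orbit, so the period estimate above still applies directly, because Corollary~\ref{cor-Reeb-Ham} covers all integer-periodic orbits in ${\rm Int}(W)$. Either route yields $\Sys(\partial W_H)\le\tau_{\max}(H)$, which concludes the proof.
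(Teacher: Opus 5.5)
Your proposal is correct, but it treats the boundary case differently from the paper.

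\textbf{The paper's route.} The paper applies the Lefschetz fixed point theorem to $\varphi_H^1$ on $W$ itself and simply allows the fixed point $w_0$ to lie on $\partial W$. Since $H|_{\partial W}=0$, one has $X_H|_{\partial W}=\tau_{\lambda,H}\,R_{\lambda|_{\partial W}}$ (Remark~\ref{rmk-positive}). So $\sigma\mapsto\varphi_H^\sigma(w_0)$, $\sigma\in[0,1]$, is a reparametrized closed Reeb orbit of $\partial W$ with Reeb period $\int_0^1\tau_{\lambda,H}\,d\sigma\le\tau_{\max}(H)$, and it lives on the binding $\partial W\times\{0\}\subset\partial W_H$. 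A boundary fixed point is therefore no obstacle, and no shrinking is needed.

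\textbf{Your route.} You restrict $\varphi_H^1$ to the invariant subdomain $W'=W\setminus\{h<\epsilon\}$ and use $L(\varphi_H^1|_{W'})=\chi(W')=\chi(W)$. This also works, and it yields slightly more than the paper: a closed Reeb orbit \emph{off} the binding, of type (2) in Corollary~\ref{cor-Reeb-Ham}, with period at most $\tau_{\max}(H)$.

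Two points in your write-up should be corrected.

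First, $H$ is not forced to be radial in the collar. For example, $H=-r\,g(x)$ near $r=0$ with $g>0$ satisfies (\ref{H-con}), since $\tau_{\lambda,H}=g(1-r)>0$. Your construction does not need radiality. It only needs the following facts.
\begin{itemize}
\item $dh(Y_\lambda)=-\lambda(X_h)<0$ on and near $\partial W$, so level sets of $h$ there are graphs over $\partial W$.
\item $H>0$ away from a thin collar, so for small $\epsilon$ the set $\{h<\epsilon\}$ is a collar bounded by $\{h=\epsilon\}$.
\item $h$ is autonomous there, so each $\varphi_H^s$ preserves $\{h<\epsilon\}$, hence preserves $W'$, and the homotopy $\varphi_H^s|_{W'}$ stays inside $W'$.
\end{itemize}

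Second, once you work on $W'\subset{\rm Int}(W)$, every fixed point is interior and satisfies $H(t_0,w_0)>0$. Corollary~\ref{cor-Reeb-Ham}(2) with $k=1$ then applies immediately. Consequently:
\begin{itemize}
\item The ``main obstacle'' paragraph is superfluous.
\item The Morse--Bott index count in route (a) would need nondegeneracy assumptions you do not have, and is unnecessary anyway.
\item The claim that collar fixed points come from Reeb orbits of $\partial W$ is inaccurate when $h$ is not radial; they come from Reeb-type orbits on the level sets $\{h=c\}$.
\end{itemize}
Your route (b) is exactly the right closing step.
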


\begin{proof}
Since the time-one map $\varphi_H^1\colon W\to W$ is Hamiltonian isotopic, and hence homotopic, to the identity, its Lefschetz number satisfies $L(\varphi_H^1)=L(\id_W)=\chi(W)\neq 0$. By the Lefschetz fixed point theorem, $\varphi_H^1$ has a fixed point $w_0\in W$; see \cite[Theorem~2C.3]{Hat02}.

If $w_0\in {\rm Int}(W)$, then Corollary~\ref{cor-Reeb-Ham}, applied with $k=1$, gives a closed Reeb orbit on $\partial W_H$ of period $T_{t_0,w_0,1}=\int_0^1\tau_{\lambda,H}\bigl(\sigma,\varphi_H^\sigma(t_0,w_0)\bigr)\,d\sigma\leq \tau_{\max}(H)$. So, $\Sys(\partial W_H)\leq \tau_{\max}(H)$.

If $w_0\in\partial W$, then, since $H$ is autonomous near $\partial W$ and $H|_{\partial W}\equiv 0$, the Hamiltonian vector field restricts to a reparametrization of the Reeb vector field of $(\partial W,\lambda|_{\partial W})$. Thus the fixed point of $\varphi_H^1$ gives a closed Reeb orbit on the binding $\partial W\times\{0\}\subset\partial W_H$ with period bounded above by $\tau_{\max}(H)$. Again, $\Sys(\partial W_H)\leq \tau_{\max}(H)$. On the other hand, the binding $\partial W\times\{0\}$ carries the original Reeb flow, so $\Sys(\partial W_H)\leq \Sys(\partial W)$. Therefore $\Sys(\partial W_H)\leq \min\{\Sys(\partial W),\tau_{\max}(H)\}$.
\end{proof}

The upper bound in Proposition~\ref{prop-sys-ratio} is not particularly useful, since whenever $\Cal(H) \to \infty$, we obtain $\rho(W_H) \to 0$, which is essentially obvious. The lower bound in the same proposition is more useful: if a sequence of Hamiltonians $\{H_n\}_{n \in \mathbb{N}} \subset \mathcal H(W, \lambda)$ satisfies $\min_{S^1 \times W}\tau_{\lambda, H_n}\geq \kappa$ for some positive $\kappa$ independent of $n$, while $\Cal(H_n) \to 0$, then we obtain a sequence of Liouville domains $\{W_{H_n}\}_{n \in \mathbb{N}}$ with $\rho(W_{H_n}) \to \infty$. Note that, unlike the Anosov--Katok conjugation construction in \cite{ABHS18}, where the input Hamiltonians may take negative values, our input Hamiltonians $H$ are by definition always non-negative; hence the volume condition $\Cal(H) \to 0$ is in fact a rather strong constraint. This makes it more difficult to construct a sequence of Hamiltonians $\{H_n\}_{n \in \mathbb{N}}$ yielding divergent $\{\rho(W_{H_n})\}_{n \in \mathbb{N}}$. For instance, when $(W, \lambda_{\rm std}) \subset (\bC^n, \lambda_{\rm std})$ is a Liouville domain in $\bC^n$, it seems impossible to construct such a sequence using only radial Hamiltonian functions $H$, i.e., $H(t,z) = H(\pi|z|^2)$, for which $\tau_{\lambda_{\rm std}, H}$ admits a clear geometric interpretation (see the proof of Theorem~\ref{thm-large-scale-dT}). In fact, for radial input Hamiltonians, the condition $\Cal(H_n) \to 0$ implies that $\min_{S^1 \times W}\tau_{\lambda, H_n} \to 0$ as $n \to \infty$. 

\begin{remark} From the proof of Proposition~\ref{prop-sys-ratio}, one can sharpen the lower bound in the resulting estimate by replacing $\tau_{\rm min}$ by $\min T_{t_0, w_0, k}$, i.e., the minimal period of integer-periodic Hamiltonian orbits of $H$. Still, it is a nontrivial task to control $\min T_{t_0, w_0, k}$ so that it remains relatively large (even as $\Cal(H)$ approaches $0$). 
\end{remark}

\subsection{Categorification}

Generalizing the tube construction above, given a Liouville domain $(W, \lambda)$, consider two input Hamiltonian functions $H_-, H_+ \in \mathcal H(W, \lambda)$---that is, satisfying condition (\ref{H-con})---and further required to satisfy $H_- \leq H_+$ pointwise. One can then consider 
\begin{equation} \label{tunnel-cons}
W_{H_-, H_+} \coloneqq \left\{(w,z) \in W \times \bC \, |\, z = re^{2\pi i t} \,\,\mbox{and}\,\, H_-(t, w) \leq \pi|z|^2 \leq H_+(t,w) \right\}.
\end{equation}
Suppose in addition that $H_-(t,w)<H_+(t,w)$ for $(t,w)\in S^1\times {\rm Int}(W)$. Then $W_{H_-,H_+}$ is a manifold with corners that can be expressed as $W_{H_-, H_+} = \overline{W_{H_+} \backslash W_{H_-}}$, where 
\[ \partial W_{H_-, H_+} = \partial W_{H_-} \cup_{\partial W \times \{0\}} \partial W_{H_+}. \]
We call $W_{H_-, H_+}$ a {\it tunnel construction}. When $H_-= 0$ (even though $0 \notin \mathcal H(W, \lambda)$), the tunnel construction reduces to the tube construction $W_{H_+}$ in (\ref{Usher-con}). In the other extreme case $H_- = H_+ (= H)$, the tunnel $W_{H_-, H_+}$ reduces to the contact manifold $(\partial W_{H}, \hat{\lambda}|_{\partial W_H})$. 

This picture is useful for keeping track of the fiberwise nature of the construction. For each base point $w\in W$, the fiber $(W_{H_1,H_2})_w$ is the annular region in the $\bC$-coordinate cut out by the two radii determined by $H_1$ and $H_2$; over the boundary $\partial W$, this annular fiber collapses to the common corner $\partial W\times\{0\}$. Thus the tunnel should be viewed as a family of annuli over the base $W$, interpolating between the lower and upper tube boundaries $\partial W_{H_1}$ and $\partial W_{H_2}$.

\begin{figure}[ht]
\centering
\includegraphics[width=0.70\textwidth]{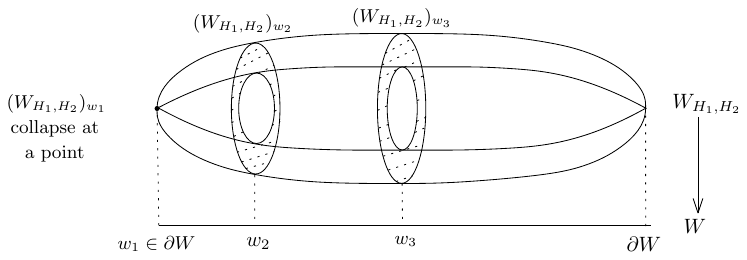}
\caption{A schematic picture of the tunnel construction $W_{H_1,H_2}$}
\label{fig:tunnel-categorification}
\end{figure}
\medskip

The tunnel construction provides a geometric realization of the Thompson-type metric introduced in Definition~\ref{dfn-T-type}. To elaborate, we first introduce the following quantity associated to a tunnel construction. 

\begin{dfn} \label{dfn-thickness}
Let $(W, \lambda)$ be a Liouville domain. For $H_-, H_+ \in \mathcal H(W, \lambda)$ with $H_- \leq H_+$, the \emph{thickness} of the tunnel $W_{H_-,H_+}$ defined in (\ref{tunnel-cons}) is given by
\begin{equation} \label{equ-thickness}
\ell(W_{H_-,H_+})\coloneqq \ln\left(\sup_{(t,w)\in S^1\times {\rm Int}(W)}\frac{H_+(t,w)}{H_-(t,w)}\right)\in [0,\infty].
\end{equation}
In the degenerate case $H_- = H_+$ (where $W_{H_-, H_+}$ is a contact manifold), set $\ell(W_{H_-,H_+}) =0$. 
\end{dfn}

The following lemma shows how the Thompson-type metric $d_{\rm T}$ can be identified with the thickness of tunnels. 
\begin{lemma} \label{lemma-dT-thick}
Let $(W,\lambda)$ be a Liouville domain. For any $H,G\in \mathcal H(W,\lambda)$, the Thompson-type metric $d_{\rm T}(\cdot,\cdot)$ from Definition~\ref{dfn-T-type} and the thickness $\ell(W_{\cdot,\cdot})$ of a tunnel from Definition~\ref{dfn-thickness} satisfy the following relation:
\begin{equation} \label{dT-thickness}
d_{\rm T}(H,G)=\inf\left\{\ell(W_{H_-,H_+}) \,\bigg|\, \begin{array}{l} H_- \leq \min\{H,G\circ\psi\}\leq \max\{H,G\circ\psi\}\leq H_+,\\ H_-,H_+\in\mathcal H(W,\lambda),\ \psi\in{\rm Symp}(W,\lambda)\end{array}\right\}.
\end{equation}
Here, $(G\circ\psi)(t,w)\coloneqq G(t,\psi(w))$. In other words, $d_{\rm T}(H,G)$ measures the smallest thickness of a tunnel containing, up to symplectomorphism, both $\partial W_H$ and $\partial W_G$ as interior level sets.
\end{lemma}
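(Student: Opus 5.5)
We prove the equality by establishing the two inequalities separately; both are elementary manipulations of the defining inequalities in Definition~\ref{dfn-T-type} and Definition~\ref{dfn-thickness}.

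For ``$\geq$'', suppose $C\ge 1$ and $\varphi\in{\rm Symp}(W,\lambda)$ satisfy $\frac1C H\le G\circ\varphi\le CH$; take $\psi=\varphi$. We need a single tunnel containing both $H$ and $G\circ\psi$, with thickness at most about $\ln C$. The natural choice is $H_-=\min\{H,G\circ\psi\}$ and $H_+=\max\{H,G\circ\psi\}$, which gives ratio $H_+/H_-\le C$ on ${\rm Int}(W)$. However, $\min$ and $\max$ are not smooth and need not lie in $\mathcal H(W,\lambda)$. We therefore use instead the scalar multiples $H_-=\frac{1}{\sqrt C}\,\cdot$ and $H_+=\sqrt C\,\cdot$ of a suitable element.

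Concretely, set $K=\frac12(H+G\circ\psi)$. First, $G\circ\psi\in\mathcal H(W,\lambda)$: since $\psi$ is strict Liouville, $\tau_{\lambda,G\circ\psi}=\tau_{\lambda,G}\circ\psi>0$, and $\psi$ preserves $\partial W$. Then $K\in\mathcal H(W,\lambda)$ by convexity of $\mathcal H(W,\lambda)$. From $\frac1C H\le G\circ\psi\le CH$ we get $\frac{1+C^{-1}}{2}H\le K\le\frac{1+C}{2}H$, together with the analogous bounds for $G\circ\psi$. Hence both $H$ and $G\circ\psi$ lie between $\frac{2}{1+C}K$ and $\frac{2C}{1+C}K$.

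Now put $H_-=\frac{2}{1+C}K$ and $H_+=\frac{2C}{1+C}K$. Both lie in $\mathcal H(W,\lambda)$ because the class is invariant under $\bR_{>0}$-rescaling. The thickness is exactly $\ln C$, so the infimum in (\ref{dT-thickness}) is at most $\ln C$. Taking the infimum over admissible $C$ gives $d_{\rm T}(H,G)\ge$ RHS.

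For ``$\leq$'', take $H_\pm$ and $\psi$ admissible in (\ref{dT-thickness}) with $\ell(W_{H_-,H_+})=\ln C<\infty$ (if the thickness is infinite there is nothing to prove). The sup bound gives $H_+\le CH_-$ on $S^1\times{\rm Int}(W)$. Hence
\[
G\circ\psi\le H_+\le CH_-\le CH,
\]
and symmetrically $H\le H_+\le CH_-\le C\,G\circ\psi$. On $\partial W$ all functions vanish, so these inequalities hold everywhere. Thus $\frac1CH\le G\circ\psi\le CH$ with $\psi\in{\rm Symp}(W,\lambda)$, so $d_{\rm T}(H,G)\le\ln C$. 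Taking the infimum completes this direction.

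The only real subtlety is the smoothness and membership issue in the first direction. The crude choice $\min$/$\max$ fails to be in $\mathcal H(W,\lambda)$, and the averaging-plus-rescaling trick above resolves it while keeping the exact constant $C$. We also check that $H_-\le H_+$ holds and that the degenerate case $C=1$ (where $H=G\circ\psi$) is consistent with the convention $\ell=0$; in that case the construction gives $H_-=H_+=H$, so both are fine.
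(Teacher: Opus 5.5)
Your proof is correct. The direction $d_{\rm T}(H,G)\le$ RHS is the paper's argument: compare the ratios $H/(G\circ\psi)$ and $(G\circ\psi)/H$ with $H_+/H_-$ pointwise on $S^1\times{\rm Int}(W)$.

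For the reverse direction you take a genuinely different and more elementary route.

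\emph{The paper's route.} It approximates $\min\{H,G\circ\psi\}$ and $\max\{H,G\circ\psi\}$ by the $L^p$-type means $K_\pm^{(p)}=(H^{\pm p}+(G\circ\psi)^{\pm p})^{\pm 1/p}$. It then proves, via Euler's identity, that $F(f,g)\in\mathcal H(W,\lambda)$ for every smooth, degree-one homogeneous, coordinatewise nondecreasing $F$, using the formula $\tau_{\lambda,F(f,g)}=\partial_1F\,\tau_{\lambda,f}+\partial_2F\,\tau_{\lambda,g}$. This gives thickness at most the sup-ratio plus $2\ln 2/p$, and the paper lets $p\to\infty$.

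\emph{Your route.} You use only the linearity of $H\mapsto\tau_{\lambda,H}$, that is, convexity and $\mathbb R_{>0}$-invariance of $\mathcal H(W,\lambda)$. You exploit the two-sided bound $\frac1C H\le G\circ\psi\le CH$ directly: the pair $\frac{1}{1+C}(H+G\circ\psi)\le\frac{C}{1+C}(H+G\circ\psi)$ sandwiches both functions with constant ratio $C$. This is shorter and needs no limit. It also shows that, for each fixed $\psi$, the infimum over tunnels is attained: take $C$ equal to the supremum of the symmetric ratio.

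\emph{What the paper's route buys.} Its tunnels converge to the tightest region between $\partial W_H$ and $\partial W_{G\circ\psi}$. Its homogeneity lemma also shows that $\mathcal H(W,\lambda)$ is closed under smoothed max/min-type operations, which has independent use. Your tunnel can be much wider than necessary where $H$ and $G\circ\psi$ nearly agree, but the thickness only records the supremum of $H_+/H_-$, so this costs nothing here.

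One small wording point: delete your preliminary sentence about multiples $C^{\mp1/2}$. With the arithmetic mean $K$, the bound $C^{-1/2}K\le H$ fails where $G\circ\psi=CH$ and $C>1$. Those multiples would work with the geometric mean $\sqrt{H\cdot G\circ\psi}$, which lies in $\mathcal H(W,\lambda)$ by the paper's homogeneity lemma. Your concrete choice $\frac{2}{1+C}K,\ \frac{2C}{1+C}K$ is the right one for the arithmetic mean.
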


\begin{remark} When the input Hamiltonian functions $H, G \in \mathcal H(W, \lambda)$ already satisfy $H \leq G$, taking $\psi = {\rm id}$ in Lemma~\ref{lemma-dT-thick} yields $d_{\rm T}(H, G) \leq \ell(W_{H, G})$, where $W_{H, G}$ is well-defined. \end{remark}

Geometrically, the tunnel construction admits a {\it composition} in the following sense: if $H_1 \leq H_2 \leq H_3 \in \mathcal H(W, \lambda)$, then 
\[ W_{H_1, H_3} \simeq W_{H_1, H_2} \cup_{\partial W_{H_2}} W_{H_2, H_3}, \]
where the gluing is carried out trivially along the contact manifold $\partial W_{H_2}$, and ``$\simeq$'' denotes isomorphism as Liouville cobordisms. Meanwhile, it is readily verified that the thickness is subadditive: $\ell(W_{H_1, H_3}) \leq \ell(W_{H_1, H_2}) + \ell(W_{H_2, H_3})$. This suggests that the Thompson-type metric $d_{\rm T}$ can be formulated categorically, with quantitative measurement given by the thickness function. In fact, such a ``metric'' category was introduced---as a categorification of a metric space---by Lawvere in \cite{Law73}. In short, a {\it Lawvere metric category} is a category\footnote{In general, a {\it normed category} means associating a norm to every hom-set of the category. Here, a Lawvere metric category is simply a normed category in which every hom-set contains exactly one element.} enriched in the monoidal poset $([0, \infty], \geq, +, 0)$. The following result, one of the main results of this paper, explicitly lists the axioms of such a category. 

\begin{thm} \label{thm-cat} Let $(W, \lambda)$ be a Liouville domain. Then the function space $\mathcal H(W, \lambda)$ from (\ref{dfn-H-lambda}) can be equipped with a Lawvere metric category structure, denoted by ${\rm Tun}(W, \lambda)$. Explicitly, ${\rm Ob}({\rm Tun}(W,\lambda)) \coloneqq  \mathcal H(W, \lambda)$, and for $H, G \in {\rm Ob}({\rm Tun}(W,\lambda))$,
\begin{equation} \label{hom-tun-cat}
{\rm hom}_{{\rm Tun}(W,\lambda)}(H,G)\coloneqq \inf_{\psi\in{\rm Symp}(W,\lambda)} \max\left\{\ln\left(\sup_{(t,w)\in S^1\times{\rm Int}(W)} \frac{H(t,w)}{G(t,\psi(w))}\right),0\right\}.
\end{equation}
Then the following two properties hold:
\begin{itemize}
\item[(i)] ${\rm hom}_{{\rm Tun}(W, \lambda)}(H, H) =0$ for any $H \in {\rm Ob}({\rm Tun}(W,\lambda))$;
\item[(ii)] ${\rm hom}_{{\rm Tun}(W, \lambda)}(H_1, H_3) \leq {\rm hom}_{{\rm Tun}(W, \lambda)}(H_1, H_2) + {\rm hom}_{{\rm Tun}(W, \lambda)}(H_2, H_3)$ for any objects $H_1, H_2, H_3 \in  {\rm Ob}({\rm Tun}(W,\lambda))$. 
\end{itemize}
Moreover, one can symmetrize ${\rm hom}_{{\rm Tun}(W, \lambda)}$ by defining
\[ {\rm hom}_{{\rm Tun}(W, \lambda)}^{\rm sym}(H, G) \coloneqq \inf_{\psi\in{\rm Symp}(W,\lambda)} \ln \left(\sup_{(t,w)\in S^1\times{\rm Int}(W)} \max\left\{\frac{H(t,w)}{G(t,\psi(w))}, \frac{G(t,\psi(w))}{H(t,w)}\right\}\right). \]
Then ${\rm hom}_{{\rm Tun}(W, \lambda)}^{\rm sym}(H, G) = d_{\rm T}(H, G)$, the Thompson-type metric in Definition~\ref{dfn-T-type}.

\end{thm}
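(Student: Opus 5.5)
The plan is to work directly with the quantity $R_\psi(H,G)\coloneqq\sup_{S^1\times{\rm Int}(W)} H(t,w)/G(t,\psi(w))$. It is well-defined and positive because $G\circ\psi>0$ on ${\rm Int}(W)$: a strict Liouville diffeomorphism preserves ${\rm Int}(W)$. It may, however, equal $+\infty$, which is allowed since the enrichment is in $[0,\infty]$. With this notation, ${\rm hom}(H,G)=\inf_\psi\max\{\ln R_\psi(H,G),0\}$.

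For (i), take $\psi=\id$. Then $R_{\id}(H,H)=1$, so ${\rm hom}(H,H)\le 0$. Since the hom is nonnegative by the $\max\{\cdot,0\}$, it equals $0$.

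For (ii), the key is multiplicativity of the ratios under composition in ${\rm Symp}(W,\lambda)$. Given $\psi_1,\psi_2$, write
\[ \frac{H_1(t,w)}{H_3(t,\psi_1\psi_2(w))}=\frac{H_1(t,w)}{H_2(t,\psi_1(w))}\cdot\frac{H_2(t,\psi_1(w))}{H_3(t,\psi_2(\psi_1(w)))}. \]
Then use the composition $\psi_2\circ\psi_1$, so the second factor is the ratio $H_2/(H_3\circ\psi_2)$ evaluated at the point $\psi_1(w)\in{\rm Int}(W)$. Taking suprema gives $R_{\psi_2\circ\psi_1}(H_1,H_3)\le R_{\psi_1}(H_1,H_2)\,R_{\psi_2}(H_2,H_3)$. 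Since $\psi_1$ is a bijection of ${\rm Int}(W)$, the supremum of the second factor over $w$ equals $R_{\psi_2}(H_2,H_3)$. Taking logarithms, and using $\max\{a+b,0\}\le\max\{a,0\}+\max\{b,0\}$, gives
\[ {\rm hom}(H_1,H_3)\le\max\{\ln R_{\psi_1},0\}+\max\{\ln R_{\psi_2},0\}. \]
Infimizing over $\psi_1,\psi_2$ yields (ii), since $\psi_2\circ\psi_1$ ranges within ${\rm Symp}(W,\lambda)$. The conventions with $+\infty$ are harmless here.

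For the symmetrized statement, fix $\psi$ and set $C_\psi\coloneqq\sup_{{\rm Int}(W)}\max\{H/(G\circ\psi),(G\circ\psi)/H\}\ge 1$. The condition $\frac1C H\le G\circ\psi\le CH$ on $S^1\times W$ is equivalent to $C\ge C_\psi$. On the interior this is immediate. On $S^1\times\partial W$ both sides vanish, because $\psi$ preserves $\partial W$ and $H=G=0$ there, so the inequalities hold trivially. Hence $\inf\{\ln C\ge0\mid \exists\psi: \tfrac1C H\le G\circ\psi\le CH\}=\inf_\psi \ln C_\psi$. The constraint $\ln C\ge0$ is automatic since $C_\psi\ge1$, and the case $C_\psi=\infty$ contributes nothing to the infimum. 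This is exactly ${\rm hom}^{\rm sym}(H,G)=d_{\rm T}(H,G)$.

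The only delicate point is bookkeeping rather than substance. I will check that ${\rm Symp}(W,\lambda)$ maps ${\rm Int}(W)$ onto itself, so that the reindexing of suprema is valid and the boundary zeros never enter a ratio. I will also confirm that the infimum in Definition~\ref{dfn-T-type} over $C$ with the existence of $\psi$ can be exchanged with the infimum over $\psi$ of the optimal $C$, which is a trivial exchange of infima.
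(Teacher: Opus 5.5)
Your proposal is correct and follows essentially the same route as the paper: $\psi={\rm id}$ for (i), the multiplicative factorization of ratios through the composite $\psi_2\circ\psi_1$ together with $\max\{a+b,0\}\le\max\{a,0\}+\max\{b,0\}$ for (ii), and the pointwise reformulation of $\frac{1}{C}H\le G\circ\psi\le CH$ for the symmetrization. Your explicit check that the boundary $S^1\times\partial W$ imposes no constraint is a small addition the paper omits. Also fix the left side of your display, where $\psi_1\psi_2(w)$ should read $\psi_2(\psi_1(w))$.
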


By Lemma~\ref{lemma-dT-thick} and the last conclusion of Theorem~\ref{thm-cat}, the symmetrization ${\rm hom}_{{\rm Tun}(W, \lambda)}^{\rm sym}$ in Theorem~\ref{thm-cat} admits a geometric interpretation in terms of the tunnel construction in (\ref{tunnel-cons}) and the thickness function in (\ref{equ-thickness}).

\begin{remark} In a similar fashion, one can formulate the pseudo-metric spaces discussed above, $(\sD_{2n}, d_{\rm SBM})$ and $(\{\mbox{contact 1-forms}\}, d_{\rm CBM})$, as Lawvere metric categories. Theorems~\ref{thm-estimation} and~\ref{thm-SBM-CBM-ln} then show that the maps $H \in \mathcal H^+(W, \lambda) \mapsto U(H)$ and $H \in \mathcal H^+(W, \lambda) \mapsto \alpha_H$ are ``1-Lipschitz'' functors with respect to these Lawvere metric structures. We leave the verification of the details to the interested reader. \end{remark}

\subsection{Floer homology of the tube} \label{ssec-Floer-tube} The tube construction yields a Liouville domain $W_H$. Recall that its Liouville completion $\widehat{W_H}$, obtained by gluing a cylindrical end $[1, \infty) \times \partial W_H$ to $W_H$, also admits a Floer theory called symplectic homology, with an equivariant version denoted by $\SH_*(W_H)$. By definition, the generators of $\SH_*(W_H)$ can be identified with the closed Reeb orbits on $\partial W_H$. Slowing down the picture, one can view ${\rm SH}_*(W_H)$ as a limit of Hamiltonian Floer homologies with respect to Viterbo-type (admissible) functions $K_N$, where $K_N$ is $C^2$-small inside ${\rm int}(W_H)$ and linear with admissible slope $N$ on a sufficiently large end of $[1, \infty) \times \partial W_H$. As usual, this Floer homology is denoted by ${\rm HF}_*(K_N)$. By Corollary~\ref{cor-Reeb-Ham}, the generators of the corresponding chain complex ${\rm CF}_*(K_N, J)$ (well-defined for a generic choice of almost complex structure $J$) can be classified into the following three subclasses:
 \begin{equation} \label{orbit-subclass}
  \mathcal P(K_N)=\mathcal P_N^{\mathrm{cst}}\sqcup \mathcal P_N^D\sqcup \coprod_{1\leq k\leq \kappa(N)}\mathcal P_N^{(k)}.
\end{equation}
Here $\kappa(N)$ denotes the maximal winding number that can occur among the Hamiltonian orbits of $K_N$ with admissible slope $N$, $D$ denotes $W \times \{0\}$, a divisor (with boundary) in $W_H$, and for an orbit $\gamma$ not contained in $D$, its {\it winding number} is set to be $k(\gamma)\coloneqq (2\pi)^{-1}\int_\gamma d\theta _{\bC}$; that is, the rotation number around the $z$-axis in the $\bC$-component of the tube construction. In (\ref{orbit-subclass}), $\mathcal P_N^{\mathrm{cst}}$ denotes the off-divisor constant orbits in a compact region, $\mathcal P_N^D$ denotes the orbits contained in $D$, and $\mathcal P_N^{(k)}$ denotes the integer-periodic Hamiltonian orbits (of the Hamiltonian flow generated by $H$) with winding number $k\geq 1$.  

We declare the constant orbits in $\mathcal P_N^{\mathrm{cst}}$ and the divisor orbits in $\mathcal P_N^D$ to have winding number zero. Then, as $\bK$-modules, we have the following decomposition:
\begin{equation} \label{equ-orbit-decomp}
{\rm CF}_*(K_N, J) = \bK\langle \mathcal P_N^{\mathrm{cst}}\sqcup \mathcal P_N^D\rangle\oplus  \bigoplus_{1 \leq k \leq \kappa(N)}  \bK\langle \mathcal P_N^{(k)}\rangle =:  C_{N,*}^{(0)}\oplus \bigoplus_{1\leq k\leq \kappa(N)} C_{N,*}^{(k)}.
\end{equation}
We call the Floer datum $(K_N,J)$ \emph{$D$-compatible} if $X_{K_N}|_D\subset TD$ and $JTD=TD$, and \emph{regular} if the moduli spaces used to define the differential are cut out transversely. A local argument, due to the dimension reason from $D$, shows that such Floer data is generic. 

Upgrading to a chain complex, for any two such generators $\gamma_-, \, \gamma_+$ in $\mathcal P(K_N)$, one investigates the standard Floer cylinders from $\gamma_-$ to $\gamma_+$ in $\widehat{W_H}$ perturbed by the Hamiltonian vector field of $K_N$. The following lemma provides a topological constraint on these Floer cylinders. 

\begin{lemma} 
\label{prop:winding-monotonicity}
Let $u\colon \bR\times S^1\ra \widehat{W_H}$ be a finite-energy Floer cylinder for a regular $D$-compatible datum $(K_N,J)$, with asymptotics $\lim_{s\to-\infty}u(s,\cdot)=\gamma_-$ and $\lim_{s\to+\infty}u(s,\cdot)=\gamma_+$ in $\mathcal P(K_N)$.  Suppose that neither asymptotic orbit is contained in $D$. Then $$u\cdot D=k(\gamma_+)-k(\gamma_-)\geq 0.$$ Moreover, $u\cdot D=0$ if and only if $u\cap D=\varnothing$. In particular, when restricted to Floer cylinders connecting orbits in $\coprod_{1\leq k\leq \kappa(N)}\mathcal P_N^{(k)}$, the Floer differential $\partial_{K_N, J}$ for the Floer chain complex ${\rm CF}_*(K_N, J)$ {\rm cannot} decrease the winding number.
\end{lemma}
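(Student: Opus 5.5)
The plan is to compute $u\cdot D$ topologically as a difference of winding numbers, and then to use positivity of intersections with the $J$-holomorphic divisor $D$ to get the sign.

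\textbf{Step 1: the topological identity.} Let $z$ denote the $\bC$-coordinate, extended to the completion $\widehat{W_H}$ so that it still defines $D$ as its zero set. On the cylindrical end this uses the radial Liouville flow, which preserves $D$ and rescales $z$ by a positive factor, so the angular coordinate $\theta_\bC$ is defined on $\widehat{W_H}\setminus \widehat D$. Since neither $\gamma_\pm$ meets $D$, we can truncate $u$ to $[-s_0,s_0]\times S^1$ for large $s_0$. Then $u\cap D$ is compact, and the boundary loops $u(\pm s_0,\cdot)$ are $C^0$-close to $\gamma_\pm$, away from $D$. The algebraic intersection number of $u$ with $D=\{z=0\}$ equals the degree computation for $z\circ u$, which is a map from the annulus to $\bC$ that is nonvanishing on the boundary. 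By the argument principle it counts zeros with multiplicity as
\[
\frac{1}{2\pi}\int_{u(s_0,\cdot)}d\theta_\bC-\frac{1}{2\pi}\int_{u(-s_0,\cdot)}d\theta_\bC = k(\gamma_+)-k(\gamma_-).
\]
Here the orientation conventions have to be checked carefully. The orbits of the families $\mathcal P_N^{(k)}$ wind positively, because the Reeb flow lifts the Hamiltonian flow with $\dot t=1/\tau>0$. Constant orbits have winding $0$.

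\textbf{Step 2: positivity.} $D$-compatibility gives $JTD=TD$, so $D$ is an almost complex submanifold, and $X_{K_N}$ is tangent to $D$. Near $D$, we write the Floer equation $\partial_s u+J(\partial_t u-X_{K_N})=0$ in local coordinates $(x,z)$ adapted to $D$. The $z$-component then satisfies a perturbed Cauchy--Riemann equation of the form $\partial_s z + i\partial_t z + A(s,t)z=0$, with $A$ bounded. This holds because $X_{K_N}$ has vanishing normal component along $D$, and because after adjusting $J$ to be normally split along $D$ the normal part is linear in $z$ to first order. By the Carleman similarity principle, each isolated zero of $z\circ u$ has positive local multiplicity. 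So either $u\subset D$, which is excluded since the ends of $u$ lie off $D$, or $u^{-1}(D)$ is discrete and finite and each point contributes positively. Hence $u\cdot D\ge0$, and $u\cdot D=0$ exactly when $u\cap D=\varnothing$. The identity with $k(\gamma_+)-k(\gamma_-)$ then gives monotonicity of the winding number.

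\textbf{Step 3: the differential.} Every Floer cylinder counted in the differential between generators of $\coprod_k\mathcal P_N^{(k)}$ satisfies the hypotheses of the lemma. So the matrix coefficients from $C^{(k)}_{N}$ to $C^{(k')}_{N}$ vanish unless $k'\ge k$, with the direction set by the paper's convention that $\gamma_-$ is the input.

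\textbf{Main obstacle.} The delicate point is Step 2. One has to put the Floer equation into the normal form needed for the similarity principle, which may require choosing $J$ normally split near $D$ (still a generic class). One also has to handle the boundary corner $\partial W\times\{0\}$ and the cylindrical end, where $D$ becomes noncompact. To deal with the end, I would use the maximum principle to keep $u$ in a compact region first, and only then apply the local analysis.
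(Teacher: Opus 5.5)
Your proposal is correct and follows the paper's route. The argument principle (Stokes' theorem for $d\theta_{\bC}$ on the truncated cylinder with small disks around $u^{-1}(D)$ removed) gives $u\cdot D=k(\gamma_+)-k(\gamma_-)$, and the similarity principle for the normal component, using $JTD=TD$ and $X_{K_N}|_D\subset TD$, gives positivity, with $u\not\subset D$ ruled out by the ends.

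Two small remarks. The hypothesis only says that $\gamma_\pm$ are not contained in $D$; the paper upgrades this to $\gamma_\pm\cap D=\varnothing$ because $D$ is invariant under the flow of $K_N$, a step you use without stating. Also, no normal splitting of $J$ is needed: $JTD=TD$ and the tangency of $X_{K_N}$ already make the off-diagonal part of $J$ and the normal part of $X_{K_N}$ vanish to first order in $z$, so the argument covers every regular $D$-compatible datum, as the lemma states.
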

The relation in Lemma~\ref{prop:winding-monotonicity} is illustrated schematically in Figure~\ref{fig:winding-monotonicity}. The left-hand panel depicts a Floer cylinder disjoint from the divisor; in this case the intersection number vanishes and the two asymptotic orbits have the same winding number. The right-hand panel depicts a cylinder with positive intersections with $D$; by positivity of intersections, each crossing contributes $+1$, and the total contribution is exactly the increase of the winding number from $\gamma_-$ to $\gamma_+$.
\begin{figure}[ht]
\centering
\includegraphics[width=0.90\textwidth]{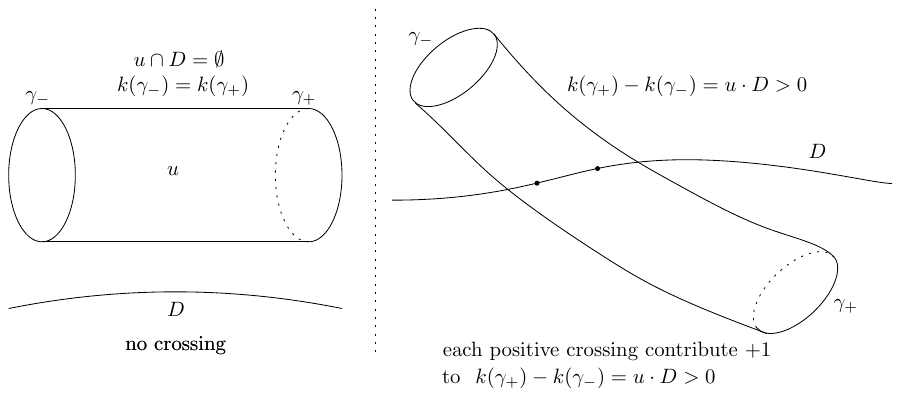}
\caption{Floer cylinders relative to the divisor $D$}
\label{fig:winding-monotonicity}
\end{figure}
Importantly, note that Lemma~\ref{prop:winding-monotonicity} does {\it not} exclude the possibility that a Floer cylinder connecting orbits in $\coprod_{1\leq k\leq \kappa(N)}\mathcal P_N^{(k)}$ may intersect $D$. Therefore, without invoking further constraints, we can at present only conclude that, with respect to the decomposition in (\ref{equ-orbit-decomp}), the differential $\partial_{K_N, J}$ is merely lower triangular: 
\begin{equation}
\label{equ-matrix-decomp}
\partial_{K_N,J}\big|_{\bigoplus_{1\leq k\leq \kappa(N)} C_{N,*}^{(k)}}
=
\begin{pmatrix}
\partial_N^{(1)} & 0 & 0 & \cdots & 0\\
\ast & \partial_N^{(2)} & 0 & \cdots & 0\\
\ast & \ast & \ddots & \ddots & \vdots\\
\vdots & \vdots & \ddots & \partial_N^{(\kappa(N)-1)} & 0\\
\ast & \ast & \cdots & \ast & \partial_N^{(\kappa(N))}
\end{pmatrix}.
\end{equation}
In particular, the entries ``$\ast$'' in the matrix (\ref{equ-matrix-decomp}) precisely capture those Floer cylinders connecting $\gamma_- \in \mathcal P_N^{(k)}$ and $\gamma_+ \in \mathcal P_N^{(\ell)}$ with different winding numbers $k \neq \ell$ (so that these cylinders must pass through the divisor $D$).

\medskip

Back to ${\rm CF}_*(K_N, J)$ in (\ref{equ-orbit-decomp}), let us denote 
\begin{align*} \label{equ-orbit-decomp-2}
{\rm CF}_*(K_{\infty}, J) :& = \varinjlim_{N \in \bN} {\rm CF}_*(K_{N}, J)\\
& =\varinjlim_{N \in \bN}\left( \bK\langle \mathcal P_N^{\mathrm{cst}} \sqcup \mathcal P_N^D\rangle\oplus  \bigoplus_{1 \leq k \leq \kappa(N)}  \bK\langle \mathcal P_N^{(k)}\rangle\right)\\
& = \underbrace{\left(\varinjlim_{N \in \bN}\bK\langle \mathcal P_N^{\mathrm{cst}}\rangle \right)}_{\text{cst-orbits}}
\oplus \underbrace{\left(\varinjlim_{N \in \bN} \bK\langle\cP_N^D\rangle \right)}_{\text{$D$-orbits}} \oplus \underbrace{\left( \varinjlim_{N \in \bN} \bigoplus_{1 \leq k \leq \kappa(N)}  \bK\langle \mathcal P_N^{(k)}\rangle\right)}_{\text{Ham-orbits}}
\end{align*} 
where the direct limit in the first line is taken over inclusions ${\rm CF}_*(K_{N}, J) \to {\rm CF}_*(K_{N+1}, J)$. To simplify the setting, by working on strictly positive filtration levels, one can ignore the off-divisor constant orbits, the part labelled by ``cst-orbits''. Then the remaining generators consist of all the closed Reeb orbits on the contact boundary $\partial W$ (which come from the part labelled by ``$D$-orbits'') and all the integer-periodic Hamiltonian orbits of $H$ in the interior of $W$ (which come from the part labelled by ``Ham-orbits''). 

Again, when restricting on the Hamiltonian orbits part, the differential, now naturally denoted by $\partial_{K_{\infty}, J}|_{\tiny{\left<\mbox{Ham-orbits}\right>}}$, is in the same form as in (\ref{equ-matrix-decomp}). However, the following Example \ref{ex-ell-Floer-diff} shows that Reeb orbits in divisor $D$ and Hamiltonian orbits must interacted. In other words, $\partial_{K_{\infty}, J}$ will mix different types of Reeb orbits on the contact boundary $\partial W_H$, and the classification via winding numbers as in the defining equation in (\ref{equ-orbit-decomp}) does {\it not} necessarily split the differential $\partial_{K_{\infty}, J}$.

\begin{ex}\label{ex-ell-Floer-diff} 
In this example, we revisit the standard four-dimensional ellipsoid \(E(1,a)\) from the perspective of the tube construction in (\ref{Usher-con}). We take the standard two-dimensional disk \((\overline{B}^2(1), \lambda_{\rm std}) \subset (\mathbb{C}, \lambda_{\rm std})\) as the initial Liouville domain and denote it by \((W,\lambda)\) for notational consistency. Recall that the symbol `\(1\)' in \(\overline{B}^2(1)\) indicates that its standard Euclidean area is equal to \(1\) (see the convention at the beginning of Section \ref{ssec-capacity-stab}). Fix an irrational scalar \(a>1\), and consider the following radially symmetric Hamiltonian function on \(\overline{B}^2(1)\), written in coordinates \(w=(r,\theta)\):
\[
H(r,\theta)=a(1-\pi r^2)=a(1-\pi |w|^2), \qquad r\in[0,1].
\]
By (\ref{Usher-con}), we obtain \(W_H = \{(w,z) \,| \, \pi|z|^2 \leq a(1- \pi|w|^2) \}\), which is precisely \(E(1,a)\). 

For the generators of \(\mathrm{CF}_*(K_\infty, J)\), apart from the constant ones, we have the following:
\begin{itemize}
\item[(i)] \(D\)-orbits: iterates of the full \(1\)-rotation along the boundary \(\partial\overline{B}^2(1)\), denoted by \(\gamma_D^k\) for \(k\in\mathbb{N}\), with periods \(\mathcal A(\gamma_D^k)=k\).
\item[(ii)] Ham-orbits: iterates of the simple orbit arising from the unique critical point of \(H(r,\theta)\) at the origin of \(\overline{B}^2(1)\), denoted by \(\gamma_H^\ell\) for \(\ell\in\mathbb{N}\), with periods \(\mathcal A(\gamma_H^{\ell})=\ell a\).
\end{itemize}
It is standard that their Conley–Zehnder indices are given by
\begin{equation} \label{ellipsoid-indices}
\mathrm{CZ}(\gamma_D^k)=2k+1+2\lfloor k/a\rfloor,\qquad
\mathrm{CZ}(\gamma_H^\ell)=2\ell+1+2\lfloor \ell a\rfloor.
\end{equation} 
These formulas can be recovered in the tube setting as in Proposition 4.3 and Proposition 4.5 of \cite{Ush22}. Meanwhile, the winding number of \(\gamma_H^{\ell}\) is equal to \(\ell\), hence in particular non-zero. Therefore, Lemma \ref{prop:winding-monotonicity} is trivial in this case, since the winding number grows proportionally to the action of the orbits \(\gamma_H^{\ell}\), plus that the positive energy of any connecting Floer cylinder is non-negative. 

Moreover, the restriction of the differential \(\partial_{K_{\infty}, J}\) (counting only index 1 Floer cylinders) to the subspace generated by the Ham-orbits satisfies
\begin{equation*}
\partial_{K_{\infty}, J}|_{\left<\mathrm{Ham\text{-}orbits}\right>} = 0
\end{equation*}
since the collection of Ham-orbits consists precisely of the \(\gamma_H^{\ell}\), and by the index formula (\ref{ellipsoid-indices}) above, the Conley–Zehnder indices of any two distinct such orbits differ by at least \(2\). Consequently, if the Ham-orbits alone formed a chain complex, it would be simply an infinite direct sum of copies of the ground field \(\mathbb{K}\). On the other hand, it is a standard fact that the positive symplectic homology of \(E(1,a)\) is one-dimensional, i.e. \({\rm SH}_+(E(1,a))\cong \mathbb{K}\). This forces infinitely many homological cancellations to occur between the \(\gamma_H^\ell\) and the \(\gamma_D^k\).
\end{ex}

\jznote{Notwithstanding the complexity of the differential $\partial_{K_{\infty}, J}$, one may pursue the following two approaches. First, one can reduce the orbit pool in the construction of (positive) symplectic homology by imposing homotopy constraints (see either \cite{PS16} or \cite{SZ21}), since Floer cylinders preserve the homotopy type of orbits. This requires that the initial Liouville domain $(W, \lambda)$ possess sufficient topological complexity, e.g., a high-genus surface with disks removed. Second, one can investigate more deeply the mixed-type Floer differential relations between $D$-orbits and Ham-orbits via symplectic field theory (SFT). More concretely, one may consider a neighborhood of the divisor $D$ inside $W_H$, perform neck-stretching for a Floer cylinder, and seek quantitative restrictions arising from SFT-compactness (cf.~\cite{DL19} in the setting of closed ambient symplectic manifolds). These directions will be investigated in detail in the future work \cite{FZ-Floer}.}

\section*{Acknowledgement}
This work grew out of the China-Israel joint research project on \emph{Symplectic Topology and Chaotic Dynamics}, supported by NSFC Grant No.~12361141812. Throughout the writing process, we have been grateful to the Chinese team members---Jinxin Xue, Huagui Duan, and Jian Wang---for their steadfast support and insightful discussions. We thank Leonid Polterovich, representing the Israeli team, for his generous guidance. \jznote{We are also grateful to Leonid Polterovich and Michael Usher for their valuable feedback on an earlier draft of this paper. Finally, we thank the organizer, Wentian Kuang, of the Workshop on Celestial Mechanics and Hamiltonian Systems (June~2026), for the opportunity to present parts of this work as it was in
preparation.} The second author is also partially supported by the National Key R\&D Program of China (Grant No.~2023YFA1010500) and NSFC Grants No.~12301081 and No.~12511540054.

\section{Background}

 In this section, we briefly recall the dynamics on Liouville domains and study the properties of the domain $(W_H, \hat{\lambda})$ constructed in \eqref{Usher-con}. In addition, we establish a few preparatory propositions that will be used in the proofs of our main theorems.
 
\subsection{Tube construction revisited} A {\em Liouville domain} $(W,\lambda)$ is a compact exact symplectic manifold with boundary, where the Liouville vector field $Y_{\lambda}$, defined by $\iota_{Y_\lambda}d\lambda=\lambda$, points outwards along $\partial W$. Therefore for all $t<0$ the time-$t$ Liouville flow  $\sL_{\lambda}^t\colon W\ra W$ is well-defined.  The {\em skeleton} of $(W,\lambda)$ is defined by
\[
{\rm Sk}(W)\coloneqq\bigcap_{t> 0}\sL_{\lambda}^{-t}(W).
\]

Given a compactly supported time-dependent Hamiltonian $H\colon S^1\times W^{2n}\ra\bR$, let $\varphi_H^t$ denote its Hamiltonian flow, obtained by  integrating  the Hamiltonian vector field $X_{H_t}$  determined by  $\iota_{X_{H_t}}d\lambda=d_wH_t$. For $\psi\coloneqq\varphi_H^1$, the {\rm Calabi invariant} is defined by
\[
\Cal(\psi)\coloneqq\int_{S^1\times W}Hdt\wedge (d\lambda)^n
\]
By Lemma~10.3.4 in \cite{MS17}, the above integral is independent of the choice of compactly supported Hamiltonian $H$ generating $\psi$. A curve $\gamma\colon[0,1]\ra W$ is called a $1$-periodic orbit of $H$ if $\dot{\gamma}(t)=X_{H_t}(\gamma(t))$ and $\gamma(0)=\gamma(1)$. Its action  is given by
\[
\cA_{H}(\gamma)\coloneqq\int_{\gamma}\lambda+\int_0^1H(t,\gamma(t))dt.
\]

 The restriction  $\alpha\coloneqq\lambda|_{\partial W}$ defines a contact form on $\partial W$. Recall that the Reeb vector field $R_\alpha$ of the contact manifold $(\partial W, \ker(\alpha))$ is uniquely determined by $d\alpha(R_{\alpha},\cdot)=0$ and $\alpha(R_\alpha)=1$. A  {\em closed Reeb orbit} in $(\partial W, \ker(\alpha))$ is a curve $\gamma\colon [0,T]\ra \partial W$ satisfying $\dot{\gamma}(t)=R_{\alpha}(\gamma(t))$ and $\gamma(0) = \gamma(T)$. 

\medskip

Given a Liouville domain $(W,\lambda)$, recall from (\ref{dfn-H-lambda}) that $\cH(W,\lambda)$ denotes the set of all Hamiltonian functions $H\colon S^1\times W\ra\bR$ satisfying condition (\ref{H-con}). Also, recall that for any $H\in\cH(W,\lambda)$ we associate a domain $W_H \subset W \times \bC$ constructed in (\ref{Usher-con}), equipped with $1$-form $\hat\lambda(= \lambda+\pi r^2dt) \in\Omega^1(W\times\bC)$. The following propositions are inspired by Proposition~2.1 in \cite{ABE25} and Proposition~4.1 in \cite{Ush22}.
\begin{prop}\label{prop-Louville}
For any $H\in\cH(W^{2n},\lambda)$, the domain $(W_H,\hat{\lambda})$ is a Liouville domain with the volume ${\rm Vol}(W_H)=\frac{1}{n!}\,\Cal(H).$	
\end{prop}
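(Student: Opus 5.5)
To prove Proposition~\ref{prop-Louville}, I split the work into three parts: exactness and nondegeneracy of $d\hat\lambda$, the Liouville vector field and its transversality to $\partial W_H$, and the volume computation.

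\textbf{Symplectic form and Liouville field.} Write $z=x+iy$, so that $\pi r^2\,dt=\tfrac12(x\,dy-y\,dx)$ and $d\hat\lambda=d\lambda+dx\wedge dy$ is a product symplectic form on $W\times\bC$. Its Liouville vector field is $\widehat Y=Y_\lambda+\tfrac12 r\partial_r$, since the two summands separately satisfy the defining identity for $\lambda$ and for $\pi r^2dt$. Hence $W_H$ carries an exact symplectic structure, and what remains is that $W_H$ is a compact domain with boundary (smooth away from, and suitably understood along, $\partial W\times\{0\}$) on which $\widehat Y$ points outward.

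\textbf{Transversality.} On the smooth piece $\{\pi r^2=H(t,w)\}$ with $w\in\mathrm{Int}(W)$, I take the defining function $F(w,r,t)=\pi r^2-H(t,w)$, so that $W_H=\{F\le 0\}$. I then compute
\[
dF(\widehat Y)=\pi r^2-dH_t(Y_\lambda).
\]
Since $dH_t(Y_\lambda)=d\lambda(X_{H_t},Y_\lambda)=-\lambda(X_{H_t})$ (up to the sign convention $\iota_{X}d\lambda=dH$), and $\pi r^2=H$ on the boundary, this gives $dF(\widehat Y)=H+\lambda(X_{H_t})=\tau_{\lambda,H}>0$. So $\widehat Y$ is outward transverse exactly because of condition (\ref{H-con}). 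The same computation, applied to the $t$-derivative, shows $dF\neq0$ there, so this boundary piece is smooth.

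Near $\partial W$, $H$ is autonomous and vanishes on $\partial W$, so $W_H$ is locally $\{\pi|z|^2\le H(w)\}$ with $H$ vanishing to first order along $\partial W$. Positivity of $\tau$ on $\partial W$ forces $\lambda(X_H)>0$ there, so $X_H$ is a positive multiple of the Reeb field, $dH\neq0$, and the boundary is smooth across $\partial W\times\{0\}$. I expect this region near the binding $\partial W\times\{0\}$ to be the main obstacle. My first attempt is to use the collar coordinate $\rho=H$ near $\partial W$, in which $\pi|z|^2\le\rho$ looks like a smooth hypersurface meeting $\partial W\times\{0\}$ tangentially in the $\bC$-direction. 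There I would verify $\widehat Y\cdot(\pi|z|^2-H)=\pi|z|^2+\lambda(X_H)>0$ directly, since it is positive at $z=0$ on $\partial W$. If the smoothness fails strictly there, I would follow the convention of \cite{Ush22}, Proposition 4.1.

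\textbf{Volume.} I use Fubini fiberwise over $W$: $(d\hat\lambda)^{n+1}=(n+1)(d\lambda)^n\wedge dx\wedge dy$, and the fiber $(W_H)_w$ has area
\[
\int_0^1\!\int_0^{\sqrt{H/\pi}}2\pi r\,dr\,dt=\int_0^1 H(t,w)\,dt.
\]
With the normalization $\mathrm{Vol}=\frac{1}{(n+1)!}\int(d\hat\lambda)^{n+1}$, this yields
\[
\mathrm{Vol}(W_H)=\frac{1}{n!}\int_{S^1\times W}H\,dt\wedge(d\lambda)^n=\frac1{n!}\mathrm{Cal}(H).
\]
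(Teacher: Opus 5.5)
Your proposal is correct, and its Liouville part follows the paper's argument. It uses the same field $\widehat Y=Y_\lambda+\tfrac12 r\partial_r$, the same identity $dF(\widehat Y)=\pi|z|^2+\lambda(X_{H_t})$ (which equals $\tau_{\lambda,H}$ on the zero level set), and the same split into the region $z\neq0$ and a neighborhood $U\times\bC$ of the binding on which $H$ is autonomous.

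Your hedging near the binding is unnecessary, and so are the collar coordinate $\rho=H$ and the fallback to \cite{Ush22}. Since $H=h(w)$ on $U$, the function $g(w,z)=\pi|z|^2-h(w)$ is smooth on all of $U\times\bC$, because no angular coordinate enters. Your own computation gives $dg(\widehat Y)=g+\tau_{\lambda,H}$, hence $dg(\widehat Y)=\tau_{\lambda,H}>0$ at every point of $g^{-1}(0)$, not merely at $z=0$ on $\partial W$. This gives smoothness of the boundary across $\partial W\times\{0\}$ and outward transversality in one stroke, which is exactly how the paper closes this case.

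The genuine difference is the volume computation.
\begin{itemize}
\item The paper writes $(d\hat\lambda)^{n+1}=(n+1)\,d(\tfrac12 r^2d\theta)\wedge(d\lambda)^n$ and applies Stokes to reduce to $\int_{\partial W_H}\tfrac12 r^2\,d\theta\wedge(d\lambda)^n$, parametrizing the graph part of $\partial W_H$ by $(\theta,w)$.
\item You instead integrate $(n+1)(d\lambda)^n\wedge dx\wedge dy$ fiberwise by Fubini, using that each fiber $(W_H)_w$ has area $\int_0^1H(t,w)\,dt$.
\end{itemize}
Your route is more elementary. It needs neither the smooth structure nor the boundary orientation of $\partial W_H$, nor any comment about the measure-zero binding that the $(\theta,w)$-parametrization misses. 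It also exhibits $\Vol(W_H)$ directly as $\frac1{n!}\int_W\overline H\,(d\lambda)^n$ with $\overline H(w)=\int_0^1H(t,w)\,dt$, matching the fiberwise picture of the tube. The paper's route instead presents the volume as a boundary integral, in the same spirit as its later identity $\Vol(\partial W_H,\alpha_H)=(n+1)\Vol(W_H)$ used for the systolic ratio.
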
 
\begin{proof}
It is immediate that $d\hat{\lambda}=d\lambda+dx\wedge dy$ is symplectic on $W\times \bC$, hence also on $W_H$. Denote by $Y_{\lambda}$ the Liouville vector field of $(W,\lambda)$, so that $\iota_{Y_\lambda}d\lambda=\lambda$. Then the Liouville vector field of $(W\times \bC,\hat{\lambda})$ is $\widehat Y=Y_\lambda+\frac12(x\partial_x+y\partial_y),$ since $\iota_{\widehat Y}d\hat\lambda=\hat\lambda$.

On $W\times (\bC\setminus\{0\})$, the angular coordinate $t=\theta/2\pi\in S^1$ is well-defined and smooth, so we may define a smooth function $f_H(w,z)=\pi |z|^2-H(t,w)$. Then 
\[ W_H\cap (W\times (\bC\setminus\{0\}))=f_H^{-1}((-\infty,0]).\]
We compute $df_H(\widehat{Y})$. Since $x\partial_x+y\partial_y=r\partial_r$ and $dt(r\partial_r)=0$, we have
\begin{align*}
df_H(\widehat Y)
&=d(\pi |z|^2)(\widehat Y)-dH(\widehat Y) \\
&=\pi |z|^2-dH_t(Y_\lambda)
-\partial_tH\cdot dt\!\left(\frac12(x\partial_x+y\partial_y)\right) \\
&=\pi |z|^2-dH_t(Y_\lambda).
\end{align*}
Using $\iota_{X_{H_t}}d\lambda=d_wH_t$ and
$\iota_{Y_\lambda}d\lambda=\lambda$, we obtain $d_wH_t(Y_\lambda)=d\lambda(X_{H_t},Y_\lambda)=-\lambda(X_{H_t}).$ Hence, we have 
\[ df_H(\widehat Y)=\pi |z|^2+\lambda(X_{H_t})=f_H+H+\lambda(X_{H_t})=f_H+\tau_{\lambda,H}.\] Therefore, on $f_H^{-1}(0)$ we have $df_H(\widehat Y)=\tau_{\lambda,H}>0$. It follows that $0$ is a regular value of $f_H$, and  $\widehat Y$  is outward transverse to the boundary along $f_H^{-1}(0)\subset W\times (\bC\setminus\{0\})$.

  Since $H$ is autonomous in a neighborhood of $\partial W$, there exists an open neighborhood $U\subset W$ of $\partial W$ and a smooth function $h\colon U\to \bR$ such that $H(t,w)=h(w)$ for all $(t,w)\in S^1\times U$. On $U\times \bC$, define $g_H(w,z)=\pi |z|^2-h(w)$.
Then $g_H$ is smooth on all of $U\times \bC$, and by definition, $W_H\cap (U\times \bC)=g_H^{-1}((-\infty,0])$. We again compute
\begin{align*}
dg_H(\widehat Y)
&=d(\pi |z|^2)(\widehat Y)-dh(Y_\lambda) \\
&=\pi |z|^2-dH_t(Y_\lambda) \\
&=\pi |z|^2+\lambda(X_{H_t}) \\
&=g_H+h+\lambda(X_{H_t}) =g_H+\tau_{\lambda,H}.
\end{align*}
Thus, on $g_H^{-1}(0)$ we have $dg_H(\widehat Y)=\tau_{\lambda,H}>0$. Hence $0$ is a regular value of $g_H$, and $\widehat Y$ is outward transverse to the boundary along $g_H^{-1}(0)$, which contains $\partial W\times\{0\}$.

Since every boundary point of $W_H$ is covered by one of the two cases above, it follows that $W_H$ is a smooth manifold with boundary $\partial W_H$, and that $\widehat Y$ points outward everywhere along $\partial W_H$. Finally, $W_H$ is compact because $W$ is compact and $\pi |z|^2\le H(t,w)\le \max_{S^1\times W}H$. Therefore $(W_H,\hat{\lambda})$ is a Liouville domain.

The volume of $W_H$ is
		\begin{align*}
		{\rm Vol}(W_H)&=\frac{1}{(n+1)!}\int_{W_H}(d\hat{\lambda})^{n+1}\\
		& =\frac{n+1}{(n+1)!}\int_{W_H}d\left(\frac{1}{2}r^2d\theta\right)\wedge(d\lambda)^n\\
		&=\frac{1}{n!}\int_{\partial W_H}\frac{1}{2}r^2d\theta\wedge(d\lambda)^n\\
		& =\frac{1}{n!}\int_{[0,2\pi]\times W}\frac{H(\theta/2\pi,w)}{2\pi}d\theta\wedge(d\lambda)^n =\frac{1}{n!}\int_{[0,1]\times W}Hdt\wedge(d\lambda)^n.
	\end{align*}
Here we use $\partial W_H=\{(w,re^{i\theta})\in W\times \bC\mid \pi r^2=H(\theta/2\pi,w)\}$.
\end{proof}

An immediate consequence is:
\begin{cor}\label{cor-calabi}
	For any $H,G\in\cH(W,\lambda)$, we have
	\[
	\frac{1}{n+1}|\ln\Cal(H)-\ln\Cal(G)|\leq d_{\rm BM}((W_H,\hat{\lambda}),(W_G,\hat{\lambda}))\leq d_{\rm SBM}((W_H,\hat{\lambda}),(W_G,\hat{\lambda})).
	\]
	\end{cor}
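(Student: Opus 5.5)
The second inequality $d_{\rm BM}\le d_{\rm SBM}$ is immediate from the definitions: any pair of embeddings admissible for $d_{\rm SBM}$ with constant $C$ is in particular admissible for $d_{\rm BM}$ with the same $C$. So the plan concentrates on the first inequality, a volume argument combined with the volume formula of Proposition~\ref{prop-Louville}.

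First, determine how volume scales under the Liouville rescaling in dimension $2n+2$. For a Liouville domain $(V,\lambda_V)$ of dimension $2N$ we have $(\sL^{s}_{\lambda})^*d\lambda_V=e^{s}d\lambda_V$. Hence $\frac1C V=\sL^{-\ln C}_{\lambda}(V)$ satisfies ${\rm Vol}(\frac1C V)=C^{-N}{\rm Vol}(V)$. Here $N=n+1$, so ${\rm Vol}(\frac1C W_H)=C^{-(n+1)}{\rm Vol}(W_H)$.

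Next, a Liouville embedding is symplectic, hence volume-preserving onto its image. Fix any $C$ with $\ln C$ admissible for $d_{\rm BM}(W_H,W_G)$. From $\frac1C W_H\xL W_G$ we obtain $C^{-(n+1)}{\rm Vol}(W_H)\le {\rm Vol}(W_G)$, and symmetrically $C^{-(n+1)}{\rm Vol}(W_G)\le{\rm Vol}(W_H)$. Taking logarithms gives
\[
|\ln{\rm Vol}(W_H)-\ln{\rm Vol}(W_G)|\le (n+1)\ln C.
\]
By Proposition~\ref{prop-Louville}, ${\rm Vol}(W_H)=\frac1{n!}\Cal(H)$ and ${\rm Vol}(W_G)=\frac1{n!}\Cal(G)$. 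These are positive because $H>0$ in the interior; this follows since $H$ is positive somewhere, as $W_H$ has nonempty interior, and in any case $W_H$ is a genuine Liouville domain. The factor $\frac1{n!}$ cancels in the difference of logarithms. Taking the infimum over admissible $C$ then yields $\frac1{n+1}|\ln\Cal(H)-\ln\Cal(G)|\le d_{\rm BM}$.

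There is no serious obstacle. The only points to check are the exponent $n+1$ in the volume scaling, coming from $\dim W_H=2n+2$, and the fact that the embeddings in the definition of $d_{\rm BM}$ are genuine symplectic embeddings of the compact domains, so that volumes compare monotonically.
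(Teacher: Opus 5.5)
Your proposal is correct and follows the paper's proof essentially step for step: the rescaled domain $\frac1C V$ has volume $\Vol(\frac1C V)=C^{-(n+1)}\Vol(V)$ in dimension $2n+2$, volume is monotone under the two Liouville embeddings, you take logarithms, and you substitute $\Vol(W_H)=\frac1{n!}\Cal(H)$ from Proposition~\ref{prop-Louville}. Your justification that $\Cal(H)>0$ is a bit loose; the clean reason is Remark~\ref{rmk-positive}, where $\tau_{\lambda,H}>0$ forces $H>0$ on ${\rm Int}(W)$.
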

\begin{proof}
Let $(W,\lambda_W)$ and $(V,\lambda_V)$ be two $2n$-dimensional Liouville domains. For any $\varepsilon>0$, there exists $C$ with $\ln C<d_{\rm BM}((W,\lambda_W),(V,\lambda_V))+\varepsilon$ and Liouville embeddings $i_{W,V} \colon \tfrac{1}{C}W \xL V$ and $i_{V,W} \colon \tfrac{1}{C}V \xL W$.  Recall that for a Liouville domain $(W,\lambda_W)$ with Liouville flow $\sL_{\lambda_W}^t$, the scaled domain $aW$ for $0<a\leq 1$ is defined by $aW=\sL_{\lambda_W}^{\ln a}(W)$, and its volume satisfies $\Vol(aW)=a^n\Vol(W)$. In particular, for $a=1/C$, we have 
\[ \Vol(\tfrac{1}{C} W) = (\tfrac{1}{C})^n \Vol(W).\]

Since $i_{W,V}$ is a Liouville embedding,  the image $i_{W,V}(\tfrac{1}{C}W)$ has the same volume as $\tfrac{1}{C}W$, and it is contained in $V$. Therefore, $\left(\frac{1}{C}\right)^n\Vol(W)=\Vol\left(\frac{1}{C}W\right) \leq \Vol(V)$. Similarly, from the embedding $i_{V,W}$ we obtain $\left(\frac{1}{C}\right)^n\Vol(V)=\Vol\left(\frac{1}{C}V\right)\leq \Vol(W)$. Combining these two inequalities yields 
\[ \frac{1}{C^n}\Vol(V) \leq \Vol(W) \leq C^n\Vol(V).\]
 Taking logarithms we obtain  $\frac{1}{n}\bigl|\ln\Vol(W) - \ln\Vol(V)\bigr| \leq \ln C$. Letting $\varepsilon \to 0$, we conclude that $d_{\mathrm{BM}}((W,\lambda_W),(V,\lambda_V)) \geq \frac{1}{n} \bigl|\ln\Vol(W) - \ln\Vol(V)\bigr|$. Taking $(W,\lambda_W)=(W_H,\hat{\lambda})$ and $(V,\lambda_V)=(W_G,\hat{\lambda})$, then we obtain the desired conclusion with constant being $\frac{1}{n+1}$.
\end{proof}

\begin{remark}[Section~4 in \cite{Ush22}]\label{rmk-positive}
The  condition $\tau_{\lambda,H}>0$ implies that $H > 0$ on ${\rm Int}\,W$. Otherwise, $H$ would attain a non-positive global minimum in ${\rm Int}\,W$. Then at such a point for the corresponding fixed time $t$, one has
$X_{H_t}=0$, hence $\tau_{\lambda,H}(t,w)=H(t,w)<0$, which contradicts the positivity of $\tau_{\lambda,H}$. This condition also implies that $\lambda(X_{H_t})|_{\partial W}>0$ and that $d\lambda(X_{H_t},\cdot)|_{T\partial W}\equiv 0$. Hence $X_{H_t}|_{\partial W}$ is a positive, possibly nonconstant, multiple of the Reeb field $R_{\lambda|_{\partial W}}$ of $\lambda|_{\partial W}$.
\end{remark}

The following proposition provides a sufficient condition where two tubes, constructed from different input Hamiltonian functions, can be related by a Liouville diffeomorphism. 

\begin{prop}\label{prop-invariant}
Let $\{H^{s}\}_{s\in[0,1]}\subset \cH(W,\lambda)$ be a smooth family. Assume that the time-one map $\varphi_{H^{s}}^{1}$ is independent of $s$, and that there exists a neighborhood $U$ of $\partial W$ such that $H^{s}=H^{0} \text{ on } S^{1}\times U$, for all $s\in[0,1]$. Then there exists a Hamiltonian diffeomorphism $\psi\colon W\times \bC\to W\times \bC$ such that $\psi(W_{H^{0}})=W_{H^{1}}$. In particular, the restriction $\psi|_{W_{H^{0}}}\colon (W_{H^{0}},\hat\lambda)\to (W_{H^{1}},\hat\lambda)$ is a Liouville diffeomorphism.
\end{prop}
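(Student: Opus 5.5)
The plan is to realize $W_{H^0}$ and $W_{H^1}$ as the images of one another under a Hamiltonian isotopy of $W\times\bC$, built from the family $\{H^s\}$ through the standard extended-phase-space picture. Since $\varphi^1_{H^s}$ does not depend on $s$, the loop-type derivative $\frac{d}{ds}\varphi^t_{H^s}$ is generated by a Hamiltonian $F^s_t$ with $F^s_0=F^s_1$ (after normalization). Explicitly, set $\varphi^{s,t}=\varphi^t_{H^s}$ and let $F^s_t$ generate the $s$-derivative, $\partial_s\varphi^{s,t}=X_{F^s_t}\circ\varphi^{s,t}$. The compatibility identity
\[\partial_s H^s_t-\partial_t F^s_t=\{H^s_t,F^s_t\}\]
holds up to sign and normalization conventions. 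Moreover $F^s_0=0$ because $\varphi^{s,0}=\id$, and $F^s_1=0$ because $\varphi^{s,1}$ is constant in $s$. Hence $F^s_t$ is $1$-periodic in $t$ and descends to $S^1\times W$. Since $H^s=H^0$ on $S^1\times U$, the flows agree near $\partial W$, so $F^s$ vanishes on a (slightly smaller) neighborhood of $\partial W$, after fixing the additive constant using the boundary.

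Next I would define the Hamiltonian on $W\times(\bC\setminus\{0\})$ by
\[K_s(w,re^{2\pi i t})\coloneqq -F^s_t(w)\quad(\text{sign fixed by the computation below}).\]
It extends by zero over $U\times\bC$ because $F^s$ vanishes there, so it is smooth on all of $W\times\bC$, including along $z=0$ where $t$ is undefined. In the coordinates $(w,t,\rho=\pi r^2)$ the form is $d\hat\lambda=d\lambda+d\rho\wedge dt$. Then $X_{K_s}$ has $w$-component $-X_{F^s_t}$, a $\rho$-component proportional to $\partial_tF^s_t$, and no $t$-component. The key computation is that the hypersurface $\{\rho=H^s(t,w)\}$ is carried to $\{\rho=H^{s'}(t,w)\}$ under the flow in $s$. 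Differentiating $\rho-H^s(t,w)$ along $\partial_s+X_{K_s}$ gives, up to signs, $\partial_tF-\partial_sH-\{H,F\}$, which vanishes by the compatibility identity. Thus the $s$-flow $\psi_s$ of $X_{K_s}$ maps $\partial W_{H^0}$ onto $\partial W_{H^s}$, and hence maps $W_{H^0}$ onto $W_{H^s}$ by connectedness and compactness of the region, with the collar near $\partial W\times\{0\}$ fixed. Setting $\psi=\psi_1$ gives the Hamiltonian diffeomorphism. Since $K_s$ is compactly supported in the relevant region, the flow exists; one may cut off for large $|z|$ if needed.

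For the Liouville claim, a Hamiltonian diffeomorphism $\psi$ of an exact manifold satisfies $\psi^*\hat\lambda-\hat\lambda=dg$, where $g=\int_0^1(\iota_{X_{K_s}}\hat\lambda-K_s)\circ\psi_s\,ds$. Restricting to $W_{H^0}$ therefore gives a Liouville diffeomorphism onto $W_{H^1}$.

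I expect the main obstacle to be the sign and normalization bookkeeping in the compatibility identity relating $\partial_sH$, $\partial_tF$ and the Poisson bracket, together with verifying that the chosen $K_s$ really transports the graph hypersurface $\rho=H^s$. I would first fix conventions by checking the autonomous rotation example, and only then carry out the general derivative calculation.
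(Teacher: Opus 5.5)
Your strategy matches the paper's: generate $\partial_s\varphi^t_{H^s}$ by a Hamiltonian $F_s$ on $S^1\times W$ vanishing near $\partial W$ with $\partial_sH^s=\partial_tF_s-dH^s_t(X_{F_s})$, lift it to a $\rho$-independent Hamiltonian on $(\rho,t,w)$-space with form $d\lambda+d\rho\wedge dt$, and check that $(\partial_s+X_{K_s})(\rho-H^s)=0$. The gap is in the descent to $W\times\bC$. You claim $K_s(w,re^{2\pi it})=\mp F_s(t,w)$ is smooth along $z=0$ because $F_s$ vanishes on $U$. But $\{z=0\}$ is the whole divisor $W\times\{0\}$, and vanishing on $U$ only handles $U\times\{0\}$. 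For $w$ in the support of $F_s$, which lies in the interior of $W$, the value $\mp F_s(\arg z/2\pi,w)$ depends on the angle of $z$ and has no limit as $z\to0$. Worse, $X_{K_s}$ has $\rho$-component $\pm\partial_tF_s$, which is generally nonzero at $\rho=0$, so the flow would push points to ``$\rho<0$''. So your $K_s$ does not generate a Hamiltonian isotopy of $W\times\bC$.

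The missing idea is that the hypersurfaces $\{\rho=H^s\}$ stay uniformly away from $\rho=0$ wherever $F_s\neq0$. Indeed, $\bigcup_s\mathrm{supp}\,F_s$ lies in the compact set $S^1\times(W\setminus U')\subset S^1\times\mathrm{Int}(W)$, where $H^s>0$ by Remark~\ref{rmk-positive}. Hence there is $\delta>0$, independent of $s$, with $H^s\ge2\delta$ on $\mathrm{supp}\,F_s$. Replace $K_s$ by $\chi(\rho)K_s$, where $\chi=0$ for $\rho\le\delta$ and for $\rho\ge R+1$, and $\chi=1$ on $[\tfrac32\delta,R]$ with $R>\max H^s$. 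This is smooth and compactly supported in $\mathrm{Int}(W)\times\bC$. At every point of $\{\rho=H^s\}$, either $\chi\equiv1$ nearby or $F_s\equiv0$ nearby, so the two vector fields agree along trajectories starting on $\partial W_{H^0}$. Thus the cut-off flow still carries $\partial W_{H^0}$ onto $\partial W_{H^s}$ and fixes the binding. This is exactly the role of the cutoff $\chi_s$ in the paper's proof, and with it the rest of your argument goes through. A smaller point: ``the flows agree near $\partial W$'' needs a flow-invariant neighbourhood $U'\subset U$, such as a small sublevel set of $H^0$, which is invariant because $H^0$ is autonomous near $\partial W$.
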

\begin{proof}
Write $\phi_s^t\coloneqq \varphi_{H^s}^t$. After shrinking $U$, we may assume that $U$ is invariant under the common Hamiltonian flow near $\partial W$. Indeed, on $U$ the Hamiltonians $H^s$ are all equal to $H^0$, which is autonomous near $\partial W$, and the corresponding Hamiltonian vector field preserves the level sets of $H^0$. Hence, for all $s\in[0,1]$, the flows $\phi_s^t$ agree on $U$.

Consider the $s$-variation of the Hamiltonian isotopy $Y_s^t\coloneqq \partial_s\phi_s^t\circ(\phi_s^t)^{-1}$. Since the endpoints $\phi_s^0=\mathrm{id}$ and $\phi_s^1$ are independent of $s$, we have $Y_s^0=Y_s^1=0$. Moreover, by the argument above, $Y_s^t=0$ on $U$. By Banyaga’s Lemma (Proposition~3.1.5 in \cite{Ban97}), there exists a smooth family of Hamiltonians $F_s\colon S^1\times W\to\mathbb R$ with support contained in $W\setminus U$, such that $Y_s^t=X_{F_s(t,\cdot)}$ and
\begin{equation}\label{eq-variation-formula}
\partial_s H^s=\partial_t F_s-dH_t^s(X_{F_s}).
\end{equation}
Here $X_{F_s}$ denotes the Hamiltonian vector field of $F_s(t,\cdot)$ with respect to $d\lambda$. The normalization $F_s=0$ on $U$ also makes $F_s$ a smooth function on $S^1\times W$, since $Y_s^0=Y_s^1=0$.

 Let $\widetilde\Omega\coloneqq\mathbb R\times S^1\times W$ with coordinates $(\rho,t,w)$ and symplectic form $\omega_{\widetilde\Omega}\coloneqq d\lambda+d\rho\wedge dt$. For each $s$, define $f_s(\rho,t,w)\coloneqq\rho-H^s(t,w)$. Let $K_s(\rho,t,w)\coloneqq F_s(t,w)$. Since $K_s$ is independent of $\rho$, its Hamiltonian vector field with respect to $\omega_{\widetilde\Omega}$ is $X_{K_s}=X_{F_s}+(\partial_tF_s)\partial_\rho$. Therefore $X_{K_s}(f_s)=\partial_tF_s-dH_t^s(X_{F_s})$. Using \eqref{eq-variation-formula}, we obtain $(\partial_s+X_{K_s})f_s=0$. Thus, if $\widetilde\Psi_s$ denotes the flow of $X_{K_s}$, then $\frac{d}{ds}(f_s\circ \widetilde\Psi_s)=0$. Consequently, $f_s\circ\widetilde\Psi_s=f_0$, and $\widetilde\Psi_s$ sends the hypersurface $\Gamma_0\coloneqq\{f_0=0\}$ to $\Gamma_s\coloneqq\{f_s=0\}$.

It remains to descend this construction to $W\times\mathbb C$. Put $\rho=\pi r^2$ and $z=re^{2\pi it}$. The map $\Phi\colon (0,\infty)\times S^1\times W\to W\times\mathbb C^*$ defined by \[ \Phi(\rho,t,w)=\left(w,\sqrt{\rho/\pi}\,e^{2\pi it}\right)\]
satisfies $\Phi^*\hat\lambda=\lambda+\rho\,dt$ and $\Phi^*d\hat\lambda=d\lambda+d\rho\wedge dt=\omega_{\widetilde\Omega}$. Since $F_s$ is supported away from $U$, and $H^s>0$ on ${\rm Int}W$, there exists $\delta>0$ such that $H^s(t,w)\geq 2\delta$ on the support of $F_s$, for all $s\in[0,1]$. Choose a smooth cutoff function $\chi_s(\rho,t,w)$ such that $\chi_s=1$ in a neighborhood of $\Gamma_s\cap{\rm supp}F_s$, while $\chi_s=0$ for $\rho\leq \delta$ and also near $\partial W$. Replacing $K_s$ by $\chi_sK_s$ does not change the above flow near the hypersurfaces $\Gamma_s$, but
makes the Hamiltonian vanish for $\rho$ sufficiently small and near
$\partial W$.

Hence, the function
\[
\overline K_s(w,z)\coloneqq
\begin{cases}
(\chi_sK_s)(\pi|z|^2,\frac{\arg z}{2\pi},w),& z\neq 0,\\
0, & z=0,
\end{cases}
\]
defines a smooth compactly supported Hamiltonian on $W\times\mathbb C$. Let $\psi_s$ be its Hamiltonian flow with respect to $d\hat\lambda$. By construction, $\psi_s$ agrees, near the smooth graph part of $\partial W_{H^0}$, with the flow obtained from $\widetilde\Psi_s$ under the map $\Phi$. Therefore, $\psi_1$ sends the graph part of $\partial W_{H^0}$ to the graph part of $\partial W_{H^1}$. Near the binding $\partial W\times\{0\}$, the
Hamiltonian $\overline K_s$ vanishes, and the two domains agree because $H^s=H^0$ near $\partial W$. Therefore $\psi_1$ fixes the binding and sends $\partial W_{H^0}$ onto $\partial W_{H^1}$. Moreover, the identity $(\partial_s+X_{K_s})f_s=0$ shows that the inward side of $\partial W_{H^0}$ is mapped to the inward side of $\partial W_{H^1}$. Hence $\psi_1(W_{H^0})=W_{H^1}$.

Finally, since $\psi_s$ is a Hamiltonian isotopy of the exact
symplectic manifold $(W\times\mathbb C,d\hat\lambda)$, we have
\[
\frac{d}{ds}\psi_s^*\hat\lambda=\psi_s^*\mathcal L_{X_{\overline K_s}}\hat\lambda=d\left(\psi_s^*(\overline K_s+\hat\lambda(X_{\overline K_s}))\right).
\]
It follows that $\psi_1^*\hat\lambda-\hat\lambda$ is exact. Therefore the restriction $\psi_1|_{W_{H^0}}\colon (W_{H^0},\hat\lambda)\to (W_{H^1},\hat\lambda)$
is a Liouville diffeomorphism.
\end{proof}

This subsection ends with a cute lemma which will be helpful in the proof of Theorem \ref{thm-estimation}. 

\begin{lemma}\label{lemma-reparametrization}
	If $\psi\colon (W,\lambda_W)\ra (V,\lambda_V)$ is a strict Liouville diffeomorphism, then it induces a canonical bijection between $\cH(W,\lambda_W)$ and $\cH(V,\lambda_V)$ via $H\mapsto H\circ\psi^{-1}$ and
	 a strict Liouville diffeomorphism
	\[
	\psi_H\colon (W_H,\hat{\lambda}_W)\ra (V_{H\circ \psi^{-1}},\hat{\lambda}_V) \quad \mbox{by} \quad (w,z)\mapsto (\psi(w),z).
	\]
	In particular, $(W_H,\hat{\lambda})\cong(W_{H\circ\psi},\hat{\lambda})$ for any $\psi\in{\rm Symp}(W,\lambda)$.
\end{lemma}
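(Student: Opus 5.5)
The plan is to verify the three claims in Lemma~\ref{lemma-reparametrization} in order: the map $H\mapsto H\circ\psi^{-1}$ preserves condition (\ref{H-con}); $\psi_H$ carries $W_H$ onto $V_{H\circ\psi^{-1}}$; and $\psi_H$ is strict. Throughout, $\psi^*\lambda_V=\lambda_W$.

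\textbf{Step 1: (\ref{H-con}) is preserved.} Set $G\coloneqq H\circ\psi^{-1}$, i.e.\ $G(t,v)=H(t,\psi^{-1}(v))$. Since $\psi$ is a diffeomorphism, it maps a neighborhood of $\partial W$ onto a neighborhood of $\partial V$, so $G$ is autonomous near $\partial V$ and $G|_{\partial V}\equiv 0$. Because $\psi$ is symplectic for $d\lambda_W$ and $d\lambda_V$, naturality of Hamiltonian vector fields gives $X_{G_t}=\psi_*X_{H_t}$. Then, at $v=\psi(w)$,
\[
\lambda_V(X_{G_t})(v)=\lambda_V(\psi_*X_{H_t}(w))=(\psi^*\lambda_V)(X_{H_t})(w)=\lambda_W(X_{H_t})(w).
\]
Hence $\tau_{\lambda_V,G}(t,\psi(w))=\tau_{\lambda_W,H}(t,w)>0$. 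Applying the same argument to $\psi^{-1}$ shows the assignment $H\mapsto H\circ\psi^{-1}$ is a bijection $\cH(W,\lambda_W)\to\cH(V,\lambda_V)$.

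\textbf{Step 2: $\psi_H$ is a diffeomorphism of tubes.} The map $\psi\times\id_{\bC}$ is a diffeomorphism $W\times\bC\to V\times\bC$, and it preserves the polar coordinates $(r,t)$ on $\bC$. The condition $\pi|z|^2\le H(t,w)$ is equivalent to $\pi|z|^2\le G(t,\psi(w))$, including at $z=0$ where no angle is needed. So the restriction is a bijection $W_H\to V_G$, hence a diffeomorphism of manifolds with boundary.

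\textbf{Step 3: strictness.} We compute
\[
(\psi\times\id)^*(\lambda_V+\pi r^2dt)=\psi^*\lambda_V+\pi r^2dt=\lambda_W+\pi r^2dt.
\]
The final claim follows by taking $V=W$ and replacing $\psi$ with $\psi^{-1}$, which is again in ${\rm Symp}(W,\lambda)$.

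The only point needing care is the naturality identity $X_{H\circ\psi^{-1}}=\psi_*X_H$, together with the fact that $\lambda$, not just $d\lambda$, is preserved. That is exactly where strictness is used, since $\tau$ involves $\lambda$ itself. Everything else is formal.
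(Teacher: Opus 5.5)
Your proposal is correct and follows the paper's proof essentially step by step: you check condition (\ref{H-con}) via $X_{(H\circ\psi^{-1})_t}=\psi_*X_{H_t}$ and $\psi^*\lambda_V=\lambda_W$ to get $\tau_{\lambda_V,H\circ\psi^{-1}}(t,\psi(w))=\tau_{\lambda_W,H}(t,w)$, verify that $(w,z)\mapsto(\psi(w),z)$ maps $W_H$ bijectively onto $V_{H\circ\psi^{-1}}$ with inverse induced by $\psi^{-1}$, and compute the pullback of $\hat\lambda_V$. The paper leaves the ``in particular'' clause implicit, and you spell it out by applying the lemma to $\psi^{-1}\in{\rm Symp}(W,\lambda)$.
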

\begin{proof}
Let $K \coloneqq H \circ \psi^{-1} \colon S^1 \times V \to \mathbb{R}$.
We first show that $K \in \mathcal{H}(V,\lambda_V)$. Since $\psi \colon W \to V$ is a diffeomorphism of manifolds with boundary, it maps $\partial W$ onto $\partial V$. Hence, because $H$ is autonomous in a neighborhood of $\partial W$ and satisfies $H|_{\partial W} \equiv 0$, the same is true for $K$ in a neighborhood of $\partial V$. Thus it remains to verify that $\tau_{\lambda_V,K}(t,v)=\lambda_V(X_{K_t}(v))+K(t,v)>0.$

To do so, we first identify the Hamiltonian vector field of $K$. Since $\psi$ is a strict Liouville diffeomorphism, we have $\psi^*\lambda_V=\lambda_W$, hence $\psi^*d\lambda_V=d\lambda_W$. Therefore $\psi$ is a symplectomorphism. For each fixed $t\in S^1$, we have $K_t=H_t\circ\psi^{-1}$. Since $\psi$ is a strict Liouville diffeomorphism, it is symplectic. Hence, 
\[ \iota_{\psi_*X_{H_t}}d\lambda_V=(\psi^{-1})^*(\iota_{X_{H_t}}d\lambda_W)=(\psi^{-1})^*d_wH_t=d_vK_t.\]
Therefore $X_{K_t}=\psi_*X_{H_t}$. Then, using $\psi^*\lambda_V=\lambda_W$, we obtain $\psi^*\tau_{\lambda_V,K}(t,\cdot)=\psi^*(\lambda_V(X_{K_t})+K_t)=\lambda_W(X_{H_t})+H_t=\tau_{\lambda_W,H}(t,\cdot).$ Since $\tau_{\lambda_W,H}>0$ by assumption, we conclude that $\tau_{\lambda_V,K}>0$. Thus $K=H\circ\psi^{-1}\in \mathcal{H}(V,\lambda_V)$.

Next, define $\psi_H \colon W_H \to V_{H\circ\psi^{-1}}$ by $(w,z)\mapsto (\psi(w),z)$.
We check that this map is well defined. If $(w,z)\in W_H$, writing $z=re^{i\theta}$ with $\theta=2\pi t$, we have $\pi |z|^2 \le H(t,w)=(H\circ\psi^{-1})(t,\psi(w))$, so $(\psi(w),z)\in V_{H\circ\psi^{-1}}$. Since the inverse map is simply $(v,z)\mapsto (\psi^{-1}(v),z)$,
the map $\psi_H$ is a diffeomorphism.

Finally, we compute the pullback of the Liouville form, $\psi_H^*\hat{\lambda}_V=\psi_H^*\left(\lambda_V+\frac12 r^2\,d\theta\right)=\psi^*\lambda_V+\frac12 r^2\,d\theta =\lambda_W+\frac12 r^2\,d\theta =\hat{\lambda}_W$.
Therefore $\psi_H$ is a strict Liouville diffeomorphism
$\psi_H\colon (W_H,\hat{\lambda}_W)\to (V_{H\circ\psi^{-1}},\hat{\lambda}_V)$.

This proves that the assignment $H\mapsto H\circ\psi^{-1}$ gives a bijection from $\mathcal{H}(W,\lambda_W)$ to $\mathcal{H}(V,\lambda_V)$, with inverse $K\mapsto K\circ\psi$.
\end{proof}
 
 \subsection{Reeb dynamics on the lift}\label{sec-Reeb-Ham}
We now study the dynamics on $\partial W_H$. When $H$ is autonomous, the following results recover those from Section~4 in \cite{Ush22}.  We begin by computing the Reeb vector field.
\begin{prop}\label{prop-construction}
Let $H\in\cH(W,\lambda)$ and set $\alpha_H\coloneqq\hat{\lambda}|_{\partial W_H}$. Then, on $\partial W_H\cap\{r>0\}$, the Reeb vector field of $\alpha_H$ is given by
\[
R_{\alpha_H}=\frac{1}{\tau_{\lambda,H}}\left(X_{H_t}+\frac{\partial_tH}{2\pi r}\partial_r+2\pi\partial_\theta\right).
\]
Equivalently, since $\theta=2\pi t$, one may write
\[
R_{\alpha_H}=\frac{1}{\tau_{\lambda,H}}\left(X_{H_t}+\frac{\partial_tH}{2\pi r}\partial_r+\partial_t\right).
\]
Moreover, this vector field extends smoothly to the entire $\partial W_H$.
\end{prop}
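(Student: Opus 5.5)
The plan is a direct verification. I will write $\partial W_H\cap\{r>0\}$ as the regular level set $\{f_H=0\}$ of $f_H(w,z)=\pi r^2-H(t,w)$, with $z=re^{2\pi i t}$, as in the proof of Proposition~\ref{prop-Louville}. I will then show that the candidate field
\[
V\coloneqq X_{H_t}+\frac{\partial_tH}{2\pi r}\partial_r+\partial_t
\]
is tangent to this hypersurface, satisfies $\hat\lambda(V)=\tau_{\lambda,H}$, and lies in the kernel of $d\hat\lambda|_{T\partial W_H}$. Since $\tau_{\lambda,H}>0$, the Reeb vector field is then $R_{\alpha_H}=V/\tau_{\lambda,H}$. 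Throughout I use $\hat\lambda=\lambda+\pi r^2dt$, so $d\hat\lambda=d\lambda+2\pi r\,dr\wedge dt$, together with the convention $\iota_{X_{H_t}}d\lambda=d_wH_t$.

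\textbf{Step 1: tangency.} Compute
\[
df_H(V)=2\pi r\cdot\frac{\partial_tH}{2\pi r}-d_wH_t(X_{H_t})-\partial_tH\,dt(\partial_t).
\]
Since $d_wH_t(X_{H_t})=d\lambda(X_{H_t},X_{H_t})=0$ and $dt(\partial_t)=1$, this gives $df_H(V)=0$. Hence $V$ is tangent to $\partial W_H$.

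\textbf{Step 2: normalization.} We have $\hat\lambda(V)=\lambda(X_{H_t})+\pi r^2$. On $\partial W_H$ we have $\pi r^2=H$, so $\hat\lambda(V)=\tau_{\lambda,H}$.

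\textbf{Step 3: kernel condition.} Contract $V$ into $d\hat\lambda$:
\[
\iota_V d\hat\lambda=d_wH_t+2\pi r\left(\frac{\partial_tH}{2\pi r}\,dt-dr\right)=d_wH_t+\partial_tH\,dt-2\pi r\,dr=-df_H.
\]
Therefore $\iota_V d\alpha_H$ vanishes on $T\partial W_H=\ker df_H$. Combined with Step 2, this gives the stated formula. The version written with $\partial_\theta$ follows from $\partial_t=2\pi\partial_\theta$.

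\textbf{Step 4: smooth extension across the binding.} This is the only step needing care, because $\partial_r/r$ and $\partial_t$ are singular at $r=0$. Near $\partial W$, $H=h$ is autonomous, so $\partial_tH\equiv 0$ there and the formula reduces to
\[
R_{\alpha_H}=\frac{X_h+\partial_t}{\tau_{\lambda,h}}.
\]
Here $\partial_t=2\pi(x\partial_y-y\partial_x)$ is a smooth vector field on $\bC$, and $X_h$ and $\tau_{\lambda,h}=\lambda(X_h)+h>0$ are smooth on $U$. So this expression is a smooth vector field on $U\times\bC$. It is tangent to $\{g_H=0\}$ by the same computation with $g_H$ in place of $f_H$, and it satisfies the Reeb equations on all of $\partial W_H\cap(U\times\bC)$ by continuity, since $\{r>0\}$ is dense there. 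Away from $U$, the hypersurface stays inside $\{r>0\}$, because $H>0$ on $\mathrm{Int}\,W$ by Remark~\ref{rmk-positive}, so no extension is needed there.

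As a consistency check, on the binding $\partial W\times\{0\}$ the field reduces to $X_h/\lambda(X_h)$. By Remark~\ref{rmk-positive} this is exactly the Reeb field of $\lambda|_{\partial W}$, in agreement with the later use of $\partial W\times\{0\}$ as an invariant set.
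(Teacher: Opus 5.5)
Your proof is correct and is essentially the paper's argument. The paper makes the ansatz $R=aX_{H_t}+b\partial_r+c\partial_t$ and solves $\iota_R d\hat\lambda=g\,dF$ together with $\hat\lambda(R)=1$ to get $g=-1/\tau_{\lambda,H}$, which is your identity $\iota_V d\hat\lambda=-df_H$ after rescaling. It handles the binding the same way you do, using $\partial_tH\equiv 0$ near $\partial W$ and the smoothness of $\partial_t=2\pi\partial_\theta$ across $r=0$. Your verify-the-candidate version is a slightly cleaner presentation, since it checks tangency explicitly and does not need to assume in advance that $R_{\alpha_H}$ has that form.
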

\begin{proof}
Set $F(w,r,t)\coloneqq \pi r^2-H(t,w),$ so that $\partial W_H=\{F=0\}$. Since $\theta=2\pi t$, we have $\hat{\lambda}=\lambda+\pi r^2dt$ and $d\hat{\lambda}=d\lambda+2\pi r\,dr\wedge dt$. On $\partial W_H\cap\{r>0\}$, write the Reeb vector field in the form $R=aX_{H_t}+b\partial_r+c\partial_t$, where $a,b,c$ are functions to be determined.

The condition $\iota_R d\alpha_H=0$ is equivalent to saying that, along $\partial W_H$, the $1$-form $\iota_R d\hat{\lambda}$ is proportional to $dF$. Thus there exists a function $g$ on $\partial W_H$ such that $\iota_Rd\hat{\lambda}=g\,dF$. Using $\iota_{X_{H_t}}d\lambda=d_wH_t$, we compute $\iota_Rd\hat{\lambda}=a\,d_wH_t+2\pi r\,b\,dt-2\pi r\,c\,dr$. On the other hand, $dF=2\pi r\,dr-d_wH_t-(\partial_tH)\,dt$. Comparing the coefficients of $d_wH_t, dt$ and $dr$, we obtain  
\[ a=-g,\,\, b=-\frac{g\,\partial_tH}{2\pi r},\,\,\mbox{and}\,\, c=-g.\]
It remains to impose the normalization condition $\alpha_H(R)=1$. Since $\pi r^2=H(t,w)$ on $\partial W_H$, we have 
\[ 1=\hat{\lambda}(R)=a\,\lambda(X_{H_t})+\pi r^2c=-g(\lambda(X_{H_t})+H(t,w))=-g\,\tau_{\lambda,H}.\]
Hence, $g=-\frac{1}{\tau_{\lambda,H}}$, and therefore $a=\frac{1}{\tau_{\lambda,H}}, b=\frac{\partial_tH}{2\pi r\,\tau_{\lambda,H}}, c=\frac{1}{\tau_{\lambda,H}}$. Substituting these coefficients into the expression for $R$ gives
\[
R_{\alpha_H}=\frac{1}{\tau_{\lambda,H}}\left(X_{H_t}+\frac{\partial_tH}{2\pi r}\partial_r+\partial_t\right)
=\frac{1}{\tau_{\lambda,H}}\left(X_{H_t}+\frac{\partial_tH}{2\pi r}\partial_r+2\pi\partial_\theta\right).
\]

Finally, by the defining assumptions on $H$, the Hamiltonian is autonomous in a neighborhood of $\partial W$. Hence $\partial_tH=0$ near the locus $r=0\subset\partial W_H$. Therefore $\frac{\partial_tH}{2\pi r}\partial_r$ vanishes near $r=0$. The remaining term $\partial_t=2\pi\partial_\theta$ is the angular vector field in the $\mathbb C$-factor and extends smoothly across $r=0$, where it vanishes. Thus the above formula defines a smooth vector field on all of $\partial W_H$.
\end{proof}
 
\begin{remark}\label{rmk-rho}
Equivalently, one may use the coordinate $\rho=\frac{1}{2}r^2$ instead of $r$. In this coordinate, we have $\hat{\lambda}=\lambda+\rho d\theta$. The formula for the Reeb vector field of $\alpha_H=\hat{\lambda}|_{\partial W_H}$ on $\partial W_H$ becomes 
\[
R_{\alpha_H}= \frac{1}{\tau_{\lambda,H}}
\left(
X_{H_t}+\frac{1}{2\pi}\partial_tH\,\partial_\rho
+2\pi\partial_\theta
\right).
\]
\end{remark}

Next, we give an explicit formula for the Reeb flow on $\partial W_H$.

\begin{cor}\label{cor-formula}
In the setting of Proposition~\ref{prop-construction}, let $\{\psi^s:\partial W_H\to\partial W_H\}_{s\in\bR}$ denote the flow of the Reeb vector field $R_{\alpha_H}$. For $(t,w)\in S^1\times W$, let $\sigma\mapsto\varphi_H^\sigma(t,w)$ be the non-autonomous Hamiltonian trajectory starting from $w$ at time $t$, namely
\begin{equation}\label{eq-Ham-vector}
\frac{d}{d\sigma}\varphi_H^\sigma(t,w)=X_{H_{t+\sigma}}(\varphi_H^\sigma(t,w)),
\qquad
\varphi_H^0(t,w)=w
\end{equation}
where $t+\sigma$ is understood modulo $1$. Since $\tau_{\lambda,H}>0$, for each $(t,w)\in S^1\times W$, the map $u\mapsto \int_0^u \tau_{\lambda,H}(t+\sigma,\varphi_H^\sigma(t,w))\,d\sigma$ is a strictly increasing diffeomorphism $\bR\to\bR$. Let $h\colon\bR\times S^1\times W\to\bR$ denote its inverse, so that
\begin{equation}\label{eq-h}
\int_0^{h(s,t,w)} \tau_{\lambda,H}(t+\sigma,\varphi_H^\sigma(t,w))\,d\sigma=s.
\end{equation}
Then for $(w,z)\in\partial W_H$, written as $z=re^{2\pi it}$ and $\pi r^2=H(t,w)$, one has
\[
\psi^s(w,z)=\left(\varphi_H^{h(s,t,w)}(t,w),\ \sqrt{\frac{H(t+h(s,t,w),\varphi_H^{h(s,t,w)}(t,w))}{\pi}}\,e^{2\pi i(t+h(s,t,w))}\right).
\]
Moreover, this formula extends smoothly to the binding $\partial W\times\{0\}$.
\end{cor}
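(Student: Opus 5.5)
The plan is to verify directly that the proposed map is a flow line of the vector field computed in Proposition~\ref{prop-construction}, and then invoke uniqueness of integral curves. Fix $(w,z)\in\partial W_H$ with $r>0$, write $z=re^{2\pi i t}$ with $\pi r^2=H(t,w)$, and abbreviate $\gamma(\sigma)\coloneqq\varphi_H^\sigma(t,w)$. Consider the curve
\[
\Gamma(u)\coloneqq\left(\gamma(u),\ \sqrt{H(t+u,\gamma(u))/\pi}\,e^{2\pi i(t+u)}\right),
\]
parametrized by the Hamiltonian time $u$. By construction $\Gamma(u)\in\partial W_H$ for all $u$ (its radius satisfies $\pi r(u)^2=H(t+u,\gamma(u))$), and $\Gamma(0)=(w,z)$. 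The first step is to compute $\Gamma'(u)$ in the coordinates $(w,r,t)$. The $W$-component of $\Gamma'(u)$ is $X_{H_{t+u}}(\gamma(u))$ by (\ref{eq-Ham-vector}), and the angular component is $\partial_t$. For the radial component, differentiate $\pi r(u)^2=H(t+u,\gamma(u))$ to obtain
\[
2\pi r r'=\partial_tH+d_wH_{t+u}(X_{H_{t+u}})=\partial_tH,
\]
since $d_wH_t(X_{H_t})=d\lambda(X_{H_t},X_{H_t})=0$. Hence $r'=\partial_tH/(2\pi r)$, and therefore
\[
\Gamma'(u)=X_{H_{t+u}}+\frac{\partial_tH}{2\pi r}\partial_r+\partial_t=\tau_{\lambda,H}(t+u,\gamma(u))\,R_{\alpha_H}(\Gamma(u))
\]
by Proposition~\ref{prop-construction}.

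The next step is the time change. Set $s(u)=\int_0^u\tau_{\lambda,H}(t+\sigma,\gamma(\sigma))\,d\sigma$, so that $ds/du=\tau_{\lambda,H}$, and its inverse is $u=h(s,t,w)$ by (\ref{eq-h}). Then $\frac{d}{ds}\Gamma(h(s,t,w))=R_{\alpha_H}$ with initial value $(w,z)$, and uniqueness of integral curves identifies this curve with $\psi^s(w,z)$. Since $\tau_{\lambda,H}$ is bounded below by a positive constant on the compact set $S^1\times W$, the map $u\mapsto s(u)$ is indeed a diffeomorphism of $\bR$, so the formula holds for all $s$. This also uses that $\varphi_H^\sigma$ exists for all $\sigma$: $X_{H_t}$ is tangent to $\partial W$ by Remark~\ref{rmk-positive}, so $W$ is invariant. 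One should also check that $\Gamma$ never reaches $r=0$ while $\gamma$ is in the interior. This follows because $H>0$ on $\mathrm{Int}(W)$ (Remark~\ref{rmk-positive}) and interior points stay interior under the flow.

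The expected main obstacle is the binding $\partial W\times\{0\}$, where the polar coordinates break down. For this case I would use that near $\partial W$ the Hamiltonian is autonomous, $H=h_0(w)$, so $\partial_tH=0$ there and $R_{\alpha_H}=\tau^{-1}(X_{h_0}+\partial_t)$. On the binding itself $\partial_t=2\pi\partial_\theta$ vanishes at $z=0$, so $R_{\alpha_H}=X_{h_0}/\tau$ is tangent to $\partial W$, and the formula reduces to $(\varphi_H^{h(s,t,w)}(w),0)$ because $H=0$ on $\partial W$. The remaining issues are that the $t$-dependence in the formula is then a dummy variable, and that the resulting expression is smooth. For smoothness, I would observe that in the autonomous region the right-hand side equals $(\varphi_{h_0}^{u}(w),\, e^{2\pi i u}\cdot z\cdot\sqrt{h_0(\varphi^u w)/h_0(w)})$, where the ratio $h_0(\varphi^u w)/h_0(w)=1$ because $h_0$ is preserved by its own flow. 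In fact $z\mapsto e^{2\pi iu}z$ with $u=h(s,\cdot)$, and $h$ depends on $(t,w)$ only through $w$ there (autonomy), so the formula becomes $(\varphi^{u}_{h_0}(w),e^{2\pi i u}z)$ with $u$ smooth in $(s,w)$. This is manifestly smooth across $z=0$ and agrees with the flow of the smooth extension of $R_{\alpha_H}$ given in Proposition~\ref{prop-construction}.
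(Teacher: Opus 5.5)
Your proposal is correct and follows essentially the same route as the paper: you show that the explicit curve lies on $\partial W_H$ and satisfies the Reeb equation from Proposition~\ref{prop-construction}, then conclude by uniqueness of integral curves. The only differences are that you differentiate in Hamiltonian time $u$ and then reparametrize (the paper differentiates directly in $s$ using $\partial_s h=1/\tau_{\lambda,H}$), and that you handle the binding more explicitly, via the smooth form $(\varphi^{u}_{h_0}(w),e^{2\pi i u}z)$, than the paper's one-line remark; for that form you should take the near-boundary region to be a small sublevel set $\{H<\epsilon\}$, so that trajectories stay where $H=h_0$ and the ratio $h_0(\varphi^u w)/h_0(w)$ really equals $1$.
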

\begin{proof}
Fix $(w,z)\in \partial W_H$, and write $z=re^{2\pi i t}$ with $\pi r^2=H(t,w)$. Define 
\[
\eta(s)\coloneqq h(s,t,w), \,\, w(s)\coloneqq \varphi_H^{\eta(s)}(t,w),\,\, t(s)\coloneqq t+\eta(s), \text{ and } r(s)\coloneqq \sqrt{H(t(s),w(s))/\pi}.
\] Then $\Psi^s(w,z)\coloneqq (w(s),r(s)e^{2\pi i t(s)})$ lies in $\partial W_H$ for all $s$, since $\pi r(s)^2=H(t(s),w(s))$. We claim that $\Psi^s=\psi^s$. Indeed, by the definition of $h$, we have $h(0,t,w)=0$. Differentiating \eqref{eq-h} with respect to $s$ gives $\dot{\eta}(s) = \tau_{\lambda,H}(t(s),w(s))^{-1}$. Therefore, $\dot{t}(s) = \dot{\eta}(s) =\tau_{\lambda,H}(t(s),w(s))^{-1}.$
Moreover, since $\sigma\mapsto \varphi_H^\sigma(t,w)$ is generated by $X_{H_{t+\sigma}}$, we have 
\[
\dot{w}(s)=\dot{\eta}(s)X_{H_{t(s)}}(w(s))=\frac{ X_{H_{t(s)}}(w(s)) }{ \tau_{\lambda,H}(t(s),w(s)) }.
\]

Next, differentiating the identity $\pi r(s)^2=H(t(s),w(s))$ yields 
\[
2\pi r(s)\dot{r}(s)=d_wH(t(s),w(s))(\dot{w}(s))+\partial_tH(t(s),w(s))\dot{t}(s).
\]
 Substituting the formulas for $\dot{w}(s)$ and $\dot{t}(s)$, we get
\[
2\pi r(s)\dot{r}(s)=\frac{ d_wH(t(s),w(s))(X_{H_{t(s)}}(w(s))) + \partial_tH(t(s),w(s)) }{ \tau_{\lambda,H}(t(s),w(s)) }.
\]
Since $\iota_{X_{H_t}}d\lambda=d_wH_t$, we have $d_wH_t(X_{H_t})=d\lambda(X_{H_t},X_{H_t})=0$. Hence, \[ \dot{r}(s) = \frac{ \partial_tH(t(s),w(s)) }{ 2\pi r(s)\tau_{\lambda,H}(t(s),w(s)) }. \]

Combining these identities, we find that $\Psi^s$ satisfies 
\[
\frac{d}{ds}\Psi^s=\frac{1}{\tau_{\lambda,H}}\left(X_{H_t}+\frac{\partial_tH}{2\pi r}\partial_r+\partial_t\right)
\]
which is exactly the Reeb equation from Proposition~\ref{prop-construction}. Since $\Psi^0(w,z)=(w,z)$, uniqueness of solutions implies $\Psi^s(w,z)=\psi^s(w,z)$. This proves the stated formula on $\partial W_H\cap\{r>0\}$.

Finally, because $H$ is autonomous in a neighborhood of $\partial W$, the expression above is independent of the choice of angular coordinate near the binding and extends smoothly across $r=0$. Equivalently, on $\partial W\times\{0\}$, the extended Reeb flow agrees with the Reeb flow of $\lambda|_{\partial W}$. This completes the proof.
\end{proof}

\begin{remark}\label{rmk-bdy}
Near $\partial W$, the Hamiltonian is autonomous, so $X_{H_t}$ is independent of $t$ there. Thus on the binding, $R_{\alpha_H}|_{\partial W\times\{0\}}=R_{\lambda}|_{\partial W}$. Hence, the orbits of $R_{\alpha_H}$ are each either contained in or disjoint from $\partial W\times\{0\}$. 
\end{remark}

There exists a natural correspondence between Reeb orbits in $(\partial W_H\setminus (\partial W\times\{0\}),\alpha_H)$ and integer-periodic Hamiltonian orbits of $H$ on $(W,d\lambda)$. For a time-dependent Hamiltonian $H$, we write $\varphi_H^\sigma(t_0,w_0)\coloneqq\varphi_H^{t_0+\sigma,t_0}(w_0)$, where $\varphi_H^{t_1,t_0}$ denotes the Hamiltonian flow from time $t_0$ to time $t_1$.
\begin{cor}\label{cor-Reeb-Ham}
The periodic Reeb orbits of $(\partial W_H,\alpha_H)$, up to time translation, are of the following two types.

\begin{enumerate}
\item
For each periodic Reeb orbit $c:\bR/T\bZ\to \partial W$ of $\lambda|_{\partial W}$, the curve $\gamma_c:\bR/T\bZ\to \partial W_H$, $\gamma_c(s)\coloneqq (c(s),0)$, is a periodic Reeb orbit of $\alpha_H$.

\item
Let $(t_0,w_0,k)\in S^1\times {\rm Int}(W)\times \bN$ satisfy $H(t_0,w_0)>0$ and $\varphi_H^k(t_0,w_0)=w_0$. Set
$z_0\coloneqq \sqrt{H(t_0,w_0)/\pi}\,e^{2\pi i t_0}$ and
$T_{t_0,w_0,k}\coloneqq \int_0^k \tau_{\lambda,H}(t_0+\sigma,\varphi_H^\sigma(t_0,w_0))\,d\sigma$.
Then $(w_0,z_0)\in \partial W_H$ and $h(T_{t_0,w_0,k},t_0,w_0)=k$. Thus $\gamma_{t_0,w_0,k}(s)\coloneqq \psi^s(w_0,z_0)$ is a periodic Reeb orbit of period $T_{t_0,w_0,k}$. More explicitly,
\[
\gamma_{t_0,w_0,k}(s)=\left(\varphi_H^{h(s,t_0,w_0)}(t_0,w_0),\sqrt{\frac{H(t_0+h(s,t_0,w_0),\varphi_H^{h(s,t_0,w_0)}(t_0,w_0))}{\pi}}\,e^{2\pi i(t_0+h(s,t_0,w_0))}\right).
\]
Moreover,  $\Gamma(s)\coloneqq \varphi_H^{ks}(t_0,w_0)$ is a $1$-periodic orbit of the Hamiltonian $H^{(k)}_{t_0}(t,w)\coloneqq kH(t_0+kt,w)$, and its action satisfies $T_{t_0,w_0,k}=\cA_{H^{(k)}_{t_0}}(\Gamma).$
\end{enumerate}

Conversely, every periodic Reeb orbit of $(\partial W_H,\alpha_H)$ is of one of the two types above.
\end{cor}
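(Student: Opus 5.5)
The plan is to read everything off the explicit Reeb flow of Corollary~\ref{cor-formula}, together with Remark~\ref{rmk-bdy}. \textbf{Step 1: splitting.} By Remark~\ref{rmk-bdy}, every Reeb orbit either lies in the binding $\partial W\times\{0\}$ or misses it entirely. On the binding, $R_{\alpha_H}=R_{\lambda|_{\partial W}}$ (Corollary~\ref{cor-formula}). Hence the closed orbits contained in the binding are exactly the curves $\gamma_c(s)=(c(s),0)$ with $c$ a closed Reeb orbit of $\lambda|_{\partial W}$. This gives type (1) and its converse.

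\textbf{Step 2: type (2).} Take $(t_0,w_0,k)$ with $w_0\in{\rm Int}(W)$ and $\varphi_H^k(t_0,w_0)=w_0$. By Remark~\ref{rmk-positive}, $H(t_0,w_0)>0$, so $z_0\neq 0$ and $(w_0,z_0)\in\partial W_H$ by definition. Since $u\mapsto\int_0^u\tau_{\lambda,H}\,d\sigma$ is strictly increasing with inverse $h$, the definition of $T_{t_0,w_0,k}$ gives $h(T_{t_0,w_0,k},t_0,w_0)=k$. Plugging $s=T$ into the formula of Corollary~\ref{cor-formula}:
\begin{itemize}
\item the $W$-component is $\varphi_H^k(t_0,w_0)=w_0$;
\item the angle is $t_0+k\equiv t_0 \pmod 1$;
\item the radius is $\sqrt{H(t_0+k,w_0)/\pi}=\sqrt{H(t_0,w_0)/\pi}$ by $1$-periodicity of $H$ in $t$.
\end{itemize}
So $\psi^{T}(w_0,z_0)=(w_0,z_0)$, and $\gamma_{t_0,w_0,k}$ is closed with period $T_{t_0,w_0,k}$.

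\textbf{Step 3: converse for orbits off the binding.} Let $\gamma(s)=\psi^s(w_0,z_0)$ be closed with period $T>0$, where $z_0=r_0e^{2\pi it_0}$ and $r_0>0$. Set $\eta=h(T,t_0,w_0)>0$. Equating angular components gives $t_0+\eta\equiv t_0$, so $\eta=k\in\bN$. Equating $W$-components gives $\varphi_H^k(t_0,w_0)=w_0$. The point $w_0$ lies in ${\rm Int}(W)$ because $r_0>0$ forces $H(t_0,w_0)>0$. Finally, $T=\int_0^k\tau_{\lambda,H}\,d\sigma=T_{t_0,w_0,k}$ by the definition of $h$. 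Up to time translation, $\gamma$ is therefore of type (2). One subtle point: $T$ is the period, not necessarily the minimal one. Iterates correspond to multiples of $k$, which is consistent with the statement.

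\textbf{Step 4: action identity.} Let $\Gamma(s)=\varphi_H^{ks}(t_0,w_0)$. Then $\dot\Gamma(s)=kX_{H_{t_0+ks}}(\Gamma(s))$, which is the Hamiltonian vector field of $H^{(k)}_{t_0}$ at time $s$, and $\Gamma(1)=\Gamma(0)$. Then
\[
\cA_{H^{(k)}_{t_0}}(\Gamma)=\int_0^1\Bigl(\lambda(\dot\Gamma)+kH(t_0+ks,\Gamma(s))\Bigr)\,ds
=\int_0^1 k\,\tau_{\lambda,H}\bigl(t_0+ks,\varphi_H^{ks}(t_0,w_0)\bigr)\,ds .
\]
The substitution $\sigma=ks$ turns this into $T_{t_0,w_0,k}$.

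The only step needing care is Step 3, specifically identifying $\eta$ as an integer through the angular component. This relies on $t$ increasing strictly along the flow, since $\dot t=1/\tau_{\lambda,H}>0$, so the angle cannot return by winding backwards. I expect this to go through routinely.
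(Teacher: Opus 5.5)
Your proposal is correct and follows essentially the same route as the paper's proof. Binding orbits come from Remark~\ref{rmk-bdy}, closedness of type (2) orbits from evaluating the flow formula of Corollary~\ref{cor-formula} at $s=T_{t_0,w_0,k}$, the converse from integrality of $h(T,t_0,w_0)$ read off the angular component together with monotonicity of $h$, and the action identity from the substitution $\sigma=ks$.
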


\begin{proof}
Since $\alpha_H|_{\partial W\times\{0\}}=\lambda|_{\partial W}$, and  $R_{\alpha_H}$ is tangent to $\partial W\times\{0\}$, the restriction of $R_{\alpha_H}$ to $\partial W\times\{0\}$ coincides with the Reeb vector field of $\lambda|_{\partial W}$. Therefore, the closed Reeb orbits contained in $\partial W\times\{0\}$ are precisely the closed Reeb orbits of $(\partial W,\lambda|_{\partial W})$. This proves (1).

Now let $\widetilde\gamma(s)=\psi^s(w_0,z_0)$ be a periodic Reeb orbit not contained in $\partial W\times\{0\}$. By Remark~\ref{rmk-bdy}, it is disjoint from $\partial W\times\{0\}$. After a time translation, we may write $z_0=r_0e^{2\pi i t_0}\neq 0$, where $r_0=\sqrt{H(t_0,w_0)/\pi}$. Let $T>0$ be a period of $\widetilde{\gamma}$. By Corollary~\ref{cor-formula},
\[
\widetilde\gamma(s)=\left(\varphi_H^{h(s,t_0,w_0)}(t_0,w_0),\sqrt{\frac{H(t_0+h(s,t_0,w_0),\varphi_H^{h(s,t_0,w_0)}(t_0,w_0))}{\pi}}\,e^{2\pi i(t_0+h(s,t_0,w_0))}\right).
\]
Since $\widetilde\gamma(T)=\widetilde\gamma(0)$ and $z_0\neq 0$, the angular coordinate satisfies $t(T)=t_0+h(T,t_0,w_0)=t_0+k$ for some $k\in \bZ$. Moreover, \[
\partial_s h(s,t_0,w_0)=\frac{1}{\tau_{\lambda,H}(t_0+h(s,t_0,w_0),\varphi_H^{h(s,t_0,w_0)}(t_0,w_0))}>0
\]
 so $h(\,\cdot\,,t_0,w_0)$ is strictly increasing. Since $T>0$ and $h(0,t_0,w_0)=0$, we have  $k=h(T,t_0,w_0)\in\bN$. The $W$-component of the equality $\widetilde{\gamma}(T)=\widetilde{\gamma}(0)$ gives $\varphi_H^k(t_0,w_0)=w_0$. Finally, the defining equation of $h$ yields
\[
T=\int_0^k \tau_{\lambda,H}(t_0+\sigma,\varphi_H^\sigma(t_0,w_0))\,d\sigma.
\]
Thus every periodic Reeb orbit disjoint from the binding determines a triple $(t_0,w_0,k) \in S^1\times \operatorname{Int}(W)\times\bN$ as in (2).

Conversely, suppose that $(t_0,w_0,k)\in S^1\times {\rm Int}(W)\times \bN$ satisfies $H(t_0,w_0)>0$ and $\varphi_H^k(t_0,w_0)=w_0$. Define $z_0\coloneqq \sqrt{H(t_0,w_0)/\pi}\,e^{2\pi i t_0}$ and let $T_{t_0,w_0,k}$ be as above. Then the defining property of $h$ gives $h(T_{t_0,w_0,k},t_0,w_0)=k$. Applying Corollary~\ref{cor-formula} at time $T_{t_0,w_0,k}$, we obtain
\[
\psi^{T_{t_0,w_0,k}}(w_0,z_0)=\left(\varphi_H^k(t_0,w_0),\sqrt{\frac{H(t_0+k,\varphi_H^k(t_0,w_0))}{\pi}}\,e^{2\pi i(t_0+k)}\right).
\]
Since $H$ is $1$-periodic in time and $\varphi_H^k(t_0,w_0)=w_0$, this equals $(w_0,z_0).$
Hence $\gamma_{t_0,w_0,k}(s)\coloneqq \psi^s(w_0,z_0)$ is a periodic Reeb orbit of period $T_{t_0,w_0,k}$.

It remains to verify the action identity. Define $\Gamma(s)\coloneqq \varphi_H^{ks}(t_0,w_0)$. Since $\varphi_H^k(t_0,w_0)=w_0$, the curve $\Gamma$ is $1$-periodic. Moreover, $\dot\Gamma(s)=kX_{H_{t_0+ks}}(\Gamma(s))=X_{H^{(k)}_{t_0,s}}(\Gamma(s))$, where $H^{(k)}_{t_0,s}(w)=kH(t_0+ks,w)$. Thus $\Gamma$ is a $1$-periodic Hamiltonian orbit of $H^{(k)}_{t_0}$. Its action is $\cA_{H^{(k)}_{t_0}}(\Gamma)=\int_0^1 (\lambda(\dot\Gamma(s))+H^{(k)}_{t_0}(s,\Gamma(s)))\,ds$.
Using $\dot\Gamma(s)=kX_{H_{t_0+ks}}(\Gamma(s))$, $H^{(k)}_{t_0}(s,\Gamma(s))=kH(t_0+ks,\Gamma(s))$, and $\tau_{\lambda,H}=\lambda(X_{H_t})+H_t$, we obtain
\[
\cA_{H^{(k)}_{t_0}}(\Gamma)=\int_0^1 k\tau_{\lambda,H}(t_0+ks,\Gamma(s))\,ds=\int_0^k \tau_{\lambda,H}(t_0+\sigma,\varphi_H^\sigma(t_0,w_0))\,d\sigma=T_{t_0,w_0,k},
\]
where in the last step we used the change of variables $\sigma=ks$. This completes the proof.
\end{proof}

\subsection{Floer theory}
We briefly recall the Floer-theoretic setup used throughout the paper, in particular, on results in Section \ref{ssec-Floer-tube}.  Let $(W,\lambda_W)$ be a Liouville domain and denote by $\widehat W=W\cup_{\partial W}([1,\infty)_r\times \partial W)$ its Liouville completion. On the cylindrical end we write  $\widehat\lambda_W=r\alpha_W$, where $\alpha_W=\lambda_W|_{\partial W}$. 

A Hamiltonian $K_N:S^1\times \widehat W\to \mathbb R$ is called admissible if it is $C^2$-small on the compact part $W$ and, on the cylindrical end, agrees with a function of the radial coordinate which is linear of slope $N>0$ for $r\gg 1$. We always choose $N$ outside the Reeb spectrum of $(\partial W,\alpha_W)$. This condition ensures that no new $1$-periodic Hamiltonian orbits appear on the region where $K_N$ is exactly linear. After a small perturbation in the transition region, the set $\cP(K_N)$ of $1$-periodic orbits of $K_N$ is non-degenerate. Note that the orbits in the transition region correspond, in the usual way, to Reeb orbits on $(\partial W,\alpha_W)$ with period controlled by the slope $N$.

Let $J_N$ be a compatible almost complex structure which is cylindrical on the end. In particular, on the cylindrical end $J_N$ is invariant in the radial direction and is adapted to the contact form $\alpha_W$. We define the Floer chain complex over $\mathbb Q$ by $\CF_*(K_N,J_N)\coloneqq \bQ\langle \mathcal P(K_N)\rangle$. The differential is defined by counting finite-energy solutions $u:\mathbb R\times S^1\to \widehat W$ of the Floer equation $\partial_su+J_N(\partial_tu-X_{K_N}(u))=0$ with  asymptotics $\lim_{s\to-\infty}u(s,\cdot)=\gamma_-,  \lim_{s\to+\infty}u(s,\cdot)=\gamma_+$, where $\gamma_\pm\in\mathcal P(K_N)$. The standard maximum principle for admissible Hamiltonians and cylindrical almost complex structures implies that these Floer cylinders remain in a compact subset of $\widehat W$. Hence the usual compactness and gluing arguments apply, and for a generic choice of $J_N$ we obtain a well-defined chain complex $(\CF_*(K_N,J_N),\partial_{K_N,J_N})$. We denote its homology by $\HF_*(K_N,J_N)$. 

If $N\leq N'$, we choose a monotone homotopy from $K_N$ to $K_{N'}$. Counting the corresponding parametrized Floer solutions gives a continuation map $\HF_*(K_N,J_N)\to \HF_*(K_{N'},J_{N'})$. These maps are compatible under composition. Taking the direct limit over any cofinal sequence of admissible slopes gives the symplectic homology
\[
{\rm SH}_*(W)\coloneqq \varinjlim_N {\rm HF}_*(K_N,J_N).
\]
The resulting group is independent of all auxiliary choices, including the cofinal family $\{K_N\}$, the perturbations, and the almost complex structures.

We can also apply the filtered or the $S^1$-equivariant versions of this construction. The action functional, with the above Hamiltonian convention, is $\mathcal A_K(\gamma) = \int_{S^1} (\gamma^*\widehat\lambda_W+K_t(\gamma(t))\,dt)$. It induces an action filtration on the Floer complex, and the continuation maps are compatible with this filtration in the standard monotone sense. Applying the same cofinal family of admissible Hamiltonians to the $S^1$-equivariant Floer complex gives $\SH_*^{S^1}(W,\lambda_W)$. In the quantitative applications below, we use the filtered positive $S^1$-equivariant symplectic homology and the associated persistence module/barcode.

\subsection{Quantitative homological tools}\label{sec-bottleneck}
In this subsection, we will use the barcode of $S^1$-equivariant symplectic homology to estimate the symplectic Banach--Mazur distance. We refer \cite{PRSZ20, Ush22}  for more details. Now we begin with recalling the following definition. 

	An {\em $\bR_+$-persistence module} $\bV$ consists of a collection of vector spaces $V_s$, indexed by $s\in\bR_+$, together with linear maps (the ``structure maps'') $\sigma^{\bV}_{t,s}\colon V_s\ra V_t$ for $s\leq t$, such that $\sigma^{\bV}_{s,s}=\operatorname{id}_{V_s}$ and $\sigma^{\bV}_{u,s}=\sigma^{\bV}_{u,t}\circ\sigma^{\bV}_{t,s}\text{ whenever }s\leq t\leq u$.
If $\bV=\{V_s,\sigma^{\bV}_{t,s}\}$ and $\bW=\{W_s,\sigma^{\bW}_{t,s}\}$ are $\bR_+$-persistence modules, a {\em morphism} $F\colon\bV\ra\bW$ is a collection of linear maps $F_s\colon V_s\ra W_s$ satisfying $F_t\circ\sigma^{\bV}_{t,s}=\sigma^{\bW}_{t,s}\circ F_s\text{ whenever }s\leq t$.

\begin{remark}\label{rmk-ln}
There is a natural correspondence between $\bR_+$-persistence module and $\bR$-persistence module. Given an $\bR_+$-persistence module $\bV=\{\{V_t\}_{t\in\bR_+},\{\sigma_{t,s}\}_{s\leq t}\}$, its logarithmic version $\bV'$ is the $\bR$-persistence module defined by 
	\[
	\bV'=\left\{ \{V'_{t}\coloneqq V_{e^t}\}_{t\in\bR},\{\sigma'_{t,s}=\sigma_{e^t,e^s}\}_{s\leq t}\right\}.
	\]
\end{remark}
For any Liouville domain $(W,\lambda_W)$, one can define its $S^1$-equivariant symplectic homology. According to \cite{GH18}, the filtered positive $S^1$-equivariant symplectic homology defines an $\bR_+$-persistence module
\[
\bC\bH(W,\lambda_W)=\left\{\{{\rm CH}^L(W,\lambda_W)\}_{L\in \bR_+},\{\iota_{L_2,L_1}\}_{L_2, L_1\in \bR_+, L_1\leq L_2}\right\}
\]
where ${\rm CH}^L(W,\lambda_W)$ is a $\bQ$-vector space and $\iota_{L_2,L_1}\colon {\rm CH}^{L_1}(W,\lambda_W)\ra {\rm CH}^{L_2}(W,\lambda_W)$ is a $\bQ$-linear map. By Remark~\ref{rmk-ln}, we can obtain a logarithmic version of $S^1$-equivariant persistence module $\bC\bH'(W,\lambda_W)$. Moreover, we can define the distance between the $\bR$-persistence modules by introducing the concept of interleaving.
Let $\mathbb{V}=\{V_s,\sigma_{t,s}\}$ and $\mathbb{W}=\{W_s,\tau_{t,s}\}$ be two $\mathbb{R}$-persistence modules and let $\delta> 0$.  A {\em $\delta$-interleaving} between $\bV$ and $\bW$ consists of linear maps $\varphi_t\colon V_t\ra W_{t+\delta}\text{ and }\psi_t\colon W_t\ra V_{t+\delta}$, for all $t\in\bR$,
which commute with all structure maps: for every $s\leq t$,
\[
\tau_{t+\delta,s+\delta}\circ\varphi_s
=\varphi_t\circ\sigma_{t,s},
\qquad
\sigma_{t+\delta,s+\delta}\circ\psi_s
=\psi_t\circ\tau_{t,s}.
\]
They are also required to satisfy, for every $t\in\bR$,
\[
\psi_{t+\delta}\circ\varphi_t=\sigma_{t+2\delta,t}
\qquad\text{and}\qquad
\varphi_{t+\delta}\circ\psi_t=\tau_{t+2\delta,t}.
\]
The {\em interleaving distance} between $\bV$ and $\bW$ is
\[
d_{\rm int}(\bV,\bW)\coloneqq
\inf\{\delta\geq0\mid \bV\text{ and }\bW\text{ admit a $\delta$-interleaving}\}.
\]

By the Normal Form Theorem, when the persistence module $\bV$ is pointwise finite dimensional, we can decompose it into a direct sum of ``interval type'' persistence modules of the form $\bQ_{I_i}$ supported only in the interval $I_i \subset \bR$ with pointwise dimension 1. We collect these intervals $I_i$  with multiplicities $m_i$ as $\cB(\bV)$, the {\em barcode} of $\bV$.

We next define a distance between barcodes. For an interval $I$, let $I^\delta$ denote the interval obtained by expanding $I$ outward by $\delta$ at both finite endpoints. For a barcode $\cB$ and $\varepsilon>0$, let $\cB_\varepsilon$ be the sub-multiset of bars in $\cB$ whose lengths are greater than $\varepsilon$. A {\em matching} between two barcodes $\cB$ and $\cC$ is a bijection $\mu\colon\cB'\ra\cC'$ between sub-multisets $\cB'\subset\cB$ and $\cC'\subset\cC$. Thus $\operatorname{dom}\mu=\cB'$ and $\operatorname{im}\mu=\cC'$; bars outside these sub-multisets are left unmatched.

A {\em $\delta$-matching} between $\cB$ and $\cC$ is such a partial matching $\mu\colon\cB'\ra\cC'$ satisfying
	\begin{enumerate}
		\item $\cB_{2\delta}\subset {\rm dom}\mu.$,
		\item $\cC_{2\delta}\subset \im\mu$,
		\item If $\mu(I)=J$, then $I\subset J^{\delta}$, $J\subset I^{\delta}$.
	\end{enumerate}
	The {\em bottleneck distance} between $\cB$ and $\cC$ is
	\[
	d_{\rm bot}(\cB,\cC):=\inf\{\delta>0\mid \cB \text{ and }\cC \text{ admit a $\delta$-matching}\}.
	\]

\begin{theorem}[Isometry Theorem]\label{thm-isometry-pm}
	For any two persistence modules $\bV$ and $\bW$, we have
	\[
	d_{\rm int}(\bV,\bW)=d_{\rm bot}(\cB(\bV),\cB(\bW)).
	\]
\end{theorem}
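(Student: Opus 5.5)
We prove the two inequalities $d_{\rm bot}\leq d_{\rm int}$ and $d_{\rm int}\leq d_{\rm bot}$ separately. Throughout we assume, as in the Normal Form Theorem, that $\bV$ and $\bW$ are pointwise finite dimensional, so that $\cB(\bV)$ and $\cB(\bW)$ are defined. Since both distances are infima, it suffices to show the following for every $\delta>0$ (up to an arbitrarily small enlargement of $\delta$ to absorb open/closed endpoint issues): a $\delta$-matching yields a $\delta$-interleaving, and a $\delta$-interleaving yields a $\delta$-matching.

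\emph{Converse direction ($d_{\rm int}\leq d_{\rm bot}$).} This is the elementary half. Suppose $\mu$ is a $\delta$-matching between $\cB(\bV)$ and $\cB(\bW)$.
\begin{itemize}
\item For a matched pair $\mu(I)=J$, the conditions $I\subset J^{\delta}$ and $J\subset I^{\delta}$ mean the left and right endpoints of $I$ and $J$ differ by at most $\delta$. Define $\varphi_t\colon \bQ_I(t)\to\bQ_J(t+\delta)$ to be the identity of $\bQ$ when both sides are nonzero and zero otherwise, and define $\psi$ symmetrically.
\item Check directly that these maps commute with the structure maps and that $\psi_{t+\delta}\circ\varphi_t=\sigma_{t+2\delta,t}$. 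The point is that whenever $t$ and $t+2\delta$ both lie in $I$, the intermediate point $t+\delta$ lies in $J$.
\item For an unmatched bar $I$, its length is at most $2\delta$, so the structure map $\bQ_I(t)\to\bQ_I(t+2\delta)$ is zero. Hence $\bQ_I$ is $\delta$-interleaved with the zero module via zero maps.
\end{itemize}
Taking the direct sum of these interleavings over all bars, using the normal form decompositions of $\bV$ and $\bW$, gives a $\delta$-interleaving of $\bV$ and $\bW$.

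\emph{Stability direction ($d_{\rm bot}\leq d_{\rm int}$).} This is the algebraic stability theorem, and I would follow the induced matching approach of Bauer--Lesnick. The first step is to attach to each morphism a canonical matching of barcodes.
\begin{itemize}
\item For an injection $\bV\hookrightarrow\bW$, bars of $\bV$ and bars of $\bW$ sharing a common left endpoint are matched in order of decreasing length. Each bar of $\bV$ is then matched to a bar of $\bW$ with the same left endpoint and a right endpoint at least as large.
\item Dually, a surjection matches bars sharing a common right endpoint.
\item The key lemma to establish is that these matchings are well defined and injective on the relevant side. It follows from comparing the ranks of $\bV_s\to\bV_t$ and $\bW_s\to\bW_t$, which counts bars with prescribed endpoints.
\end{itemize}
Next, given a $\delta$-interleaving $(\varphi,\psi)$, view $\varphi$ as a morphism $\bV\to\bW[\delta]$ and factor it as $\bV\twoheadrightarrow\im\varphi\hookrightarrow\bW[\delta]$. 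Composing the two induced matchings and shifting back by $\delta$ gives a candidate matching $\cB(\bV)\to\cB(\bW)$. We then use the interleaving identities to control the kernel and cokernel of $\varphi$.
\begin{itemize}
\item Since $\psi\circ\varphi$ is the $2\delta$-shift, every element of $\ker\varphi$ dies within $2\delta$.
\item Since $\varphi\circ\psi$ is the $2\delta$-shift, every element of ${\rm coker}\,\varphi$ also dies within $2\delta$.
\item From this we deduce the endpoint control: a bar $[b,d)$ of $\bV$ is matched to a bar $[b',d')$ of $\im\varphi$ with $b'=b$ and $d-2\delta\leq d'\leq d$. Similarly, a bar of $\im\varphi$ is matched into $\bW[\delta]$ with the same right endpoint and left endpoint moved down by at most $2\delta$.
\end{itemize}
After undoing the $\delta$-shift, these estimates become exactly condition (3) of a $\delta$-matching. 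The same counting shows that every bar longer than $2\delta$ on either side is matched, which is conditions (1) and (2).

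\emph{Main obstacle.} I expect the main difficulty to be the proof of the induced matching lemma together with the endpoint bookkeeping above. One must handle infinite bars, and each of the four endpoint types (open or closed at each end) separately. One must also make sure that composing the two partial matchings still covers every bar longer than $2\delta$ on both sides. My first attempt would be to work with the ranks of $\bV_s\to\bV_t$, i.e. counting functions for bars, rather than with explicit bases, since ranks behave well under the injection/surjection factorization.
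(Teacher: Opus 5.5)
The paper gives no proof of this theorem; it recalls it as a standard fact from the persistence literature. Your route is the standard one. The elementary direction is fine as you describe it:
\begin{itemize}
\item condition (3) makes the bar-by-bar maps module morphisms;
\item condition (3) also forces $t+\delta\in J$ whenever $t,t+2\delta\in I$;
\item a slight enlargement of $\delta$ handles closed unmatched bars of length exactly $2\delta$.
\end{itemize}

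\textbf{The gap.} The stability direction rests on an induced-matching lemma that you have stated with injections and surjections interchanged, and as stated it is false. Submodules of $\bQ_{[a,c)}$ are the $\bQ_{[b,c)}$ with $b\geq a$, and quotients are the $\bQ_{[a,b)}$ with $b\leq c$. So for $\bQ_{[1,2)}\hookrightarrow\bQ_{[0,2)}$ the bar $[1,2)$ has no partner with left endpoint $1$. For $\bQ_{[0,2)}\twoheadrightarrow\bQ_{[0,1)}$ the two bars share no right endpoint. The correct statement is the reverse:
\begin{itemize}
\item for an injection, bars with a common \emph{right} endpoint are matched in order of decreasing length, and each bar of the submodule goes to a bar at least as long with the same right endpoint;
\item for a surjection, bars with a common \emph{left} endpoint are matched, and each bar of the quotient comes from a bar at least as long with the same left endpoint.
\end{itemize}

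\textbf{The fix.} Your endpoint-control bullets already use this correct orientation ($b'=b$ for $\bV\twoheadrightarrow\im\varphi$, same right endpoint for $\im\varphi\hookrightarrow\bW[\delta]$). So the plan is internally inconsistent, but swapping the two rules repairs it. The composite then matches $[b,d)$ to a bar $[b'',d')$ of $\bW[\delta]$ with $b-2\delta\leq b''\leq b$ and $d-2\delta\leq d'\leq d$. Undoing the $\delta$-shift gives condition (3).

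\textbf{Proving the lemma.} Comparing ${\rm rank}(V_s\to V_t)$ with ${\rm rank}(W_s\to W_t)$ alone will not suffice. For an injection $M\subset N$, you need the following. For $u<d$ sufficiently close to $d$, the number of bars containing $s$ and ending openly at $d$ equals $\dim\bigl(\im(M_s\to M_u)\cap\ker(M_u\to M_d)\bigr)$; finiteness of $\dim M_s$ makes this stabilize. Since $\ker(M_u\to M_d)=M_u\cap\ker(N_u\to N_d)$, this space lies inside its analogue for $N$. Dually, for a surjection $M\twoheadrightarrow N$ and $u<b$ close to $b$, the number of bars starting at $b$ and containing $t$ is $\dim\bigl(\im(M_b\to M_t)/\im(M_u\to M_t)\bigr)$, and the corresponding space for $N$ is a quotient of it. These inequalities, for every $s$ (respectively $t$), are exactly what makes matching by decreasing length well defined. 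They hold only in the corrected orientation.
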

Denote the barcode of $\bC\bH'(W,\lambda_W)$ and $\bC\bH'(V,\lambda_V)$ by $\cB(W,\lambda_W)$ and $\cB(V,\lambda_V)$. Then we can use the bottleneck distance to estimate the symplectic Banach--Mazur distance.
\begin{prop}[Theorem 1.8 in \cite{SZ21} or Proposition~3.1 in \cite{Ush22}]\label{prop-interleaving}
	For any two Liouville domains $(W,\lambda_W)$ and $(V,\lambda_V)$, we have
	\[
	d_{\rm bot}(\cB(W,\lambda_W),\cB(V,\lambda_V))\leq d_{\rm SBM}((W,\lambda_W),(V,\lambda_V)).
	\]
\end{prop}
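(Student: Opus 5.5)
The plan is to turn the embeddings in the definition of $d_{\rm SBM}$ into a $\delta$-interleaving of the logarithmic persistence modules $\bC\bH'(W,\lambda_W)$ and $\bC\bH'(V,\lambda_V)$, with $\delta=\ln C$. The Isometry Theorem~\ref{thm-isometry-pm} then converts the interleaving into the bound on $d_{\rm bot}$. Concretely, fix any $C$ with $\ln C> d_{\rm SBM}((W,\lambda_W),(V,\lambda_V))$, and take embeddings $i_{W,V}$ and $i_{V,W}$ as in the definition. It then suffices to produce a $\ln C$-interleaving and let $\ln C$ decrease to $d_{\rm SBM}$.

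The argument has three ingredients.

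\emph{Rescaling.} For $0<a\le 1$ the Liouville flow $\sL^{\ln a}$ identifies $(aW,\lambda_W)$ with $(W,a\lambda_W)$. Since actions of Reeb orbits scale by $a$, this gives canonical isomorphisms ${\rm CH}^{L}(aW)\cong {\rm CH}^{L/a}(W)$ compatible with the structure maps. In the logarithmic version this is a shift of the index by $-\ln a$.

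\emph{Transfer maps.} For a Liouville subdomain $U\subset V$, the Viterbo transfer map ${\rm CH}^L(V)\to {\rm CH}^L(U)$ is built from step-shaped admissible Hamiltonians. It is filtration preserving and functorial for nested subdomains $U'\subset U\subset V$. It is also invariant under Liouville deformation of the subdomain, so a general Liouville embedding may be used rather than a strict one.

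\emph{The key identification.} For the nested pair $\sL^{-s}_{\lambda_V}(V)\subset V$, the transfer map composed with the rescaling isomorphism equals the persistence structure map ${\rm CH}^L(V)\to{\rm CH}^{e^sL}(V)$. This is the standard fact that the region between $\partial V$ and its pushed-in copy is a trivial Liouville cobordism, in which no new orbits appear.

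With these in hand, the maps are defined as follows. Let $\varphi$ be the transfer map from $V$ to $i_{W,V}(\frac1C W)$, followed by the identification of ${\rm CH}^{L}(\frac1C W)$ with ${\rm CH}^{CL}(W)$. In logarithmic indices this is $\varphi_t\colon {\rm CH}'_t(V)\to {\rm CH}'_{t+\ln C}(W)$. Define $\psi$ symmetrically using $i_{V,W}$. Naturality of transfer maps with respect to the action window gives commutation with the structure maps.

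For the composite identities, use the fine condition $\frac1{C^2}V\subset i_{W,V}(\frac1C W)\subset V$. Functoriality of transfer along these nested inclusions rewrites $\psi\circ\varphi$ as the transfer map $V\to \frac1{C^2}V$ followed by rescaling. Here one also checks that transfer commutes with the rescaling identifications, so that the transfer $\frac1C W\to \frac1{C^2}V$ matches $\psi$ after rescaling. By the key identification, this composite is the structure map $\sigma_{t+2\ln C,t}$. The same argument with the roles of $W$ and $V$ exchanged gives the other identity.

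I expect the main obstacle to be the bookkeeping in the composite step. First, the transfer for the non-strict embedding $i_{W,V}$ (with $i_{W,V}^*\lambda_V-\lambda_W=df$) must agree, via deformation invariance, with the transfer computed using the restricted form $\lambda_V$. Second, the filtrations must match exactly, with no additional constant shift coming from $f$. I would handle both by first isotoping $\lambda_V$ on $i_{W,V}(\frac1C W)$ so that it agrees with $i_{W,V*}\lambda_W$ near the boundary, through Liouville forms. This does not change the positive equivariant filtered homology, since actions there are computed from the boundary contact form. After that, I would invoke the $S^1$-equivariant version of the functoriality statements as in \cite{GH18,Ush22}.
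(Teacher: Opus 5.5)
The paper does not prove this itself; it quotes \cite{SZ21} and \cite{Ush22}, where the inequality comes from a transfer-map interleaving. Your overall strategy is that standard one, and your rescaling, naturality and key-identification steps are fine. However, the composite step has a genuine gap.

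\textbf{The gap.} Your $\varphi$ is built from $i_{W,V}$ and your $\psi$ from the unrelated embedding $i_{V,W}$. The fine condition $\frac1{C^2}V\subset i_{W,V}(\frac1CW)\subset V$ factors the transfer for $\frac1{C^2}V\subset V$ as the transfer along $i_{W,V}$ followed by the transfer along $i_{W,V}^{-1}|_{\frac1{C^2}V}\colon\frac1{C^2}V\to\frac1CW$. So it yields $\psi\circ\varphi=\sigma_{t+2\ln C,t}$ only if $\psi$, after rescaling, is the transfer along $i_{W,V}^{-1}$. Nothing in the definition of $d_{\rm SBM}$ relates $i_{V,W}$ to $i_{W,V}^{-1}$, and compatibility of transfer with rescaling does not supply such a relation. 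The exchanged-roles step has the mirror defect: it needs $\varphi$ to come from $i_{V,W}^{-1}$.

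The claimed identity is in fact false in general. Take $V=W$, let $i_{W,V}$ be the inclusion $\frac1CW\subset W$, and let $i_{V,W}=\rho|_{\frac1CW}$ for a strict Liouville automorphism $\rho$ of $W$. All hypotheses hold, since $\rho$ commutes with the Liouville flow. Yet $\psi\circ\varphi=\rho_*\circ\sigma_{t+2\ln C,t}$, where $\rho_*$ is the automorphism induced by $\rho$. This is not the structure map whenever $\rho_*\neq{\rm id}$. For example, take $(q,p)\mapsto(-q,-p)$ on the annulus $\{|p|\le 1\}\subset T^*S^1$ with $\lambda=p\,dq$; it swaps the nonzero summands in the free homotopy classes $k$ and $-k$.

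\textbf{The repair.} Build both maps from the single embedding $i\coloneqq i_{W,V}$. Keep $\varphi$ as you defined it. Let $\psi$ be the transfer along $i^{-1}|_{\frac1{C^2}V}\colon\frac1{C^2}V\to\frac1CW$, rescaled to a map ${\rm CH}'_t(W)\to{\rm CH}'_{t+\ln C}(V)$. Then $\psi\circ\varphi$ is the transfer for $\frac1{C^2}V\subset V$, which is the structure map by your key identification.

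For $\varphi\circ\psi$ you need two further facts:
\begin{itemize}
\item the second nested chain $i(\frac1{C^3}W)\subset\frac1{C^2}V\subset i(\frac1CW)$;
\item the identity $\sL^{-2\ln C}_{\lambda_V}\circ i\circ\sL^{2\ln C}_{\lambda_W}=i$ on $\frac1{C^3}W$.
\end{itemize}
Both follow from $i\circ\sL^{-s}_{\lambda_W}=\sL^{-s}_{\lambda_V}\circ i$. This holds when $i$ is strict, because then $i_*Y_{\lambda_W}=Y_{\lambda_V}$. So in the strict case a single sandwiched embedding already gives a $\ln C$-interleaving, and $i_{V,W}$ is not needed at all.

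For a merely exact $i$, with $i^*\lambda_V-\lambda_W=df$, the identity $i_*Y_{\lambda_W}=Y_{\lambda_V}$ fails wherever $df\neq0$. Your modification of $\lambda_V$ near $\partial i(\frac1CW)$ does not restore it. Modifying $\lambda_V$ globally would instead change the domain $\frac1{C^2}V$ that enters the hypothesis. So the non-strict case needs an additional argument that your proposal does not yet contain.
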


\subsection{Equicontinuity and entropy} \label{ssec-K-equi}

\begin{prop} \label{prop-equiv-entropy}
Let $\varphi^t\colon X\ra X$ be an equicontinuous flow. Then $h_{\rm top}(\varphi)=0$.
\end{prop}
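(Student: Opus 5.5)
The plan is to bound the separated-set count $N_{\rm sep}(\varphi,T,\varepsilon)$ uniformly in $T$, using compactness of $X$ together with equicontinuity. Then $\frac{1}{T}\ln N_{\rm sep}(\varphi,T,\varepsilon)\to 0$ for each fixed $\varepsilon$, and the limit in (\ref{dfn-top-entropy}) vanishes. Throughout, $X$ is the compact metric space $(X,d)$ from the setup of (\ref{dfn-top-entropy}).

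First, fix $\varepsilon>0$. By equicontinuity, choose $\delta>0$ such that $d(x,y)<\delta$ implies $d(\varphi^t(x),\varphi^t(y))<\varepsilon$ for all $t\in\bR$, and in particular for all $t\in[0,T]$ and every $T$.

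Next, by compactness, cover $X$ by finitely many open balls $B_1,\dots,B_m$ of radius $\delta/2$, where $m=m(\delta)$ does not depend on $T$. Let $S$ be any $(T,\varepsilon)$-separated set. Two distinct points $x,y\in S$ cannot lie in the same ball $B_j$: otherwise $d(x,y)<\delta$, so $\max_{0\le t\le T} d(\varphi^t(x),\varphi^t(y))<\varepsilon$ (the maximum is attained on the compact interval $[0,T]$), contradicting separation. Hence $|S|\le m$, and therefore $N_{\rm sep}(\varphi,T,\varepsilon)\le m(\delta(\varepsilon))$ for all $T>0$.

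Consequently $\limsup_{T\to\infty}\frac1T\ln N_{\rm sep}(\varphi,T,\varepsilon)\le \lim_{T\to\infty}\frac{\ln m}{T}=0$ for every $\varepsilon>0$, and letting $\varepsilon\to0$ gives $h_{\rm top}(\varphi)=0$. For the time-1 map, the same argument with iterates $\varphi^k$ in place of $\varphi^t$ gives the same conclusion, consistent with $h_{\rm top}(\varphi)=h_{\rm top}(\varphi^1)$.

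The only point needing care is that $\delta$ must be independent of $T$. This is exactly what equicontinuity provides, as opposed to plain continuity of each $\varphi^t$. Beyond that, the argument is routine.
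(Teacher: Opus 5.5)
Your argument is correct, but it takes a different route from the paper. You work directly in the original metric $d$. Equicontinuity gives a $\delta$ independent of $T$, so every $(T,\varepsilon)$-separated set meets each ball of a fixed finite $\delta/2$-cover at most once, and $N_{\rm sep}(\varphi,T,\varepsilon)\le m(\delta)$ uniformly in $T$.

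The paper instead replaces $d$ by the flow-invariant metric $d'(x,y)=\sup_{t\in\bR}d(\varphi^t(x),\varphi^t(y))$. It observes that equicontinuity makes $d'$ induce the same topology as $d$. Since the flow is isometric for $d'$, $(T,\varepsilon)$-separation in $d'$ reduces to plain $\varepsilon$-separation in the compact space $(X,d')$. It then appeals to the independence of $h_{\rm top}$ from the choice of compatible metric.

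Both arguments rest on the same fact: Bowen balls contain $\delta$-balls uniformly in $T$. Yours is more self-contained. It needs neither the check that $d'$ is compatible with the topology nor the metric-independence result (Proposition~3.1.2 of \cite{KH95}). It also works verbatim on any compact metric space, whereas the paper starts from a Riemannian distance. The paper's version offers a cleaner conceptual picture: an equicontinuous flow is an isometric flow for an equivalent metric, and the uniform bound is then immediate.

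Your remark that the maximum over $[0,T]$ is attained is the right detail. It is what lets the strict inequality $<\varepsilon$ contradict the separation condition in your counting step.
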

\begin{proof}
Let $g$ be any Riemannian metric on $X$ and denote by $d$ the induced Riemannian distance, which generates the topology of $X$.  We define a new $\varphi$-invariant metric on $X$ by
    \[
    d'(x,y)\coloneqq \sup_{t\in\bR}d(\varphi^t(x),\varphi^t(y)), \text{ for any }x,y\in X.
    \]
    Equicontinuity of $\varphi^t$ guarantees that $d'$ is finite and induces the same topology as $d$. Moreover, $d'$ is invariant under the flow: $d'(\varphi^t(x),\varphi^t(y))=d'(x,y)$ for all $t$. Because of this invariance, two points $x,y$ are $(T,\varepsilon)$-separated with respect to $d'$ if and only if $d'(x,y)\ge\varepsilon$. Consequently the maximal cardinality $N_{\mathrm{sep}}(\varphi,T,\varepsilon,d')$ equals the maximal cardinality of an $\varepsilon$-separated set in the compact metric space $(X,d')$.  This number is finite and independent of $T$.   Hence $\limsup_{T\to\infty}\frac{1}{T}\ln N_{\mathrm{sep}}(\varphi,T,\varepsilon,d')=0$ for every $\varepsilon>0$, and therefore $h_{\mathrm{top}}(\varphi)=0$.
\end{proof}

The next part is the verification of Example \ref{ex-non-equicontinuous}, which shows that the tube construction can produce contact forms whose Reeb flow is {\it not} equicontinuous.
\begin{proof} [Verification of Example \ref{ex-non-equicontinuous}] 
We use coordinates $(r,\varphi,r_1,\theta_1,\ldots,r_n,\theta_n)$ on $\overline{B}^{\,2}(\pi)\times \bC^n$. On $\partial W_H$, the contact form is
    \[
   \hat{\lambda}|_{\partial W_H} = \frac{1}{2} r^2 d\varphi + \frac{1}{2} r_1^2 d\theta_1+\cdots+\frac{1}{2} r_n^2 d\theta_n.  
    \]
    Since $r^2=|z|^2$ is smooth at the origin, so is $H=e-e^{r^2}$. Moreover, with the convention $\iota_{X_H}d\lambda_{\rm std}=dH$, we have $X_H=2e^{r^2}\partial_\varphi$  and hence 
    \[
    \tau(r)=\lambda_{\rm std}(X_H)+H=e-e^{r^2}+r^2e^{r^2}=e+(r^2-1)e^{r^2}.
    \]
    Since $\tau(0)=e-1$ and $\tau'(r)=2r^3e^{r^2}\geq0$ for $r\in[0,1]$, we have $\tau(r)>0$ on the whole disk. Together with $H|_{r=1}=0$, this also verifies directly that $H\in\cH(W,\lambda_{\rm std})$.

    A direct computation now gives the Reeb vector field
    \[
    R = \frac{2e^{r^2}}{\tau(r)} \partial_\varphi
    + \frac{2\pi}{\tau(r)} (\partial_{\theta_1} +\cdots+\partial_{\theta_n}).
    \]
    In particular, there is no singular factor $1/r$. The Reeb flow preserves
	the radial coordinates $r,r_1,\ldots,r_n$, while the angular coordinates
	evolve linearly    
    \[
    \dot{\varphi}=\Omega_\varphi(r)\coloneqq\frac{2e^{r^2}}{\tau(r)}, \qquad
    \dot{\theta}_1=\cdots=\dot{\theta}_n=
    \Omega_\theta(r)\coloneqq\frac{2\pi}{\tau(r)}.
    \]
    Furthermore,
    \[
    \Omega_\varphi'(r)
    =\frac{4re^{r^2}(e-e^{r^2})}{\tau(r)^2}>0
    \qquad\text{for }0<r<1.
    \]
	We shall show that this strict dependence of $\Omega_\varphi$ on $r$ forces failure of equicontinuity. 
	
	Fix $r_0=\frac{1}{2}$. For every sufficiently small $s>0$, the strict positivity above gives $\Omega_\varphi(r_0+s)\neq \Omega_\varphi(r_0).$ For such an $s$, choose two points $x_s,y_s\in \partial W_H$ as follows. They have the same angular coordinates $\varphi=\theta_1=\cdots=\theta_n=0$. Their $r$-coordinates are $r_0$ and $r_0+s$, respectively. We set
	\[ r_1(x_s)=\sqrt{\frac{H(r_0)}{\pi}}, r_1(y_s)=\sqrt{\frac{H(r_0+s)}{\pi}},	r_2=\cdots=r_n=0.\] Thus both points lie on $\partial W_H$. Moreover, as $s\to0^+$, the points $x_s$ and $y_s$ converge to the same point of $\partial W_H$. In particular, $d(x_s,y_s)\to 0$ for any fixed auxiliary Riemannian metric $d$ on $\partial W_H$.
	
	Let $\phi^t$ denote the Reeb flow. Since the $r$-coordinate is preserved, the difference of the $\varphi$-coordinates of $\phi^t(x_s)$ and $\phi^t(y_s)$ is $t(\Omega_\varphi(r_0+s)-\Omega_\varphi(r_0)) \text{ mod }2\pi $. Choose $t_s=\frac{\pi}{	\left|\Omega_\varphi(r_0+s)-\Omega_\varphi(r_0)\right|}$. Then the two points $\phi^{t_s}(x_s)$ and $\phi^{t_s}(y_s)$ have $\varphi$-coordinates differing by $\pi$. Then we obtain a uniform separation: the first $\overline{B}^{\,2}(\pi)$-component of $\phi^{t_s}(x_s)$ has polar radius $r_0$, while that of $\phi^{t_s}(y_s)$ has polar radius $r_0+s$, and their angular coordinates differ by $\pi$. Therefore, in the Euclidean metric on the $\overline{B}^{\,2}(\pi)$-factor, the distance between these two projections is $r_0+(r_0+s)\geq 2r_0=1$. Consequently, after comparing the chosen Riemannian metric on 	$\partial W_H$ with the ambient product metric on a fixed compact neighborhood, there exists a constant $\varepsilon_0>0$, independent of $s$, such that $d(\phi^{t_s}(x_s),\phi^{t_s}(y_s))\geq \varepsilon_0.$

Finally, given any $\delta>0$, choose $s>0$ so small that $d(x_s,y_s)<\delta$. The time $t_s$ chosen above then satisfies $d(\phi^{t_s}(x_s),\phi^{t_s}(y_s))\geq \varepsilon_0$.
	This contradicts equicontinuity of the Reeb flow. Hence the Reeb flow of $(\alpha_H)_n$ is not equicontinuous.
	\end{proof}


\subsection{Estimation of capacity} \label{ssec-capacity} The proof of Proposition \ref{prop-wG-finite} relies on a technique from symplectically uniruled manifolds. Recall a closed symplectic manifold $(M,\omega)$ is called \emph{symplectically uniruled} if there exists a nonzero genus-zero Gromov--Witten invariant of the form
$\langle \mathrm{pt},a_2,\dots,a_k\rangle_{k,A}^M,$
where $k\ge 1$, $0\neq A\in H_2(M;\mathbb Z)$, and $a_i\in H^*(M;\mathbb Q)$. The following proposition is essentially due to \cite{Lu06}.

\begin{prop}\label{prop:uniruled-finite}
If $(M,\omega)$ is symplectically uniruled, then
$w_G(M\times \mathbb C)<\infty$. In particular, if $(W,\omega)$ can be symplectically embedded into a closed symplectically uniruled manifold, then $w_G(W\times\bC)<\infty$.
\end{prop}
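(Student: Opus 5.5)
The plan is to follow Lu's argument (\cite{Lu06}), which bounds the Gromov width of $M\times \bC$ by the symplectic area of a class carrying a nonzero Gromov--Witten invariant through a point. Fix a nonzero invariant $\langle \mathrm{pt},a_2,\dots,a_k\rangle^M_{k,A}$ with $0\neq A\in H_2(M;\bZ)$. The goal is the bound $w_G(M\times\bC,\omega\oplus\omega_{\rm std})\le \omega(A)<\infty$. First we pass from $\bC$ to a closed factor. For any embedding $B^{2n+2}(a)\hookrightarrow M\times\bC$, the image is compact, so it lies in $M\times B^2(R)$ for some large $R$. Next, $B^2(R)$ embeds symplectically into $S^2$ equipped with an area form of total area $R'>R$. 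So it suffices to show $w_G(M\times S^2(R'))\le\omega(A)$ for every sufficiently large $R'$.

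The second step is to produce a nonzero invariant on the product. We consider the class $A\times[\mathrm{pt}]\in H_2(M\times S^2)$, and use the product formula for genus-zero GW invariants (Kontsevich--Manin; in the symplectic setting, Hu or Lu). This gives
\[
\langle \mathrm{pt}\times \mathrm{pt}, a_2\times 1,\dots,a_k\times 1, \ldots\rangle^{M\times S^2}_{A\times 0} = \langle \mathrm{pt},a_2,\dots,a_k\rangle^M_{k,A}\cdot\langle \mathrm{pt}\rangle^{S^2}_{0},
\]
where the degree-zero factor on $S^2$ is $1$, possibly after inserting one extra point constraint or suitable classes to fix the dimension. Hence $M\times S^2$ is uniruled through the class $A\times 0$, whose area $\omega(A)$ does not depend on $R'$. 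Here I expect the main obstacle. One must either invoke a product formula valid for general (semipositive or virtual) GW invariants, or directly argue that constraints pulled back from $M$ force $J$-curves in the product class for split $J$ and then for generic $J$. I would cite Lu's Theorem 1.10 or the product formula as given there, instead of reproving it.

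The third step is the Gromov argument. Given $B^{2n+2}(a)\hookrightarrow M\times S^2(R')$, choose an almost complex structure $J$ that is standard on the image of the ball. The nonvanishing invariant with a point constraint at the center yields a $J$-holomorphic genus-zero stable curve through the center in class $A\times 0$; this holds for generic $J$, and then for the given $J$ by Gromov compactness. Monotonicity for minimal surfaces in the ball gives $a\le \omega(A)$. Therefore $w_G(M\times\bC)\le\omega(A)$.

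The "in particular" part is then immediate. If $W\hookrightarrow M$ symplectically, then $W\times\bC\hookrightarrow M\times\bC$, and monotonicity of the Gromov width gives $w_G(W\times\bC)\le w_G(M\times\bC)<\infty$.
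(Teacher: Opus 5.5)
Your proof follows the same skeleton as the paper's. You reduce to $M\times S^2(R')$ and transfer the point invariant to the class $(A,0)$. That transfer works because every stable map in this class is constant in the $S^2$-factor, so the moduli space is $\overline{M}_{0,k}(M,A)\times S^2$ and the sphere contributes $\int_{S^2}\mathrm{pt}=1$. You then bound the width of the closed product by $\omega(A)$.

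The difference is in this last step. The paper first uses the divisor axiom, with some $d\in H^2(M;\mathbb Q)$ satisfying $\langle d,A\rangle\neq 0$, to pad the invariant to $m\ge 3$ insertions. It does this only because $m\ge 3$ is a hypothesis of Theorem~4.1 of \cite{HLS21}, which it then applies as a black box. You instead run Gromov's monotonicity argument directly. That needs no padding, since $\overline{M}_{0,k}(M,A)$ is already stable for $A\neq 0$ and any $k\ge 1$, and it makes the argument self-contained.

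A few points need tightening.

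\emph{Virtual setting.} $M$, and hence $M\times S^2$, need not be semipositive. So the phrase ``for generic $J$, then Gromov compactness'' should be replaced by the standard virtual argument. If, for the chosen $J$, no stable map in class $(A,0)$ passed through the centre, then by compactness none would meet a small neighbourhood of it. Representing the point class by a form supported in that neighbourhood would then make the invariant vanish.

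\emph{Compactness of the ball image.} The image of the open ball $B^{2n+2}(a)$ is not compact, and its projection to $\bC$ need not even be bounded. Likewise, $J$ should only be made standard on the image of a slightly smaller closed ball. In both places, work with $\overline{B}^{\,2n+2}(a')$ for $a'<a$, obtain $a'\le\omega(A)$, and let $a'\to a$, exactly as the paper does.

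\emph{Product formula.} No extra insertions are needed, because the degrees match automatically. An additional $S^2$-point constraint would in fact kill the invariant, since $\int_{S^2}\mathrm{pt}\cup\mathrm{pt}=0$.
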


\begin{proof}
By assumption, there exist a homology class $A\neq 0$ and classes $a_2,\dots,a_k$ such that $\langle \mathrm{pt},a_2,\dots,a_k\rangle_{k,A}^M\neq 0.$
Since this invariant is nonzero and $A\neq0$, we have $\omega(A)>0$, so $A$ is non-torsion. Hence there is a class $d\in H^2(M;\mathbb Q)$ such that $\langle d,A\rangle\neq0$. Set $r\coloneqq\max\{0,3-k\}$ and $m\coloneqq k+r$, and let $b_2,\dots,b_m$ be the list $a_2,\dots,a_k$ followed by $r$ copies of $d$. The divisor axiom gives $\langle \mathrm{pt},b_2,\dots,b_m\rangle_{m,A}^M=\langle d,A\rangle^r\langle \mathrm{pt},a_2,\dots,a_k\rangle_{k,A}^M\neq0$. In particular, $m\ge3$. 

For $a>0$, let $S^2(a)=(S^2,\omega_a)$ denote the symplectic two-sphere of total area $\int_{S^2}\omega_a=a$. Then we  consider
\[ X_a\coloneqq M\times S^2(a),\,\, \Omega_a\coloneqq\omega\oplus \omega_a,\,\, \mbox{and}\,\, B\coloneqq(A,0)\in H_2(X_a;\mathbb Z).\]For $i=2,\dots,m$, define $\widetilde b_i\coloneqq {\rm pr}_1^*b_i\in H^*(X_a;\mathbb Q)$. Writing $\mathrm{pt}_{X_a}={\rm pr}_1^*(\mathrm{pt}_M)\cup{\rm pr}_2^*(\mathrm{pt}_{S^2(a)})$, observe that every genus-zero stable map to $X_a$ in class $B=(A,0)$ is constant in the $S^2(a)$-factor. Thus the corresponding virtual moduli cycle is the product of the virtual moduli cycle for maps to $M$ in class $A$ with $[S^2(a)]$. Pairing the latter factor with $\mathrm{pt}_{S^2(a)}$ gives, equivalently by the genus-zero product formula,
\[
\langle \mathrm{pt}_{X_a},\widetilde b_2,\dots,\widetilde b_m\rangle^{X_a}_{m,B}=\langle\mathrm{pt},b_2,\dots,b_m\rangle^M_{m,A}\neq0.
\]

Since $B\neq 0$ and $m\ge 3$, Theorem~4.1 of \cite{HLS21} applies and yields 
\[ w_G(X_a)\le \Omega_a(B)=\omega(A).\] 
Let \(c>0\) and suppose that there exists a symplectic embedding $\varphi\colon B^{2n+2}(c)\hookrightarrow M\times\mathbb C$. For any \(0<c'<c\), the set \(\varphi(\overline{B}^{\,2n+2}(c'))\) is compact. Hence its projection to the \(\mathbb C\)-factor is contained in \(B^2(a)\) for some \(a>0\). Therefore the restriction of \(\varphi\) gives $B^{2n+2}(c')\hookrightarrow M\times B^2(a) \hookrightarrow M\times S^2(a)$. It follows that $c'\le w_G(M\times S^2(a))\le \omega(A)$. Letting \(c'\ra c\), we obtain \(c\le\omega(A)\). Taking the supremum over all such \(c\) proves that $w_G(M\times\mathbb C)\le\omega(A)<\infty$.
\end{proof}

\begin{ex}  Every Hamiltonian $S^1$-manifold is symplectically uniruled by \cite{McD09a}.\end{ex}

The following result is another elementary result that will be useful to the proof of Proposition \ref{prop-wG-finite}. 

\begin{lemma}\label{lem:surface-width-area}
Let $(\Sigma,\omega)$ be a connected compact symplectic surface, possibly with boundary. Then
$w_G(\Sigma,\omega)=\int_\Sigma \omega.$
\end{lemma}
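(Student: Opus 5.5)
The proof splits into two inequalities. The upper bound $w_G(\Sigma,\omega)\le\int_\Sigma\omega$ is the volume obstruction in dimension two. Any symplectic embedding $B^2(a)\hookrightarrow{\rm Int}(\Sigma)$ is area preserving, so $a=\int_{B^2(a)}\omega_{\rm std}\le\int_\Sigma\omega$. Taking the supremum over $a$ gives the upper bound. Here $\Sigma$ is compact, so its area is finite.

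For the lower bound, fix $0<a<A:=\int_\Sigma\omega$ and construct a symplectic embedding $B^2(a)\hookrightarrow{\rm Int}(\Sigma)$. I would use Moser's theorem for area forms on surfaces, in three steps.

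\textbf{Step 1.} Choose a smoothly embedded closed disk $D\subset{\rm Int}(\Sigma)$ with $\int_D\omega>a$. Connectedness of $\Sigma$ lets us do this: take a cell decomposition, or a Morse function with a unique interior maximum, and thicken a tree of cells. This produces an embedded disk whose complement in $\Sigma$ has arbitrarily small area. For instance, remove a small collar of $\partial\Sigma$ and a thin neighbourhood of a 1-skeleton; what remains is a disk exhausting all but $\varepsilon$ of the area.

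\textbf{Step 2.} Fix a diffeomorphism $f\colon\overline{B}^{\,2}(b)\to D$, where $b=\int_D\omega$. Then $f^*\omega$ and $\omega_{\rm std}$ are two area forms on the closed disk with the same total area. Moser's argument, in its relative version for the disk, gives a diffeomorphism $g$ of $\overline{B}^{\,2}(b)$ with $g^*f^*\omega=\omega_{\rm std}$. This is the classical statement that area forms of equal total area on a compact surface are diffeomorphic. Orientation is arranged by precomposing $f$ with a reflection if needed.

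\textbf{Step 3.} Since $a<b$, restricting $f\circ g$ to $B^2(a)\subset B^2(b)$ gives the desired symplectic embedding into ${\rm Int}(\Sigma)$. Letting $a\to A$ yields $w_G(\Sigma,\omega)\ge\int_\Sigma\omega$.

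The main obstacle is Step 1: showing that the interior of a connected compact surface contains an embedded disk carrying area arbitrarily close to the total area. I would first try cutting $\Sigma$ along a finite system of arcs and curves into a polygon, for example via the classification of surfaces. The interior of that polygon is an open disk of full area. Shrinking it slightly gives a closed smooth disk with area $>a$, because the removed set has measure zero and area is continuous under exhaustion. If this gets messy, an alternative is a self-indexing Morse function with one minimum, whose sublevel set below the index-one values is a disk that can be enlarged by flowing.
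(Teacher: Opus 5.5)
Your proposal is correct and follows essentially the same route as the paper: the upper bound is the area obstruction, and the lower bound comes from using the classification of surfaces to realize $\Sigma$ as a polygon whose interior embeds, choosing a smoothly bounded closed disk there with area arbitrarily close to $\int_\Sigma\omega$, and applying two-dimensional Moser to identify it with a standard disk $\overline{B}^{\,2}(b)$. Your explicit care that the disk lies in ${\rm Int}(\Sigma)$, and your relative Moser statement on the closed disk, spell out details that the paper's proof uses only implicitly.
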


\begin{proof}
Suppose that there exists a symplectic embedding $\varphi\colon B^2(r)\hookrightarrow \Sigma$. Then we have $ r=\int_{B^2(r)}\omega_{\mathrm{std}}=\int_{\varphi(B^2(r))}\omega\le\int_\Sigma \omega$, hence $w_G(\Sigma,\omega)\le \int_\Sigma \omega.$

For the reverse inequality, fix $\varepsilon>0$. By the classification of compact connected surfaces, $\Sigma$ can be represented as a quotient of a planar polygon $P$ by pairwise edge identifications, possibly after removing finitely many disjoint open disks to account for the boundary. Let $q\colon P\to \Sigma$ be the quotient map. Then $q$ is injective on the interior of $P$.

Choose a smoothly bounded domain $D_\varepsilon$, compactly supported in ${\rm Int}(P)$ and diffeomorphic to a disk, such that $\int_{D_\varepsilon} q^*\omega > \int_\Sigma \omega - \varepsilon$. By Moser's theorem in dimension two, the symplectic disk $(D_\varepsilon,q^*\omega)$ is symplectomorphic to the standard disk $\overline{B}^{\,2}(r_\varepsilon)$ with $r_\varepsilon=\int_{D_\varepsilon} q^*\omega$. Since $q$ is injective on $D_\varepsilon$, this gives a symplectic embedding $\overline{B}^{\,2}(r_\varepsilon)\hookrightarrow \Sigma$, and therefore $w_G(\Sigma,\omega)\ge r_\varepsilon > \int_\Sigma \omega-\varepsilon$. Letting $\varepsilon\to 0$ proves the lemma.
\end{proof}



\section{Proof of Theorem~\ref{thm-estimation}}\label{sec-thm-A}
Before proving Theorem~\ref{thm-estimation}, we first show some basic properties of the Thompson-type distance $d_{\rm T}$ on $\cH(W,\lambda)$.
\begin{prop}\label{prop-ln-metric}
\begin{enumerate}[leftmargin=*]
\item We have the following equivalent expression,
    \[
    d_{\rm T}(H,G)=\inf_{\varphi\in{\rm Symp}(W,\lambda)}\sup_{ S^1\times {\rm Int}(W)} \bigl|\ln H-\ln (G\circ\varphi)\bigr| = \inf_{\varphi\in{\rm Symp}(W,\lambda)}\|\ln H-\ln (G\circ\varphi)\|_{\infty}.
    \]
\item The Thompson-type distance $d_{\rm T}$ satisfies the basic metric properties:
\begin{itemize}
\item  $d_{\rm T}(H,G)=d_{\rm T}(G,H)$.
\item  $d_{\rm T}(H,G)\ge 0$.
\item For any $H,G,F\in\cH(W,\lambda)$,
        $d_{\rm T}(H,F)\leq d_{\rm T}(H,G)+d_{\rm T}(G,F)$.
\end{itemize}
Consequently, $d_{\rm T}$ is an extended pseudo-metric on $\mathcal{H}$ (i.e., it may take the value $+\infty$).
\item The Thompson-type distance $d_{\rm T}$ satisfies the following invariance properties:
    \begin{itemize}
    	\item For any $\varphi,\psi\in{\rm Symp}(W,\lambda)$ and $H,G\in\cH(W,\lambda)$, we have
    	\[
    	d_{\rm T}(H,G)=d_{\rm T}(H\circ \varphi, G\circ\psi). 
    	\]
        \item  For any constant $a>0$, $d_{\rm T}(aH, aG)=d_{\rm T}(H,G)$.
        \item  Define $H_{t_0}(t,x)\coloneqq H(t+t_0,x)$ for any $t_0\in S^1$. Then $d_{\rm T}(H_{t_0},G_{t_0})=d_{\rm T}(H,G)$.
    \end{itemize}
\end{enumerate}
\end{prop}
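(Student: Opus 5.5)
Everything here follows directly from Definition~\ref{dfn-T-type}, once we note that $H$ and $G\circ\varphi$ are strictly positive on $S^1\times{\rm Int}(W)$ (Remark~\ref{rmk-positive}, together with the fact that $\varphi\in{\rm Symp}(W,\lambda)$ preserves ${\rm Int}(W)$). This positivity lets us take logarithms. I will first prove (1) and then use it to get (2) and (3).

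For (1), fix $\varphi$ and $C\ge 1$. On $S^1\times{\rm Int}(W)$ the condition $\frac1C H\le G\circ\varphi\le CH$ is equivalent to $|\ln H-\ln(G\circ\varphi)|\le \ln C$. On $S^1\times\partial W$ both functions vanish, so the inequality holds there automatically. Hence the set of admissible $\ln C$ for a given $\varphi$ is exactly $[\,\sup_{S^1\times{\rm Int}(W)}|\ln H-\ln(G\circ\varphi)|,\infty)$, intersected with $[0,\infty)$. Taking the infimum over $\varphi$ gives the formula in (1), with the convention that the value is $+\infty$ when the supremum is infinite. This can genuinely happen, since $H/G$ near $\partial W$ is the ratio of two functions vanishing on $\partial W$ and need not be bounded.

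For (2), nonnegativity is immediate. For symmetry, I replace $\varphi$ by $\varphi^{-1}$ and substitute $w=\varphi^{-1}(w')$ in the supremum. This gives
\[
\sup_{S^1\times{\rm Int}(W)}|\ln H-\ln(G\circ\varphi)| = \sup_{S^1\times{\rm Int}(W)}|\ln G-\ln(H\circ\varphi^{-1})|,
\]
and then I take the infimum. For the triangle inequality, given near-optimal $\varphi$ for $(H,G)$ and $\psi$ for $(G,F)$, I use $\psi\circ\varphi$ for $(H,F)$ and write
\[
|\ln H-\ln F\circ\psi\circ\varphi|\le|\ln H-\ln G\circ\varphi|+|\ln G\circ\varphi-\ln F\circ\psi\circ\varphi|.
\]
The second supremum equals $\sup|\ln G-\ln F\circ\psi|$, again by invariance of ${\rm Int}(W)$ under $\varphi$. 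Finally I let the error tend to $0$.

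For (3), there are three invariances.
\begin{itemize}
\item Symplectic invariance: I reparametrize the infimum by $\varphi'=\psi\circ\chi\circ\varphi^{-1}$, using the group structure of ${\rm Symp}(W,\lambda)$ and composing the supremum with $\varphi^{-1}$. One also checks, via Lemma~\ref{lemma-reparametrization}, that $H\circ\varphi$ and $G\circ\psi$ remain in $\cH(W,\lambda)$.
\item Scaling: $\ln(aH)-\ln(aG\circ\varphi)=\ln H-\ln(G\circ\varphi)$.
\item Time shift: the supremum over $t\in S^1$ is invariant under $t\mapsto t+t_0$, and the shifted Hamiltonians still satisfy (\ref{H-con}).
\end{itemize}

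The only delicate point is the boundary behavior in (1). The interior restriction is essential, since the log-ratio is undefined on $\partial W$, and the value $+\infty$ must be allowed. These two facts are why $d_{\rm T}$ is only an extended pseudo-metric.
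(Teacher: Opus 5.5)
Your proof is correct and is essentially the paper's argument in additive form: the paper gets symmetry from $\frac1C H\le G\circ\varphi\le CH\iff\frac1C G\le H\circ\varphi^{-1}\le CG$, and the triangle inequality by composing $\psi\circ\varphi$ and multiplying the constants, which is what you do with logarithms (the paper omits (1) and (3) as routine). One side remark of yours is wrong, though it does not affect the proof: for $H,G\in\mathcal H(W,\lambda)$ the supremum in (1) is never infinite, because $\tau_{\lambda,H}>0$ on $\partial W$ gives $dH(Y_\lambda)=-\lambda(X_H)<0$ along $\partial W$, so $H$ and $G\circ\varphi$ both vanish to exactly first order there and their ratio stays bounded; the value $+\infty$ in the extended pseudo-metric is permitted by the definition but never attained.
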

\begin{proof}
	The verification of (1) and (3) is straightforward and therefore omitted.
	
	For (2), the first bullet comes from the following equivalence,
	\[
	\frac{1}{C}H\leq G\circ\varphi\leq CH\Longleftrightarrow \frac{1}{C}G\leq H\circ\varphi^{-1}\leq CG.
	\]
	The second bullet is trivial. For the third bullet, if for some $C,D\geq 1$ and $\varphi,\psi\in{\rm Symp}(W,\lambda)$, we have
	\[
	\frac{1}{C}H\leq G\circ\varphi\leq CH, \frac{1}{D}G\leq F \circ\psi\leq DG
	\] 
	then
	\[
	\frac{1}{CD}H\leq \frac{1}{D}G\circ\varphi\leq F\circ\psi\circ\varphi\leq D\,G\circ \varphi\leq CD\, H.\qedhere
	\]	
\end{proof}

For the next proposition, recall that $\cH^+(W,\lambda)$ is a subset of $\cH(W, \lambda)$ with an extra condition that $\lambda(X_{H_t})\ge0$. 

\begin{prop}\label{prop-up-bound}
    Let $H,G\in\cH^+(W,\lambda)$ and $C\ge 1$.
    \begin{enumerate}
        \item If $H\ge \frac{1}{C}G$, then there exists a Liouville embedding $i_C\colon \frac{1}{C}W_G\hookrightarrow W_H$.
        \item If $H\le CG$, then there exists a Liouville embedding $i'_C\colon \frac{1}{C}W_H\hookrightarrow W_G$. 
        \item If $\frac{1}{C}G\le H\le CG$, then both embeddings exist (by (1) and (2) above) and satisfy
        \[
        \frac{1}{C^{2}}W_H\subset i_C\!\left(\frac{1}{C}W_G\right)\subset W_H,
        \qquad 
        \frac{1}{C^{2}}W_G\subset i'_C\!\left(\frac{1}{C}W_H\right)\subset W_G .
        \]
    \end{enumerate}
\end{prop}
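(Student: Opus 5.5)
The plan is to take all the embeddings in Proposition~\ref{prop-up-bound} to be honest inclusions $\frac1C W_G\subset W_H$ (resp.\ $\frac1C W_H\subset W_G$) inside $W\times\bC$. Inclusions are automatically strict Liouville embeddings, since both sides carry the restriction of $\hat\lambda$. The work is then to describe the rescaled tubes explicitly and reduce each inclusion to a pointwise inequality between Hamiltonians.

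\textbf{Rescaled tubes.} By the proof of Proposition~\ref{prop-Louville}, the Liouville field of $\hat\lambda$ is $\widehat Y=Y_\lambda+\frac12 r\partial_r$. Its flow is therefore $\sL^{s}_{\hat\lambda}(w,z)=(\sL^{s}_\lambda(w),e^{s/2}z)$, and it preserves the angular variable $t$. Since $\widehat Y$ is outward transverse to $\partial W_G$, the backward flow preserves $W_G$, and $\frac1C W_G=\sL^{-\ln C}_{\hat\lambda}(W_G)$. Substituting $w=\sL^{-\ln C}_\lambda(w')$ and $z=C^{-1/2}z'$, I obtain
\[
\tfrac1C W_G=\Bigl\{(w,z)\,\Bigm|\, w\in \tfrac1C W,\ z=re^{2\pi i t},\ \pi|z|^2\le \tfrac1C\, G\bigl(t,\sL^{\ln C}_\lambda(w)\bigr)\Bigr\}.
\]
Here $\sL^{\ln C}_\lambda(w)\in W$ is well defined because $w\in\frac1C W$. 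The analogous description holds for $\frac1{C^2}W_H$ with $\ln C$ replaced by $2\ln C$.

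\textbf{Monotonicity lemma.} This is the key step, and the only place the hypothesis $H,G\in\cH^+(W,\lambda)$ enters. As computed in the proof of Proposition~\ref{prop-Louville},
\[
d_wH_t(Y_\lambda)=-\lambda(X_{H_t})\le 0 .
\]
So $H_t$ is non-increasing along forward Liouville trajectories, which gives $H(t,\sL^{s}_\lambda(w))\le H(t,w)$ for all $s\ge 0$ whenever the left side is defined. The same holds for $G$.

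\textbf{Proof of (1) and (2).} For (1), take $(w,z)\in\frac1C W_G$. By the lemma and the hypothesis $G\le CH$,
\[
\pi|z|^2\le \tfrac1C G\bigl(t,\sL^{\ln C}_\lambda w\bigr)\le \tfrac1C G(t,w)\le H(t,w),
\]
so $(w,z)\in W_H$. At the binding $z=0$ there is nothing to check. Part (2) is the same argument with the roles of $H$ and $G$ exchanged, using $H\le CG$.

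\textbf{Proof of (3).} The outer inclusions are exactly (1) and (2). For $\frac1{C^2}W_H\subset \frac1C W_G$, take $(w,z)$ with $w\in\frac1{C^2}W$ and $\pi|z|^2\le C^{-2}H(t,\sL^{2\ln C}_\lambda w)$. Then $w\in\frac1C W$ as well. Setting $v=\sL^{\ln C}_\lambda w\in \frac1C W\subset W$ and applying the lemma,
\[
\tfrac1{C^2}H\bigl(t,\sL^{\ln C}_\lambda v\bigr)\le \tfrac1{C^2}H(t,v)\le \tfrac1C G(t,v),
\]
which is precisely the defining inequality of $\frac1C W_G$ at $(w,z)$. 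The remaining inclusion is symmetric.

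\textbf{Main obstacle.} The delicate points are bookkeeping rather than ideas. First, one must justify the explicit description of $\frac1C W_G$; this needs that the backward $\hat\lambda$-flow fixes $t$ and maps $\partial W_G$ onto the boundary of the rescaled domain. Second, one must make sure each forward-flowed point stays inside $W$, which is why I keep track of the membership $w\in\frac1C W$ throughout.
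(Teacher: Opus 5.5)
Your proof is correct and follows essentially the same route as the paper's. Both arguments take the embeddings to be inclusions, use the explicit description of $\frac1C W_G$ through the flow $(w,z)\mapsto(\sL^{s}_\lambda(w),e^{s/2}z)$, and use that $H_t$ and $G_t$ are non-increasing along forward Liouville trajectories because $\lambda(X_{H_t}),\lambda(X_{G_t})\ge 0$.

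The differences are only in the order of steps. In (1) you apply monotonicity to $G$, where the paper applies it to $H$. In (3) you apply monotonicity to $H$ before using $H\le CG$, where the paper uses the hypothesis first. Your ordering in (3) also avoids a harmless slip in the paper's displayed chain, which writes $\frac1C H\le\frac1C G$ where $H\le CG$ only gives $\frac1C H\le G$.
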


\begin{proof}
    Recall that for a function $F\in\cH^+(W,\lambda)$, the domain $W_F$ is defined as
    \[
    W_F=\left\{(w,z)\in W\times\mathbb C\ \middle|\ z=re^{2\pi it},\ \pi|z|^2\le F(t,w)\right\}.
    \]
    and the Liouville form on $W\times\mathbb{C}$ is $\hat{\lambda}=\lambda+\pi r^2dt$.

    (1) Assume $H\ge\frac{1}{C}G$.  Consider the map
    \[
    i_C\colon \frac{1}{C}W_G\rightarrow W_H,\qquad (w,z)\mapsto(w,z).
    \]
    Here, the rescaled domain $\frac{1}{C}W_G$ is given by
    \begin{align*}
    	 \frac{1}{C}W_G
    &=\bigl\{(\sL_{\lambda}^{\ln(1/C)}(w),\,z/\sqrt{C})\in W\times\mathbb{C}
    \;\big|\; (w,z)\in W_G\bigr\}\\
    & =\Bigl\{(w,z)\in \sL_{\lambda}^{\ln(1/C)}(W)\times\mathbb{C}
    \;\Big|\; \pi\bigl|\sqrt{C}\cdot z\bigr|^{2}\le G(t,\sL_{\lambda}^{\ln C}(w))\Bigr\}
    \end{align*}
    where $\sL_{\lambda}^{\ln(1/C)}$ is the time-$\ln(1/C) $ Liouville flow on $W$.
     For  $(w,z)\in\frac{1}{C}W_G$, we have
    \[
    \pi|z|^{2}\le\frac{1}{C}\,G(t,\sL_{\lambda}^{\ln C}(w))\le H(t,\sL_{\lambda}^{\ln C}(w)).
    \]
    
    Let $Y_\lambda$ be the Liouville vector field on $W$. For each fixed $t\in S^1$, the function $s\mapsto H(t,\sL_\lambda^s(w))$ is non-increasing, because
    \[
    \frac{d}{ds}H(t,\sL_\lambda^s(w))=d_wH_t(Y_\lambda)(\sL_\lambda^s(w))=d\lambda(X_{H_t},Y_\lambda)(\sL_\lambda^s(w))=-\lambda(X_{H_t})(\sL_\lambda^s(w))\le 0. 
    \]
    Hence, $H(t,\sL_\lambda^{\ln C}(w))\le H(t,w)$. Therefore $\pi|z|^2\le H(t,w),$ which means $(w,z)\in W_H$. Thus $i_C$ is well-defined.
    Since $i_C$ is simply the inclusion, it satisfies $i_C^*\hat\lambda=\hat\lambda$; consequently it is a Liouville embedding.

    (2) The second embedding is completely analogous: if $H\le CG$ we set
    \[
    i'_C\colon \frac{1}{C}W_H\rightarrow W_G,\qquad (w,z)\mapsto(w,z),
    \]
    and the same argument shows that it is a Liouville embedding.

   (3) Assume now $\frac{1}{C}G\le H\le CG$.  Then both (1) and (2) apply.
    To prove the first inclusion in (3), observe that
    \[
    \frac{1}{C^{2}}W_H=\Bigl\{(w,z)\in\sL_{\lambda}^{2\ln(1/C)}(W)\times\mathbb{C}\;\Big|\; \pi\bigl|Cz\bigr|^{2}\le H(t,\sL_{\lambda}^{2\ln C}(w))\Bigr\}.
    \]
    On the other hand,
    \[
    i_C\!\left(\frac{1}{C}W_G\right)
    =\Bigl\{(w,z)\in\sL_{\lambda}^{\ln(1/C)}(W)\times\mathbb{C}\;\Big|\; \pi\bigl|\sqrt{C}\,z\bigr|^{2}\le G(t,\sL_{\lambda}^{\ln C}(w))\Bigr\}.
    \]
    For $(w,z)\in\frac{1}{C^{2}}W_H$ we have
    \[
    \pi\bigl|\sqrt{C}\,z\bigr|^{2}=\frac{1}{C}\,\pi\bigl|Cz\bigr|^{2}    \le \frac{1}{C}\,H(t,\sL_{\lambda}^{2\ln C}(w))\le\frac{1}{C}\,G(t,\sL_{\lambda}^{2\ln C}(w))
    \]
    where the last inequality uses $H\le CG$ and the fact that $G$ is also non-increasing along the forward Liouville flow. Since $2\ln C\ge \ln C$, we have
    \[
   G(t,\sL_{\lambda}^{2\ln C}(w))\le G(t,\sL_{\lambda}^{\ln C}(w)).
    \]
    hence
    \[
    \pi\bigl|\sqrt{C}\,z\bigr|^{2}\le \frac{1}{C}\,G(t,\sL_{\lambda}^{\ln C}(w))    \le G(t,\sL_{\lambda}^{\ln C}(w)).
    \]
    This shows $(w,z)\in i_C(\frac{1}{C}W_G)$, establishing $\frac{1}{C^{2}}W_H\subset i_C(\frac{1}{C}W_G)$.

    The second inclusion in (3) is proved in the same way, exchanging the roles of $H$ and $G$ and using the embedding $i'_C$.
\end{proof}  
\begin{proof}[Proof of Theorem~\ref{thm-estimation}]
By Proposition~\ref{prop-up-bound}, for any $H,G\in\cH^+(W,\lambda)$, we have 
\[
d_{\rm SBM}({\rm U}(H),{\rm U}(G))\leq\inf\left\{\ln C\geq 0\,\left|\,\frac{1}{C}H\leq G\leq C H\right\}\right..
\]
By Lemma~\ref{lemma-reparametrization}, we have $(W_{G},\hat\lambda)\cong(W_{G\circ \varphi},\hat\lambda)$ for any $\varphi\in{\rm Symp}(W,\lambda)$. Then we have
\begin{align*}
	d_{\rm SBM}({\rm U}(H),{\rm U}(G))&\leq\inf\left\{\ln C\,\left|\,\exists\,\varphi\in {\rm Symp}(W,\lambda),\text{ s.t. } \frac{1}{C}H\leq G\circ\varphi \leq C H\right\}\right.\\
	&=d_{\rm T}(H,G).
\end{align*}
This completes the proof.  \end{proof}

\section{Proof of Theorem~\ref{thm-SBM-CBM-ln}}
We begin this section by constructing the canonical diffeomorphism $\varphi_{H,G}$ mentioned right below Corollary \ref{cor-estimation-class}, relating contact boundaries of tubes that are constructed from two input Hamiltonian functions $H$ and $G$.
 \begin{prop}\label{prop-contact-bdy}
	For any $H,G\in \cH(W,\lambda)$, the Liouville flow defines a diffeomorphism $\varphi_{H,G}\colon \partial W_H\ra \partial W_G,	p\mapsto \sL_{\hat{\lambda}}^{\,f_{H,G}(p)}(p)$ such that $\varphi_{H,G}^*(\alpha_G)=e^{f_{H,G}}\alpha_H$. Here, $f_{H,G}(p)$ is the unique time such that	$\sL_{\hat{\lambda}}^{\,f_{H,G}(p)}(p)\in\partial W_G$ 	for every $p\in\partial W_H$. Moreover, for any $H,G,F\in\cH(W,\lambda)$, we have
 $\varphi_{H,F}=\varphi_{G,F}\circ\varphi_{H,G}$.
\end{prop}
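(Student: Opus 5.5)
The plan is to use the standard fact that a hypersurface transverse to a Liouville vector field is a graph over any other such hypersurface. Set $\widehat Y = Y_\lambda + \frac12 r\partial_r$ on $W\times\bC$, which is the Liouville field of $\hat\lambda$ from Proposition~\ref{prop-Louville}. By Proposition~\ref{prop-Louville}, $\widehat Y$ is outward transverse to $\partial W_H$ and to $\partial W_G$. The first step is to show that every backward flow line of $\widehat Y$ starting on $\partial W_H$ meets $\partial W_G$ exactly once.

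To see this, observe that $\sL_{\hat\lambda}^{s}(w,z) = (\sL_\lambda^{s}(w), e^{s/2}z)$, so the flow preserves the angular coordinate $t$. Take $p=(w,z)\in\partial W_H$ with $z\neq 0$, and put $F_G(s) := \pi e^{s}|z|^2 - G(t,\sL_\lambda^{s}(w))$. The computation in the proof of Proposition~\ref{prop-Louville} gives $F_G'(s) = F_G(s) + \tau_{\lambda,G} > 0$ wherever $F_G=0$. Hence $F_G$ changes sign at most once, and each zero is a transverse crossing from negative to positive.

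It remains to show that a crossing actually occurs. As $s\to -\infty$, the point $\sL_\lambda^{s}(w)$ tends into the skeleton, where $G$ is bounded below by a positive constant (this uses Remark~\ref{rmk-positive} and the fact that the skeleton is a compact subset of ${\rm Int}(W)$). Meanwhile $e^s|z|^2\to 0$, so $F_G<0$ there. For the other end, the forward flow in $W\times\bC$ eventually leaves the compact set $W_G$, so $F_G>0$ for large $s$; if $\sL_\lambda^s(w)$ reaches $\partial W$ first, then $G=0$ there while $|z|>0$, which again forces $F_G>0$. This gives a unique zero $f_{H,G}(p)$.

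Points on the binding $\partial W\times\{0\}$ need separate treatment. Near $\partial W$ both $H$ and $G$ are autonomous and vanish on $\partial W$, and the binding lies in both boundaries, so $f_{H,G}=0$ there. Smoothness of $f_{H,G}$ follows from the implicit function theorem applied to the defining functions $f_G$ and $g_G$ of Proposition~\ref{prop-Louville}, using transversality. Near the binding one uses $g_G$, which is smooth across $z=0$. The resulting map $\varphi_{H,G}$ is smooth, and $\varphi_{G,H}$ is an inverse of the same form, so $\varphi_{H,G}$ is a diffeomorphism.

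For the contact form identity, I would use $\mathcal L_{\widehat Y}\hat\lambda = \hat\lambda$, which gives $(\sL^s)^*\hat\lambda = e^s\hat\lambda$. Write $\varphi_{H,G}(p)=\Phi(p,f(p))$ with $\Phi(p,s)=\sL^s(p)$. Then
\[
\varphi_{H,G}^*\hat\lambda = e^{f}\hat\lambda + \hat\lambda(\widehat Y)\,df,
\]
and the last term vanishes because $\hat\lambda(\widehat Y)=d\hat\lambda(\widehat Y,\widehat Y)=0$. Restricting to $\partial W_H$ yields $\varphi_{H,G}^*\alpha_G = e^{f_{H,G}}\alpha_H$.

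The cocycle identity follows from uniqueness of the hitting time. Starting at $p$, the flow for time $f_{H,G}(p)+f_{G,F}(\varphi_{H,G}(p))$ lands on $\partial W_F$. By uniqueness, this time equals $f_{H,F}(p)$, and the group property of the flow then gives $\varphi_{H,F}=\varphi_{G,F}\circ\varphi_{H,G}$.

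I expect the main obstacle to be the existence and smoothness of the hitting time near the binding. The first issue is the limiting behaviour of the backward flow toward the skeleton. The second is checking that $g_G$ gives a smooth implicit solution uniformly as $z\to 0$. Transversality of $\widehat Y$ along $\{g_G=0\}$, which is already established in Proposition~\ref{prop-Louville}, should resolve the second issue.
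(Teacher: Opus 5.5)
Your proposal is correct and follows essentially the same route as the paper. The common steps are: the hitting time along the flow $(\sL_\lambda^s(w),e^{s/2}z)$, existence from the limits near the skeleton and near $\partial W$, smoothness by transversality, the identity $\varphi_{H,G}^*\alpha_G=e^{f_{H,G}}\alpha_H$ from $(\sL_{\hat\lambda}^s)^*\hat\lambda=e^s\hat\lambda$ and $\hat\lambda(\widehat Y)=0$, and the cocycle relation from uniqueness of hitting times.

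The differences are cosmetic. Near the binding you apply the implicit function theorem to $g_G$, where the paper uses a flow box. For uniqueness, the paper uses the rescaled function $e^{-s}F_G(s)$, whose derivative $e^{-s}\tau_{\lambda,G}>0$ makes it globally strictly increasing; this is slightly cleaner than your transverse-crossing argument. Also, you should speak of the full flow line rather than the backward one, since the hitting time can be positive.
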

\begin{proof}
Let $\widehat Y$ be the Liouville vector field of $\hat\lambda$. Since $\hat\lambda=\lambda+\frac12r^2d\theta$, we have $\widehat Y=Y_\lambda+\frac12r\partial_r,$ and hence $\sL_{\hat\lambda}^s(w,z)=(\sL_\lambda^s(w),e^{s/2}z).$

We first define the ``hitting time'' from $\partial W_H$ to $\partial W_G$. Fix $p=(w,z)\in \partial W_H$.
If $z=0$, then $p\in \partial W\times\{0\}$, because $p\in \partial W_H$ implies $\pi|z|^2=H(t,w)=0$ (hence $w\in \partial W$). Since $G|_{\partial W}=0$, we also have $p\in \partial W_G$, so in this case we set $f_{H,G}(p)$ to be $0$.

Now assume $z\neq 0$, and write $z=re^{2\pi i t_0}$. Along the Liouville flow, the angular coordinate is constant, so $\sL_{\hat\lambda}^s(p)\in \partial W_G$ is equivalent to $\pi e^s|z|^2=G(t_0,\sL_\lambda^s(w))$.
Equivalently, if we define $$g_p(s)\coloneqq e^{-s}(\pi e^s|z|^2-G(t_0,\sL_\lambda^s(w)))=\pi|z|^2-e^{-s}G(t_0,\sL_\lambda^s(w)),$$ then $g_p(s)=0$ if and only if $\sL_{\hat\lambda}^s(p)\in \partial W_G$. We claim that $g_p$ is strictly increasing. Indeed, using $d_wG_{t_0}=\iota_{X_{G_{t_0}}}d\lambda$ and $\iota_{Y_\lambda}d\lambda=\lambda$, we get $d_wG_{t_0}(Y_\lambda)=d\lambda(X_{G_{t_0}},Y_\lambda)=-\lambda(X_{G_{t_0}})$.  Therefore
\begin{align*}
\frac{d}{ds}(e^{-s}G(t_0,\sL_\lambda^s(w)))&=e^{-s}(d_wG_{t_0}(Y_\lambda)-G_{t_0})(\sL_\lambda^s(w))\\
& =-e^{-s}\tau_{\lambda,G}(t_0,\sL_\lambda^s(w))<0.
\end{align*}
Hence,
$g_p'(s)=e^{-s}\tau_{\lambda,G}(t_0,\sL_\lambda^s(w))>0,$
and then $g_p$ is strictly increasing.

Next, we prove that $g_p$ has a unique zero. Since $z\neq0$ and $p\in\partial W_H$, we have $H(t_0,w)=\pi|z|^2>0$, and hence $w\in{\rm Int}(W)$. Let $s_+(w)\in(0,\infty]$ be the maximal forward time for which the Liouville trajectory through $w$ remains in $W$. If $s_+(w)<\infty$, then $\sL_\lambda^s(w)$ approaches $\partial W$ as $s\ra s_+(w)$. Consequently, $G(t_0,\sL_\lambda^s(w))\to0$ and $g_p(s)\to\pi|z|^2>0$. If $s_+(w)=\infty$, then $G$ is bounded on the compact manifold $S^1\times W$, so $e^{-s}G(t_0,\sL_\lambda^s(w))\ra 0$ and hence $\quad g_p(s)\ra\pi|z|^2>0$ as $s\to\infty$. 
	Thus $g_p$ is positive at some forward time in either case. 
	
	On the other hand, the backward Liouville orbit of $w$ eventually lies in a compact neighborhood of ${\rm Sk}(W)$ contained in ${\rm Int}(W)$. Indeed, the compact sets $\sL_\lambda^{-T}(W)$ decrease to ${\rm Sk}(W)$ as $T\to\infty$, so they are eventually contained in any prescribed neighborhood of the skeleton. Since $G(t_0,\cdot)>0$ on ${\rm Int}(W)$, there exist $c>0$ and $s_0<0$ such that $G(t_0,\sL_\lambda^s(w))\ge c$ for all $s\le s_0$. Therefore $g_p(s)\le \pi|z|^2-ce^{-s}\ra-\infty$ as $s\to-\infty$.	By strict monotonicity, there exists a unique $t_p\in\bR$ such that $g_p(t_p)=0$, equivalently $\sL_{\hat\lambda}^{t_p}(p)\in \partial W_G$.

We now define $f_{H,G}(p)\coloneqq t_p$ and $\varphi_{H,G}(p)\coloneqq \sL_{\hat\lambda}^{\,f_{H,G}(p)}(p)$. Then we show that $f_{H,G}$ is smooth. On $\partial W_H\setminus (\partial W\times\{0\})$, one may choose the angle $t_0$ smoothly on a neighborhood, and then the equation $g_p(s)=0$ together with $\partial_s g_p(s)>0$ implies by the implicit function theorem that $f_{H,G}$ is smooth there.

It remains to treat points of $\partial W\times\{0\}$. Let $p_0\in \partial W\times\{0\}$. Since $\widehat Y$ is transverse to both $\partial W_H$ and $\partial W_G$, by the flow-box theorem there exist a neighborhood $U$ of $p_0$, a manifold $V$, and coordinates $(q,s)\in V\times (-\varepsilon,\varepsilon)$ on $U$ such that $\widehat Y=\partial_s$. In these coordinates, $\partial W_H\cap U$ and $\partial W_G\cap U$ are graphs $s=u_H(q)$ and $s=u_G(q)$ of smooth functions. Hence, for $p=(q,u_H(q))\in \partial W_H\cap U$, we have $f_{H,G}(p)=u_G(q)-u_H(q)$, so $f_{H,G}$ is smooth near $p_0$. Therefore $f_{H,G}$ is smooth on all of $\partial W_H$, and so is $\varphi_{H,G}$.

Constructing similarly $f_{G,H}$ and $\varphi_{G,H}$, uniqueness of the hitting time gives
$
\varphi_{G,H}\circ \varphi_{H,G}=\id_{\partial W_H}, \varphi_{H,G}\circ \varphi_{G,H}=\id_{\partial W_G}.
$
Thus $\varphi_{H,G}$ is a diffeomorphism with inverse $\varphi_{G,H}$.

We now prove the conformal relation. Since $(\sL_{\hat\lambda}^s)^*\hat\lambda=e^s\hat\lambda$ and $\hat\lambda(\widehat Y)=0$, for any $v\in T_p\partial W_H$ we have
$
d\varphi_{H,G}(v)=d(\sL_{\hat\lambda}^{\,f_{H,G}(p)})_p(v)+df_{H,G}(v)\,\widehat Y_{\varphi_{H,G}(p)}.
$
Therefore
\[
\varphi_{H,G}^*(\alpha_G)(v)= \hat\lambda_{\varphi_{H,G}(p)}\!\left(d(\sL_{\hat\lambda}^{\,f_{H,G}(p)})_p(v)\right) = e^{f_{H,G}(p)}\hat\lambda_p(v)=e^{f_{H,G}(p)}\alpha_H(v).
\]
Hence, $\varphi_{H,G}^*(\alpha_G)=e^{f_{H,G}}\alpha_H$. Finally, for $p\in \partial W_H$, both $\varphi_{H,F}(p)$ and $\varphi_{G,F}(\varphi_{H,G}(p))$ lie on the same Liouville orbit through $p$, and both belong to $\partial W_F$. Then the uniqueness of the hitting time with $\partial W_F$ implies that $\varphi_{H,F}=\varphi_{G,F}\circ \varphi_{H,G}$.
\end{proof}

Next, we establish several metric properties of  $d_{\rm CBM}(\alpha_H,\alpha_G)$.
\begin{prop}\label{prop-CBM}
The distance $d_{\rm CBM}$ satisfies the following properties. For any $H,G,K\in\cH(W,\lambda)$, any $\varphi\in{\rm Cont}_0(\partial W_H,\ker\alpha_H)$ and $\psi\in{\rm Cont}_0(\partial W_G,\ker\alpha_G)$,
	\begin{enumerate}
		\item $d_{\rm CBM}(\alpha_H,\alpha_G)\geq 0$ and $d_{\rm CBM}(\alpha_H,\alpha_H)= 0$;
		 \item $d_{\rm CBM}(\varphi^*(\alpha_H),\psi^*(\alpha_G))=d_{\rm CBM}(\alpha_H,\alpha_G)$. 
		\item $d_{\rm CBM}(\alpha_H,\alpha_G)=d_{\rm CBM}(\alpha_G,\alpha_H)$;
		\item $d_{\rm CBM}(\alpha_G,\alpha_K)=d_{\rm CBM}(\varphi_{H,G}^*(\alpha_G),\varphi_{H,K}^*(\alpha_K))$;
		\item $d_{\rm CBM}(\alpha_H,\alpha_K)\leq d_{\rm CBM}(\alpha_H,\alpha_G)+d_{\rm CBM}(\alpha_G,\alpha_K)$;
			\end{enumerate}
\end{prop}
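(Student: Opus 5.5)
The plan is to reduce every item to the definition of $d_{\rm CBM}$ on a single contact manifold (as in (\ref{dfn-CBM})), to its independence of the base form $\alpha_0$, and to the cocycle identity $\varphi_{H,F}=\varphi_{G,F}\circ\varphi_{H,G}$ from Proposition~\ref{prop-contact-bdy}. Item (1) is immediate: the infimum is over $\ln C\ge 0$, and for $G=H$ we have $\varphi_{H,H}=\id$, so $\varphi=\id$ gives $f=0$.

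For (2), we write $\alpha=\varphi_{H,G}^*\alpha_G$ on $\partial W_H$. Replacing $\alpha_H$ by $\varphi^*\alpha_H$ only changes the reference form by pullback under an element of ${\rm Cont}_0(\partial W_H)$. Since conjugation by $\varphi$ preserves ${\rm Cont}_0$, and $\varphi^*(e^f\alpha_H)=e^{f\circ\varphi}\varphi^*\alpha_H$ with $\|f\circ\varphi\|_\infty=\|f\|_\infty$, the infimum is unchanged. For $\psi^*\alpha_G$ on the second slot, we interpret $d_{\rm CBM}(\varphi^*\alpha_H,\psi^*\alpha_G)$ via $\varphi_{H,G}$. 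Then $\varphi_{H,G}^*\psi^*\alpha_G=(\psi\circ\varphi_{H,G})^*\alpha_G=\psi'^*\varphi_{H,G}^*\alpha_G$ with $\psi'=\varphi_{H,G}^{-1}\psi\varphi_{H,G}\in{\rm Cont}_0(\partial W_H,\ker\alpha_H)$. This holds because $\varphi_{H,G}$ is a contactomorphism $(\partial W_H,\ker\alpha_H)\to(\partial W_G,\ker\alpha_G)$ by Proposition~\ref{prop-contact-bdy}, and conjugating an isotopy to $\id$ gives an isotopy to $\id$. Absorbing $\psi'$ into the infimum over $\varphi$ finishes (2).

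For (3), we take a competitor $\phi$ with $\phi^*\varphi_{H,G}^*\alpha_G=e^f\alpha_H$ and $|f|\le\ln C$. Pushing forward by $\varphi_{G,H}=\varphi_{H,G}^{-1}$, we set $\phi'=\varphi_{H,G}\phi^{-1}\varphi_{G,H}\in{\rm Cont}_0(\partial W_G)$. A direct computation gives $\phi'^*\varphi_{G,H}^*\alpha_H=e^{-f\circ\phi^{-1}\circ\varphi_{G,H}}\alpha_G$, with the same sup-norm bound. Item (4) follows the same pattern: pulling back the pair $(\alpha_G,\alpha_K)$ by $\varphi_{H,G}$, and using $\varphi_{H,K}=\varphi_{G,K}\circ\varphi_{H,G}$, transports the comparison from $\partial W_G$ to $\partial W_H$. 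Conjugation by $\varphi_{H,G}$ gives a bijection ${\rm Cont}_0(\partial W_G)\to{\rm Cont}_0(\partial W_H)$ preserving the conformal factor's sup-norm, so the two infima agree.

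For (5), we use (4) to move everything to $\partial W_H$, where $\beta_G=\varphi_{H,G}^*\alpha_G$ and $\beta_K=\varphi_{H,K}^*\alpha_K$ are forms for $\ker\alpha_H$, and then apply the ordinary triangle inequality. Explicitly, given $\phi_1^*\beta_G=e^{f_1}\alpha_H$ and $\phi_2^*\beta_K=e^{f_2}\beta_G$, we obtain $(\phi_2\phi_1)^*\beta_K=e^{f_2\circ\phi_1+f_1}\alpha_H$, with $|f_2\circ\phi_1+f_1|\le\ln C_1+\ln C_2$. The main obstacle is bookkeeping rather than a substantive difficulty: we must verify that conjugation by $\varphi_{H,G}$ really lands in the identity components. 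This uses only that $\varphi_{H,G}$ is a fixed contactomorphism and that ${\rm Cont}_0$ consists of maps isotopic to the identity.
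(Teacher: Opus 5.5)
Your proposal is correct and follows essentially the same route as the paper: items (2)--(4) are proved by re-indexing competitors, either by composing with $\varphi$ or by conjugating with the canonical contactomorphism $\varphi_{H,G}$ (using $\varphi_{G,H}=\varphi_{H,G}^{-1}$ and the cocycle identity from Proposition~\ref{prop-contact-bdy}), and (5) is obtained by moving everything to $\partial W_H$ via (4). The one difference is that you prove the single-manifold triangle inequality directly by composing competitors, where the paper cites Lemma~7 of \cite{RZ21}; also, in the first half of (2) the operation you need is composition with $\varphi$, not conjugation, which is harmless since $\varphi\in{\rm Cont}_0$.
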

\begin{proof}
	The item (1) is trivial. 
	
	For (2), for any   $\varphi, \varphi_0\in{\rm Cont}_0(\partial W_H,\ker\alpha_H)$, if 
	$
	\varphi^*_0\varphi_{H,G}^*(\alpha_G)=e^f\varphi^*\alpha_H=\varphi^*(e^{f\circ\varphi^{-1}}\alpha_H),
	$
	then
	$
	(\varphi^{-1})^*\varphi^*_0\varphi_{H,G}^*(\alpha_G)=e^{f\circ\varphi^{-1}}\alpha_H.
	$
	By definition,
	\begin{align*}
		d_{\rm CBM}(\varphi^*\alpha_H,\alpha_G)
		&=\inf\left\{\ln C\left| 
	\begin{array}{l}
		\exists\varphi_0\in{\rm Cont}_0(\partial W_{H},\ker\alpha_H), \text{ s.t.}\\
		\varphi^*_0\varphi_{H,G}^*(\alpha_G)=e^f\varphi^*\alpha_H,|f|\leq \ln C
	\end{array}
	\right\}\right.\\
	&=\inf\left\{\ln C\left| 
	\begin{array}{l}
		\exists\varphi_1=\varphi_0\circ \varphi\in{\rm Cont}_0(\partial W_{H},\ker\alpha_H), \text{ s.t.}\\
		\varphi_1^*\varphi_{H,G}^*(\alpha_G)=e^{f\circ\varphi^{-1}}\alpha_H,|f\circ\varphi^{-1}|\leq \ln C
	\end{array}
	\right\}\right.\\
	&=d_{\rm CBM}(\alpha_H,\alpha_G).
	\end{align*}
For any $\psi\in{\rm Cont}_0(\partial W_G,\ker\alpha_G)$, since $\varphi_{G,H}\circ\varphi_{H,G}=\id_{\partial W_H}$, we can define  $\varphi\coloneqq\varphi_{G,H}\circ \psi\circ\varphi_{H,G}\in{\rm Cont}_0(\partial W_H,\ker\alpha_H).$ Equivalently, $\psi=\varphi_{H,G}\circ\varphi\circ\varphi_{G,H}$, and the pullbacks satisfy $\varphi^*\varphi_{H,G}^*=\varphi_{H,G}^*\psi^*.$ This gives an identification of ${\rm Cont}_0(\partial W_H,\ker\alpha_H)$ and ${\rm Cont}_0(\partial W_G,\ker\alpha_G)$.
By definition,
\begin{align*}
	d_{\rm CBM}(\alpha_H,\psi^*\alpha_G)
		&=\inf\left\{\ln C\left| 
	\begin{array}{l}
		\exists\varphi_0\in{\rm Cont}_0(\partial W_{H},\ker\alpha_H), \text{ s.t.}\\
		\varphi^*_0\varphi_{H,G}^*\psi^*(\alpha_G)=e^f\alpha_H,|f|\leq \ln C
	\end{array}
	\right\}\right.\\
	&=\inf\left\{\ln C\left| 
	\begin{array}{l}
		\exists\varphi_1=\varphi\circ\varphi_0\in{\rm Cont}_0(\partial W_{H},\ker\alpha_H), \text{ s.t.}\\
		\varphi^*_1\varphi_{H,G}^*(\alpha_G)=e^f\alpha_H,|f|\leq \ln C
	\end{array}
	\right\}\right.\\
	&=d_{\rm CBM}(\alpha_H,\alpha_G).
\end{align*}
For (3), by definition, 
\begin{align*}
	d_{\rm CBM}\left(\alpha_H,\alpha_G\right)
		&=\inf\left\{\ln C\left| 
	\begin{array}{l}
		\exists\varphi\in{\rm Cont}_0(\partial W_{H},\ker\alpha_H), \text{ s.t.}\\
		\varphi^*\varphi_{H,G}^*(\alpha_G)=e^f\alpha_H,|f|\leq \ln C
	\end{array}
	\right\}\right. \\
	&=\inf\left\{\ln C\left| 
	\begin{array}{l}
		\exists\psi\in{\rm Cont}_0(\partial W_{G},\ker\alpha_G), \text{ s.t.}\\
		\varphi_{H,G}^*\psi^*(\alpha_G)=e^f\alpha_H,|f|\leq \ln C
	\end{array}
	\right\}\right.	\\
	&=\inf\left\{\ln C\left| 
	\begin{array}{l}
		\exists\psi\in{\rm Cont}_0(\partial W_{G},\ker\alpha_G), \text{ s.t.}\\
		\psi^*(\alpha_G)=e^{f\circ\varphi_{G,H}}\varphi_{G,H}^*\alpha_H,|f\circ\varphi_{G,H}|\leq \ln C
	\end{array}
	\right\}\right.	\\
	&=\inf\left\{\ln C\left| 
	\begin{array}{l}
		\exists\psi\in{\rm Cont}_0(\partial W_{G},\ker\alpha_G), \text{ s.t.}\\
		(\psi^{-1})^*\varphi_{G,H}^*\alpha_H=e^{-f\circ\varphi_{G,H}\circ\psi^{-1}}\alpha_G,\\
		|f\circ\varphi_{G,H}\circ\psi^{-1}|\leq \ln C
	\end{array}
	\right\}\right.	\\
	&=d_{\rm CBM}(\alpha_G,\alpha_H).
\end{align*}
The second equality uses the conjugation identity $\varphi^*\varphi_{H,G}^*=\varphi_{H,G}^*\psi^*$ above. The third and fourth equalities follow by pulling back with $\varphi_{G,H}$ and $\psi^{-1}$, respectively.

For (4), by definition,
\begin{align*}
	d_{\rm CBM}\left(\varphi_{H,G}^*(\alpha_G),\varphi_{H,K}^*(\alpha_K)\right)
		=&\inf\left\{\ln C\left| 
	\begin{array}{l}
		\exists\varphi\in{\rm Cont}_0(\partial W_{H},\ker\alpha_H), \text{ s.t.}\\
		\varphi^*\varphi_{H,G}^*(\alpha_G)=e^f\varphi_{H,K}^*(\alpha_K),|f|\leq \ln C
	\end{array}
	\right\}\right. \\
	=&\inf\left\{\ln C\left| 
	\begin{array}{l}
		\exists\psi\in{\rm Cont}_0(\partial W_{G},\ker\alpha_G), \text{ s.t.}\\
		\varphi_{H,G}^*\psi^*(\alpha_G)=e^f\varphi_{H,K}^*(\alpha_K),|f|\leq \ln C
	\end{array}
	\right\}\right.	\\
	=&\inf\left\{\ln C\left| 
	\begin{array}{l}
		\exists\psi\in{\rm Cont}_0(\partial W_{G},\ker\alpha_G), \text{ s.t.}\\
		\psi^*(\alpha_G)=e^{f\circ\varphi_{G,H}}\varphi_{G,K}^*\alpha_K,\\
		|f\circ\varphi_{G,H}|\leq \ln C
	\end{array}
	\right\}\right.	\\
	=&d_{\rm CBM}(\alpha_G,\alpha_K).
\end{align*}
The second equality uses the same conjugation identity. For the third equality, pull back by $\varphi_{G,H}$ and use
	$\varphi_{G,K}=\varphi_{H,K}\circ\varphi_{G,H}$. 

For (5), by (3) in Lemma~7 in \cite{RZ21}, the triangle inequality of $d_{\rm CBM}$ on $(\partial W_H,\ker\alpha_H)$,
\begin{align*}
	d_{\rm CBM}(\alpha_H,\alpha_K)
	&=d_{\rm CBM}(\alpha_H,\varphi_{H,K}^*\alpha_K)\\
	&\leq d_{\rm CBM}(\alpha_H,\varphi_{H,G}^*\alpha_G)+d_{\rm CBM}(\varphi^*_{H,G}\alpha_G,\varphi^*_{H,K}\alpha_K)\\
	&=d_{\rm CBM}(\alpha_H,\alpha_G)+d_{\rm CBM}(\alpha_G,\alpha_K).\qedhere
\end{align*}
\end{proof}

Now, we can give the proof of Theorem~\ref{thm-SBM-CBM-ln}

\begin{proof}[Proof of Theorem~\ref{thm-SBM-CBM-ln}] The proof will be divided into two parts. 
First, we will show that for any $H,G\in\cH(W,\lambda)$, we have
	\[
	d_{\rm SBM}({\rm U}(H),{\rm U}(G))\leq d_{\rm CBM}(\alpha_H,\alpha_G).
	\]
	Recall some notations from \cite{RZ21}. Let $(M,\xi=\ker\alpha_0)$ be a closed and Liouville-fillable. Consider the associated Liouville domain $(W,\lambda,L)$ such that  $\partial W=M$, $\lambda|_{M}=\alpha_0$ and the flow of $L$ is complete for $t<0$. Let $\alpha\in O_{(M,\xi)}(\alpha_0)$, then $\alpha=e^f\alpha_0$ for some $f\colon M\ra\bR$. Define
$
W^\alpha\coloneqq \{(s,x)\in\hat{W}\mid s\leq e^{f(x)}\}.
$ 
By Lemma~8 in \cite{RZ21}, for any $\alpha_1,\alpha_2\in O_{(M,\xi)}(\alpha_0)$, we have
\begin{equation}\label{eq-SBM-CBM}
	d_{\rm SBM}(W^{\alpha_1},W^{\alpha_2})\leq d_{\rm CBM}(\alpha_1,\alpha_2).
\end{equation}
Take $(M,\alpha_0)=(\partial W_H,\alpha_H)$ and $(W,\lambda,L)=(W_H,\hat{\lambda},\widehat Y)$, where $\widehat Y$ is the Liouville vector field of $\hat\lambda$. Then $\varphi_{H,G}^*\alpha_G=e^{f_{H,G}}\alpha_H$. Under the natural identification induced by $\varphi_{H,G}$, the domain
$W^{\varphi_{H,G}^*(\alpha_G)}$ is Liouville isomorphic to $W_G$. Apply \eqref{eq-SBM-CBM}, we have
\[
d_{\rm SBM}(W^{\alpha_H},W^{\varphi_{H,G}^*(\alpha_G)})\leq d_{\rm CBM}(\alpha_H,\varphi^*_{H,G}\alpha_G)= d_{\rm CBM}(\alpha_H,\alpha_G).
\]

Then we will show that for any $H,G\in\cH^+(W,\lambda)$, we have
	\[
	d_{\rm CBM}(\alpha_H,\alpha_G)\leq d_{\rm T}(H,G).
	\]
	Indeed, if $\frac{1}{C}W_G\subset W_H$, then $f_{H,G}\leq \ln C$. Similarly, if $\frac{1}{C}W_H\subset W_G$, then $f_{G,H}\leq \ln C$, i.e. $f_{H,G}\geq -\ln C$.
By Proposition~\ref{prop-up-bound}, if $\frac{1}{C}G\leq H\leq CG$, then $\frac{1}{C}W_G\subset W_H$ and $\frac{1}{C}W_H\subset W_G$. Subsequently,  we have $|f_{H,G}|\leq \ln C$, which implies that
$
d_{\rm CBM}(\alpha_H,\alpha_G)\leq \ln C.
$
Then for any $H,G\in\cH^+(W,\lambda)$, we have 
\[
d_{\rm CBM}(\alpha_H,\alpha_G)\leq\inf\left\{\ln C\geq 0\,\left|\,\frac{1}{C}H\leq G\leq C H\right\}\right..
\]
By Lemma~\ref{lemma-reparametrization}, we have $(W_{G},\hat{\lambda})\cong(W_{G\circ \varphi},\hat{\lambda})$ for any $\varphi\in{\rm Symp}(W,\lambda)$, which implies that $(\partial W_{G},\alpha_G)\cong (\partial W_{G\circ \varphi},\alpha_{G\circ\varphi})$. Then we have
\[
d_{\rm CBM}(\alpha_H,\alpha_G)\leq\inf\left\{\ln C\,\left|\,\exists\,\varphi\in {\rm Symp}(W,\lambda),\text{ s.t. } \frac{1}{C}H\leq G\circ\varphi \leq C H\right\}\right.=d_{\rm T}(H,G).
\]
This completes the proof.
\end{proof}

\section{Proof of Theorem~\ref{thm-Thompson-rate}}

Let $a=[\psi_H]$ and $b=[\psi_G]$. Set $s=d_{\rm T,p}(H,G)$. Fix $\varepsilon>0$ and choose $\varphi\in\ham(X,\omega)$ such that, with $C=e^{s+\varepsilon}$, $\frac{1}{C} H\leq G\circ\varphi\leq CH$. Choose a strict contact lift $\widetilde\varphi$ of $\varphi$ and write $c=[\widetilde\varphi]$. Thus $p\circ\widetilde\varphi=\varphi\circ p$, $\widetilde\varphi^*\alpha=\alpha$, and the conjugated class $b_\varphi\coloneqq c^{-1}bc$ is represented by the autonomous Hamiltonian $G\circ\varphi$. Hence $[\psi_{H/C}]\preceq b_\varphi\preceq[\psi_{CH}]$.

We use only integer iteration. If $K$ is autonomous and $m\in\bN$, concatenation of the autonomous path gives $[\psi_{mK}]=[\psi_K]^m$. Let $k_l=\lceil Cl\rceil$. Since $G\circ\varphi\leq CH$ and the order is monotone under multiplication, $b_\varphi^l=[\psi_{l(G\circ\varphi)}]\preceq[\psi_{lCH}]\preceq[\psi_{k_lH}]=a^{k_l}$.

Since $a$ is dominant, we can choose $N\in\bN$ such that $a^N\succeq c$ and $a^N\succeq c^{-1}$. By the bi-invariance of the order, $b^l=c b_\varphi^l c^{-1}\preceq c a^{k_l}c^{-1}\preceq a^{k_l+2N}$. Consequently,
\[
\rho_{\succeq}^+(a,b)\leq\lim_{l\to\infty}\frac{k_l+2N}{l}=C.
\]

For the reverse relative growth rate, compose the Hamiltonian inequality with $\varphi^{-1}$, i.e. $\frac1C H\circ\varphi^{-1}\leq G\leq C H\circ\varphi^{-1}$. The class $a_{\varphi^{-1}}\coloneqq cac^{-1}$ is represented by $H\circ\varphi^{-1}$. Therefore, with the same $k_l$, $a_{\varphi^{-1}}^l=[\psi_{l(H\circ\varphi^{-1})}]\preceq[\psi_{lCG}]\preceq[\psi_{k_lG}]=b^{k_l}$. Choose $M\in\bN$ with $b^M\succeq c$ and $b^M\succeq c^{-1}$. Again by bi-invariance, $a^l=c^{-1}a_{\varphi^{-1}}^l c\preceq c^{-1}b^{k_l}c\preceq b^{k_l+2M}$, so $\rho_{\succeq}^-(a,b)=\rho_{\succeq}^+(b,a)\leq C$.

For dominants the two relative growth rates are positive, and hence
\[
\gamma_{\succeq}(a,b)=\max\{\rho_{\succeq}^+(a,b),\rho_{\succeq}^-(a,b)\}\leq C.
\]
It follows that $d_\gamma(a,b)\leq\ln C=s+\varepsilon$. Letting $\varepsilon\to0$ proves the claim.

\section{Proof of Theorem~\ref{thm-entropy}}
  
The following proposition relates the entropies of two flows that differ by a time reparameterization.
\begin{prop}\label{prop-reparameterization}
Let $(X,d)$ be a compact metric space, and let $\varphi=\{\varphi_t\}_{t\in\bR}$ and $\psi=\{\psi_t\}_{t\in\bR}$ be continuous flows on $X$ without fixed points. 
Assume that $\psi_t$ is a time reparameterization of $\varphi_t$, namely there exists a continuous map
$
\tau:X\times \mathbb R\to \mathbb R
$
such that for every $x\in X$, the map $t\mapsto \tau(x,t)$ is strictly increasing, surjective, and
$
\psi_t(x)=\varphi_{\tau(x,t)}(x)$, for all $x\in X$ and $t\in \mathbb R.
$
Suppose moreover that there exist constants $0<c\le C$ such that for all $x\in X$ and $t\geq 0$,
$
ct\le \tau(x,t)\le Ct$. Then
$
c\,h_{\mathrm{top}}(\varphi_t)\le h_{\mathrm{top}}(\psi_t)\le C\,h_{\mathrm{top}}(\varphi_t).
$
\end{prop}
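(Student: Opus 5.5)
The plan is to compare $(T,\varepsilon)$-separated sets for the two flows directly, with uniform continuity on the compact space $X$ absorbing the time distortion. By symmetry it suffices to prove the upper bound $h_{\rm top}(\psi)\le C\,h_{\rm top}(\varphi)$. The lower bound then follows by applying the same argument to the inverse reparametrization: $\varphi_s(x)=\psi_{\sigma(x,s)}(x)$ with $\sigma(x,\cdot)=\tau(x,\cdot)^{-1}$. From $ct\le\tau(x,t)\le Ct$ we get $s/C\le \sigma(x,s)\le s/c$, so the upper bound applied to this pair gives $h_{\rm top}(\varphi)\le (1/c)\,h_{\rm top}(\psi)$.

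For the upper bound I would fix $\varepsilon>0$ and use uniform continuity of $(x,t)\mapsto\varphi_t(x)$ on $X\times[-1,1]$ to choose $\delta\in(0,1)$ with $d(\varphi_r(y),y)<\varepsilon/4$ for $|r|\le\delta$. Let $S$ be a $(T,\varepsilon)$-separated set for $\psi$. For $x\ne y$ in $S$ there is some $t\in[0,T]$ with $d(\varphi_{\tau(x,t)}(x),\varphi_{\tau(y,t)}(y))\ge\varepsilon$, and both times lie in $[0,CT]$. The obstruction is that these two times differ, so $x,y$ need not be $(CT,\varepsilon')$-separated for $\varphi$ in the naive sense. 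To handle this, I would replace separation by a discretized comparison. Cover $X$ by finitely many sets $B_1,\dots,B_m$ of diameter $<\varepsilon/4$. To each $x\in S$ assign the itinerary word $(i_0,\dots,i_K)$ recording which $B_i$ contains $\varphi_{j\delta}(x)$ for $j\le K=\lceil CT/\delta\rceil$, together with the discretized time data $\lfloor \tau(x,t_\ell)/\delta\rfloor$ for $\ell=0,\dots,L$ at a grid $t_\ell$ in $[0,T]$.

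The main obstacle is controlling this extra time data. Two points with the same itinerary and the same rounded reparametrization would be $\varepsilon$-close at every time $t$, provided $\tau(x,\cdot)$ and $\tau(y,\cdot)$ stay within $\delta$ of each other along the grid and the grid is fine enough. So the map $x\mapsto$ data is injective on $S$, and $\#S\le N_{\rm span}(\varphi,CT,\varepsilon/4)\cdot(\text{number of time words})$.

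A crude count of monotone integer sequences with values in $[0,CT/\delta]$ of length $L\sim T/\eta$ is exponential in $T$ and would spoil the bound. I would first try the standard remedy from the Ohno/Sun–Vargas-type proof of entropy under reparametrization (cf.\ Bowen's argument for flows without fixed points). Since the flows have no fixed points, local flow boxes let one absorb the time discrepancy into the spatial $\varepsilon$. Concretely, fixed-point-freeness gives $\beta>0$ with $d(\varphi_r(x),x)\ge\beta$ for $r\in[\delta_0,2\delta_0]$. Hence if $\varphi_a(x)$ and $\varphi_b(y)$ are close while the $\psi$-orbits separate, one can re-synchronize the times within $\pm\delta$ at a cost of only subexponentially many choices, and the separated set is compared with a $(CT+1,\varepsilon'')$-separated set for $\varphi$ up to a polynomial factor. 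Taking $\frac1T\ln$, then $\limsup_{T\to\infty}$, then $\varepsilon\to0$, yields $h_{\rm top}(\psi)\le C\,h_{\rm top}(\varphi)$.

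If this synchronization step proves delicate, I would instead invoke the known result that for a continuous reparametrization with bounded rates the entropy scales between the infimum and supremum of the speed ratio. In our application $\tau$ is differentiable in $t$, with derivative given by $\tau_{\lambda,H}$ via Corollary~\ref{cor-formula}, and that result then applies directly.
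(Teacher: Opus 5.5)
The reduction of the lower bound to the upper bound via the inverse time change $\sigma(x,\cdot)=\tau(x,\cdot)^{-1}$, with $s/C\le\sigma(x,s)\le s/c$, is fine. Your self-contained argument for the upper bound, however, has a genuine gap at exactly the step you flag.

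The fact that fixed-point-freeness gives $d(\varphi_r(x),x)\ge\beta$ for $r\in[\delta_0,2\delta_0]$ only says that short time shifts are visible in space. It says nothing about how many distinct time-drift histories can occur inside a separated set. So ``re-synchronize at a cost of only subexponentially many choices'' and the ``polynomial factor'' are asserted, not derived.

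The missing idea is to encode only the drift \emph{relative to a reference orbit in the same $\varphi$-Bowen ball}, and to show that this drift moves slowly.
Let $n=\lceil CT\rceil+1$ and assign each $x\in S$ to a point $y$ of a minimal $(n,\varepsilon'')$-spanning set for $\varphi$. With no fixed points, $\sigma$ is a cocycle over $\varphi$. Hence $D_x(j)\coloneqq\sigma(x,j)-\sigma(y,j)$ satisfies $|D_x(j+1)-D_x(j)|=|\sigma(\varphi_jx,1)-\sigma(\varphi_jy,1)|\le\omega(\varepsilon'')$, where $\omega$ is a modulus of continuity of $\sigma(\cdot,1)$.

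Encode $D_x$ by integers $a_x(j)$ with $|D_x(j)-\delta a_x(j)|<\delta$. Update $a_x$ to the nearest integer of $D_x(j)/\delta$ only when this inequality would fail. Consecutive updates are then at least $\delta/(2\omega(\varepsilon''))$ steps apart, and each jump lies in $\{\pm1,\pm2\}$. So there are at most $e^{n\theta(\eta)+o(n)}$ codes, where $\eta=2\omega(\varepsilon'')/\delta$ and $\theta(\eta)\to0$ as $\eta\to0$.

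Two points of $S$ in the same ball with the same code satisfy $|\sigma(x,s)-\sigma(x',s)|<2\delta+\omega_1(2\varepsilon'')$ on $[0,n]$, with $\omega_1$ the modulus of continuity of $\sigma$ on $X\times[0,1]$. Writing $\psi_tx'=\psi_{t-\sigma(x',s)}(\varphi_sx')$ with $s=\tau(x,t)$ shows they are not $(T,\varepsilon)$-separated for $\psi$ once $\delta$, and then $\varepsilon''$, are small. Hence $\limsup_{T}\frac1T\ln N_{\rm sep}(\psi,T,\varepsilon)\le C\,h_{\rm top}(\varphi)+C\theta(\eta)$. Letting $\varepsilon''\to0$ with $\delta$ fixed, then $\varepsilon\to0$, gives $h_{\rm top}(\psi)\le C\,h_{\rm top}(\varphi)$.

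Your fallback, invoking the known entropy bounds for time changes with rates in $[c,C]$, is exactly what the paper does. It cites Ohno's theorem on weakly equivalent fixed-point-free flows, noting that the additive functional defining the time change has time-one value between $c$ and $C$. The citation is short but opaque. The repaired direct argument is elementary and shows that fixed-point-freeness is used only to make $\sigma$ a cocycle.

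There is one slip in the fallback. In the application, the time-change function $h$ of Corollary~\ref{cor-formula} has $\partial_sh=1/\tau_{\lambda,H}$, not $\tau_{\lambda,H}$. Also, specializing to differentiable $\tau$ would only cover that application, not the proposition as stated for continuous $\tau$; the cited result does not need this specialization.
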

\begin{proof}
This is a special case of Ohno's theorem on weakly equivalent flows without fixed points; see Theorem~1 in \cite{Oh80}. In the present situation, the bounds $ct\le \tau(x,t)\le Ct$ imply that the additive functional defining the time change has time-one value bounded from below by $c$ and from above by $C$. Therefore $c\,h_{\rm top}(\varphi_1)\le h_{\rm top}(\psi_1)\le C\,h_{\rm top}(\varphi_1)$.
\end{proof}

Before proving Theorem~\ref{thm-entropy}, let us recall the definition of the topological entropy of a flow on some subset. For $T>0$ and $\varepsilon>0$, denote by $N_U(T,\varepsilon)$ the maximal cardinality of a $(T,\varepsilon)$-separated set of $\varphi$ in $U$. 
The topological entropy of $\varphi^t$ on $U$ is defined as
\[
h_{\mathrm{top}}(\varphi, U) 
= \lim_{\varepsilon\to0} \limsup_{T\to\infty} \frac{1}{T} \ln N_U(T,\varepsilon).
\]
If $U_1\subset U_2$, then clearly $h_{\mathrm{top}}(\varphi, U_1)\leq 
h_{\mathrm{top}}(\varphi, U_2)$.

\begin{prop}\label{prop-open-dense}
	If $U$ is a dense subset of $X$, then we  have
	$
	h_{\rm top}(\varphi,U)=h_{\rm top}(\varphi,X).
	$
\end{prop}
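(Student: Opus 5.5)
We prove the two inequalities separately. The inequality $h_{\rm top}(\varphi,U)\le h_{\rm top}(\varphi,X)$ is the monotonicity already noted just before the statement, since $U\subset X$. So the whole content is the reverse inequality. The plan is to compare separated sets at a fixed time horizon $T$: any $(T,\varepsilon)$-separated set in $X$ can be moved into $U$ and remain $(T,\varepsilon/2)$-separated, without changing its cardinality.

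Fix $T>0$ and $\varepsilon>0$. The map $[0,T]\times X\to X$, $(t,x)\mapsto\varphi(t,x)$, is continuous on a compact space, hence uniformly continuous. Therefore there is $\delta=\delta(T,\varepsilon)>0$ such that $d(x,x')<\delta$ implies $d(\varphi(t,x),\varphi(t,x'))<\varepsilon/4$ for all $t\in[0,T]$. It is essential here that $\delta$ may depend on $T$; no equicontinuity over all times is needed, because we only compare cardinalities at one fixed $T$ before taking limits.

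Let $E\subset X$ be a maximal $(T,\varepsilon)$-separated set, so that $\#E=N_X(T,\varepsilon)$, which is finite by compactness. Since $U$ is dense, for each $x\in E$ we choose $x'\in U$ with $d(x,x')<\delta$, and set $E'=\{x'\}$. For distinct $x,y\in E$ there is a $t\in[0,T]$ with $d(\varphi(t,x),\varphi(t,y))\ge\varepsilon$. The triangle inequality then gives
\[
d(\varphi(t,x'),\varphi(t,y'))>\varepsilon-\varepsilon/4-\varepsilon/4=\varepsilon/2 .
\]
In particular $x'\ne y'$, so $x\mapsto x'$ is injective and $E'$ is a $(T,\varepsilon/2)$-separated subset of $U$ with $\#E'=\#E$. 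Hence
\[
N_U(T,\varepsilon/2)\ge N_X(T,\varepsilon)\qquad\text{for every } T>0 \text{ and } \varepsilon>0.
\]

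We now pass to the limits. Taking $\limsup_{T\to\infty}\frac1T\ln(\cdot)$ of both sides of the last inequality, and then letting $\varepsilon\to0$ (so that $\varepsilon/2\to0$ as well), yields $h_{\rm top}(\varphi,U)\ge h_{\rm top}(\varphi,X)$. This finishes the argument. The only step needing care is the uniform continuity on $[0,T]\times X$ with a $T$-dependent $\delta$, and this is handled by the order of quantifiers: first fix $T$ and $\varepsilon$, then choose $\delta$, and only afterwards take limits.
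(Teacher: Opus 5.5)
Your proposal is correct and follows essentially the same argument as the paper: you use uniform continuity of the flow on $[0,T]\times X$ to choose $\delta$, use density to move a maximal $(T,\varepsilon)$-separated set in $X$ into $U$, and obtain a $(T,\varepsilon/2)$-separated set of the same cardinality, hence $N_U(T,\varepsilon/2)\ge N_X(T,\varepsilon)$. Your explicit checks that $x\mapsto x'$ is injective and that $\delta$ may depend on $T$ are small points the paper leaves implicit.
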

\begin{proof}
 Since $U\subset X$, we have $h_{\rm top}(\varphi,U) \le h_{\rm top}(\varphi)$.

For the reverse inequality, fix $\varepsilon>0$.  Because $\varphi^t$ is uniformly continuous on the compact space $[0,T]\times X$, there exists $\delta>0$ such that if $d(x,y)<\delta$, then $\sup_{0\le t\le T}d(\varphi^t(x),\varphi^t(y)) < \frac{\varepsilon}{4}$. Let $E \subset X$ be a maximal $(T,\varepsilon)$-separated set in $X$ (so $|E| = N_X(T,\varepsilon)$).  Since $U$ is dense, for each $x \in E$ we can choose a point $x' \in U$ with $d(x,x') < \delta$.  Set $E' = \{ x' : x \in E \} \subset U$.  We claim that $E'$ is $(T,\varepsilon/2)$-separated. Indeed, take two distinct points $x', y' \in E'$ with corresponding $x,y \in E$.  Because $E$ is $(T,\varepsilon)$-separated, there exists $t\in[0,T]$ such that $d(\varphi^t(x),\varphi^t(y)) \ge \varepsilon$. Then, using the choice of $\delta$,
\begin{align*}
d(\varphi^t(x'),\varphi^t(y')) &\ge d(\varphi^t(x),\varphi^t(y)) 
                          - d(\varphi^t(x),\varphi^t(x')) 
                          - d(\varphi^t(y),\varphi^t(y')) \\
& > \varepsilon - \frac{\varepsilon}{4} - \frac{\varepsilon}{4}
   = \frac{\varepsilon}{2}.
\end{align*}
Thus $E'$ is $(T,\varepsilon/2)$-separated and $|E'| = |E|$. Consequently,
$N_U(T,\varepsilon/2) \ge N_X(T,\varepsilon)$,
which implies that $h_{\rm top}(\varphi,U) \ge h_{\mathrm{top}}(\varphi)$.
\end{proof}

In the relative situation, one considers a factor map $\pi$ between a continuous dynamical system $T\colon X\ra X$ and a second system $S\colon Y \ra Y$ . That is, $\pi\colon X \ra Y$ is continuous and onto and satisfies $\pi\circ T = S\circ \pi$. Then we have the following result.
\begin{theorem}[Theorem~17 in \cite{Bow71}, Proposition 3.1.6 in \cite{Kat80}]\label{thm-rel-entropy}
	Let $(X,d)$ and $(Y,d')$ be compact metric spaces and $T\colon X \ra X$ and $S\colon Y\ra Y$ be continuous. Let $\pi\colon X \ra Y$ be a continuous factor map. Then
	\[
	h_{\rm top}(S)\leq h_{\rm top}(T)\leq h_{\rm top}(S)+\sup_{y\in Y}h_{\rm top}(T,\pi^{-1}y).
	\]
\end{theorem}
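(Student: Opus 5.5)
The lower bound $h_{\rm top}(S)\le h_{\rm top}(T)$ is the easy half, and I would prove it with spanning sets instead of separated sets. Fix $\varepsilon>0$. By uniform continuity of $\pi$ on the compact space $X$ there is $\delta>0$ such that $d(x,x')<\delta$ implies $d'(\pi x,\pi x')<\varepsilon$. If $F\subset X$ is $(n,\delta)$-spanning for $T$, then $\pi(F)$ is $(n,\varepsilon)$-spanning for $S$. To see this, take $y\in Y$, pick $x\in\pi^{-1}(y)$ (using surjectivity), and choose $x'\in F$ that $\delta$-shadows $x$ up to time $n$. The intertwining relation $\pi\circ T^k=S^k\circ\pi$ then shows that $\pi x'$ $\varepsilon$-shadows $y$. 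Hence $r_S(n,\varepsilon)\le r_T(n,\delta)$. Taking $\limsup\frac1n\ln$ and then letting $\varepsilon\to0$ (so $\delta\to0$) gives the inequality, since entropy may equally be computed with spanning sets, as in \cite{KH95}.

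For the upper bound I would follow Bowen's covering argument. Set $a\coloneqq\sup_{y}h_{\rm top}(T,\pi^{-1}y)$ and fix $\varepsilon>0$ and $\eta>0$. For each $y\in Y$, by the definition of fiber entropy, choose $m(y)\in\bN$ and a finite set $E_y$ of cardinality at most $e^{m(y)(a+\eta)}$ that $(m(y),\varepsilon)$-spans $\pi^{-1}y$. The set $U_y\coloneqq\bigcup_{x\in E_y}\{x' : d(T^kx',T^kx)<\varepsilon,\ 0\le k<m(y)\}$ is an open neighborhood of the compact fiber $\pi^{-1}y$. Since $\pi$ is a closed map, there is an open $V_y\ni y$ with $\pi^{-1}(V_y)\subset U_y$. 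By compactness, finitely many $V_{y_1},\dots,V_{y_r}$ cover $Y$. Let $\beta>0$ be a Lebesgue number of this cover and $M\coloneqq\max_i m(y_i)$.

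Next I would take a minimal $(n,\beta/2)$-spanning set $F$ for $S$, so that $|F|\le e^{n(h_{\rm top}(S)+\eta)}$ for large $n$. For each $z\in F$, the orbit segment of $z$ up to time $n$ is chopped greedily into blocks $[t_j,t_{j+1})$ with $t_{j+1}-t_j=m(y_{i_j})$, where $S^{t_j}z$ (and hence the $\beta/2$-ball around it) lies in $V_{y_{i_j}}$; the last block is truncated at $n$. For every point $x$ whose image $\pi x$ is $\beta/2$-shadowed by $z$, we get $T^{t_j}x\in U_{y_{i_j}}$, so on each block $x$ is $\varepsilon$-shadowed by $T^{-t_j}$-translates of points of $E_{y_{i_j}}$. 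Forming all concatenated choices yields at most $\prod_j|E_{y_{i_j}}|\le e^{(n+M)(a+\eta)}\cdot C^{\text{(tail)}}$ sequences. For each realized sequence I pick one representative point of $X$; by the triangle inequality these representatives form a $(n,2\varepsilon)$-spanning set. Therefore $r_T(n,2\varepsilon)\le |F|\cdot e^{(n+M)(a+\eta)}\cdot K$, with $K$ depending only on the cover. Taking $\frac1n\ln$, letting $n\to\infty$, then $\eta\to0$ and $\varepsilon\to0$ gives $h_{\rm top}(T)\le h_{\rm top}(S)+a$.

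The main obstacle is the bookkeeping in this gluing step. The block decomposition depends on $z$, so the count must be uniform over $z$, and the truncated final block and the boundary effects have to be absorbed into the constant $K$, which must be independent of $n$. I would handle this by noting that each block has length at least $1$ and at most $M$, and that the truncated tail contributes at most $\max_i|E_{y_i}|$ choices. A further subtlety is that a concatenated sequence need not itself be an orbit. This is why I select one actual point of $X$ per realized sequence, at the cost of replacing $\varepsilon$ by $2\varepsilon$.
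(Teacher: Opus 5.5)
Your proof is correct: the lower bound via pushing forward spanning sets, and the upper bound via fiberwise spanning sets, a Lebesgue-number cover, greedy block decomposition of $S$-orbits and one representative point per realized block sequence, is exactly Bowen's covering argument for Theorem~17 in \cite{Bow71}, which the paper cites rather than reproves. Three small fixes are needed. First, $U_y$ contains $\pi^{-1}y$ only if $E_y$ spans with strict inequality; otherwise use radius $2\varepsilon$. Second, since $T$ need not be invertible, ``$T^{-t_j}$-translates'' should be read as the conditions $d(T^{t_j+k}x,T^ke)<\varepsilon$ for $0\le k<m(y_{i_j})$; your representative-point step already handles this. Third, the tail factor $K$ is unnecessary, because the product over all blocks, the last one counted in full, is already at most $e^{(n+M)(a+\eta)}$.
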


Now, let us give the proof of Theorem~\ref{thm-entropy}.
 
\begin{proof}[Proof of Theorem~\ref{thm-entropy}] We fix some notations first. Set 
\[\tau_{\min}\coloneqq \min_{S^1\times W}\tau_{\lambda,H}\quad \mbox{and} \quad \tau_{\max}\coloneqq \max_{S^1\times W}\tau_{\lambda,H}.\]
Denote $X\coloneqq S^1\times W$, $X^\circ\coloneqq S^1\times {\rm Int}(W)$, and $M^\circ\coloneqq \partial W_H\setminus (\partial W\times\{0\})$.

For $(t,w)\in S^1\times W$, define $h(s,t,w)$ by
\[
\int_0^{h(s,t,w)} \tau_{\lambda,H}(t+\sigma,\varphi_H^\sigma(t,w))\,d\sigma=s.
\]
Since $\tau_{\lambda,H}>0$, the function $u\mapsto \int_0^u \tau_{\lambda,H}(t+\sigma,\varphi_H^\sigma(t,w))\,d\sigma$ is strictly increasing and surjective, so $h$ is well-defined. Moreover, for all $(s,t,w)\in [0,\infty)\times S^1\times W$, we have 
\[
\frac{s}{\tau_{\max}}\le h(s,t,w)\le \frac{s}{\tau_{\min}}.
\]

Now define two flows on $X$ by
\[
\Psi^s(t,w)\coloneqq (t+s,\varphi_H^s(t,w))
\]
and
\[
R^s(t,w)\coloneqq (t+h(s,t,w),\varphi_H^{h(s,t,w)}(t,w)).
\]
Then $R^s$ is a time reparameterization of $\Psi^s$, with time-change function $h$. Since both flows have no fixed points, Proposition~\ref{prop-reparameterization} gives
\[
\frac{1}{\tau_{\max}}\,h_{\rm top}(\Psi^1)\le h_{\rm top}(R^1)\le \frac{1}{\tau_{\min}}\,h_{\rm top}(\Psi^1).
\]

Next define
\[
\Phi:X\to \partial W_H\qquad \mbox{by} \qquad
\Phi(t,w)\coloneqq \left(w,\sqrt{\frac{H(t,w)}{\pi}}\,e^{2\pi i t}\right).
\]
This map is continuous and surjective, and its restriction $\Phi|_{X^\circ}:X^\circ\to M^\circ$ is a homeomorphism. By Corollary~\ref{cor-formula}, on $X^\circ$ we have
$
\Phi\circ R^s=\varphi_{\hat\lambda|_{\partial W_H}}^s\circ \Phi.
$
So, $R^1|_{X^\circ}$ is topologically conjugate to $\varphi_{\hat\lambda|_{\partial W_H}}^1|_{M^\circ}$ and 
$
h_{\rm top}(R^1|_{X^\circ})=h_{\rm top}(\varphi_{\hat\lambda|_{\partial W_H}}^1|_{M^\circ}).
$
Since $X^\circ$ is open and dense in $X$, and $M^\circ$ is open and dense in $\partial W_H$, Proposition~\ref{prop-open-dense} implies
$
h_{\rm top}(R^1)=h_{\rm top}(\varphi_{\hat\lambda|_{\partial W_H}}^1).
$

It remains to compute $h_{\rm top}(\Psi^1)$. By definition,
$
\Psi^1(t,w)=(t,\varphi_H^1(t,w)).
$
For each $t\in S^1$, the subset $X_t\coloneqq \{t\}\times W$ is $\Psi^1$-invariant, and $\Psi^1|_{X_t}$ is exactly the time-one map $\varphi_H^1(t,\cdot)$. Hence
$
h_{\rm top}(\Psi^1)\ge \sup_{t\in S^1} h_{\rm top}(\varphi_H^1(t,\cdot)).
$ For the reverse inequality, let $\pi:X\to S^1$ be the projection $\pi(t,w)=t$. Then $\pi$ is a factor map from $\Psi^1$ to $\id_{S^1}$. By Theorem~\ref{thm-rel-entropy},
\[
h_{\rm top}(\Psi^1)\le h_{\rm top}(\id_{S^1})+\sup_{t\in S^1} h_{\rm top}(\Psi^1,X_t).
\]
Since $h_{\rm top}(\id_{S^1})=0$ and $h_{\rm top}(\Psi^1,X_t)=h_{\rm top}(\varphi_H^1(t,\cdot))$, we get
$
h_{\rm top}(\Psi^1)\le \sup_{t\in S^1} h_{\rm top}(\varphi_H^1(t,\cdot)).
$
Therefore
$
h_{\rm top}(\Psi^1)=\sup_{t\in S^1} h_{\rm top}(\varphi_H^1(t,\cdot)).
$

Finally, all the maps $\varphi_H^1(t,\cdot)$ are topologically conjugate. Indeed, if we write $\phi_t\coloneqq \varphi_H^t(0,\cdot)$, then the $1$-periodicity of $H$ implies
$
\varphi_H^1(t,\cdot)=\phi_t\circ \varphi_H^1(0,\cdot)\circ \phi_t^{-1}.
$
Here, $\varphi_H^1(t,\cdot)$ denotes the time-one map from time $t$ to time $t+1$.
Therefore, $h_{\rm top}(\varphi_H^1(t,\cdot))$ is independent of $t$, and thus
$
h_{\rm top}(\Psi^1)=h_{\rm top}(\varphi_H^1),
$
where $\varphi_H^1$ denotes the time-one map starting at time $0$.

Combining this with the previous inequalities, we get
\[
\frac{1}{\tau_{\max}}\,h_{\rm top}(\varphi_H^1)\le h_{\rm top}(\varphi_{\hat\lambda|_{\partial W_H}}^1)\le \frac{1}{\tau_{\min}}\,h_{\rm top}(\varphi_H^1).
\]
Equivalently,
\[
\tau_{\min}\,h_{\rm top}(\varphi_{\hat\lambda|_{\partial W_H}}^1)\le h_{\rm top}(\varphi_H^1)\le \tau_{\max}\,h_{\rm top}(\varphi_{\hat\lambda|_{\partial W_H}}^1).
\]
This completes the proof.
\end{proof}
\begin{proof}[Proof of Corollary~\ref{cor-pos-entropy-2}]
Since $H$ is autonomous, the Reeb flow of $(\alpha_H)_n$ on $(\partial W_H)_n$ is given by $\varphi^t_{(\alpha_H)_n}(w,z)=(\varphi_H^{h(t,w)}(w),e^{2\pi i h(t,w)}z),$ where $z=(z_1,\dots,z_n)\in \bC^n$, and $h:\bR\times W\to \bR$ is determined by
\[
\int_0^{h(t,w)} \tau_{\lambda,H}(\varphi_H^\sigma(w))\,d\sigma=t.
\]
Since $\tau_{\lambda,H}>0$, for every $w\in W$ and $t\ge 0$ we have
\[
\frac{t}{\max_W\tau_{\lambda,H}}\le h(t,w)\le \frac{t}{\min_W\tau_{\lambda,H}}.
\]

Now define $\psi^t(w,z)\coloneqq (\varphi_H^t(w),e^{2\pi it}z).$ We first check that $\psi^t$ is a continuous flow on $(\partial W_H)_n$. If $(w,z)\in (\partial W_H)_n$, then $\pi\sum_{j=1}^n |z_j|^2=H(w)$. Since $H$ is autonomous, $H(\varphi_H^t(w))=H(w)$, and 
\[
\pi\sum_{j=1}^n |e^{2\pi it}z_j|^2=\pi\sum_{j=1}^n |z_j|^2=H(w)=H(\varphi_H^t(w)).
\]
Thus $\psi^t(w,z)\in (\partial W_H)_n$, so $\psi^t$ is well-defined. It is clearly a continuous flow.

Moreover, $\varphi^t_{(\alpha_H)_n}$ is a time reparameterization of $\psi^t$, with time-change function $((w,z),t)\mapsto h(t,w)$. If $z\neq 0$, then $\psi^t(w,z)\neq (w,z)$ for $t\notin \bZ$, so $(w,z)$ is not a fixed point of $\psi$. If $z=0$, then $H(w)=0$, hence $w\in \partial W$, and on $\partial W$ we have $\tau_{\lambda,H}=\lambda(X_H)>0$. Therefore $X_H(w)\neq 0$, so $(w,0)$ is not a fixed point either. Thus $\psi$ has no fixed points. The Reeb flow has no fixed points as well. Then Proposition~\ref{prop-reparameterization} applies and yields
\[
\frac{1}{\max_W\tau_{\lambda,H}}\,h_{\rm top}(\psi^1)\le h_{\rm top}(\varphi^1_{(\alpha_H)_n})\le \frac{1}{\min_W\tau_{\lambda,H}}\,h_{\rm top}(\psi^1).
\]

It remains to prove that $h_{\rm top}(\psi^1)=h_{\rm top}(\varphi_H^1)$. Consider the projection
\[
{\rm pr}_W:(\partial W_H)_n\to W,\qquad {\rm pr}_W(w,z)=w.
\]
Then ${\rm pr}_W$ is a continuous surjective factor map from $\psi^1$ to $\varphi_H^1$, since ${\rm pr}_W\circ \psi^1=\varphi_H^1\circ {\rm pr}_W.$ So, Theorem~\ref{thm-rel-entropy} gives
\[
h_{\rm top}(\varphi_H^1)\le h_{\rm top}(\psi^1)\le h_{\rm top}(\varphi_H^1)+\sup_{w\in W} h_{\rm top}(\psi^1,{\rm pr}_W^{-1}(w)).
\]
For each $w\in W$, the fiber ${\rm pr}_W^{-1}(w)=\bigl\{z\in \bC^n\mid \pi\sum_{j=1}^n |z_j|^2=H(w)\bigr\}$ is either a sphere $S^{2n-1}$ of radius $\sqrt{H(w)/\pi}$ if $H(w)>0$, or a point if $H(w)=0$. Moreover, $\psi^1\bigl({\rm pr}_W^{-1}(w)\bigr)={\rm pr}_W^{-1}\bigl(\varphi_H^1(w)\bigr),$ and, unless $w$ is fixed by $\varphi_H^1$, this is not a self-map of the fiber. Nevertheless, the relative entropy on each fiber vanishes, as can be seen directly from Bowen separated sets. Fix a metric $d_W$ on $W$ and equip $(\partial W_H)_n\subset W\times\bC^n$ with the restriction of the product metric  
\[
d\bigl((w,z),(w',z')\bigr)\coloneqq d_W(w,w')+\| z-z'\|.
\] 
For $N\in\bN$, let $d_N(x,y)\coloneqq\max_{0\leq k<N}d\bigl((\psi^1)^k(x),(\psi^1)^k(y)\bigr)$ be the corresponding Bowen metric. Since $(\psi^1)^k(w,z)=\bigl((\varphi_H^1)^k(w),z\bigr)$ for every $k\in\bN$, any two points $(w,z),(w,z')\in{\rm pr}_W^{-1}(w)$ satisfy $d_N\bigl((w,z),(w,z')\bigr)=\| z-z'\|$, independently of $N$. Thus, for every $\varepsilon>0$, the cardinality of an $(N,\varepsilon)$-separated subset of ${\rm pr}_W^{-1}(w)$ is bounded by the maximal cardinality of an $\varepsilon$-separated subset of this fixed compact sphere (or point). This bound is finite and independent of $N$. Consequently, $h_{\rm top}(\psi^1,{\rm pr}_W^{-1}(w))=0$ for every $w\in W$, and hence $h_{\rm top}(\psi^1)=h_{\rm top}(\varphi_H^1).$

Substituting this into the previous estimate, we obtain
\[
\frac{1}{\max_W\tau_{\lambda,H}}\,h_{\rm top}(\varphi_H^1)\le h_{\rm top}(\varphi^1_{(\alpha_H)_n})\le \frac{1}{\min_W\tau_{\lambda,H}}\,h_{\rm top}(\varphi_H^1),
\]
which is equivalent to
\[
(\min_W\tau_{\lambda,H})h_{\rm top}(\varphi^1_{(\alpha_H)_n})\le h_{\rm top}(\varphi_H^1)\le (\max_W\tau_{\lambda,H})h_{\rm top}(\varphi^1_{(\alpha_H)_n}).
\]

For the final statement, assume $h_{\rm top}(\varphi^1_{\lambda|_{\partial W}})>0$. By Corollary~\ref{cor-pos-entropy}, we have $h_{\rm top}(\varphi_H^1)>0$, and then
\[
h_{\rm top}(\varphi^1_{(\alpha_H)_n})\ge \frac{1}{\max_W\tau_{\lambda,H}}\,h_{\rm top}(\varphi_H^1)>0.
\]
This completes the proof. 
\end{proof}

\section{Proof of Theorem~\ref{thm-barcode-extension-c0}}

We first show that every smooth Hamiltonian in $\cH(W,\lambda)$ belongs to the $C^0$-strong class.
\begin{prop}\label{prop-smooth-into-strong}
For every $H\in \cH(W,\lambda)$, one has $H\in \cH^0(W,\lambda)$.
\end{prop}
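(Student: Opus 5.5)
We have to check the three defining conditions of $\cH^0(W,\lambda)$ for a given $H\in\cH(W,\lambda)$. Continuity, non-negativity and $H|_{S^1\times\partial W}=0$ are immediate. Strict positivity of $H$ on $S^1\times{\rm Int}(W)$ is exactly Remark~\ref{rmk-positive}, which shows that $\tau_{\lambda,H}>0$ forces $H>0$ there. The real content is the positivity of $c_H(\varepsilon)$ for every $\varepsilon>0$.

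For this I will reuse the monotonicity computation from the proof of Proposition~\ref{prop-contact-bdy}. Fix $(t,w)$ and set $u(s)\coloneqq e^{-s}H(t,\sL_\lambda^{s}(w))$ for $s\le 0$, which is well-defined since the backward Liouville flow preserves $W$. Using $d_wH_t(Y_\lambda)=-\lambda(X_{H_t})$, we get
\[
u'(s)=-e^{-s}\tau_{\lambda,H}(t,\sL_\lambda^{s}(w)).
\]
Integrating from $-\varepsilon$ to $0$ gives
\[
e^{\varepsilon}H(t,\sL_\lambda^{-\varepsilon}(w))-H(t,w)=\int_{-\varepsilon}^0 e^{-s}\tau_{\lambda,H}(t,\sL_\lambda^{s}(w))\,ds\ \ge\ \tau_{\min}(e^{\varepsilon}-1),
\]
where $\tau_{\min}=\min_{S^1\times W}\tau_{\lambda,H}>0$ by compactness. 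Multiplying by $e^{-\varepsilon}$ yields
\[
H(t,\sL_\lambda^{-\varepsilon}(w))-e^{-\varepsilon}H(t,w)\ \ge\ \tau_{\min}(1-e^{-\varepsilon})>0
\]
uniformly in $(t,w)$.

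The minimum defining $c_H(\varepsilon)$ is attained, because the expression is continuous on the compact set $S^1\times W$; hence $c_H(\varepsilon)\ge \tau_{\min}(1-e^{-\varepsilon})>0$. The only point needing care is that the flow $\sL_\lambda^{-\varepsilon}$ is defined on all of $W$ and stays in $W$. This holds because $Y_\lambda$ points outward along $\partial W$, as recalled at the start of Section~2. Since this is the only subtle step, I do not expect any serious obstacle.
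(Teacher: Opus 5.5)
Your proof is correct and is essentially the paper's argument. The paper differentiates $s\mapsto e^{s}H(t,\sL_\lambda^{-s}(w))$ on $[0,\varepsilon]$ and finds the derivative $e^{s}\tau_{\lambda,H}(t,\sL_\lambda^{-s}(w))$, which is your computation after the substitution $s\mapsto -s$; it then obtains the same uniform lower bound $(1-e^{-\varepsilon})\min_{S^1\times W}\tau_{\lambda,H}$ for $c_H(\varepsilon)$.
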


\begin{proof}
Fix $H\in \cH(W,\lambda)$. Since $H|_{S^1\times \partial W}=0$ and $H>0$ on $S^1\times {\rm Int}(W)$, it remains to prove that for every $\varepsilon>0$,
\[
c_H(\varepsilon)\coloneqq \min_{(t,w)\in S^1\times W}(H(t,\sL_\lambda^{-\varepsilon}(w))-e^{-\varepsilon}H(t,w))>0.
\]
Indeed, fix $(t,w)\in S^1\times W$ and define $f_{t,w}(s)\coloneqq e^sH(t,\sL_\lambda^{-s}(w))$ for $s\in [0,\varepsilon]$. Since $Y_\lambda$ is the Liouville vector field, we have
\[
\frac{d}{ds}f_{t,w}(s)=e^s(H_t-d_wH_t(Y_\lambda))(\sL_\lambda^{-s}(w)).
\]
With our convention $\iota_{X_{H_t}}d\lambda=d_wH_t$, one has $d_wH_t(Y_\lambda)=d\lambda(X_{H_t},Y_\lambda)=-\lambda(X_{H_t})$. Therefore
\[
\frac{d}{ds}f_{t,w}(s)=e^s(\lambda(X_{H_t})+H_t)(\sL_\lambda^{-s}(w))=e^s\tau_{\lambda,H}(t,\sL_\lambda^{-s}(w)).
\]
It follows that
\[
H(t,\sL_\lambda^{-\varepsilon}(w))-e^{-\varepsilon}H(t,w)=e^{-\varepsilon}\int_0^\varepsilon e^s\tau_{\lambda,H}(t,\sL_\lambda^{-s}(w))\,ds.
\]
Since $\tau_{\lambda,H}>0$ is continuous on the compact set $S^1\times W$, the right-hand side is bounded below by
\[
e^{-\varepsilon}\int_0^\varepsilon e^s\,ds\cdot \min_{S^1\times W}\tau_{\lambda,H}>0.
\]
Taking the minimum over $(t,w)\in S^1\times W$, we obtain $c_H(\varepsilon)>0$. Thus $H\in \cH^0(W,\lambda)$.
\end{proof}

The next proposition shows that the smooth class $\cH(W,\lambda)$ is $C^0$-dense in $\cH^0(W,\lambda)$.

\begin{prop}\label{prop-density-smooth}
The subset $\cH(W,\lambda)$ is dense in $\cH^0(W,\lambda)$ with respect to the $C^0$-topology.
\end{prop}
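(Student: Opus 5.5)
The plan is to build, for a given $H\in\cH^0(W,\lambda)$ and $\eta>0$, a smooth $F\in\cH(W,\lambda)$ with $\|F-H\|_{C^0}<C\eta$, in three stages. First I smooth $H$ and add a small constant. Then I average along the Liouville flow, which makes the infinitesimal condition $\tau>0$ hold. Finally I glue in an explicit autonomous profile near $\partial W$ using a homogeneous smooth ``min''.

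Throughout I use the reformulation from the proof of Proposition~\ref{prop-smooth-into-strong}: for smooth $F$,
\[
\frac{d}{dr}\Big|_{r=0}\, e^{r}F(t,\sL_\lambda^{-r}(w))=\tau_{\lambda,F}(t,w).
\]
So $\tau_{\lambda,F}>0$ is equivalent to $r\mapsto e^rF(t,\sL_\lambda^{-r}w)$ being strictly increasing with positive derivative.

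\textbf{Step 1 (smoothing).} Fix $\varepsilon>0$ and set $c\coloneqq c_H(\varepsilon)>0$. Mollify $H$ in a finite atlas of $S^1\times W$, extending $H$ by $0$ slightly past $\partial W$, and add a constant. This gives a smooth $G=\widetilde H+\eta'$ with $\|\widetilde H-H\|_{C^0}<\eta'$, where $\eta'<\min\{\eta,c/4\}$. Then
\[
G(t,\sL^{-\varepsilon}_\lambda w)-e^{-\varepsilon}G(t,w)\ge c-2\eta'+(1-e^{-\varepsilon})\eta'>c/2,
\]
and $G\ge\eta'>0$ everywhere.

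\textbf{Step 2 (flow averaging).} Define
\[
K(t,w)\coloneqq\frac1\varepsilon\int_0^\varepsilon e^{s}G(t,\sL_\lambda^{-s}w)\,ds .
\]
$K$ is smooth, and $\|K-G\|_{C^0}$ is small once $\varepsilon$ is small, by uniform continuity of $G$ and since $\sL^{-s}_\lambda\to\id$ uniformly. A change of variables gives
\[
\frac{d}{dr}\Big(e^rK(t,\sL_\lambda^{-r}w)\Big)=\frac{e^{r+\varepsilon}}{\varepsilon}\Big(G(t,\sL^{-\varepsilon}_\lambda w')-e^{-\varepsilon}G(t,w')\Big)>0,
\]
where $w'=\sL_\lambda^{-r}w$. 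Hence $\tau_{\lambda,K}>0$ and $K\ge\eta'$. One point needs care: $c_H(\varepsilon)$ may degenerate as $\varepsilon\to0$. So I choose $\varepsilon$ first, so that $\|K-G\|$ is small for every such $G$ with $G$ uniformly close to $H$, and only then choose $\eta'$ relative to $c_H(\varepsilon)$.

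\textbf{Step 3 (boundary gluing; the main obstacle).} $K$ is neither zero on $\partial W$ nor autonomous there. In a collar $(1-\delta,1]\times\partial W$, where $\lambda=\rho\,\alpha$ and $Y_\lambda=\rho\partial_\rho$, take an autonomous $A=A(\rho)$ with $A=\kappa(1-\rho)$ near $\rho=1$. This satisfies $\tau_{\lambda,A}=A-\rho A'=\kappa>0$ there. I extend $A$ by a nonincreasing profile with $A-\rho A'>0$, so that $A\ge 2\max K$ near $\rho=1-\delta$.

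Now fix a smooth $\mu\colon(0,\infty)\to(0,\infty)$ that is nondecreasing, with $\mu(x)/x$ nonincreasing, $\mu(x)=x$ for $x\le\frac12$ and $\mu(x)=1$ for $x\ge2$. Set $m(a,b)\coloneqq a\,\mu(b/a)$, which is smooth, positively $1$-homogeneous and nondecreasing in each argument. Define $F\coloneqq m(K,A)$ on the collar and $F\coloneqq K$ elsewhere; these agree where $A\ge2K$. By homogeneity,
\[
e^rF(\sL^{-r}w)=m\big(e^rK(\sL^{-r}w),\,e^rA(\sL^{-r}w)\big)
\]
is strictly increasing in $r$, because both arguments are and $m$ is strictly increasing in at least one argument where it matters. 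Hence $\tau_{\lambda,F}>0$.

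Where $A<K/2$, which contains $\rho>1-\eta'/(2\kappa)$, we have $F=A$. So $F$ is autonomous near $\partial W$ and vanishes on it, and therefore $F\in\cH(W,\lambda)$.

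\textbf{$C^0$-closeness.} Off the collar, $F=K\approx H$. On the collar, $F\le K$, and $F\ne K$ only where $A<2K$. Taking $\kappa$ large makes this region lie in $\rho>1-2\max K/\kappa$, which is an arbitrarily thin neighbourhood of $\partial W$. There $H$ is uniformly small because $H|_{\partial W}=0$ and $H$ is continuous, so $0\le F\le K\le H+O(\eta)$ is small as well. Letting $\eta\to0$ gives the claimed density.

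The delicate points I expect are the order of choices among $\varepsilon$, $\eta'$, $\kappa$ and $\delta$, and checking strict (not merely weak) monotonicity of $m$ in the transition region.
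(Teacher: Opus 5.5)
Your proof is correct. Steps 1--2 are essentially the paper's Lemma~\ref{lem-liouville-averaging}: the same averaging operator $\frac1\varepsilon\int_0^\varepsilon e^sG(t,\sL_\lambda^{-s}w)\,ds$, the same identity $\tau_{\lambda,K}=\varepsilon^{-1}\bigl(e^\varepsilon G(t,\sL_\lambda^{-\varepsilon}w)-G(t,w)\bigr)$, and the same order of choices (averaging time first, then the smoothing accuracy relative to $c_H(\varepsilon)$).

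The genuine difference is the boundary step.
\begin{itemize}
\item \textbf{The paper's route.} It writes the averaged function as $e^ru$ on a collar, notes that $\partial_ru=-e^{-r}\tau_{\lambda,\cdot}<0$, and replaces $u$ by an explicitly integrated decreasing profile $\int_r^0p$ (Lemmas~\ref{lem-boundary-straightening} and~\ref{lem-monotone-collar-interpolation}). To control the $C^0$ error this way, the averaged function must already be small on a fixed collar. That is why Lemma~\ref{lem-liouville-averaging} carries the extra conclusion $\|A|_{[-\rho,0]\times\partial W}\|_{C^0}<\eta$.
\item \textbf{Your route.} You glue with the homogeneous smooth minimum $m(K,\kappa(1-\rho))$, with $\rho$ your collar coordinate. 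This is exactly the Euler-identity mechanism the paper uses in the proof of Lemma~\ref{lemma-dT-thick}:
$\tau_{\lambda,m(K,A)}=\partial_1m\,\tau_{\lambda,K}+\partial_2m\,\tau_{\lambda,A}$.
\end{itemize}

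What your route buys:
\begin{itemize}
\item You need no interpolation lemma.
\item You get the pointwise bound $0\le F\le\min\{K,A\}$ for free.
\item The modified region can be pushed arbitrarily close to $\partial W$ just by increasing $\kappa$, so you never need smallness of $K$ on a collar in advance.
\end{itemize}

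The strictness you flagged is automatic. For $b>0$, Euler gives $a\,\partial_1m+b\,\partial_2m=m(a,b)>0$, so the two nonnegative partials cannot vanish together. At $b=0$, i.e.\ on $\partial W$, one has $\partial_2m=\mu'(0)=1$ and $\tau_{\lambda,A}=\kappa>0$.

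Finally, no extension of $A$ beyond the linear profile is needed. The linear profile $A=\kappa(1-\rho)$ on the whole collar already suffices once $\kappa\delta>2\max K$.
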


We divide the proof of Proposition~\ref{prop-density-smooth} into two lemmas.
\begin{lemma}\label{lem-liouville-averaging}
Let $H\in\cH^0(W,\lambda)$. For every \(\eta>0\), there exist a Liouville collar $[-\rho,0]\times\partial W\subset W$ with \(0<\rho<\log 2\), and a smooth function $A:S^1\times W\to(0,\infty)$ such that $\tau_{\lambda,A}>0, \|A-H\|_{C^0}<\eta$ and $\|A|_{[-\rho,0]\times\partial W}\|_{C^0}<\eta$.
\end{lemma}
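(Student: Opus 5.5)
The plan is to build $A$ from $H$ by a Liouville-flow averaging followed by a spatial mollification. The averaging is what converts the strong $C^0$ condition $c_H(\varepsilon)>0$ into the smooth condition $\tau_{\lambda,A}>0$.

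The key identity comes from the proof of Proposition~\ref{prop-smooth-into-strong}. For smooth $F$ we have
\[
\frac{d}{ds}\bigl(e^sF(t,\sL_\lambda^{-s}(w))\bigr)=e^s\tau_{\lambda,F}(t,\sL_\lambda^{-s}(w)).
\]
So $\tau_{\lambda,F}>0$ is exactly strict increase of $s\mapsto e^sF(t,\sL_\lambda^{-s}w)$. The condition $c_H(\varepsilon)>0$ is a discrete, uniform version of this monotonicity.

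\textbf{Step 1: mollify.} First extend $H$ by zero beyond $\partial W$; this is legitimate in the completion $\widehat W$ or on a slightly larger domain. Then mollify in the $(t,w)$-variables to get a smooth $H_\delta$ with $\|H_\delta-H\|_{C^0}\to0$. The inequality $H(t,\sL^{-\varepsilon}w)-e^{-\varepsilon}H(t,w)\ge c_H(\varepsilon)$ is uniform, but it is not exactly preserved by mollification, because mollification does not commute with $\sL^{-\varepsilon}$. Uniform continuity should nonetheless give
\[
H_\delta(t,\sL^{-\varepsilon}w)-e^{-\varepsilon}H_\delta(t,w)\ge c_H(\varepsilon)/2
\]
once $\delta$ is small relative to $\varepsilon$ and $c_H(\varepsilon)$. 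To arrange this cleanly, I would mollify in Liouville-collar coordinates, with smoothing only in the $\partial W$ and $t$ directions near the boundary.

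\textbf{Step 2: average along the Liouville flow.} Define
\[
A(t,w)\coloneqq\frac1\varepsilon\int_0^\varepsilon e^{s}H_\delta(t,\sL_\lambda^{-s}(w))\,ds .
\]
Write $g(s)=e^sH_\delta(t,\sL^{-s}w)$. Then $e^{u}A(t,\sL^{-u}w)=\frac1\varepsilon\int_u^{u+\varepsilon}g$, so
\[
\frac{d}{du}\Big|_{u=0}=\frac{g(\varepsilon)-g(0)}{\varepsilon}=\frac{e^{\varepsilon}}{\varepsilon}\bigl(H_\delta(t,\sL^{-\varepsilon}w)-e^{-\varepsilon}H_\delta(t,w)\bigr)>0.
\]
Hence $\tau_{\lambda,A}>0$ everywhere by the identity above. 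The function $A$ is smooth, and it is positive since $g>0$ on most of the interval. Moreover $\|A-H\|_{C^0}\lesssim(e^\varepsilon-1)\max H+\omega_H(\varepsilon)+\|H_\delta-H\|$, which is small once $\varepsilon$ and then $\delta$ are small.

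\textbf{Step 3: the collar bound.} Since $H|_{\partial W}=0$ and $H$ is uniformly continuous, there is a collar $[-\rho,0]\times\partial W$ with $\rho<\log 2$ on which $H<\eta/3$. Then $A<\eta$ there, by the $C^0$ estimate.

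The main obstacle is Step 1 near $\partial W$. There the flow $\sL^{-s}$ and the mollification interact with the boundary, and $A$ must remain defined on $W$ itself. I would handle this by working in the completion, where $\sL^{-s}$ maps $W$ into $W$ for $s\ge0$, so only values of $H_\delta$ on $W$ are needed. Making the mollifier $\sL$-equivariant in the collar keeps the positive gap $c_H(\varepsilon)/2$ uniform up to the boundary.
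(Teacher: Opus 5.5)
Your proposal is essentially the paper's proof: replace $H$ by any smooth positive $F$ that is $C^0$-close to $H$, average $e^sF(t,\sL_\lambda^{-s}(w))$ over a short interval of Liouville time, and observe that $\tau_{\lambda,A}$ becomes the difference quotient $\bigl(e^{\varepsilon}F(t,\sL_\lambda^{-\varepsilon}(w))-F(t,w)\bigr)/\varepsilon$, which is positive because of the uniform gap $c_H(\varepsilon)>0$ (the paper simply takes the averaging time equal to the collar width $\rho$). The obstacle you flag in Step~1 is not real: $\|F-H\|_{C^0}<\delta$ alone gives $F(t,\sL_\lambda^{-\varepsilon}(w))-e^{-\varepsilon}F(t,w)\ge c_H(\varepsilon)-2\delta$ everywhere on $S^1\times W$, so no flow-equivariant or collar-adapted mollifier is needed --- and mollifying only in the $t$ and $\partial W$ directions would actually leave $A$ merely $C^1$ in the collar coordinate $r$.
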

\begin{lemma}\label{lem-boundary-straightening}
Let \(A\in C^\infty(S^1\times W)\) satisfy $A>0$ and $\tau_{\lambda,A}>0$. Fix a Liouville collar $[-\rho,0]\times\partial W\subset W$. Then there exists \(K\in \cH(W,\lambda)\) such that \(K=A\) outside the collar and $\|K-A\|_{C^0}\le e^\rho\|A|_{[-\rho,0]\times\partial W}\|_{C^0}$.
\end{lemma}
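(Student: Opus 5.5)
The statement to prove is Lemma~\ref{lem-boundary-straightening}. The plan is to modify $A$ only inside the collar, interpolating along Liouville rays toward a function that is autonomous and vanishes on $\partial W$.

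\textbf{Setup.} Use collar coordinates $(s,x)\in[-\rho,0]\times\partial W$, with Liouville flow $\sL_\lambda^{u}(s,x)=(s+u,x)$ and $\lambda=e^{s}\alpha$, where $\alpha=\lambda|_{\partial W}$. The key quantity is $\tau$. For any function $F$, the computation in the proof of Proposition~\ref{prop-smooth-into-strong} gives
\[
\tau_{\lambda,F}=F-dF(Y_\lambda)=F-\partial_sF .
\]
So on the collar, positivity of $\tau$ says exactly that $s\mapsto e^{-s}F(t,s,x)$ is strictly decreasing. This turns the problem into a one-variable ODE-type construction along each ray.

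\textbf{Model near the boundary.} Near $s=0$, take the autonomous function $K_0(s,x)=c(1-e^{s})$ for a small constant $c>0$. It is autonomous and vanishes at $s=0$. Its $\tau$ is
\[
c(1-e^{s})+ce^{s}=c>0 .
\]

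\textbf{Gluing.} Define $K$ on the collar by prescribing $g:=e^{-s}K$ along each ray $(t,\cdot,x)$, with the following requirements:
\begin{itemize}
\item $g=e^{-s}A$ for $s$ near $-\rho$, so that $K=A$ outside the collar;
\item $g=c(e^{-s}-1)$ for $s$ near $0$;
\item $g$ is strictly decreasing in $s$ throughout.
\end{itemize}
A convenient way to build $g$ is to take a minimum-type interpolation. Set
\[
g=\min\bigl(e^{-s}A,\; c(e^{-s}-1)+\beta(s)\bigr),
\]
and then smooth it, for example by a smooth maximum/minimum or by integrating a convex combination of the negative derivatives. Smoothing preserves strict monotonicity as long as both branches are decreasing.

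\textbf{The $C^0$ estimate.} The estimate comes from $|K|\le e^{s}\max(e^{-s}A,\dots)$ on the collar. If we choose $c$ and the interpolation so that $0<g\le \max_{\text{collar}}e^{-s}A\le e^{\rho}\|A|_{\rm collar}\|$, we get $0<K\le e^{\rho}\|A|_{\rm collar}\|$. Since $A>0$ as well, this yields
\[
|K-A|\le \max(K,A)\le e^{\rho}\|A|_{\rm collar}\| ,
\]
which is the bound in the statement.

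\textbf{Main obstacle.} The delicate step is making the interpolation autonomous near $\partial W$ while keeping $g$ strictly decreasing and smooth. I would proceed as follows.
\begin{itemize}
\item Pick a cutoff $\chi(s)$, decreasing from $1$ near $-\rho$ to $0$ near $0$.
\item Write $-\partial_sg=\chi\cdot(-\partial_s(e^{-s}A))+(1-\chi)\,c\,e^{-s}$. Both terms are positive, so $g$ is strictly decreasing, i.e.\ $\tau_{\lambda,K}>0$.
\item Integrate from $s=0$ with $g(0)=0$. This forces $K=0$ on $\partial W$, and $K$ is autonomous near $0$ because $\chi=0$ there.
\end{itemize}

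The difficulty is then matching $g=e^{-s}A$ near $-\rho$, which requires
\[
\int_{s_0}^0(-\partial_s g)=e^{-s_0}A(t,s_0,x)
\]
for every $(t,x)$. I would enforce this by replacing the constant $c$ with a function $c(t,x)>0$ solved from this linear equation. Positivity of $c$ requires $e^{-s_0}A$ to exceed the $\chi$-part of the integral. That holds if $\chi$ drops sharply just after $s_0$, since then the $\chi$-part is only a fraction of the total decrease of $e^{-s}A$ and $e^{-s}A$ remains positive.

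This leaves a residual $t$-dependence in $c(t,x)$, which would break the requirement that $K$ be autonomous near $\partial W$. If that occurs, I would fix it as follows:
\begin{itemize}
\item insert a further region in which $g$ is chosen to be the $t$-independent function $\bar c(e^{-s}-1)$ with $\bar c=\min c$;
\item use a second cutoff on the derivatives to pass from $c(t,x)$ to $\bar c$, arranged so that the derivatives remain positive throughout.
\end{itemize}
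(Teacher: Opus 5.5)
Your reduction matches the paper's. On the collar, $\tau_{\lambda,K}=K-\partial_sK>0$ is equivalent to $g=e^{-s}K$ being strictly decreasing along rays, and one builds $g(s)=\int_s^0p$ from a positive density $p$. This $p$ equals $-\partial_s(e^{-s}A)$ near $s=-\rho$ and is $t$-independent near $s=0$; these are the paper's $u$, $p_0$, $\widetilde u$ in Lemma~\ref{lem-monotone-collar-interpolation}.

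\textbf{The gap} is in the step you yourself flag as the crux. In your first pass, the $(t,x)$-dependent constant $c(t,x)$ multiplies the near-boundary term. So $K=c(t,x)(1-e^{s})$ near $\partial W$, which is not autonomous. Your repair lowers that coefficient to $\bar c=\min c$ near $s=0$ and only asks that derivatives stay positive. But positivity is automatic for such convex combinations. What actually breaks is the matching identity $\int_{s_0}^0(-\partial_sg)\,ds=e^{-s_0}A(t,s_0,x)$ from which you solved for $c$. Wherever $c(t,x)>\bar c$, this integral drops by $(c-\bar c)$ times a positive integral. Then $g(s_0)<e^{-s_0}A(t,s_0,x)$, and $K$ no longer agrees with $A$ at the inner end of the collar.

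\textbf{The missing idea} is to separate the two roles. Fix the $t$-independent boundary coefficient first, let the $(t,x)$-dependent unknown live on a bump supported in the interior of the collar, and then re-solve. For instance, let $q\coloneqq-\partial_s(e^{-s}A)>0$, take $\chi_2\ge0$ compactly supported in $(s_0,0)$ with $\chi+\chi_2\le1$, and set
\[
-\partial_sg=\chi\,q+\chi_2\,c'\,e^{-s}+(1-\chi-\chi_2)\,\bar c\,e^{-s}.
\]
With all integrals over $[s_0,0]$, the matching equation gives $c'\int\chi_2e^{-s}=c\int\chi_2e^{-s}+(c-\bar c)\int(1-\chi-\chi_2)e^{-s}$. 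Hence $c'\ge c>0$ for your choice $\bar c=\min c$, and now $K=\bar c(1-e^{s})$ near $\partial W$. This is exactly the paper's correction term $\bigl(u(t,-\rho,x)-\int_{-\rho}^0p_0\bigr)\beta(r)$, with $\beta$ supported away from both ends.

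\textbf{An alternative} is your first idea, which closes the argument without any matching constraint. Take a smoothed minimum of $e^{-s}A$ and $c(e^{-s}-1)+\beta(s)$, with a \emph{constant} small $c>0$ and $\beta$ nonincreasing, zero near $0$ and large near $-\rho$. Use a regularized minimum $m(a,b)$ with:
\begin{itemize}
\item $\partial_am,\partial_bm\ge0$ and $\partial_am+\partial_bm=1$;
\item $m\le\min\{a,b\}$;
\item $m=\min\{a,b\}$ when $|a-b|\ge\epsilon$.
\end{itemize}
The branches are $\epsilon$-separated near both ends once $c(e^\rho-1)+\epsilon<\min A|_{S^1\times\partial W}$ and $\beta$ is large enough near $-\rho$. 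This route even gives $0\le K\le A$ on the collar.
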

Assuming these two lemmas, the proposition follows immediately.

\begin{proof}[Proof of Proposition~\ref{prop-density-smooth}]
Fix \(H\in\cH^0(W,\lambda)\) and \(\varepsilon>0\). Apply Lemma~\ref{lem-liouville-averaging} with \(\eta=\varepsilon/4\). We obtain a Liouville collar \( [-\rho,0]\times\partial W\subset W\), where \(0<\rho<\log 2\), and a smooth positive function \(A\) such that \(\tau_{\lambda,A}>0\), \(\|A-H\|_{C^0}<\varepsilon/4\), and $\|A|_{[-\rho,0]\times\partial W}\|_{C^0}<\frac{\varepsilon}{4}$.

Apply Lemma~\ref{lem-boundary-straightening} to \(A\). It gives \(K\in\cH(W,\lambda)\) satisfying
\[
\|K-A\|_{C^0}\le e^\rho\|A|_{[-\rho,0]\times\partial W}\|_{C^0}<\frac{\varepsilon}{2},
\]
where we have used \(e^\rho<2\). Therefore,
\[
\|K-H\|_{C^0}\le\|K-A\|_{C^0}+\|A-H\|_{C^0}<\frac{3\varepsilon}{4}<\varepsilon.
\]
Thus \(\cH(W,\lambda)\) is \(C^0\)-dense in \(\cH^0(W,\lambda)\).
\end{proof}

\begin{proof}[Proof of Lemma~\ref{lem-liouville-averaging}]
Choose a Liouville collar \((-\rho_0,0]\times\partial W\subset W\), in which \(Y_\lambda=\partial_r\). Since \(H\) is continuous and \(H|_{S^1\times\partial W}=0\), we may choose \(0<\rho<\min\{\rho_0/2,\log 2\}\) such that
\[
\sup_{0\le s\le\rho}\left\|e^sH(t,\sL_\lambda^{-s}(w))-H(t,w)\right\|_{C^0(S^1\times W)}<\frac{\eta}{2}\,\,\text{ and }\,\, e^\rho\|H\|_{C^0(S^1\times[-2\rho,0]\times\partial W)}<\frac{\eta}{2}.
\]
Since \(H\in\cH^0(W,\lambda)\), we have \(c_H(\rho)>0\). Choose \(\delta>0\) such that \(e^\rho\delta<\eta/2\) and $(1+e^\rho)\delta<\frac12e^\rho c_H(\rho)$. Choose a smooth positive function \(F:S^1\times W\to(0,\infty)\) satisfying \(\|F-H\|_{C^0}<\delta\), and define
\[
A(t,w)\coloneqq\frac1\rho\int_0^\rho e^sF(t,\sL_\lambda^{-s}(w))\,ds.
\]
Then \(A\) is smooth and strictly positive.

 Recall that \(\tau_{\lambda,A}=A-d_wA(Y_\lambda)\). Since $\frac{d}{ds}\sL_\lambda^{-s}(w)=-Y_\lambda(\sL_\lambda^{-s}(w))$, the chain rule gives $\frac{d}{ds}\left(e^sF(t,\sL_\lambda^{-s}(w))\right)=e^s\left(F-d_wF(Y_\lambda)\right)(t,\sL_\lambda^{-s}(w))$. Then we obtain
\[
\tau_{\lambda,A}(t,w)=\frac1\rho\int_0^\rho\frac{d}{ds}\left(e^sF(t,\sL_\lambda^{-s}(w))\right)\,ds=\frac{e^\rho F(t,\sL_\lambda^{-\rho}(w))-F(t,w)}{\rho}.
\]
By the definition of \(c_H(\rho)\), $e^\rho H(t,\sL_\lambda^{-\rho}(w))-H(t,w)\ge e^\rho c_H(\rho)$. It follows that $\tau_{\lambda,A}(t,w)>0$.

We next show that \(A\) is \(C^0\)-close to \(H\). Adding and subtracting \(e^sH(t,\sL_\lambda^{-s}(w))\) inside the integral gives
\[
\|A-H\|_{C^0}\le e^\rho\|F-H\|_{C^0}+\sup_{0\le s\le\rho}\left\|e^sH(t,\sL_\lambda^{-s}(w))-H(t,w)\right\|_{C^0}<\eta.
\]
It remains to estimate \(A\) near the boundary. If \((r,x)\in[-\rho,0]\times\partial W\) and \(0\le s\le\rho\), then $\sL_\lambda^{-s}(r,x)=(r-s,x)\in[-2\rho,0]\times\partial W$. Using \(\|F-H\|_{C^0}<\delta\), we obtain
\[
\|A|_{[-\rho,0]\times\partial W}\|_{C^0}\le e^\rho\left(\|H\|_{C^0(S^1\times[-2\rho,0]\times\partial W)}+\delta\right)<\eta.
\]
Thus \(A\) has all the required properties.
\end{proof}
Before we prove Lemma~\ref{lem-boundary-straightening}, we need the following lemma.
\begin{lemma}
\label{lem-monotone-collar-interpolation}
Let \(\Sigma\) be a compact manifold and let $u:S^1\times[-\rho,0]\times\Sigma\to(0,\infty)$ be smooth with \(\partial_ru<0\). Then there exists a smooth function $\widetilde u:S^1\times[-\rho,0]\times\Sigma\to[0,\infty)$ such that:

\begin{enumerate}
\item \(\widetilde u=u\) near \(r=-\rho\);
\item \(\widetilde u(t,0,x)=0\) and \(\widetilde u(t,r,x)>0\) for \(r<0\);
\item \(\partial_r\widetilde u<0\);
\item \(\widetilde u\) is independent of \(t\) near \(r=0\);
\item $0\le\widetilde u(t,r,x)\le\max_{(t,x)\in S^1\times\Sigma}u(t,-\rho,x)$.
\end{enumerate}
\end{lemma}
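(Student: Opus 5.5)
The plan is to build $\widetilde u$ as a convex combination of $u$ and an explicit autonomous model profile, glued in $r$ by a cutoff. Set $m\coloneqq\max_{S^1\times\Sigma}u(t,-\rho,x)$ and $m_0\coloneqq\min_{S^1\times\Sigma}u(t,-\rho/2,x)>0$.

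\textbf{Step 1: the model profile.} Choose a smooth autonomous function $g:[-\rho,0]\to[0,\infty)$ with
\begin{itemize}
\item $g(0)=0$ and $g'<0$ everywhere;
\item $g<m_0$ on $[-\rho/2,0]$;
\item $g\le m$ on the whole interval.
\end{itemize}
For instance, take $g(r)=-\epsilon r$ with $\epsilon\rho<m_0$. Since $u(t,-\rho,x)\le m$ and $\partial_r u<0$, we also have $u\le m$ on the collar.

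\textbf{Step 2: the cutoff.} Fix a smooth cutoff $\beta:[-\rho,0]\to[0,1]$ with
\begin{itemize}
\item $\beta=1$ near $-\rho$ and $\beta=0$ on $[-\rho/4,0]$;
\item $\beta'\le 0$, with $\operatorname{supp}\beta'\subset[-3\rho/4,-\rho/2]$.
\end{itemize}

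\textbf{Step 3: the definition.} Set
\[
\widetilde u\coloneqq\beta u+(1-\beta)g .
\]

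\textbf{Step 4: verifying the properties.}
\begin{itemize}
\item Properties (1) and (4) hold because $\widetilde u=u$ where $\beta=1$, and $\widetilde u=g$ is $t$-independent where $\beta=0$.
\item Property (2): at $r=0$ we get $\widetilde u=g(0)=0$. For $r<0$, $\widetilde u$ is a convex combination of two positive numbers, hence positive.
\item Property (5) follows from $0\le u,g\le m$ and convexity.
\item Property (3): compute
\[
\partial_r\widetilde u=\beta\,\partial_r u+(1-\beta)g'+\beta'(u-g).
\]
The first two terms form a convex combination of negative quantities, so their sum is negative. The last term is nonzero only on $[-3\rho/4,-\rho/2]$. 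There $\beta'\le 0$, and $u\ge u(t,-\rho/2,x)\ge m_0>g$ by monotonicity of $u$. So $\beta'(u-g)\le 0$, and the total derivative is strictly negative.
\end{itemize}

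\textbf{Main obstacle.} The only delicate point is the gluing term $\beta'(u-g)$. Its sign is controlled by forcing $g$ below $u$ on the region where $\beta$ varies. This is exactly why $g$ is chosen smaller than $m_0$ there, rather than taking $g$ arbitrary. All remaining checks are immediate.
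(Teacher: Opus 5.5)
Your proof is correct, and it takes a different route from the paper's.

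\textbf{The paper's route.} The paper works at the level of the derivative. It builds a positive function $p$ with three properties: $p=-\partial_r u$ near $r=-\rho$; $p$ is $t$-independent near $r=0$; and, after adding a normalized bump times the deficit, $\int_{-\rho}^0 p\,ds=u(t,-\rho,x)$. It then sets $\widetilde u(t,r,x)=\int_r^0 p(t,s,x)\,ds$. Positivity of $p$ gives monotonicity, vanishing at $r=0$, and the bound $\widetilde u\le u(t,-\rho,x)$ for free. The integral normalization forces agreement with $u$ near $-\rho$. The cost is the somewhat informal step of producing $p_0$ with uniformly small total integral.

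\textbf{Your route.} You interpolate at the level of the function itself, $\widetilde u=\beta u+(1-\beta)g$, with the explicit model $g=-\epsilon r$. Properties (1), (2), (4), (5) are then immediate. The only real work is the sign of the cross term $\beta'(u-g)$ in $\partial_r\widetilde u$. You control it by keeping $g$ below $m_0=\min u(\cdot,-\rho/2,\cdot)$ and using that $u\ge m_0$ for $r\le-\rho/2$. This is more explicit and avoids the integral bookkeeping. It even makes $\widetilde u$ independent of $x$ near $r=0$. Either version feeds into the boundary-straightening lemma in the same way, since only $\partial_r\widetilde u<0$ and the listed boundary behavior are used there.

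\textbf{A small slip.} In Step 1 you require $g<m_0$ on $[-\rho/2,0]$. Step 4, however, uses $g<m_0$ on $\operatorname{supp}\beta'\subset[-3\rho/4,-\rho/2]$. Since $g$ is decreasing, the former does not imply the latter for a general $g$. Your concrete choice $g(r)=-\epsilon r$ with $\epsilon\rho<m_0$ gives $g<m_0$ on all of $[-\rho,0]$, so the argument goes through. You should just state the requirement on the transition region, or on the whole collar. The same choice also gives $g\le\epsilon\rho<m_0<m$, which is what (5) needs.
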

\begin{proof}
Since \(u>0\) and \(S^1\times\Sigma\) is compact, the function \(u(t,-\rho,x)\) has a uniform positive lower bound. We first choose a smooth positive function \(p_0\) on \(S^1\times[-\rho,0]\times\Sigma\) such that \(p_0=-\partial_ru\) near \(r=-\rho\), \(p_0\) is independent of \(t\) near \(r=0\), and
\[
\int_{-\rho}^0p_0(t,s,x)\,ds<u(t,-\rho,x)
\]
for every \((t,x)\).

To see that this is possible, retain \(-\partial_ru\) only on a sufficiently small neighborhood of \(r=-\rho\), choose a sufficiently small positive \(t\)-independent function near \(r=0\), and join the two functions smoothly and positively across the middle of the collar. By making the two end neighborhoods and the intermediate function sufficiently small, the total integral of \(p_0\) can be made uniformly smaller than \(u(t,-\rho,x)\).

Choose a fixed nonnegative smooth function \(\beta\), supported in the interior of \((-\rho,0)\), such that \(\int_{-\rho}^0\beta(s)\,ds=1\). Then define
\[
p(t,r,x)\coloneqq p_0(t,r,x)+\left(u(t,-\rho,x)-\int_{-\rho}^0p_0(t,s,x)\,ds\right)\beta(r).
\]
Since \(\beta\) is supported away from the two ends of the collar, the function \(p\) still satisfies \(p=-\partial_ru\) near \(r=-\rho\) and is independent of \(t\) near \(r=0\). Moreover, \(p>0\) and $\int_{-\rho}^0p(t,s,x)\,ds=u(t,-\rho,x)$.

Now define $\widetilde u(t,r,x)\coloneqq\int_r^0p(t,s,x)\,ds$. Because \(p>0\), the function \(\widetilde u\) is strictly positive for \(r<0\), vanishes at \(r=0\), and satisfies \(\partial_r\widetilde u=-p<0\). At the inner end of the collar, the integral identity gives \(\widetilde u(t,-\rho,x)=u(t,-\rho,x)\). Since \(p=-\partial_ru\) near \(r=-\rho\), we also have \(\partial_r\widetilde u=\partial_ru\) there. So, \(\widetilde u=u\) near \(r=-\rho\). Near \(r=0\), the function \(p\) is independent of \(t\), and therefore so is \(\widetilde u\). Finally, positivity of \(p\) gives $0\le\widetilde u(t,r,x)\le\int_{-\rho}^0p(t,s,x)\,ds=u(t,-\rho,x)$, which proves the required estimate.
\end{proof}

\begin{proof}[Proof of Lemma~\ref{lem-boundary-straightening}]
Write the fixed Liouville collar as \( [-\rho,0]\times\partial W\), where \(Y_\lambda=\partial_r\). Define \(u(t,r,x)\coloneqq e^{-r}A(t,r,x)\). Since \(\tau_{\lambda,A}=A-\partial_rA>0\), we have $\partial_ru=-e^{-r}\tau_{\lambda,A}<0$. Apply Lemma~\ref{lem-monotone-collar-interpolation} to \(u\), with \(\Sigma=\partial W\), and denote the resulting function by \(\widetilde u\). Define \(K(t,r,x)\coloneqq e^r\widetilde u(t,r,x)\) on the collar and set \(K\coloneqq A\) outside the collar.

Since \(\widetilde u=u\) near \(r=-\rho\), the two definitions agree near the inner boundary of the collar, and hence \(K\) is smooth. Since \(\widetilde u(t,0,x)=0\), the function \(K\) vanishes on \(S^1\times\partial W\). Moreover, \(K\) is independent of \(t\) near \(\partial W\) and is strictly positive on \(S^1\times\operatorname{Int}(W)\). On the collar, $\tau_{\lambda,K}=K-\partial_rK=-e^r\partial_r\widetilde u>0$. Outside the collar, \(K=A\), so \(\tau_{\lambda,K}=\tau_{\lambda,A}>0\). Therefore \(K\in\cH(W,\lambda)\).

Finally, Lemma~\ref{lem-monotone-collar-interpolation} gives
\[
0\le\widetilde u(t,r,x)\le\max_{(t,x)\in S^1\times\partial W}u(t,-\rho,x)\le e^\rho\|A|_{[-\rho,0]\times\partial W}\|_{C^0}.
\]
Since \(e^r\le1\), the same upper bound holds for \(K\) on the collar. Because \(A>0\), we conclude that $\|K-A\|_{C^0}\le e^\rho\|A|_{[-\rho,0]\times\partial W}\|_{C^0}$. This completes the proof.
\end{proof}
 
For $H,G\in \cH^0(W,\lambda)$, define
\[
\delta_\lambda(H,G)\coloneqq \inf\Bigl\{\varepsilon\ge 0 \,\Bigm|\, \sL_{\hat\lambda}^{-\varepsilon}(W_H)\subset W_G \text{ and } \sL_{\hat\lambda}^{-\varepsilon}(W_G)\subset W_H\Bigr\}.
\]
\begin{proof}[Proof of Theorem~\ref{thm-barcode-extension-c0}]
Let $H,G\in \cH(W,\lambda)$, and choose $\varepsilon>\delta_\lambda(H,G)$. By the definition of $\delta_\lambda$, the Liouville flow satisfies $\sL_{\hat\lambda}^{-\varepsilon}(W_H)\subset W_G$ and $\sL_{\hat\lambda}^{-\varepsilon}(W_G)\subset W_H$. These inclusions are admissible for the symplectic Banach--Mazur distance of the smooth Liouville domains $(W_H,\hat\lambda)$ and $(W_G,\hat\lambda)$. Hence, $d_{\rm SBM}((W_H,\hat\lambda),(W_G,\hat\lambda))\le \varepsilon$. Proposition~\ref{prop-interleaving} gives $d_{\rm bot}(\cB(H),\cB(G))\le \varepsilon$, and letting $\varepsilon\ra \delta_\lambda(H,G)$ yields
\begin{equation}\label{eq:smooth-delta-lipschitz}
d_{\rm bot}(\cB(H),\cB(G))\le \delta_\lambda(H,G).
\end{equation}

We now define the barcode for $H\in \cH^0(W,\lambda)$. Choose smooth Hamiltonians $H_k\in \cH(W,\lambda)$ with $H_k\to H$ in $C^0$. We show that $\cB(H_k)$ has a limit in bottleneck distance. Fix $\varepsilon>0$. Since $c_H(\varepsilon/2)>0$, for all sufficiently large $k$ one has $\|H_k-H\|_{C^0}<c_H(\varepsilon/2)$. For such $k$, we claim that $\delta_\lambda(H_k,H)\le \varepsilon/2$. Indeed, if $(w,z)\in W_H$, with $z=re^{2\pi it}$, then $\pi|z|^2\le H(t,w)$, and therefore 
\begin{align*}
\pi|e^{-\varepsilon/4}z|^2&=e^{-\varepsilon/2}\pi|z|^2\le e^{-\varepsilon/2}H(t,w)\\
& <H(t,\sL_\lambda^{-\varepsilon/2}(w))-c_H(\varepsilon/2)\le H_k(t,\sL_\lambda^{-\varepsilon/2}(w)).
\end{align*}
 Thus $\sL_{\hat\lambda}^{-\varepsilon/2}(W_H)\subset W_{H_k}$. Conversely, if $(w,z)\in W_{H_k}$, then $\pi|z|^2\le H_k(t,w)\le H(t,w)+c_H(\varepsilon/2)$, so $e^{-\varepsilon/2}\pi|z|^2<e^{-\varepsilon/2}H(t,w)+c_H(\varepsilon/2)\le H(t,\sL_\lambda^{-\varepsilon/2}(w))$. Therefore, $\sL_{\hat\lambda}^{-\varepsilon/2}(W_{H_k})\subset W_H$, as claimed. Then it follows from the triangle inequality for $\delta_\lambda$ that, for all sufficiently large $k,\ell$, $\delta_\lambda(H_k,H_\ell)\le \delta_\lambda(H_k,H)+\delta_\lambda(H,H_\ell)\le \varepsilon$. Applying \eqref{eq:smooth-delta-lipschitz} to the smooth pair $(H_k,H_\ell)$, we get 
\[
d_{\rm bot}(\cB(H_k),\cB(H_\ell))\le \varepsilon.
\] 
Hence $\{\cB(H_k)\}$ is Cauchy, and we define $\cB(H)\coloneqq \lim_{k\to\infty}\cB(H_k)$.

Note that this definition does not depend on the chosen smooth approximation. Indeed, if $G_k\in \cH(W,\lambda)$ is another sequence with $G_k\to H$ in $C^0$, then the previous paragraph gives $\delta_\lambda(H_k,H)\le \varepsilon/2$ and $\delta_\lambda(G_k,H)\le \varepsilon/2$ for all large $k$. Hence $\delta_\lambda(H_k,G_k)\le \varepsilon$, and \eqref{eq:smooth-delta-lipschitz} implies $d_{\rm bot}(\cB(H_k),\cB(G_k))\le \varepsilon$. The two approximating sequences therefore have the same bottleneck limit. Thus $\cB(H)$ is well defined. Since smooth Hamiltonians in $\cH(W,\lambda)$ are $C^0$-dense in $\cH^0(W,\lambda)$ by Proposition \ref{prop-density-smooth}, this extension is unique.

Moreover, we claim that the estimate \eqref{eq:smooth-delta-lipschitz} passes to the $C^0$-strong class. Indeed, let $H,G\in \cH^0(W,\lambda)$, and choose smooth approximations $H_k\to H$ and $G_k\to G$ in $C^0$. Given $\eta>0$, the preceding argument gives $\delta_\lambda(H_k,H)<\eta$ and $\delta_\lambda(G_k,G)<\eta$ for all large $k$. Thus $\delta_\lambda(H_k,G_k)\le \delta_\lambda(H,G)+2\eta$. Using \eqref{eq:smooth-delta-lipschitz} for $H_k$ and $G_k$, and then passing to the bottleneck limit, gives $d_{\rm bot}(\cB(H),\cB(G))\le \delta_\lambda(H,G)+2\eta$. Since $\eta>0$ is arbitrary, we have
\begin{equation}\label{eq:c0-delta-lipschitz}
d_{\rm bot}(\cB(H),\cB(G))\le \delta_\lambda(H,G).
\end{equation}
In particular, the extended barcode map is $\delta_\lambda$-continuous. 

Finally, for smooth $H,G \in \cH(W,\lambda)$, the standard collar argument as in Proposition \ref{prop-density-smooth} identifies $d_{\rm SBM}(W_H,W_G)$ in Definition \ref{def-open-sbm} with the symplectic Banach--Mazur distance between the Liouville domains $(W_H,\hat\lambda)$ and $(W_G,\hat\lambda)$ defined at the beginning of Section \ref{sec-intro}. Then Proposition~\ref{prop-interleaving} gives
\begin{equation}\label{eq:smooth-sbm-lipschitz}
d_{\rm bot}(\cB(H),\cB(G))\le d_{\rm SBM}(W_H,W_G).
\end{equation}
Now let $H,G\in \cH^0(W,\lambda)$, and choose smooth $H_\eta,G_\eta\in \cH(W,\lambda)$ such  that $\delta_\lambda(H_\eta,H)<\eta$ and $\delta_\lambda(G_\eta,G)<\eta$. Then, by triangle inequalities of $d_{\rm SBM}$ and $d_{\rm bot}$, we have
\begin{align*}
d_{\rm bot}(\cB(H),\cB(G)) &\leq d_{\rm bot}(\cB(H), \cB(H_\eta)) + d_{\rm bot}(\cB(H_{\eta}),\cB(G_{\eta})) + d_{\rm bot}(\cB(G_\eta),\cB(G)) \\
& \leq d_{\rm SBM}(W_{H_{\eta}}, W_{G_{\eta}}) + 2\eta\\
& \leq d_{\rm SBM}(W_{H_{\eta}}, W_H) + d_{\rm SBM}(W_{H}, W_{G}) + d_{\rm SBM}(W_{G_{\eta}}, W_{G}) \\
& \leq d_{\rm SBM}(W_{H}, W_{G}) + 4\eta.
\end{align*}
Here, the second inequality comes from (\ref{eq:smooth-sbm-lipschitz}). Letting $\eta\to 0$ proves the desired inequality $d_{\rm bot}(\cB(H),\cB(G))\le d_{\rm SBM}(W_H,W_G)$ for $H, G \in \cH^0(W,\lambda)$. 
\end{proof}
\section{Proof of Theorem~\ref{thm-capacity} }
\begin{proof}[Proof of Lemma~\ref{lem:Gwidth-continuity-c0-time-dependent}]
Fix $H\in \cH^0(W,\lambda)$. Let $\sL_{\hat\lambda}^s$ denote the Liouville flow of $\hat\lambda$ on $W\times \bC$. Then $\sL_{\hat\lambda}^s(w,z)=(\sL_\lambda^s(w),e^{s/2}z)$, and $w_G(\sL_{\hat\lambda}^s(U))=e^s\,w_G(U)$ for every open subset $U\subset W\times \bC$. Fix $\varepsilon>0$. Recall that 
\[
c_H(\varepsilon)=\min_{(t,w)\in S^1\times W}(H(t,\sL_\lambda^{-\varepsilon}(w))-e^{-\varepsilon}H(t,w))>0.
\]
Choose $0<\delta<c_H(\varepsilon)$, and let $G\in \cH^0(W,\lambda)$ satisfy $\|G-H\|_{C^0}<\delta$. We claim that
$
\sL_{\hat\lambda}^{-\varepsilon}(U_H)\subset U_G\subset\sL_{\hat\lambda}^{\varepsilon}(U_H).
$
Indeed, let $(w,z)\in U_H$, then $\pi |z|^2<H(t,w)$. Applying $\sL_{\hat\lambda}^{-\varepsilon}$, we get $\sL_{\hat\lambda}^{-\varepsilon}(w,z)=(\sL_\lambda^{-\varepsilon}(w),e^{-\varepsilon/2}z)$ and hence $\pi |e^{-\varepsilon/2}z|^2=e^{-\varepsilon}\pi |z|^2<e^{-\varepsilon}H(t,w)$. By the definition of  $c_H(\varepsilon)$ and the choice of $\delta$,  
\[
e^{-\varepsilon}H(t,w)<H(t,\sL_\lambda^{-\varepsilon}(w))-\delta<G(t,\sL_\lambda^{-\varepsilon}(w)).
\] 
Therefore $\sL_{\hat\lambda}^{-\varepsilon}(w,z)\in U_G$ and hence $\sL_{\hat\lambda}^{-\varepsilon}(U_H)\subset U_G$. Conversely, it suffices to prove $\sL_{\hat\lambda}^{-\varepsilon}(U_G)\subset U_H$. Let $(w,z)\in U_G$, then $\pi |z|^2<G(t,w)\le H(t,w)+\delta$. Therefore 
\[
\pi |e^{-\varepsilon/2}z|^2=e^{-\varepsilon}\pi |z|^2<e^{-\varepsilon}H(t,w)+e^{-\varepsilon}\delta<e^{-\varepsilon}H(t,w)+\delta.
\]
 Since $\delta<c_H(\varepsilon)$, we have $\pi |e^{-\varepsilon/2}z|^2<e^{-\varepsilon}H(t,w)+\delta<H(t,\sL_\lambda^{-\varepsilon}(w)),$ so $\sL_{\hat\lambda}^{-\varepsilon}(w,z)\in U_H$. Thus $\sL_{\hat\lambda}^{-\varepsilon}(U_G)\subset U_H$, and hence $U_G\subset \sL_{\hat\lambda}^{\varepsilon}(U_H)$.

By monotonicity and conformality of the Gromov width, we have \[ e^{-\varepsilon}w_G(U_H)\le w_G(U_G)\le e^{\varepsilon}w_G(U_H). \]
Now, let $H_\nu\to H$ in $\cH^0(W,\lambda)$ with respect to the $C^0$-topology. For all sufficiently large $\nu$, we have $\|H_\nu-H\|_{C^0}<\delta$, and hence $e^{-\varepsilon}w_G(U_H)\le w_G(U_{H_\nu})\le e^{\varepsilon}w_G(U_H)$. Since $\varepsilon>0$ is arbitrary, it follows that $w_G(U_{H_\nu})\to w_G(U_H)$, that is, $H\mapsto w_G(W_H,{\rm d}\hat\lambda)$ is continuous.
\end{proof}

Now, we are ready to give the proof of Theorem \ref{thm-capacity}. 
\begin{proof}[Proof of Theorem~\ref{thm-capacity} ]
We first prove the upper bound in (i). Since $H(t,w)\le M$ on $S^1\times W$, we have $W_{aH}\subset W\times \overline{B}^{\,2}(aM)$, where $\overline{B}^{\,2}(aM)\subset\bC$ is the closed two-dimensional standard ball. Therefore, by Theorem~1.4 in \cite{LM95a}, $w_G(W_{aH})\le aM$ and then $\limsup_{a\to0^+}\frac{w_G(W_{aH})}{a}\le M$.

We now prove the lower bound in (i). Fix $\varepsilon>0$ and choose $w_0\in {\rm Int}(W)$ such that $\overline H(w_0)>\overline M-\varepsilon$. Since $H(t,w_0)>0$ for all $t\in S^1$,  we may choose a number $\mu>0$ such that $0<\mu<\varepsilon\text{ and }H(t,w_0)-\mu>0\text{ for all }t\in S^1$. By continuity of $H$, after shrinking to a sufficiently small Darboux chart around $w_0$,  there exist $\rho>0$ and a symplectic embedding $\phi:B^{2n}(\rho)\hookrightarrow {\rm Int}(W)$ such that $H(t,\phi(x))\ge H(t,w_0)-\mu\text{ for all }(t,x)\in S^1\times B^{2n}(\rho).$ Then, for $a>0$, denote a region in $\bC$ as follows, 
\[ \Omega_{a,\mu}\coloneqq\left\{re^{2\pi it}\in \bC\;\middle|\;\pi r^2<a(H(t,w_0)-\mu)\right\}.\]
Then by the defining properties, we have $\phi(B^{2n}(\rho))\times \Omega_{a,\mu}\subset W_{aH}$. 

Moreover, ${\rm Area}(\Omega_{a,\mu})=a\int_0^1 (H(t,w_0)-\mu)\,dt=a(\overline H(w_0)-\mu)$. For brevity, set $A_\mu:=\overline H(w_0)-\mu (>0)$. Then for any $0<\delta<A_\mu$, since $\Omega_{a,\mu}\subset \bC$ is simply connected, a standard two-dimensional Moser argument yields a symplectic embedding $B^2(A_\mu-\delta)\hookrightarrow \Omega_{1,\mu}$, so $B^2(a(A_\mu-\delta))\hookrightarrow \Omega_{a,\mu}$. This implies that 
\[ B^{2n}(\rho)\times B^2(a(A_\mu-\delta)) \hookrightarrow W_{aH}.\]
Then since $B^{2n+2}(\min\{\rho,\ a(A_\mu-\delta)\})\subset B^{2n}(\rho)\times B^2(a(A_\mu-\delta))$, we get $w_G(W_{aH})\ge \min\{\rho,\ a(A_\mu-\delta)\}$. For sufficiently small $a>0$, the second term obtains the minimum. Hence, $w_G(W_{aH})\ge a(A_\mu-\delta)$. Dividing by $a$ and taking $\liminf$, we obtain 
\[
\liminf_{a\to0^+}\frac{w_G(W_{aH})}{a}\ge A_\mu-\delta=\overline H(w_0)-\mu-\delta.
\] 
Letting $\delta\to0^+$, and then using $\mu<\varepsilon$ together with $\overline H(w_0)>\overline M-\varepsilon$, gives 
\[
\liminf_{a\to0^+}\frac{w_G(W_{aH})}{a}\ge \overline M-2\varepsilon.
\] 
Since $\varepsilon>0$ was arbitrary, it follows that $\liminf_{a\to0^+}\frac{w_G(W_{aH})}{a} \ge \overline M$. This proves (i).

\medskip

We now prove (ii). Let $(w,z)\in {\rm Int}(W)\times \bC$. Write $z=re^{2\pi it}$. Since $H(t,w)>0$, choosing $a>\frac{\pi |z|^2}{H(t,w)}$ gives $(w,z)\in U_{aH}$. Thus $\bigcup_{a>0}U_{aH}={\rm Int}(W)\times \bC$. The family $\{U_{aH}\}_{a>0}$ is clearly increasing in $a$. Therefore, by monotonicity of the Gromov width, $\lim_{a\to\infty} w_G(U_{aH})=\sup_{a>0} w_G(U_{aH})\le w_G({\rm Int}(W)\times\mathbb C)$. It remains to prove the opposite inequality. 

Let $c<w_G({\rm Int}(W)\times\mathbb C)$. Choose $c'<c$ and a symplectic embedding $\Psi:B^{2n+2}(c)\hookrightarrow {\rm Int}(W)\times\mathbb C$. The image $K=\Psi(\overline{B}^{\,2n+2}(c'))$ is compact in ${\rm Int}(W)\times\mathbb C$.  Since $\{U_{aH}\}_{a>0}$ is an increasing open cover of ${\rm Int}(W)\times\mathbb C$, there exists $a_0>0$ such that $K\subset U_{aH}\text{ for all }a\ge a_0$. It follows that $w_G(U_{aH})\ge c' \text{ for all }a\ge a_0$. So, $\lim_{a\to\infty}w_G(U_{aH})\ge c'$. Letting $c'\to c$, and then $c\to w_G(\operatorname{Int}(W)\times \mathbb C)$, we obtain $\lim_{a\to\infty}w_G(U_{aH})\ge w_G({\rm Int}(W)\times\mathbb C)$. Therefore $\lim_{a\to\infty}w_G(W_{aH})=w_G({\rm Int}(W)\times\mathbb C) = w_G(W\times\mathbb C)$.
\end{proof}

In what follows, we will give the proofs of Proposition \ref{prop-wG-finite} and Proposition \ref{prop-lift-capacity}.  
 
\begin{proof}[Proof of Proposition~\ref{prop-wG-finite}] We will give the proof for each individual case. 

\medskip

\noindent For (i): We first prove the lower bound. Let $c<w_G(M,\omega)$, and then there exists a symplectic embedding $\psi\colon B^{2n}(r)\hookrightarrow M$ with $r>c$. Taking the product with the standard inclusion $B^2(r)\hookrightarrow \mathbb C$, we obtain a symplectic embedding $B^{2n}(r)\times B^2(r)\hookrightarrow M\times \mathbb C$. Since $B^{2n+2}(r)\subset B^{2n}(r)\times B^2(r),$ it follows that $c<w_G(M\times \mathbb C,\omega\oplus \omega_{\mathrm{std}})$. As $c<w_G(M,\omega)$ was arbitrary, we conclude that $w_G(M,\omega)\le w_G(M\times \mathbb C,\omega\oplus \omega_{\mathrm{std}}).$

For the reverse inequality, let $r>0$ and suppose that there exists a symplectic embedding $\varphi\colon M\hookrightarrow Z^{2n}(r)$. Then $\varphi\times \id_{\mathbb C}\colon M\times \mathbb C \hookrightarrow Z^{2n}(r)\times \mathbb C=Z^{2n+2}(r)$ is a symplectic embedding, so $c_Z(M\times \mathbb C,\omega\oplus \omega_{\mathrm{std}})\le c_Z(M,\omega)$. Combining this with the general inequality $w_G\le c_Z$, we obtain
\[ w_G(M\times \mathbb C,\omega\oplus \omega_{\mathrm{std}})\le c_Z(M\times \mathbb C,\omega\oplus \omega_{\mathrm{std}})\le c_Z(M,\omega)=w_G(M,\omega).\]
This completes the proof of case (i). 

\medskip

\noindent For (ii), set $A\coloneqq \int_W\omega$. We first recall a simple computation of the Gromov width of symplectic manifolds in the form of $S^2(a)\times \bC$. Let $S^2(a)$ denote the two-sphere equipped with an area form of total area $a$. Since $S^2(a)\setminus\{\infty\}$ is symplectomorphic to $B^2(a)$, if $a\leq b$, then $B^4(a)\subset B^2(a)\times B^2(a)\subset B^2(a)\times B^2(b)\hookrightarrow S^2(a)\times S^2(b)$. Hence $w_G(S^2(a)\times S^2(b))\geq a$. For the reverse inequality, let $\Delta=[0,a]\times[0,b]$ be the moment polytope of $S^2(a)\times S^2(b)$. By Theorem~3.8 and Proposition~3.9 of \cite{AHN23}, 
\begin{equation} \label{equ-width}
w_G(S^2(a)\times S^2(b))\leq {\rm width}(\Delta),
\end{equation}
where ${\rm width}(\Delta)\coloneqq \min_{u\in(\bZ^2)^*\setminus\{0\}}\max_{x,y\in\Delta}|u(x)-u(y)|$. Taking $u=(1,0)$ gives ${\rm width}(\Delta)\leq a$. By symmetry, the general case gives $w_G(S^2(a)\times S^2(b))=\min\{a,b\}$.

It follows that $w_G(S^2(a)\times\bC)=a$. Indeed, the inclusion $B^2(a)\times\bC\hookrightarrow S^2(a)\times\bC$, together with the equality $w_G(B^2(a))=c_Z(B^2(a))=a$ and part (i), gives $w_G(S^2(a)\times\bC)\geq a$. Conversely, suppose $c<w_G(S^2(a)\times\bC)$. Then there is a symplectic embedding $\varphi\colon B^4(r)\hookrightarrow S^2(a)\times\bC$ with $r>c$. For any $0<\varepsilon<r$, the compact set $\varphi(\overline{B}^{\,4}(r-\varepsilon))$ projects to a compact subset of the $\bC$-factor, hence is contained in $S^2(a)\times B^2(b)$ for some $b>0$. Since $B^2(b)$ embeds symplectically into $S^2(b)$, we get $B^4(r-\varepsilon)\hookrightarrow S^2(a)\times S^2(b)$. Then (\ref{equ-width}) above gives 
\[ r-\varepsilon\leq w_G(S^2(a)\times S^2(b))=\min\{a,b\}\leq a.\]
Letting \(\varepsilon\to0\) gives \(r\le a\). Since \(c<r\), we obtain \(c<a\). As \(c<w_G(S^2(a)\times\bC)\) was arbitrary, it follows that \(w_G(S^2(a)\times\bC)\le a\).

Now, return to $W=S^2\setminus  \bigcup_{j=1}^n{\rm Int}(D_j^2)$. By Lemma~\ref{lem:surface-width-area}, $w_G(W,\omega)=A$. Given $\varepsilon>0$, the surface $(W,\omega)$ symplectically embeds into a sphere $S^2(A+\varepsilon)$ with total area $A+\varepsilon$, which implies that $W\times\bC$ embeds into $S^2(A+\varepsilon)\times\bC$. Then the computation above gives 
\[ w_G(W\times\bC)\leq w_G(S^2(A+\varepsilon)\times\bC)=A+\varepsilon.\]
Letting $\varepsilon\to 0$, we obtain $w_G(W\times\bC)\leq A$. For the opposite inequality, fix $\varepsilon>0$. Since $w_G(W,\omega)=A$, there exists a symplectic embedding $B^2(r)\hookrightarrow W$ with $r>A-\varepsilon$. Taking the product with the standard inclusion $B^2(r)\hookrightarrow\bC$ gives $B^4(r)\hookrightarrow W\times\bC$. Thus $w_G(W\times\bC)>A-\varepsilon$. Since $\varepsilon>0$ is arbitrary, $w_G(W\times\bC)\geq A$. Therefore $w_G(W\times\bC)=A$, as claimed.

\medskip

\noindent For (iii), we will consider in two different cases.

\smallskip

\emph{Case 1: $N$ is non-orientable.}
Then $N\cong k\mathbb{RP}^2$ for some $k\ge 1$. By Proposition~2.12 and the proof of Proposition~3.1 in \cite{DHL19}, there exists a rational symplectic $4$-manifold $(M,\omega)$ containing an embedded Lagrangian copy of $k\mathbb{RP}^2$. By the Weinstein neighborhood theorem, for every Riemannian metric $g$ on $N$, the disk cotangent bundle $D_g^*N$ symplectically embeds into $M$ after rescaling $\omega$. Since rational symplectic $4$-manifolds are symplectically uniruled, Proposition~\ref{prop:uniruled-finite} gives $w_G(M\times \mathbb C)<\infty$. By monotonicity, we have $w_G(D_g^*N\times \mathbb C)<\infty.$

\smallskip

\emph{Case 2: $N=S^2$ or $N=\mathbb T^2$.}
The anti-diagonal in $S^2\times S^2$ is a Lagrangian sphere, and the product of equators $S^1\times S^1\subset S^2\times S^2$ is a Lagrangian torus. Again, by the Weinstein neighborhood theorem, for every metric $g$ on $S^2$ or $\mathbb T^2$, the disk cotangent bundle $D_g^*N$ symplectically embeds into $S^2\times S^2$ after rescaling the symplectic form. Since $S^2\times S^2$ is a Hamiltonian $S^1$-manifold, it is symplectically uniruled, and Proposition~\ref{prop:uniruled-finite} implies that $w_G(S^2\times S^2\times \mathbb C)<\infty$. Again, monotonicity yields $w_G(D_g^*N\times \mathbb C)<\infty$. This completes the proof of case (iii). 
\end{proof}

\begin{proof}[Proof of Proposition~\ref{prop-lift-capacity}]
Since $\Sigma$ has positive genus, it contains a symplectic open subset $U\subset \Sigma$ symplectomorphic to a once-punctured torus. Fix such a $U$. By the main lemma in \cite{Gut08} and  Lemma~5.1 in \cite{PV15}, for every $m\ge 1$, there exist constants $C_U>0$ and $A_0>0$, depending only on $U$, such that for every $A\ge A_0$, there is a symplectic embedding $B^{2m+2}(A)\hookrightarrow U\times B^{2m}(C_UA^2)$. 

Now suppose that $\dim X=2m$. Since $w_G(X)=\infty$, for every $S>0$ there exists a symplectic embedding $B^{2m}(S)\hookrightarrow X$. Let $A\ge A_0$. Applying the preceding embedding result with this value of $A$, we get $B^{2m+2}(A)\hookrightarrow U\times B^{2m}(C_UA^2)$. Taking $S=C_UA^2$ and composing with a symplectic embedding $B^{2m}(C_UA^2)\hookrightarrow X$ and with the inclusion $U\subset\Sigma$, we obtain a symplectic embedding 
\[ B^{2m+2}(A)\hookrightarrow U\times B^{2m}(C_UA^2)\hookrightarrow \Sigma\times X.\] Since $A$ can be taken arbitrarily large, it follows that $w_G(\Sigma\times X)=\infty.$
\end{proof}
\section{Proof of Theorem \ref{thm-large-scale-dT}}

In this section, we will give the proof of Theorem \ref{thm-large-scale-dT}. 

\begin{proof}[Proof of Theorem \ref{thm-large-scale-dT}]
Let $c_N>0$ and $\{X_v\}_{v\in\bR^N}$ be the family given by Proposition \ref{prop-CGH}. In particular, for each $v\in\bR^N$, we can express
\[ X_v=\left\{(u,z)\in\bC^2 \, \big|\, \pi|u|^2\le 1,\ \pi|z|^2\le f_v(\pi|u|^2)\right\},\]
where $f_v:[0,1]\to[0,\infty)$ is smooth and concave, $f_v(1)=0$, and
$d_{\rm BM}(X_v,X_w)\ge c_N\|v-w\|_\infty$ for all $v,w\in\bR^N$.

Next, we use the profile function $f_v$ to construct a Hamiltonian on the disk of size $a$. Rescale $f_v$ from $[0,1]$ to $[0,a]$ by defining $\widetilde f_v:[0,a]\to[0,\infty)$, $\widetilde f_v(\mu)\coloneqq a\,f_v(\mu/a)$. Then $\widetilde f_v$ is again smooth and concave with $\widetilde f_v(a)=0$. Define an autonomous radial Hamiltonian on $\overline{B}^{\,2}(a)$ by 
\[ H_v(u)\coloneqq \widetilde f_v(\pi|u|^2) = \widetilde f_v(\mu) \,\,\,\,\mbox{if $\mu\coloneqq \pi|u|^2$}.\]
Note that $H_v\in \mathcal H^+(\overline{B}^{\,2}(a),\lambda_{\mathrm{std}})$. Indeed, since $H_v$ is autonomous on all of $\overline{B}^{\,2}(a)$ and vanishes on $\partial \overline{B}^{\,2}(a)$, it remains to verify the positivity conditions. For such a radial (symmetric) Hamiltonian, it is readily verified that 
\[ \lambda_{\mathrm{std}}(X_{H_v})=-\mu \widetilde f_v'(\mu) \,\,\,\,\mbox{and}\,\,\,\, \tau_{\lambda_{\mathrm{std}},H_v}=\widetilde f_v(\mu)-\mu \widetilde f_v'(\mu).\]
Since $X_v$ is a graph-type toric domain and its profile function is non-increasing, $\widetilde f_v'(\mu)\le 0$, which implies that $\lambda_{\mathrm{std}}(X_{H_v})\ge 0$. Moreover, $\widetilde f_v(\mu)>0$ for $0\le \mu<a$, so on ${\rm Int}(\overline{B}^{\,2}(a))$ we have 
\[ \tau_{\lambda_{\mathrm{std}},H_v}=\widetilde f_v(\mu)-\mu \widetilde f_v'(\mu)\ge \widetilde f_v(\mu)>0.\] 
Therefore, consider the following map 
\[ \Phi_{N,a}: \bR^N \to {\mathcal H}^+(\overline{B}^{\,2}(a),\lambda_{\mathrm{std}})\,\,\,\,\mbox{by $v \mapsto \Phi_{N,a}(v)\coloneqq H_v$} \]
as constructed above. 

Next we identify the associated tube domain. By definition,
${\rm U}(H_v)=\{(u,z)\in \overline{B}^{\,2}(a)\times\bC \mid \pi|z|^2\le \widetilde f_v(\pi|u|^2)\}$.
Substituting the definition of $\widetilde f_v$, we get 
$${\rm U}(H_v)=\{(u,z)\in\bC^2 \mid \pi|u|^2\le a,\ \pi|z|^2\le a\,f_v(\pi|u|^2/a)\}.$$
This is exactly the Liouville rescaling of $X_v$ by the factor $a$; in other words,
${\rm U}(H_v)=aX_v$.

Thus, we obtain
$d_{\rm SBM}({\rm U}(H_v),{\rm U}(H_w))=d_{\rm SBM}(X_v,X_w)$, since $d_{\rm SBM}$ is invariant under simultaneous Liouville rescaling.
Then Theorem~\ref{thm-estimation} gives
\[ d_{\rm T}(H_v,H_w)\ge d_{\rm SBM}({\rm U}(H_v),{\rm U}(H_w))=d_{\rm SBM}(X_v,X_w).\]
Using $d_{\rm SBM}\ge d_{\rm BM}$ and the lower bound from Proposition \ref{prop-CGH}, we conclude that
$d_{\rm T}(H_v,H_w)\ge d_{\rm BM}(X_v,X_w)\ge c_N\|v-w\|_\infty$ as requested.
\end{proof}

\section{Proof of Theorem \ref{thm-cat}}

First, let us give the proof of Lemma \ref{lemma-dT-thick}. 

\begin{proof}[Proof of Lemma \ref{lemma-dT-thick}] 
By the defining condition on the right-hand side of (\ref{dT-thickness}), for any such $\psi\in{\rm Symp}(W,\lambda)$, the corresponding Hamiltonian functions $H_-,H_+\in\mathcal H(W,\lambda)$ satisfy $H_-\le H$, $H_-\le G\circ\psi$, $H\le H_+$, and $G\circ\psi\le H_+$. Therefore, for every $(t,w)\in S^1\times{\rm Int}(W)$, we have
\begin{equation} \label{ratio-bound-1}
\frac{H(t,w)}{G(t,\psi(w))}\leq \frac{H_+(t,w)}{H_-(t,w)}, \qquad \frac{G(t,\psi(w))}{H(t,w)}\leq \frac{H_+(t,w)}{H_-(t,w)}.
\end{equation}
Hence, by taking logarithms, we get
\begin{equation} \label{proof-ln}
\ln\left(\sup_{(t,w)\in S^1\times{\rm Int}(W)}\max\left\{\frac{H(t,w)}{G(t,\psi(w))},\frac{G(t,\psi(w))}{H(t,w)}\right\}\right)\leq \ell(W_{H_-,H_+}).
\end{equation}
Taking the infimum over all such $\psi\in{\rm Symp}(W,\lambda)$ gives $d_{\rm T}(H,G)\leq \mbox{the right-hand side of }(\ref{dT-thickness})$.

For the reverse inequality, we aim to prove that there exist $H_-, H_+ \in \mathcal H(W, \lambda)$, satisfying the condition on the right-hand side of (\ref{dT-thickness}), such that $\ell(W_{H_-,H_+})\le d_{\rm T}(H,G)$. Note that the direct taking $H_- = \min\{H, G \circ \psi\}$ and $H_+ = \max\{H, G \circ \psi\}$ does {\it not} always work, since it is not guaranteed that these $H_-, H_+ \in \mathcal H(W, \lambda)$. By viewing ``$\max$'' and ``$\min$'' as limits of certain $L_p$-norms, let us consider an intermediate step: for $p \geq 1$ and defined in a pointwise manner, 
\[ 
K_+^{(p)}\coloneqq (H^p+(G \circ \psi)^p)^{1/p},\qquad K_-^{(p)}\coloneqq (H^{-p}+(G \circ \psi)^{-p})^{-1/p}
\]
for any fixed $\psi\in \mathrm{Symp}(W,\lambda)$. Then we claim such $K_+^{(p)}, K_-^{(p)} \in \mathcal H(W, \lambda)$. 

For brevity, let us formulate a more general and an abstract setting: (labelling $f\coloneqq H$ and $g\coloneqq G \circ \psi$) if $F:(0,\infty)^2\to (0,\infty)$ is smooth, positively homogeneous of degree $1$, and nondecreasing in each variable, then we will show that $F(f,g)\in \mathcal H(W,\lambda)$ whenever $f,g\in \mathcal H(W,\lambda)$. Indeed, by definition $dF(f,g) =(\partial_1F)(f,g)\,df+(\partial_2F)(f,g)\,dg$, which implies that 
\[
X_{F(f,g)}=(\partial_1F)(f,g)\,X_f+(\partial_2F)(f,g)\,X_g.
\]
Then for the $\tau$ function in (\ref{H-con}), we have 
\begin{align*}
\tau_{\lambda,F(f,g)} &= \iota_{X_{F(f,g)}}\lambda+F(f,g)\\
& = (\partial_1F)(f,g)\,\iota_{X_f}\lambda + (\partial_2F)(f,g)\,\iota_{X_g}\lambda +  F(f,g)\\
& = (\partial_1F)(f,g)\,\iota_{X_f}\lambda + (\partial_2F)(f,g)\,\iota_{X_g}\lambda +  (\partial_1F)(f,g) f+ (\partial_2F)(f,g) g\\
& = (\partial_1F)(f,g)(\iota_{X_f}\lambda+f)
+
(\partial_2F)(f,g)(\iota_{X_g}\lambda+g)\\
& =  (\partial_1F)(f,g) \tau_{\lambda, f} + (\partial_2F)(f,g)\tau_{\lambda, g}
\end{align*}
where the third equality uses Euler's identity for positively homogeneous functions of degree $1$. Since $\partial_1F,\partial_2F\ge 0$,  $f,g\in \mathcal H(W,\lambda)$ (hence, $\tau_{\lambda,f}>0$ and $\tau_{\lambda,g}>0$)
 implies that $\tau_{\lambda,F(f,g)}>0$. Applying this to the function $F(a,b) = (a^p + b^p)^{1/p}$ and $F(a,b) = (a^{-p} + b^{-p})^{-1/p}$ respectively, we get the desired claim that $K_+^{(p)}, K_-^{(p)} \in \mathcal H(W, \lambda)$. 

Next, by the elementary inequalities
\[
\max(a,b)\le (a^p+b^p)^{1/p}\le 2^{1/p}\max(a,b),
\]
and
\[
2^{-1/p}\min(a,b)\le (a^{-p}+b^{-p})^{-1/p}\le \min(a,b),
\]
we obtain $K_-^{(p)}\le \min\{H, G\circ \psi\}\le \max\{H, G \circ \psi\}\le K_+^{(p)}$. Therefore, $K_-^{(p)}$ and $K_+^{(p)}$ satisfy the condition on the right-hand side of (\ref{dT-thickness}). Moreover, the inequalities above also imply that 
\begin{align*}
\ell(W_{K_-^{(p)},K_+^{(p)}})
&=\ln\left(\sup_{(t,w)\in S^1\times{\rm Int}(W)}\frac{K_+^{(p)}(t,w)}{K_-^{(p)}(t,w)}\right) \nonumber\\
&\leq \ln\left(\sup_{(t,w)\in S^1\times{\rm Int}(W)}\frac{\max\{H(t,w),G(t,\psi(w))\}}{\min\{H(t,w),G(t,\psi(w))\}}\right)+\frac{2\ln 2}{p} \nonumber\\
&=\ln\left(\sup_{(t,w)\in S^1\times{\rm Int}(W)}\max\left\{\frac{H(t,w)}{G(t,\psi(w))},\frac{G(t,\psi(w))}{H(t,w)}\right\}\right)+\frac{2\ln 2}{p}. \label{thick-K-bound}
\end{align*}
Finally, letting $p \to \infty $ and then taking an infimum over all $\psi\in \mathrm{Symp}(W,\lambda)$ yields the inequality $(\mbox{right-hand side of (\ref{dT-thickness})}) \leq d_{\rm T}(H, G)$, as requested.  
\end{proof}

Then we will give the proof of Theorem \ref{thm-cat}. 

\begin{proof} [Proof of Theorem \ref{thm-cat}] We first verify the two axioms in the statement. For (i), taking $\psi={\rm id}$ in $(\ref{hom-tun-cat})$ gives 
\[
{\rm hom}_{{\rm Tun}(W,\lambda)}(H,H)\leq \max\left\{\ln (\sup_{(t,w)\in S^1\times{\rm Int}(W)} H(t,w)/H(t,w)),0\right\}=0.
\] 
Since the right-hand side of $(\ref{hom-tun-cat})$ is always nonnegative, we obtain ${\rm hom}_{{\rm Tun}(W,\lambda)}(H,H)=0$.

For (ii), let $H_1,H_2,H_3\in\mathcal H(W,\lambda)$, and fix $\epsilon>0$. If either ${\rm hom}_{{\rm Tun}(W,\lambda)}(H_1,H_2)$ or ${\rm hom}_{{\rm Tun}(W,\lambda)}(H_2,H_3)$ is infinite, then the desired inequality is immediate. Thus we may assume that both quantities are finite.

By the definition of the infimum, there exist $\psi,\varphi\in{\rm Symp}(W,\lambda)$ such that, with $A_\psi\coloneqq \sup_{(t,w)\in S^1\times{\rm Int}(W)} H_1(t,w)/H_2(t,\psi(w))$ and $B_\varphi\coloneqq \sup_{(t,w)\in S^1\times{\rm Int}(W)} H_2(t,w)/H_3(t,\varphi(w))$, we have
\[
\max\{\ln A_\psi,0\}\leq {\rm hom}_{{\rm Tun}(W,\lambda)}(H_1,H_2)+\epsilon,\,\,\,\max\{\ln B_\varphi,0\}\leq {\rm hom}_{{\rm Tun}(W,\lambda)}(H_2,H_3)+\epsilon .
\]
For every $(t,w)\in S^1\times{\rm Int}(W)$, we have
\[
\frac{H_1(t,w)}{H_3(t,\varphi(\psi(w)))}=\frac{H_1(t,w)}{H_2(t,\psi(w))}\cdot\frac{H_2(t,\psi(w))}{H_3(t,\varphi(\psi(w)))} .
\]
Taking the supremum and using that $\psi$ maps ${\rm Int}(W)$ diffeomorphically onto itself, we get
\[
\sup_{(t,w)\in S^1\times{\rm Int}(W)}\frac{H_1(t,w)}{H_3(t,\varphi(\psi(w)))}\leq A_\psi B_\varphi .
\]
Therefore, ${\rm hom}_{{\rm Tun}(W,\lambda)}(H_1,H_3)\leq \max\{\ln(A_\psi B_\varphi),0\}\leq \max\{\ln A_\psi,0\}+\max\{\ln B_\varphi,0\}$. Combining the last inequality with the choices of $\psi$ and $\varphi$, we obtain
\[
{\rm hom}_{{\rm Tun}(W,\lambda)}(H_1,H_3)\leq{\rm hom}_{{\rm Tun}(W,\lambda)}(H_1,H_2)+{\rm hom}_{{\rm Tun}(W,\lambda)}(H_2,H_3)+2\epsilon.
\]
Letting $\epsilon\to0$ proves the triangle inequality.

It remains to identify the symmetrization with the Thompson-type metric. For fixed $\psi\in{\rm Symp}(W,\lambda)$, the condition $(1/C)H(t,w)\leq G(t,\psi(w))\leq C H(t,w)$ for all $(t,w)\in S^1\times{\rm Int}(W)$ is equivalent to
\[
\max\left\{\frac{H(t,w)}{G(t,\psi(w))},\frac{G(t,\psi(w))}{H(t,w)}\right\}\leq C
\]
for all $(t,w)\in S^1\times{\rm Int}(W)$. Hence, for this fixed $\psi$, the smallest possible value of $\ln C$ is
\[
\ln\left(\sup_{(t,w)\in S^1\times{\rm Int}(W)}\max\left\{\frac{H(t,w)}{G(t,\psi(w))},\frac{G(t,\psi(w))}{H(t,w)}\right\}
\right).
\]
Taking the infimum over all $\psi\in{\rm Symp}(W,\lambda)$, and comparing with Definition~\ref{dfn-T-type}, gives ${\rm hom}_{{\rm Tun}(W,\lambda)}^{\rm sym}(H,G)=d_{\rm T}(H,G)$. This completes the proof.
\end{proof}

\section{Proof of Lemma \ref{prop:winding-monotonicity}} \label{ssec-Proof of Lemma-winding}

We first recall the standard positivity-of-intersection input that will be used below. By assumption, $D$ is $J$-holomorphic and $X_{K_N}$ is tangent to $D$ along $D$, the Floer equation is compatible with the divisor $D$. Concretely, near any point of $D$, choose local complex coordinates $(x,z)$, where $D=\{z=0\}$. The assumption $JTD=TD$ says that the normal direction to $D$ is $J$-complex, while the condition $X_{K_N}|_D\subset TD$ implies that the normal component of the Hamiltonian perturbation vanishes along $D$. Therefore, the normal component $u_{\bC}$ of $u$ satisfies a Cauchy--Riemann type equation with lower-order terms. By the similarity principle, the zeros of $u_{\bC}$ are isolated and have positive order, unless $u_{\bC}\equiv0$ on a connected component. See, for example, Appendix~E in \cite{MS12} for positivity of intersections and Lemma~7.1 in \cite{CM07}  for the corresponding divisor argument in the presence of compatible perturbations.

In our situation, $u$ cannot be entirely contained in $D$. Indeed, $D$ is preserved by the Hamiltonian flow of $K_N$, so a Hamiltonian orbit which meets $D$ is contained in $D$. Since neither $\gamma_-$ nor $\gamma_+$ is contained in $D$, both asymptotic orbits are disjoint from $D$. By the exponential convergence of finite-energy Floer cylinders to their asymptotic orbits, $u$ is also disjoint from $D$ for $|s|$ sufficiently large. Therefore $u^{-1}(D)$ is a finite set, and the intersection number is given by $u\cdot D=\sum_{p\in u^{-1}(D)}{\rm mult}_p(u,D)$. Each local multiplicity is positive, so $u\cdot D\geq0$, and $u\cdot D=0$ if and only if $u^{-1}(D)=\varnothing$, equivalently $u\cap D=\varnothing$.

It remains to identify this intersection number with the change of winding number. Let $u_{\bC}$ denote the $z$-component of $u$. On the complement of $u^{-1}(D)$, the one-form $d\theta_{\bC}=d\arg z$ is defined. Choose $R>0$ large enough so that $u$ is disjoint from $D$ on $((-\infty,-R]\cup[R,\infty))\times S^1$, and choose small pairwise disjoint disks $B_\epsilon(p)$ around the finitely many points $p\in u^{-1}(D)\cap([-R,R]\times S^1)$. Set $\Sigma_{R,\epsilon}\coloneqq [-R,R]\times S^1\setminus\bigcup_p B_\epsilon(p)$. Since $d(d\theta_{\bC})=0$ on $\bC^*$, Stokes' theorem gives
\[
0=\int_{\partial\Sigma_{R,\epsilon}}u_{\bC}^*d\theta_{\bC}=\int_{\{R\}\times S^1}u_{\bC}^*d\theta_{\bC}-\int_{\{-R\}\times S^1}u_{\bC}^*d\theta_{\bC}-\sum_{p\in u^{-1}(D)}\int_{\partial B_\epsilon(p)}u_{\bC}^*d\theta_{\bC}.
\]
With the boundary orientation induced from $\Sigma_{R,\epsilon}$, the circles $\partial B_\epsilon(p)$ are negatively oriented. Thus $(2\pi)^{-1}\int_{\partial B_\epsilon(p)}u_{\bC}^*d\theta_{\bC}={\rm ord}_p(u_{\bC})$, where ${\rm ord}_p(u_{\bC})$ is the vanishing order of the normal component at $p$. Therefore,
\[
\sum_{p\in u^{-1}(D)}{\rm ord}_p(u_{\bC})=\frac{1}{2\pi}\int_{\{R\}\times S^1}u_{\bC}^*d\theta_{\bC}-\frac{1}{2\pi}\int_{\{-R\}\times S^1}u_{\bC}^*d\theta_{\bC}.
\]
Letting $R\to\infty$, the exponential convergence of $u$ to $\gamma_+$ at $+\infty$ and to $\gamma_-$ at $-\infty$ implies that the two boundary integrals converge to the winding numbers $k(\gamma_+)$ and $k(\gamma_-)$, respectively. Since the local intersection multiplicity with $D=\{z=0\}$ is exactly the vanishing order of $u_{\bC}$, the left-hand side is $u\cdot D$. Therefore, $u\cdot D=k(\gamma_+)-k(\gamma_-)$.

Since $u\cdot D\geq0$, we obtain $k(\gamma_+)\geq k(\gamma_-)$. Thus, with the convention in this lemma that a Floer cylinder contributing to the differential goes from the negative end $\gamma_-$ to the positive end $\gamma_+$, the Floer differential cannot decrease the winding number.

\bibliographystyle{amsplain}
\bibliography{biblio}

\,

\end{document}